\theoremstyle{definition}
\newtheorem{df}{\bf Definition}[section]
\newtheorem{teo}{Theorem}[section]
\newtheorem{lem}{Lemma}[section]
\newtheorem{cor}{Corollary}[section]
\newtheorem{prop}{Proposition}[section]
\newtheorem{rem}{Remark}[section]
\renewenvironment{proof}{{\bfseries \noindent Proof} }{ \qed \\}
\begin{document}

%%%%%%%%%%%%%%%%%%%%%%%%%%%%%%%%%%%%%%%%%%%%%%%%%%%%%
%----------------General Math Notations-------------------------------------
\def\R{\mathbb{R}}                   %real numbers
\def\Z{\mathbb{Z}}                   %Integer  numbers
\def\Q{\mathbb{Q}}                   %Rational  numbers
\def\C{\mathbb{C}}                   %Complex numbers
\def\N{\mathbb{N}}                   %natural numbers
\def\uhp{{\mathbb H}}                %upper half plane
\def\A{\mathbb{A}}

\def\cf{r}
\def\T{\sf{T}}
\def\Hess{\text{Hess}}
\def\sing{\text{sing}}
\def\supp{\text{supp}}
\def\CH{{\rm CH}}
\def\P{\mathbb{P}}
\def\Gal{\text{Gal}}
\def\pr{\text{pr}}
\def\GHod{\text{GHod}}
\def\SHod{\text{SHod}}
\def\Hod{\text{Hod}}
\def\res{\text{res}}
\def\prim{\text{prim}}
\def\dR{\text{dR}}
\def\Jac{\text{Jac }}

\def\ker{{\rm ker}}              %kernel
\def\GL{{\rm GL}}                \def\HF{{\rm HF}}                %The liner group
\def\ker{{\rm ker}}              %kernel
\def\coker{{\rm coker}}          %cokernel
\def\im{{\rm Im}}               %Image
\def\coim{{\rm Coim}}            %coimage

 %Differential operator
\def\End{{\rm End}}              %Endomorphism group
\def\rank{{\rm rank}}                %rank of a module
\def\gcd{{\rm gcd}}                  %greatest common divisor
%%%%%%%%%%%%%%%%%%%%%%%%%%%%%%%%%%%%%%%%%%%%%%%%%%
%%%%%%%%%%%%%%%%%%%%%%%%%%%%%%%%%%%%%%%%%%%%%%%%%%

\begin{center}
{\LARGE\bf Modular Vector Fields for Lattice Polarized K3\footnote{MSC2020: 11F11, 14C30, 14J28, 11F46}
}
\\
\vspace{.25in} {\large {{\sc Walter Andrés Páez Gaviria}}\footnote{
{\tt wapaezg@unal.edu.co}}}
\end{center}

% \tableofcontents

\begin{abstract}
We consider a moduli space of lattice polarized K3 surfaces with the additional information of a frame of the trascendental cohomology with respect to the lattice polarization. This moduli space is proved to be quasi-affine, and the existence of vector fields on it, called modular vector fields, is proved. A purely algebraic version of the algebra of Siegel quasi-modular forms is obtained as the algebra of global regular functions over this moduli space, with a differential structure coming from the modular vector fields. By means of trascendental considerations we are able to obtain a differential algebra of meromorphic Siegel quasi-modular forms from the previous algebra.
\end{abstract}

\section{Introduction}
\label{intro}
Classical modular forms are holomorphic functions defined on the Poincaré half-plane $\mathbb{H}=\{\tau\in\mathbb{C}|\, \text{Im}\, \tau>0\}$, satisfying some functional equations, which, in particular, imply that they are periodic. Besides, a holomorphicity condition at infinity is also required. They form an algebra, usually denoted by $\mathfrak{M}$, generated over $\mathbb{C}$ by the Eisenstein series $E_4$ and $E_6$. 

For the non-expert, the Eisenstein series $E_k:\mathbb{H}\rightarrow \mathbb{C}$, for $k\geq 0$, can be defined by means of
\begin{equation*}E_{k}(\tau)=1-\frac{2k}{B_k}\sum_{n=1}^{\infty} \sigma_{k-1}(n)q^n, \,\,\,\,  q=e^{2\pi i \tau},\, \tau\in \mathbb{H}.
\end{equation*}

\noindent Here $\sigma_i(n)=\sum_{d|n}d^i$,  $B_k$ is the $k$-th Bernoulli number. As in this case, the Fourier coefficients of the classical modular forms are expected to contain "arithmetic" information, whence part of their importance in mathematics. It is worth mentioning that $E_2$ is \textit{not} a modular form.

Going back to the algebra $\mathfrak{M}$ of classical modular forms, since it is finitely generated over $\mathbb{C}$, $Spec(\mathfrak{M})$ is an affine variety over $\mathbb{C}$. This suggests that we could have arrived at the algebra of classical modular forms by having considered the affine variety $Spec(\mathfrak{M})$, or a suitable open subvariety, say $\mathsf{S}$, of it, and taken its algebra of global regular functions. Nonetheless, this procedure would only allow us to only recover the \textit{algebraic} structure of $\mathfrak{M}$, leaving out its \textit{trascendental} structure, i.e., that this is an algebra of holomorphic functions on $\mathbb{H}$. To solve this we would need to find a holomorphic map from $\mathbb{H}$ to $\mathsf{S}$, which would allow us to pullback regular function on $\mathsf{S}$ to holomorphic functions on $\mathbb{H}$. 

It turns out that such a procedure can be accomplished by taking $\mathsf{S}$ to be the moduli space of elliptic curves over $\mathbb{C}$, enhanced with a holomorphic $1$-form, and the holomorphic map from $\mathbb{H}$ to $\mathsf{T}$ can be taken to be essentialy the inverse of the classical period map for elliptic curves. Let us keep in mind this procedure while going on a further disgression. 

It is not hard to see that classical modular forms are not closed under differentiation, but a remarkable fact happens: the derivative of a modular form belongs to the algebra generated by the Eisenstein series $E_2$, $E_4$ and $E_6$. Furthermore, the so-called Ramanujan relations between Eisenstein series

$$E_2'=\frac{E_2^2-E_4}{12},$$
    \begin{equation}E_4'=\frac{E_2 E_4-E_6}{3},\end{equation}
    $$E_6'=\frac{E_2E_6-E_4^2}{2},$$

imply that this algebra is closed under differentiation. This algebra, usually denoted by $\tilde{\mathfrak{M}}$, is called the algebra of quasi-modular forms. The degrees given by the subindices of the Eisenstein series give it a structure of graded algebra. Its homogeneous elements are called quasi-modular forms. Its differential structure is completely determined by the Ramanujan relations between Eisenstein series.

We could ask whether a procedure like the one metioned before for modular forms is possible for the algebra of quasi-modular forms. A positive answer to this question was given in \cite{mov2008,mov2012,pascal}. The elements involved in its construction are the moduli space of elliptic curves over $\mathbb{C}$, enhanced with additional  cohomological data, and a generalized period map. Furthermore, the Ramanujan relations between Eisenstein series can be recovered as an algebraic vector field on the forementioned moduli space. It is remarkable that \textit{moduli} spaces give rise to \textit{modular} forms.

The upper half-plane $\mathbb{H}$ can be generalized to the hyperbolic $n$-space $\mathbb{H}^n$, and in a similar fashion classical modular forms on $\mathbb{H}$ can be generalized to Siegel modular forms on $\mathbb{H}^n$, also called Siegel modular forms of genus $n$. This work is about an extension of the notion of Siegel modular form of genus two, which is an analog of the extension of the classical elliptic modular forms to quasi-modular forms, by means of a similar procedure to the one mentioned above. Its key elements are a moduli space of a certain type of K3 surfaces enhanced with cohomological data, and algebraic vector fields defined on it, which allow us to construct an algebraic version of the graded algebra of Siegel quasi-modular forms. The previous algebraic construction can be used to obtain holomorphic functions on the complement of a fine subset of $\mathbb{H}_2$ by using transcendental considerations. 

\subsection{The GMCD method}\label{1.3}
This work fits into a broader project called \textit{Gauss-Manin Connection in Disguise} (GMCD for short). For a more general and complete presentation of this project, we refer to the main book of its creator \cite{mov20}. The main purpose of the project is to give a geometric framework for developing a theory of quasi-modular forms from it, together with their generating differential equations. In what follows we explain roughly the main ingredients of this project:

\begin{itemize}
\item \textbf{Algebraic part:} Here, for a kind of algebraic varieties (it is worth mentioning that in \cite{vogrinthesis} there is an application of this method to Landau-Ginzburg models, which may not fit into the presentation given here) we construct the moduli space $\mathsf{T}$ or a patch of it, which classifies the varieties enhanced with elements of its middle de Rham cohomology, compatible with the Hodge filtration, and with a fixed intersection matrix; \textit{this space is expected to be a quasi-affine complex variety}. The algebra of algebraic quasi-modular forms in this case correspond to the global regular functions on $\mathsf{T}$. The automorphic data of the previous forms is contained in an algebraic group $\mathsf{G}$ such that the quotient $\mathsf{T}/\mathsf{G}$ is the moduli of the algebraic varieties under consideration. Next, we must prove the existence and uniqueness of algebraic modular vector fields which describes the differential structure of the algebra of quasi-modular forms previously defined.

\item \textbf{Transcendental part:} To obtain holomorphic functions out of the algebra of algebraic quasi-modular forms constructed in the algebraic part, we must consider a classical period domain $\mathsf{D}$ associated to our class of varieties (observe that $\mathsf{D}$ need not be hermitian symmetric, so this procedure allows to construct an automorphic form theory for new domains; the case of mirror quintics, is treated in \cite{mov17}). To construct an automorphic theory on $\mathsf{D}$, we need to construct a $\tau$-map $\mathsf{D}\rightarrow \mathsf{U}$, where $\mathsf{U}$ is a generalized period domain, which is the natural codomain for the period map $\mathcal{P}:\mathsf{T}\rightarrow\mathsf{U}$.  Next we must prove, at least, the local injectivity of $\mathcal{P}$ (if $\mathcal{P}$ is a biholomorphism, we have a possitive answer to the algebraization for $\mathsf{U}$). 

\item \textbf{Mixing-up:} We use the previous functions to construct a $\mathsf{t}$-map $\mathsf{t}:\mathsf{D}\rightarrow \mathsf{T}$ which allows us to pull-back the algebraic quasi-modular forms constructed in the algebraic part, to quasi-modular forms on $\mathsf{D}$. The modular vector fields give us the differential structure.
\end{itemize}

\subsection{Main results}\label{1.4}

In this work we apply the GMCD method to quasi-ample lattice polarized K3 surfaces. 

In the case of a lattice polarization of $N=H\oplus E_8\oplus E_7$, by applying GMCD to $N$-polarized K3 surfaces, we obtain a differential algebra of meromorphic Siegel quasi-modular forms of genus two for the full group $\mathsf{Sp}_4(\mathbb{C})$ defined on $\mathbb{H}_2.$ We proceed to explain the moduli space involved in this construction.

Let $\Psi$ be a matrix
\begin{equation}\label{psi}
\Psi:= \begin{bmatrix}
0 & 0 & 1 \\
0 & \Psi' & 0\\
1 & 0 & 0 
\end{bmatrix},
\end{equation}
where $\Psi'$ is a non-singular and symmetric $3\times 3$ matrix with complex entries. 

\begin{df}\label{tmoduli}
Let us denote by $\mathsf{T}_{\Psi}$ the moduli of tuples $(X,\iota,\alpha_1,\ldots,\alpha_5)$ in which:
\begin{itemize}
\item[i.] $X$ is a smooth complex algebraic $N$-polarized K3 surface;
\item[ii.] $\iota:N\rightarrow H_{dR}^2(X/\mathbb{C})$ is a lattice polarization;
\item[iii.] $(\alpha_1,\ldots,\alpha_5)$ is a basis of $H^2_{dR}(X/\mathbb{C})_{\iota}:=H_{dR}^2(X/\mathbb{C})/\iota(N)$ such that $\alpha_1\in F^2$, $\alpha_1,\ldots,\alpha_4\in F^1$ and $[\langle\alpha_i,\alpha_j \rangle]=\Psi$. Here, $H_{dR}^2(X/\mathbb{C})$ denotes the second algebraic de Rham cohomology group of $X$ over $\mathbb{C}$, and $F^{\bullet}$ denotes its Hodge filtration.
\end{itemize}
\end{df}

The following algebraic group contains the automorphic-data of the Siegel quasi-modular forms:

\begin{df}\label{alggroup} We define the complex algebraic group
\begin{equation*}
\mathsf{G}_{\Psi}:=\{g\in \mathsf{Mat}_{5}(\mathbb{C})\, | \, g^T\Psi g=\Psi\text{ and } g^{T} \text{respects Hodge filtration} \}.
\end{equation*}
\end{df}
Here, we say that $g^T$ respects Hodge filtration if $g^T$ is of the following form
\begin{equation*}
\begin{bmatrix}
*_{1\times 1} & 0 & 0 \\
* & *_{3\times 3} & 0\\
* & * & *_{1\times 1}\\
\end{bmatrix}\in \mathsf{Mat}_5(\mathbb{C}).
\end{equation*}

Since $\mathsf{T}_{\Psi}$ and $\mathsf{G}_{\Psi}$ are independent of $\Psi$ over $\mathbb{C}$, we will denote them by $\mathsf{T}$ and $\mathsf{G}$, respectively.

\begin{teo}[Theorem \ref{quasiaffine}] There is a non-empty patch 
$\mathsf{O}\subset\mathsf{T}_{\Psi}$ such that $\mathsf{O}$ is a quasi-affine complex variety.
\end{teo}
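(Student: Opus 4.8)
The plan is to exhibit $\mathsf{O}$ as a locally closed subscheme of the total space of a vector bundle over a quasi-affine base, and then to invoke two permanence properties of quasi-affineness: \emph{(a)} the total space of a vector bundle over a quasi-affine scheme is again quasi-affine, and \emph{(b)} a locally closed subscheme of a quasi-affine scheme is quasi-affine. Property \emph{(b)} is immediate. For \emph{(a)} one uses the characterisation of quasi-affine schemes by the existence of an open cover by affine opens of the form $D_Y(f)$ with $f\in\Gamma(Y,\mathcal{O}_Y)$ (equivalently, ampleness of $\mathcal{O}_Y$): pull such a cover back along the bundle projection, and note that a vector bundle over an affine scheme is affine.

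The first step is to fix a sufficiently ample family of $N$-polarized K3 surfaces: an explicit algebraic family $f\colon\mathcal{X}\to S$ over a quasi-affine complex variety $S$ — for $N=H\oplus E_8\oplus E_7$ this is furnished by the theory of quasi-ample lattice polarized K3 surfaces of the earlier sections, with $S$ typically the complement, in an affine space of coefficients, of the discriminant over which the generic member degenerates — chosen so that the classifying map to the moduli space $\mathsf{M}$ of $N$-polarized K3 surfaces is an isomorphism onto a non-empty open patch (shrinking $S$ so that the enhanced objects are rigid and this patch carries a universal family). Over $S$ the relative algebraic de Rham cohomology forms a rank-$22$ bundle $\mathcal{H}:=R^2 f_*\Omega^\bullet_{\mathcal{X}/S}$ with the Hodge filtration $F^2\subset F^1\subset\mathcal{H}$ by sub-bundles and the cup-product pairing; dividing by the flat rank-$17$ sub-bundle $\iota(N)\otimes\mathcal{O}_S$ gives a rank-$5$ bundle $\mathcal{H}_\iota$ with induced filtration of ranks $1,4,5$ and a perfect pairing.

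I would then take $\mathsf{O}$ to be the subscheme of the rank-$25$ bundle $\mathcal{H}_\iota^{\oplus 5}$ over $S$ cut out by the conditions of Definition~\ref{tmoduli}: the tautological sections $\alpha_1,\dots,\alpha_5$ must satisfy $\alpha_1\in F^2$, $\alpha_1,\dots,\alpha_4\in F^1$, and $[\langle\alpha_i,\alpha_j\rangle]=\Psi$. Lying in a sub-bundle (vanishing of the image in the quotient bundle) and having a prescribed Gram matrix are closed conditions, so $\mathsf{O}$ is a closed subscheme of $\mathcal{H}_\iota^{\oplus 5}$; and because $\Psi$ is invertible the Gram condition already forces $\alpha_1,\dots,\alpha_5$ to be a basis adapted to the flag, so $\mathsf{O}$ parametrises exactly the tuples of Definition~\ref{tmoduli} whose underlying K3 surface occurs in the family, hence is isomorphic to a non-empty open patch of $\mathsf{T}_\Psi$. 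Combining this with \emph{(a)} (as $S$ is quasi-affine) and \emph{(b)} finishes the proof.

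The crux is the first step — producing a family of $H\oplus E_8\oplus E_7$-polarized K3 surfaces over a genuinely \emph{quasi-affine} base which is versal enough to cover a patch of $\mathsf{M}$, i.e.\ pinning down $\mathsf{O}$; this is where the explicit geometry of these surfaces enters (elliptic/Weierstrass models, their discriminants, and the identification of $\mathsf{M}$ with a quotient related to $\mathsf{Sp}_4$). Rigidity of the enhanced moduli problem on the chosen patch should also be checked, so that $\mathsf{O}$ is honestly a scheme: an automorphism of $(X,\iota)$ fixing a frame of $H^2_{dR}(X/\mathbb{C})_{\iota}$ acts unipotently and with finite order on $H^2_{dR}(X/\mathbb{C})$, hence trivially, hence is the identity by Torelli. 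Should no explicit family be available, one may instead use that $\mathsf{M}$ is quasi-projective with $\omega=F^2$ ample on its Baily–Borel compactification, so that the complement of the zero section of $\omega$ over a patch of $\mathsf{M}$ lies in the affine cone $\mathrm{Spec}\bigoplus_{k\ge0}\Gamma(\overline{\mathsf{M}},\omega^{\otimes k})$ and is quasi-affine, and present $\mathsf{O}$ as a torsor over it under the linear algebraic group stabilising a generator of $F^2$ inside $\mathsf{G}$ — torsors under linear algebraic groups over quasi-affine bases being again quasi-affine.
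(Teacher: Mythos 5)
Your overall strategy --- fibre the enhanced moduli over the quasi-affine parameter space of the explicit Clingher--Doran family and cut out the admissible frames by algebraic conditions --- is the same as the paper's, but you diverge at the decisive technical step. The paper does not appeal to permanence of quasi-affineness under vector bundles and locally closed immersions; instead it \emph{trivializes} the rank-$5$ quotient bundle over a Zariski open $V$ of the moduli $\mathsf{S}$ of pairs $(X,\iota,\omega)$ by five explicit algebraic sections $\omega_1,\dots,\omega_5$ compatible with the Hodge filtration (Theorem \ref{compatible}, which is the hard computational content of Section 3, coming from tame polynomial theory), writes every admissible frame as $\alpha=S\omega$ with $S$ block-triangular, $s_{11}=1$ and $S\Omega S^T=\Psi$, and so presents the patch as the spectrum of the explicit algebra $\mathcal{O}(V)[s_{ij}]/\mathcal{I}$. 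Your softer argument is sound as far as it goes (both permanence properties hold for quasi-compact quasi-separated schemes via ampleness of the structure sheaf, and your rigidity argument is correct: generically $NS(X)=\iota(N)$, so an automorphism fixing $\iota$ and the frame acts trivially on a finite-index sublattice of $H^2(X,\mathbb{Z})$ and is the identity by Torelli), and it avoids having to trivialize the bundle; but it buys less, since the explicit presentation $\Gamma(\mathcal{O}_{\mathsf{O}})=\mathbb{C}[a,b,c,d]\otimes\mathbb{C}[s_{ij}]/\mathcal{I}$ is precisely what the paper uses afterwards to define the algebra of algebraic Siegel quasi-modular forms and to run the coordinate computation in the proof of Theorem \ref{bigteo}.

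The genuine gap is the step you yourself label the crux and leave open: a family over a quasi-affine base $S$ whose classifying map to $\mathsf{M}$ is an isomorphism onto a patch. For the Clingher--Doran family this is not automatic: $\mathsf{B}\to\mathsf{M}=\mathsf{B}/\mathbb{G}_m$ is a weighted $\mathbb{G}_m$-quotient, so without slicing that action your map $\mathsf{O}\to\mathsf{T}_\Psi$ fails to be injective (the point $(s,\alpha)$ and its $\mathbb{G}_m$-translate define isomorphic enhanced surfaces), and $\mathsf{M}$ is only a coarse moduli space, each $X_{a,b,c,d}$ carrying the $\mathbb{Z}/2\times\mathbb{Z}/2$ of automorphisms of Proposition \ref{order}. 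The paper resolves exactly this by taking as base not a patch of $\mathsf{M}$ but the moduli $\mathsf{S}$ of triples $(X,\iota,\omega)$, shown in Theorem \ref{s} to be the affine variety $\{c\neq 0\}\subset\mathbb{C}^4$: the choice of $\omega=\alpha_1$ absorbs the $\mathbb{G}_m$-ambiguity and kills the residual automorphisms, which is why the frame matrix has $s_{11}=1$ pinned. Until your base $S$ is fixed in this way (or by an explicit slice of the $\mathbb{G}_m$-action), the identification of your $\mathsf{O}$ with a patch of $\mathsf{T}_\Psi$ does not go through.
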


\begin{df}
The algebra of algebraic Siegel quasi-modular forms $\widetilde{\mathfrak{M}}^{alg}(\mathsf{Sp}_4(\mathbb{C}))$ of genus two for the group $\mathsf{Sp}_4(\mathbb{C})$ is defined to be the algebra $H^{0}(\mathsf{O},\mathcal{O}_{\mathsf{O}})$.
\end{df}

The differential structure of the previous algebra comes from the following theorem.

\begin{teo}[Theorem \ref{bigteo}]
For each $\mathfrak{g}\in \mathfrak{sp}_4(\mathbb{C})$, there exists a unique algebraic vector field $\mathsf{R}_{\mathfrak{g}}\in \Theta_{\mathsf{O}}$ such that
\begin{equation}\label{modular1}
\nabla_{\mathsf{R}_{\mathfrak{g}}} \alpha=\mathfrak{g}^T\alpha,
\end{equation}

where $\alpha=(\alpha_1,\ldots,\alpha_5)^T$.
\end{teo}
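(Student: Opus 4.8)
The plan is to construct $\mathsf{R}_{\mathfrak{g}}$ locally from the algebraic Gauss-Manin connection and then check that the local pieces patch. Over $\mathsf{O}$ the relative de Rham cohomology $\mathcal{H} = H^2_{dR}(\mathcal{X}/\mathsf{O})$ carries the Gauss-Manin connection $\nabla\colon \mathcal{H}\to \Omega^1_{\mathsf{O}}\otimes \mathcal{H}$, and the chosen frame $\alpha=(\alpha_1,\ldots,\alpha_5)^T$ of $\mathcal{H}/\iota(N)$ (which, since $\iota(N)$ is flat for $\nabla$ — the lattice classes are locally constant — descends to a connection on the quotient) gives a Gauss-Manin matrix $A\in \mathsf{Mat}_5(\Omega^1_{\mathsf{O}})$ via $\nabla\alpha = A\otimes\alpha$, i.e.\ $\nabla_v\alpha = A(v)\alpha$ for $v\in\Theta_{\mathsf{O}}$. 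Equation \eqref{modular1} then reads $A(\mathsf{R}_{\mathfrak{g}}) = \mathfrak{g}^T$, so the whole problem is to show that, for each fixed $\mathfrak{g}\in\mathfrak{sp}_4(\mathbb{C})$, the constant matrix $\mathfrak{g}^T$ lies in the image of the $\mathcal{O}_{\mathsf{O}}$-linear map $A\colon \Theta_{\mathsf{O}}\to \mathsf{Mat}_5(\mathcal{O}_{\mathsf{O}})$, and that the preimage is unique.

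The key step is an infinitesimal-Torelli / Kodaira-Spencer computation showing that $A$ is an isomorphism onto a distinguished submodule. First I would identify $\Theta_{\mathsf{O}}$: a tangent vector to $\mathsf{T}_\Psi$ at $(X,\iota,\alpha_\bullet)$ deforms the K3 surface together with its polarization and its de Rham frame, so $\Theta_{\mathsf{O}}$ sits in an exact sequence with the deformations of $X$ keeping $\iota(N)$ of Hodge type $(1,1)$ (the tangent space to the $N$-polarized moduli, of dimension $20-\rho = 2$ for $N = H\oplus E_8\oplus E_7$ of rank $18$) and the deformations of the frame alone, which form the Lie algebra of $\mathsf{G}$. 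Via Kodaira-Spencer and the fact that $\nabla$ satisfies Griffiths transversality, the "horizontal" part of $A(v)$ (the block sending $F^2\to F^1/F^2$ and $F^1\to \mathcal{H}/F^1$) is governed by cup product $H^1(X,T_X)\otimes H^{p,q}\to H^{p-1,q+1}$, which for K3 surfaces is an isomorphism on the relevant graded pieces; the "vertical" part of $A(v)$ (block-lower-triangular, the $\mathsf{G}$-directions) is literally the identity on the Lie algebra $\mathfrak{g}$. Putting these together, $A$ identifies $\Theta_{\mathsf{O}}$ with the space of $5\times5$ matrices $B$ satisfying $B^T\Psi + \Psi B = 0$ (the compatibility with the pairing, automatic since $\langle\alpha_i,\alpha_j\rangle$ is constant on $\mathsf{T}_\Psi$, so $\nabla$ is $\Psi$-orthogonal) together with the Griffiths-transversality shape constraints coming from $F^\bullet$. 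One then checks that this ambient space of matrices is exactly $\mathfrak{g} \oplus \mathfrak{sp}_4(\mathbb{C})^T$ — more precisely that $\{\mathfrak{g}^T : \mathfrak{g}\in\mathfrak{sp}_4\}$ is a complement to $\mathrm{Lie}(\mathsf{G})$ inside $\{B : B^T\Psi+\Psi B=0,\ B \text{ has Griffiths shape}\}$ — which is where the specific $5=1+3+1$ block structure, the form of $\Psi$ in \eqref{psi}, and the isomorphism $\mathfrak{so}_5\cong\mathfrak{sp}_4$ enter. Given this decomposition, for each $\mathfrak{g}$ there is a unique $v = \mathsf{R}_{\mathfrak{g}}\in\Theta_{\mathsf{O}}$ with $A(v) = \mathfrak{g}^T$, and uniqueness is immediate because $A$ is injective (equivalently, the period map $\mathcal{P}$ is an immersion on $\mathsf{O}$, which should already be available from the quasi-affineness construction).

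The last point to address is globality and algebraicity: the construction of $v$ from $\mathfrak{g}^T$ is canonical — it does not depend on any choice of local coordinate, only on $\nabla$ and the frame — so the local vector fields glue to a global $\mathsf{R}_{\mathfrak{g}}\in\Theta_{\mathsf{O}}$, and since $\nabla$ and $\alpha$ are algebraic over $\mathsf{O}$ and $A$ is an algebraic isomorphism, $\mathsf{R}_{\mathfrak{g}}$ is an algebraic vector field. Linearity of $\mathfrak{g}\mapsto \mathsf{R}_{\mathfrak{g}}$ follows from linearity of $\mathfrak{g}\mapsto\mathfrak{g}^T$ and linearity of $A^{-1}$ on the relevant complement.

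I expect the main obstacle to be the linear-algebra bookkeeping in the middle step: verifying that the matrices arising as $A(v)$, subject to $\Psi$-orthogonality and the Hodge/Griffiths-transversality block constraints, form precisely $\mathrm{Lie}(\mathsf{G}) \oplus \mathfrak{sp}_4(\mathbb{C})^T$ with the transpose-of-$\mathfrak{sp}_4$ part a genuine complement — i.e.\ that $A$ surjects onto the $\mathfrak{sp}_4^T$-directions. This requires pinning down exactly which blocks of a general $\nabla$-matrix are forced to vanish by Griffiths transversality on a K3 (using $h^{2,0}=h^{0,2}=1$ and $h^{1,1}_{\iota}=3$), and then matching the surviving $2$-dimensional "horizontal" freedom plus the $\mathsf{G}$-directions against the $10$-dimensional $\mathfrak{sp}_4(\mathbb{C})$; a clean way to organize this is to work in a symplectic basis adapted to $\Psi'$ so that $\mathfrak{so}(\Psi)\cong\mathfrak{so}_5\cong\mathfrak{sp}_4$ becomes explicit, and to track the Hodge filtration as a flag fixed by a parabolic whose Lie algebra is $\mathrm{Lie}(\mathsf{G})^T$.
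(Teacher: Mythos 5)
Your proposal is correct in outline but takes a genuinely different route from the paper. The paper does not argue infinitesimally on $\mathsf{T}$: it first constructs the vector field explicitly on the period-domain side as the linear field $P\mapsto P\mathfrak{g}$ on $\mathsf{Mat}_5(\mathbb{C})$, checks that it is tangent to $\mathsf{\Pi}$ (because $\mathfrak{g}\Psi$ is antisymmetric) and $\Gamma$-invariant (Lemma \ref{unique}), and transports it to $\mathsf{T}$ through the global-Torelli biholomorphism $\mathcal{P}$ of Theorem \ref{bihol}; this yields existence and uniqueness of a \emph{holomorphic} solution at once. Algebraicity is then handled separately: writing $\mathsf{R}$ in a Zariski chart, \eqref{modular1} becomes a linear system over $\mathcal{O}_{\mathsf{O}}$ for the coefficients of $\mathsf{R}$, and the paper shows the coefficient matrix is everywhere nonsingular because a singularity would produce a second holomorphic solution, contradicting the uniqueness already established transcendentally. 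Your plan replaces global Torelli by infinitesimal Torelli: you want the Gauss--Manin map $A\colon\Theta_{\mathsf{O}}\to\{B \mid B\Psi+\Psi B^{T}=0\}\otimes\mathcal{O}_{\mathsf{O}}$ to be fiberwise bijective, with the $\mathsf{G}$-orbit directions accounting for $\mathrm{Lie}(\mathsf{G})^{T}$ and the Kodaira--Spencer cup product supplying the rest. This is viable and more self-contained (it avoids Theorem \ref{bihol} entirely), but the whole weight of the proof then rests on fiberwise surjectivity of $A$ at \emph{every} point of $\mathsf{O}$, which is precisely the step the paper gets for free from the period map being a biholomorphism; your parenthetical that injectivity of $d\mathcal{P}$ ``should already be available from the quasi-affineness construction'' is not accurate --- in the paper it comes from global Torelli, not from Theorem \ref{quasiaffine}.

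Two corrections to your bookkeeping. The lattice $N=H\oplus E_8\oplus E_7$ has rank $17$, not $18$, so the $N$-polarized moduli has dimension $20-17=3$ (consistent with your own $h^{1,1}_{\iota}=3$ and the $1+3+1$ block structure); with your figure of $2$ the count $2+\dim\mathsf{G}=2+7=9$ falls short of $10$ and the surjectivity argument would fail, whereas $3+7=10$ is exactly what makes it close up. Also, the phrase ``$\{\mathfrak{g}^{T}:\mathfrak{g}\in\mathfrak{sp}_4\}$ is a complement to $\mathrm{Lie}(\mathsf{G})$'' cannot be meant literally ($10+7\neq 10$): the correct statement is that the image of $A$ is the full $10$-dimensional space $\{\mathfrak{g}^{T}:\mathfrak{g}\in\mathfrak{G}\}$, inside which $\mathrm{Lie}(\mathsf{G})^{T}$ is the $7$-dimensional image of the vertical directions and the three horizontal Kodaira--Spencer directions supply a complement.
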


To pass from these algebraic constructions to transcendental ones we will need the generalized period map, which is basically an extension of the classical period map by considering not only holomorphic two-forms but also the whole middle transcendental de Rham cohomology. In this context, we have 

\begin{teo}\label{tore}[Theorem \ref{bihol}]
The generalized period map $\mathcal{P}:\mathsf{T\rightarrow}\mathsf{Sp}_4(\mathbb{C})\backslash\mathsf{\Pi}$ is a biholomorphism.
\end{teo}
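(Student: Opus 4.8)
The plan is to exhibit $\mathcal{P}$ as a holomorphic bijection with holomorphic inverse by combining three ingredients: (a) a local analysis via the Gauss--Manin connection showing $\mathcal{P}$ is a local biholomorphism; (b) a global injectivity argument using the Torelli theorem for lattice polarized K3 surfaces; (c) a surjectivity argument onto the target $\mathsf{Sp}_4(\mathbb{C})\backslash\mathsf{\Pi}$ via the surjectivity of the classical period map (surjectivity of the period domain / Torelli in the ``global'' form). First I would set up the target carefully: $\mathsf{\Pi}$ should be the space of period matrices compatible with the polarization data and with the frame $(\alpha_1,\ldots,\alpha_5)$ of the transcendental cohomology modulo $\iota(N)$, with the intersection form normalized to $\Psi$; the quotient by $\mathsf{Sp}_4(\mathbb{C})$ accounts for the choice of frame. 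The generalized period map sends a tuple $(X,\iota,\alpha_1,\ldots,\alpha_5)$ to the matrix of periods $\int_{\delta_j}\alpha_i$ against a (locally constant) basis $\delta_j$ of the transcendental homology; holomorphicity is the standard fact that periods vary holomorphically in families (Griffiths), so $\mathcal{P}$ is holomorphic.

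Next I would prove that $\mathcal{P}$ is a local biholomorphism. The tangent space to $\mathsf{T}$ at $(X,\iota,\alpha_\bullet)$ splits into a ``moduli of $X$'' direction and a ``moduli of the frame'' direction; the Kodaira--Spencer map together with the infinitesimal variation of Hodge structure identifies the first with $H^1(X,\Theta_X)$ constrained by the lattice polarization (this is where the $N = H\oplus E_8\oplus E_7$ condition fixes the dimension, giving $20 - \mathrm{rk}(N) = 3$ moduli, matching $\dim \mathbb{H}_2 = 3$), and the second with the Lie-algebra part of $\mathsf{G}$. The derivative of $\mathcal{P}$ is then expressed through the period matrix and its derivatives; one checks it is an isomorphism by the fact that for K3 surfaces the period map to the period domain is an immersion (infinitesimal Torelli, which holds for K3s since $H^1(X,\Theta_X)\cong H^1(X,\Omega_X^1)_{\mathrm{prim}}$ via the nowhere-vanishing holomorphic 2-form). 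I would combine this with the compatibility with the $\mathsf{G}$-action: $\mathcal{P}$ is $\mathsf{G}$-equivariant for the right action on frames and the corresponding action of $\mathsf{Sp}_4(\mathbb{C})$ on $\mathsf{\Pi}$, so it descends and the descended map is again a local isomorphism.

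Global injectivity I would get from the global Torelli theorem for $N$-polarized K3 surfaces (Piatetski-Shapiro--Shafarevich, in the lattice-polarized form due to Dolgachev): two $N$-polarized K3s with the same Hodge structure on the transcendental lattice and compatible markings are isomorphic, and the isomorphism is unique once the frame is fixed; hence $\mathcal{P}$ is injective on $\mathsf{T}$, and after passing to the quotient the map to $\mathsf{Sp}_4(\mathbb{C})\backslash\mathsf{\Pi}$ stays injective since we quotiented exactly by the frame ambiguity. Surjectivity follows from surjectivity of the classical period map for K3 surfaces onto its period domain (every point of the period domain satisfying the Riemann relations is realized by a K3 surface), together with the observation that any compatible choice of frame with intersection matrix $\Psi$ can be realized by acting with $\mathsf{Sp}_4(\mathbb{C})$. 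A holomorphic bijection between complex manifolds of the same dimension that is everywhere a local biholomorphism is a biholomorphism, which finishes the proof.

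The main obstacle I expect is \emph{surjectivity}, specifically matching the algebraically-defined target $\mathsf{Sp}_4(\mathbb{C})\backslash\mathsf{\Pi}$ with the analytic period domain for which the classical surjectivity statement is available: one must show that the Hodge-filtration and intersection-form constraints cutting out $\mathsf{\Pi}$ are exactly the Riemann bilinear relations defining the period domain of $N$-polarized K3s, and that the identification $\mathsf{\Pi} \cong \mathbb{H}_2$ (up to the group action) matches the isomorphism of the period domain with Siegel upper half space of genus two coming from the exceptional isomorphism of the relevant orthogonal group with $\mathsf{Sp}_4$. A secondary technical point is verifying that the patch $\mathsf{O}$ of Theorem \ref{quasiaffine} and the domain on which $\mathcal{P}$ is defined can be taken so that no codimension-one locus is lost in the transcendental picture; I would handle this by noting the period map is defined on all of $\mathsf{T}$ (periods always make sense) and that removing a proper analytic subset on the algebraic side does not affect biholomorphy after one checks the complement has the right dimension.
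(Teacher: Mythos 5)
Your overall strategy --- holomorphicity of periods, Torelli for injectivity and surjectivity, then upgrading a holomorphic bijection to a biholomorphism --- matches the paper's. Your surjectivity step is essentially what the paper does (it deduces surjectivity from the fact that the induced map $\mathcal{P}/\mathsf{G}:\mathsf{T}/\mathsf{G}\rightarrow\mathsf{U}/\mathsf{G}$ is a biholomorphism by global Torelli), and your infinitesimal-Torelli/local-biholomorphism analysis, while not wrong, is not needed: the paper closes the argument simply by citing the general theorem that a bijective holomorphic map of complex manifolds is a biholomorphism.

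The genuine gap is in your injectivity argument. You write that after passing to the quotient the map ``stays injective since we quotiented exactly by the frame ambiguity,'' but the group acting on $\mathsf{\Pi}$ to form the target $\Gamma\backslash\mathsf{\Pi}$ is the monodromy group acting on the integral \emph{homology} basis $\delta$, \emph{not} the group $\mathsf{G}$ of changes of the de Rham frame $\alpha$; the frame $\alpha$ is part of the data of a point of $\mathsf{T}$ and is not quotiented out in the target (otherwise the target would have dimension $3$, not $10$). Consequently, Torelli at the coarse level only gives injectivity of $\mathcal{P}/\mathsf{G}$, and the remaining case --- two enhancements $(X,\alpha)$ and $(X,\beta)$ of the \emph{same} surface $X$ with $\mathcal{P}(X,\alpha)=\mathcal{P}(X,\beta)$ in $\Gamma\backslash\mathsf{\Pi}$ --- still has to be handled. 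The paper does this by using the element $A\in\Gamma$ relating the two period matrices to build an isometry of $H^2(X,\mathbb{Z})_{\iota}$, extending it by the identity on $\iota(N)$ to an isometry $g$ of $H^2(X,\mathbb{Z})$, checking that $g$ is a Hodge isometry sending a Kähler class to a Kähler class (this uses that $\iota(N)$ contains Kähler classes), and then invoking the \emph{strong} (automorphism) form of the Torelli theorem to produce $\phi\in\mathrm{Aut}(X)$ with $\phi^*\beta=\alpha$, i.e.\ $[(X,\alpha)]=[(X,\beta)]$ in $\mathsf{T}$. This existence-of-an-automorphism step is the crux of the theorem; ``uniqueness of the isomorphism once the frame is fixed'' is not the statement that is needed, and without it injectivity of $\mathcal{P}$ (as opposed to $\mathcal{P}/\mathsf{G}$) is not established.
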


Here $\mathsf{\Pi}$ is a smooth complex manifold where the periods of the transcendental de Rham cohomology live.

We observe that Theorem \ref{tore} holds for any lattice polarization $N$ (under suitable changes).

The final piece of the construction is the definition of the $\mathsf{t}$-map 

\begin{df}
\begin{itemize}\item The $\tau$-map of the Doran and Clingher family is the holomorphic map 
\begin{gather}\tau:\mathbb{H}_2\rightarrow \mathsf{\Pi}, \notag\\
\begin{bmatrix}
\tau_1 & \tau_2 \\
\tau_2 & \tau_3
\end{bmatrix}\mapsto \begin{bmatrix}
\tau_2^2-\tau_1\tau_3 & -\tau_3 & -2\tau_2 & -\tau_1 & 1\\
\tau_3 & 0 & 0 & 1 & 0\\
\tau_2 & 0 & -1 & 0 & 0\\
\tau_1 & 1 & 0 & 0 & 0\\
1 & 0 & 0 & 1 & 1
\end{bmatrix}.\notag
\end{gather}

We observe that the $\tau$-map satisfies that the composition $\mathbb{H}_2\xrightarrow {\tau} \mathsf{\Pi}\rightarrow \mathsf{\Pi}/ \mathsf{G}=\mathbb{H}_2\cup\overline{\mathbb{H}_2}$ is the identity. Here $\mathbb{H}_2$ is the Siegel upper-half space of genus two and the bar represents the conjugation of its elements.
\item The $\mathsf{t}$-map is defined as the composition $\mathbb{H}_2\xrightarrow {\tau} \Gamma\backslash\mathsf{\Pi}\xrightarrow{\mathcal{P}^{-1}}\mathsf{T}$.
\end{itemize}
\end{df}

By mixing-up the previous algebraic and transcendental constructions, we get the definition of the algebra of Siegel quasimodular forms.

\begin{df}
The algebra of Siegel quasimodular forms is defined to be the pullback $\mathsf{t}^*(\mathcal{O}_{\mathsf{O}})$.
\end{df}

\subsection{Organization of the article}

\begin{itemize}
\item \textbf{Section 2:} We recall Clingher and Doran construction of $N$-polarized K3 surfaces.
\item \textbf{Section 3:} Here we introduce the cohomology of a deformation and the basics of tame polynomials theory and the needed computations for arriving at a basis compatible with the Hodge filtration, and the Gauss-Manin connection.

\item \textbf{Section 4:} The moduli space $\mathsf{T}$ of enhanced $N$-polarized K3 surfaces is introduced and its algebra of global regular functions is described. This is defined to be the algebra of algebraic Siegel quasi-modular forms of genus two. The algebraic group $\mathsf{G}$ which contains the automorphic data of the Siegel quasi-modular forms is defined and computed. The AMSY-Lie algebra is defined and proved to be isomorphic to $\mathfrak{sp}_4(\mathbb{C})$. The previous algebraic group and Lie algebra is also computed for arbitrary lattice polarizations. 

\item \textbf{Section 5:} Here we introduced the manifold of period matrices $\mathsf{\Pi}$, which is a $10$-dimensional smooth manifold where the periods of $N$-polarized K3 surfaces live. We also define the generalized period domain $\mathsf{U}$ which takes into account the monodromy action. The generalized period domain $\mathcal{P}:\mathsf{T}\rightarrow \mathsf{U}$ is defined an proved to be a biholomorphism for any lattice polarization. Next, we construct the $\tau$-map for $N$-polarized K3 surfaces and explain how it was obtained. By means of the previous results, we construct the map $\mathsf{t}:\mathbb{H}_2\rightarrow \mathsf{T}.$

\item \textbf{Section 6:} In this section, the uniqueness and existence of algebraic vector fields $\mathsf{R}_{\mathfrak{g}}$ on $\mathsf{T}$ for each $\mathfrak{g}\in\mathfrak{sp}_4(\mathbb{C})$ is proved. The algebras of algebraic Siegel modular and quasi-modular forms of genus two are defined. Next, we prove a characterization of the algebra of algebraic classical Siegel modular forms inside the algebra of algebra of algebraic Siegel quasi-modular forms by means of the previously constructed vector fields. Finally, we use the $\mathsf{t}$-map defined in Section 4 to pullback the algebras of algebraic Siegel modular and quasi modular forms of genus two to algebras of meromorphic functions on $\mathbb{H}_2$.
\end{itemize}

\bigskip

\noindent\textbf{Acknowledgements.} I would like to thank my supervisor, Hossein Movasati. Part of this work was developed during my stay at Hamburg University, visiting professor Murad Alim.

\section{The Clingher-Doran family}
\label{sec2}
It is a well-known result that every complex elliptic curves is isomorphic to a projective variety defined by the vanishing of a homogeneous polynomial of the form $$y^2z-4x^3+g_2xz^2+g_3z^3,$$ with $g_2^3-27g_3^2\neq 0$. This polynomial is usually called the \textit{Weierstrass normal form} of the elliptic curve. A normal-form type theorem for $N$-polarized compĺex K3 surfaces was proven in \cite{dor}. More specifically, we have the following theorem:

\begin{teo}[\cite{dor}] Every $N$-polarized $K3$ surface is isomorphic to the minimal resolution of a hypersurface in projective space given as the zero locus of a homogeneous polynomial of the form 
\begin{equation}
F_{a,b,c,d}=wy^2z-4x^3z+3aw^2xz+bw^3z+cwxz^2-\frac{1}{2}(dw^2z^2+w^4).
\end{equation}

Here $a,b,c,d\in \mathbb{C}$, and $c\neq 0$ or $d\neq 0$.
\end{teo}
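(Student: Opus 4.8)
The plan is to recover $X$ from a distinguished elliptic fibration singled out by the polarization and then to bring its Weierstrass equation into normal form, following the strategy of \cite{dor}. Starting from the primitive embedding $N=H\oplus E_8\oplus E_7\hookrightarrow\mathrm{Pic}(X)$, one acts by the Weyl group of $\mathrm{Pic}(X)$ so that an isotropic vector in the summand $H$ becomes a nef class $F$ with $F^2=0$ and $F\cdot h>0$ for a pseudo-ample $h$; then $|F|$ defines an elliptic fibration $\pi\colon X\to\P^1$, and the complementary generator of $H$ furnishes a section $S$. The summand $E_8\oplus E_7$ then lies in the frame lattice $\langle F,S\rangle^{\perp}_{\mathrm{Pic}(X)}$, and since $E_8$ and $E_7$ are indecomposable root lattices they can only be accounted for there through reducible fibres: by Kodaira's classification this means a fibre of type $II^{*}$ (giving $E_8$) together with one of type $III^{*}$ (giving $E_7$). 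Their Euler numbers add up to $10+9=19<24$, so there is room for the rest of the discriminant and such a fibration does exist on every $N$-polarised $X$; conversely, a K3 surface carrying an elliptic fibration with one $II^{*}$ and one $III^{*}$ fibre is $N$-polarised, as one checks by producing a pseudo-ample class in the lattice spanned by $S$, $F$ and the fibre components.

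Next one passes to the Weierstrass model $y^2=x^3+A(t)\,x+B(t)$ with $A\in H^0(\P^1,\mathcal O(8))$ and $B\in H^0(\P^1,\mathcal O(12))$, places the $II^{*}$ fibre at $t=\infty$ and the $III^{*}$ fibre at $t=0$, and imposes on $A$, $B$ and the discriminant the vanishing orders dictated by Tate's algorithm at those two points. Only a handful of Weierstrass coefficients then survive, and normalising with the remaining coordinate freedom---rescaling $t$ and the Weierstrass rescaling $x\mapsto u^{2}x$, $y\mapsto u^{3}y$---yields a four-parameter normal form, which a direct change of homogeneous coordinates identifies with the minimal resolution of the quartic $\{F_{a,b,c,d}=0\}\subset\P^3$. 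The clause ``$c\neq 0$ or $d\neq 0$'' is forced: when $c=d=0$ the quartic contains the plane $w=0$ and becomes reducible, so this is exactly the condition excluding the degenerate stratum on which $X$ fails to be a K3 surface; the remaining open conditions (notably $\Delta\not\equiv 0$) are automatic once $X$ is assumed to be such a surface.

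The step I expect to be the main obstacle is the lattice-theoretic one: showing that the embedding of $N$ can always be moved, by reflections, so that $F$ is the class of an elliptic pencil \emph{with section}, and that $E_8\oplus E_7$ is realised \emph{precisely} by a single $II^{*}$ and a single $III^{*}$ fibre---with no Mordell--Weil torsion and no competing configuration of reducible fibres. This requires controlling the ample cone of $X$ and combining the Shioda--Tate formula with the Kodaira--N\'eron classification of singular fibres, and it is here that the hypothesis that $N$ is \emph{exactly} $H\oplus E_8\oplus E_7$ (and not a strictly larger lattice) is genuinely used; the subsequent Weierstrass normalisation, although lengthy, is then a finite and essentially mechanical computation. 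An alternative route through the Shioda--Inose structure attached to an $N$-polarised K3 and its associated abelian surface leads to the same normal form, but it merely relegates the same lattice-theoretic input to the construction of the Nikulin involution.
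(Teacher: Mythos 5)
The paper itself does not prove this theorem: it is quoted from \cite{dor} and used as a black box, so the only meaningful comparison is with Clingher--Doran's own argument, whose skeleton you have correctly identified (elliptic fibration with section extracted from the $H$ summand, $E_8\oplus E_7$ accounted for by reducible fibres, Weierstrass normalisation via Tate's algorithm). However, three of your specific claims are wrong, and the first one causes you to prove a strictly weaker statement. The second distinguished fibre is \emph{not} always of type $III^*$: as the paper itself records in Theorem \ref{fibers}, the fibre over $[1,0]$ is $III^*$ only when $c\neq 0$ and degenerates to $II^*$ when $c=0$. If you impose the exact Tate vanishing orders for $III^*$ at $t=0$, you lose the stratum $c=0$, $d\neq 0$, which the normal form must (and does) cover; the conditions have to be imposed as inequalities (``the fibre contains at least an $E_7$ configuration''). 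Relatedly, your assertion that the argument ``genuinely uses'' that $N$ is exactly $H\oplus E_8\oplus E_7$ and not a strictly larger lattice misreads the definition of a lattice polarization: only a primitive embedding $N\hookrightarrow \mathrm{Pic}(X)$ is required, $\mathrm{Pic}(X)$ may well be larger (every $H\oplus E_8\oplus E_8$-polarized surface is in particular $N$-polarized), and that is precisely why the $II^*+II^*$ configuration occurs inside the family. So there \emph{is} a ``competing configuration of reducible fibres,'' and ruling it out is not the goal --- accommodating it is.

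Your explanation of the clause ``$c\neq 0$ or $d\neq 0$'' is also incorrect. At $c=d=0$ the quartic does \emph{not} contain the plane $w=0$ (the monomial $-4x^3z$ survives) and is not reducible in general. The actual reason for the exclusion is that the singular point $[0:0:1:0]$ then ceases to be a rational double point: in the affine chart $z=1$ the quadratic part $w(cx-\tfrac{d}{2}w)$ of the local equation vanishes identically, leaving a cone over a plane cubic (a simple elliptic singularity) as the leading term, so the minimal resolution no longer has trivial canonical class and is not a K3 surface. None of this destroys the overall strategy, but as written your proof both misses the $c=0$ part of the family and misattributes the role of the open condition on $(c,d)$.
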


In this section we give an exposition of these results. They will be used in the following sections, when we consider moduli spaces of enhanced $K3$-surfaces.

Let $Y(a,b,c,d):=\{F_{a,b,c,d}=0\}\subset \mathbb{P}^3,$ and $X(a,b,c,d)$ its minimal resolution. Since it is smooth and complete, it is also projective.

\begin{prop}
If $c\neq 0$ or $d\neq 0$, then $X(a,b,c,d)$ is an $N$-polarized K3 surface.
\end{prop}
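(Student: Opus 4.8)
The plan is to verify directly the two defining properties of an $N$-polarized K3 surface for $X:=X(a,b,c,d)$: that $X$ is a K3 surface, and that it carries a primitive lattice embedding $\iota\colon N\hookrightarrow\mathrm{Pic}(X)$ whose image contains a quasi-ample class.

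\emph{$X$ is a K3 surface.} The decisive point is to control the singularities of the quartic $Y=Y(a,b,c,d)\subset\mathbb{P}^3$. Applying the Jacobian criterion to $F_{a,b,c,d}$, one checks that, precisely when $c\neq 0$ or $d\neq 0$, the singular locus of $Y$ is a finite set of points, each an ADE (rational double point) singularity; the hypothesis $(c,d)\neq(0,0)$ is needed here, since it is what prevents the singularity forced at $[0:0:1:0]$ (the base point of the fibration used below) from becoming worse than canonical or from acquiring a configuration larger than the expected one. Isolated singularities force $Y$ to be normal, reduced and irreducible. Granting this, adjunction for a quartic in $\mathbb{P}^3$ gives $\omega_Y\cong\mathcal{O}_Y$; the minimal resolution $\pi\colon X\to Y$ is crepant because rational double points are canonical Gorenstein singularities, so $\omega_X\cong\pi^{*}\omega_Y\cong\mathcal{O}_X$; and from the ideal-sheaf sequence of the hypersurface $Y$ on $\mathbb{P}^3$ together with the rationality of the singularities ($R^{i}\pi_{*}\mathcal{O}_X=0$ for $i>0$) one gets $H^1(X,\mathcal{O}_X)=H^1(Y,\mathcal{O}_Y)=H^2(\mathbb{P}^3,\mathcal{O}(-4))=0$. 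Hence $X$ is a smooth projective surface with trivial canonical bundle and vanishing $H^1$ of the structure sheaf, i.e. a K3 surface.

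\emph{The $N$-polarization.} The normal form $F_{a,b,c,d}$ is engineered so that the projection $[w:x:y:z]\mapsto[w:z]$ induces on $X$ a Jacobian elliptic fibration $X\to\mathbb{P}^1$ with section (see the Weierstrass presentation in \cite{dor}) whose generic fibre is a smooth curve of genus one and which has, for $(c,d)\neq(0,0)$, one fibre of Kodaira type $II^{*}$ and one of type $III^{*}$. The non-identity components of these two fibres span inside $\mathrm{Pic}(X)$ sublattices isometric to $E_8$ and $E_7$ respectively, while the classes $f$ of a general fibre and $s$ of the section, satisfying $f^2=0$, $f\cdot s=1$, $s^2=-2$, span a copy of the hyperbolic plane $H$. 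By the Shioda–Tate theorem these classes are independent and generate a rank-$17$ sublattice of $\mathrm{Pic}(X)$ isometric to $H\oplus E_8\oplus E_7=N$; since the Mordell–Weil group of the fibration is torsion-free the embedding is primitive, and a large multiple of $f$ plus $s$ is nef and big, hence quasi-ample. This yields the required lattice polarization.

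\emph{Main obstacle.} The heart of the argument is the uniform singularity analysis: one must verify that for every $(a,b,c,d)$ with $(c,d)\neq(0,0)$ the quartic $Y$ has only ADE singularities and that the elliptic fibration, its section, and the two distinguished fibres of types $II^{*}$ and $III^{*}$ all persist — in particular over the loci in parameter space where the Picard number jumps (there the Mordell–Weil rank grows and $\mathrm{Pic}(X)$ enlarges, which does not destroy the $N$-polarization, but one still has to exclude a degeneration of the $II^{*}$ or $III^{*}$ fibre and the appearance of non-canonical singularities). Once this is established, the adjunction and crepancy computations, the Shioda–Tate bookkeeping, and the primitivity check are routine.
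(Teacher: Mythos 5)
Your overall strategy is sound, and for the lattice part you take a genuinely different (though closely related) route from the proof the paper gives here. The paper's proof of this proposition works directly with the two distinguished singular points $O=[0:1:0:0]$ and $P=[0:0:1:0]$ of the quartic $Y(a,b,c,d)$: it computes via Milnor numbers and Hessian ranks that $O$ is always an $A_{11}$ singularity and that $P$ is $A_5$ if $c\neq 0$ and $E_6$ if $c=0$, and then assembles the $N$-polarization from the exceptional curves of their resolutions together with the proper transforms $L_1''$, $L_2''$ of the lines $x=w=0$, $z=w=0$ and a further rational curve $C$, deferring the bookkeeping to Clingher--Doran. You instead pass through the elliptic fibration induced by $[x:y:z:w]\mapsto[z:w]$ and invoke Shioda--Tate; this is precisely the picture the paper itself adopts later (Theorem on the standard fibration in Section 4), so the two arguments are two presentations of the same geometry. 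The K3 part (finite rational double points, crepant resolution, triviality of the canonical class) is the same in both, and your addition of the $H^1(\mathcal{O}_X)=0$ check via rationality of the singularities is a point the paper glosses over.

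There is, however, one concrete error you should fix: your claim that for every $(c,d)\neq(0,0)$ the fibration has one fibre of type $II^{*}$ and one of type $III^{*}$ is false on the locus $c=0$, $d\neq 0$. There the fibre over $[1:0]$ degenerates from $III^{*}$ to $II^{*}$ (this is exactly the $A_5$ versus $E_6$ dichotomy for the point $P$ in the paper's proof), so the trivial lattice of the fibration is $H\oplus E_8\oplus E_8$ rather than $H\oplus E_8\oplus E_7$. The conclusion survives, but you must say why: one restricts the resulting $M$-polarization, $M=H\oplus E_8\oplus E_8$, to $N$ along a primitive embedding $E_7\hookrightarrow E_8$ (primitive because $E_7$ admits no proper even overlattice). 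This distinction is not cosmetic --- it is the reason the paper later separates \emph{strictly} $N$-polarized surfaces ($c\neq 0$) from the rest. Relatedly, your ``main obstacle'' paragraph defers the uniform verification that the singularities stay ADE and that the special fibres have the asserted types; in the paper this is exactly what the Milnor-number and Hessian-rank computations at $O$ and $P$ accomplish, so as written your argument still leans on the very facts it identifies as the heart of the matter.
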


\begin{proof} The condition $c\neq 0$ or $d\neq 0$ implies that the singular locus of $Y(a,b,c,d)$ consists of a finite number of rational double points. Since rational double points admit crepant resolutions, 
and $K_{Y(a,b,c,d)}$ is trivial, $X(a,b,c,d)$ is a K3 surface.

Now we want to construct the N-polarization. First observe that for any choice of parameters $(a,b,c,d)$, the variety $Y(a,b,c,d)$ always contains the singular points $$O=[0:1:0:0] \text{ and } P=[0:0:1:0].$$ 

To understand structure of the exceptional divisors obtained by blowing-up the points $O$ and $P$, we study first their singularity-type.

$O$ is always an $A_{11}$ singularity. To see this, using a computer system, we can check that $$\mu(Q_{a,b,c,d}(x,1,z,w))=11, rk(Hess(Q_{a,b,c,d}(x,1,z,w))(0,0,0))=2.$$ 

For $P$, its singularity-type depends on $c$. If $c=0$, it is an $E_6$ singularity since $\mu(Q_{a,b,c,d}(x,y,1,w))=6$ and $rank(Hess(Q_{a,b,c,d}(x,y,1,w))(0,0,0))=1$. On the other hand, $c\neq 0$ implies that $P$ is an $A_5$ singularity since $\mu(Q_{a,b,c,d}(x,y,1,w))=5$ and $rank(Hess(Q_{a,b,c,d}(x,y,1,w))(0,0,0))=2$ 

To define the $N$-polarization, we will need the lines $L_1''$ and $L_2''$, which are the proper transforms of the lines $x=w=0$ and $z=w=0$.

For $c\neq 0$, the intersection of the plane $x=\frac{d}{2c}w$ with $Y(a,b,c,d)$ has two components: the intersection of the plane $x=w=0$ with $Y(a,b,c,d)$ (which was already mentioned), and a rational curve $C$. 

Further details can be found in \cite[pp. 4-5]{dor}.
\end{proof}

Let us define the parameter space $$\mathsf{T}=\{(a,b,c,d)\in \mathbb{C}^4|\, c\neq 0 \text{ or } d\neq 0\}.$$

\begin{prop}
Let $(a,b,c,d)\in \mathsf{T}$. Then, for any $t\in \mathbb{C}^*$, the K3 surfaces

$$X(a,b,c,d) \text{ and } X(t^2a,t^3b,t^5c,t^5d)$$ 

are isomorphic as $N$-polarized algebraic varieties.
\end{prop}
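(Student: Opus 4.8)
The plan is to exhibit an explicit isomorphism realizing the claimed equivalence, induced by a weighted rescaling of the projective coordinates. The polynomial $F_{a,b,c,d}$ is defined on $\mathbb{P}^3$ with coordinates $(w:x:y:z)$, and the assignments $a\mapsto t^2a$, $b\mapsto t^3b$, $c\mapsto t^5c$, $d\mapsto t^5d$ strongly suggest that $x$, $y$ should carry weights $2$, $3$ (matching the classical Weierstrass picture, where $g_2,g_3$ have weights $4,6$ and $x,y$ have weights $2,3$), with $w$ of weight $0$ and $z$ of some weight to be determined. First I would posit a transformation
\begin{equation*}
\phi_t:(w:x:y:z)\mapsto (w: t^{2}x: t^{3}y: t^{k}z)
\end{equation*}
for a suitable integer $k$, and substitute into $F_{a,b,c,d}$ to see which value of $k$ makes $\phi_t^*F_{a,b,c,d}$ equal to a scalar multiple of $F_{t^2a,t^3b,t^5c,t^5d}$. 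Tracking the monomials $wy^2z$, $x^3z$, $w^2xz$, $w^3z$, $wxz^2$, $w^2z^2$, $w^4$, one finds each term of $F_{a,b,c,d}$ (after the parameter rescaling on the right-hand side) picks up the same overall power of $t$ precisely when $k=5$; e.g. $wy^2z$ becomes $t^{6+5}wy^2z = t^{11}wy^2z$, $x^3z$ becomes $t^{6+5}x^3z$, $w^2(t^2a)(t^2x)(t^5z)=t^9\cdot t^{2}\,aw^2xz$ — so after pulling out a common factor $t^{11}$ the two polynomials agree. (The exact bookkeeping is the only computation, and it is routine.) This shows $\phi_t$ restricts to an isomorphism $Y(a,b,c,d)\xrightarrow{\sim} Y(t^2a,t^3b,t^5c,t^5d)$.

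Next I would promote this to the desired statement about the K3 surfaces themselves. Since $\phi_t$ is an automorphism of $\mathbb{P}^3$ (being linear and invertible for $t\in\mathbb{C}^*$) carrying one hypersurface isomorphically onto the other, it carries the singular locus to the singular locus and hence lifts, by the universal property of minimal resolutions of surfaces, to an isomorphism $X(a,b,c,d)\xrightarrow{\sim} X(t^2a,t^3b,t^5c,t^5d)$ of smooth projective surfaces. It remains to check that this isomorphism is compatible with the $N$-polarizations, i.e. that it sends the distinguished lattice-polarizing divisor classes to their counterparts. For this I would recall from the previous proposition how the $N$-polarization was constructed — via the exceptional divisors over the fixed singular points $O=[0:1:0:0]$ and $P=[0:0:1:0]$, together with the proper transforms of the lines $L_1''=\{x=w=0\}$, $L_2''=\{z=w=0\}$ and the rational curve $C$ cut out by $\{x=\tfrac{d}{2c}w\}$ — and observe that $\phi_t$ fixes $O$ and $P$, sends the line $\{x=w=0\}$ to itself and $\{z=w=0\}$ to itself, and sends the plane $\{x=\tfrac{d}{2c}w\}$ to $\{x=\tfrac{t^5 d}{2t^5 c}w\}=\{x=\tfrac{d}{2c}w\}$; hence all the geometric data defining the polarization is preserved, so the induced lattice embeddings into $H^2$ agree.

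The main obstacle I anticipate is not the construction of $\phi_t$ but the verification of compatibility with the polarizations at the level of cohomology: one must be sure that the particular basis of $N$-classes built in the resolution process is genuinely intrinsic enough that a coordinate automorphism fixing the relevant subvarieties acts trivially on it, rather than merely by some lattice automorphism. I would handle this by appealing directly to the explicit description of the polarizing divisors in \cite{dor} and noting that each one is individually preserved (as a subvariety of $\mathbb{P}^3$, or of the resolution) by $\phi_t$, which forces the induced map on $\mathrm{NS}$ to fix each generator; this upgrades the abstract isomorphism of K3 surfaces to an isomorphism of $N$-polarized surfaces, completing the proof.
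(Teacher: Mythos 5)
Your overall strategy---exhibit an explicit diagonal automorphism of $\mathbb{P}^3$ carrying $Y(a,b,c,d)$ onto the rescaled hypersurface, lift it to the minimal resolutions, and check that the points $O$, $P$, the lines $x=w=0$, $z=w=0$ and the plane $x=\tfrac{d}{2c}w$ defining the $N$-polarization are each preserved---is the right one, and it is essentially how the paper handles this (via Lemma \ref{isouseful}, which defers the isomorphism statement to \cite{dor}). But the one computation you call ``routine'' is wrong, and no map of the form you propose exists. With $(w:x:y:z)\mapsto(w:t^2x:t^3y:t^kz)$ and the parameter weights $2,3,5$ on $a,b,c$, the monomials of $F$ cannot all acquire the same power of $t$ for any $k$: the term $wy^2z$ picks up $t^{6+k}$ while $3aw^2xz$ picks up $t^{2+2+k}=t^{4+k}$ (your ``$t^9\cdot t^2$'' counts the factor coming from $a$ twice; the true exponent is $9$, not $11$), and $bw^3z$ picks up only $t^{3+k}$. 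Solving the full system of exponent equations forces, relative to $\deg w=0$, the weights $\deg x=1$, $\deg y=\tfrac32$, $\deg z=-3$; the half-integral weight on $y$ means there is no such automorphism with integer powers of $t$, and one must introduce a square root $q$ of $t$. The correct map is $[x,y,z,w]\mapsto[q^8x,\,q^9y,\,z,\,q^6w]$ with $q^2=t$, for which every monomial acquires exactly $q^{24}$; this is precisely the map of Lemma \ref{isouseful}.

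Two further remarks. First, the same exponent bookkeeping shows that the weight of $d$ must be $6$, not $5$ (consistent with the $\mathbb{C}^*$-action \eqref{action} and with Theorem \ref{coarse}, where the moduli space sits in $\mathbb{P}(2,3,5,6)$); with $t^5d$ as literally written in the statement no diagonal rescaling works at all, and your proof never actually tests the $d$-term. Second, your argument for compatibility with the polarization is the right kind of argument and does survive once the map is corrected: the corrected map still fixes $O=[0:1:0:0]$ and $P=[0:0:1:0]$, preserves the two coordinate lines, and sends the plane $x=\tfrac{d}{2c}w$ to $x=\tfrac{t^6d}{2t^5c}w$, which is the corresponding plane for the rescaled parameters (note it is \emph{not} the same plane, contrary to what you wrote). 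So the gap is entirely in the construction of $\phi_t$, not in the resolution or polarization steps.
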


\section{Algebraic De Rham  Cohomology and Gauss-Manin connection}
\label{chapalgcoh}
In this section we will study the algebraic cohomology of a subfamily of the Clingher-Doran family of $N$-polarized K3 surfaces. This family is obtained as the minimal resolution of singularities of the family of singular complex 
projective varieties
\[Y_{a,b,c,d}:F_{a,b,c,d}=y^2zw-4x^3z+3axzw^2+bzw^3+cxz^2w-\frac{1}{2}(dz^2w^2+w^4)=0\]
for complex parameters $a,b,c,d,$ such that $c\neq 0$ or $d\neq 0$. Such projective varieties have only double rational singularities, therefore, its singular cohomology coincides with the hypercohomology of its Du Bois complex, which we recall in \S \ref{sdubois}.

We will be interested mainly in K3 surfaces of the Clingher-Doran family with the additional property that its singular locus consists of the points $[0,1,0,0]$ and $[0,0,1,0]$. The parameter space for such varieties is \[\mathsf{B}:=\{(a,b,c,d)\in\mathbb{C}^4|\, \mathcal{D}_4(a,b,c,d)\neq 0\},\]
where $\mathcal{D}_4$ is a complex polynomial which will be explicitely stated later \cite[Comments after Remark 2.3.]{dor}. This family will be denoted by $\pi:\mathcal{Y}\rightarrow\mathsf{B}$. By using the Poincaré residue sequence for orbifolds, we are able to use the work of \cite{humbert} to find five sections of the cohomology bundle $\mathcal{H}^2(\mathcal{Y}/\mathsf{B}):=\mathbb{R}^2\pi_*\tilde{\Omega}^{\bullet}_{\mathcal{Y}/\mathsf{B}}$, which are linearly independent on an open subset of $\mathsf{B}$. The pullback of these sections to the cohomology bundle $\mathcal{H}^2(\mathcal{X}/\mathsf{B}):=\mathbb{R}^2\pi_*\Omega^{\bullet}_{\mathcal{X}/\mathsf{B}}$, where $\mathcal{X}\rightarrow \mathsf{B}$ is the family obtained by fiberwise resolution of singularities of the family $\pi:\mathcal{Y}\rightarrow\mathsf{B}$, gives us linearly independent sections which do not belong to the image of the polarization. These sections will be used in the next section to construct a quasi-affine patch $\mathsf{O}$ of the moduli space $\mathsf{T}$ of enhanced $N$-polarized K3 surfaces.

Since the needed results of \cite{humbert} are in the context of \textit{tame polynomials} as developed in \cite{hodge1}, we explain the main parts of this theory, and make some comments about the relevant case of tame polynomials with null-dicriminant. 

\subsection{On $V$-varieties and their cohomologies}\label{sdubois}

The following theorem states the generalization of the de Rham complex to singular varieties done by Du Bois.

\begin{teo}
Let $Y$ be a complex scheme of finite type. Then, there exists a unique $\underline{\Omega}^{\bullet}_Y\in Ob(D_{filt}(Y))$, called the \textbf{Du Bois complex}, such that: 

\begin{enumerate}
\item $\underline{\Omega}_Y^{\bullet}\cong_{qis} \mathbb{C}_Y$, i.e., $\underline{\Omega}^{\bullet}_Y$ is a resolution of the constant sheaf $\mathbb{C}_Y$ on $Y$;
\item If $Y$ is proper, then $\underline{\Omega}^{\bullet}_Y\in Ob(D_{filt,coh}^b(Y))$, and there is a spectral sequence degenerating at $E_1$ and abutting to the singular cohomology of $Y$ such that the resulting filtration coincides with Deligne's Hodge filtration:

$$ E_1^{pq}=H^q(Y,\underline{\Omega}_Y^p)\Rightarrow H^{p+q}(Y,\mathbb{C}).$$
In particular, $$Gr^p_FH^{p+q}(Y,\mathbb{C})\cong H^q(Y,\underline{\Omega}_Y^p);$$

\item If $\rho:X\rightarrow Y$ is a resolution of singularities, then $\underline{\Omega}_Y^{\text{dim}Y}\cong_{qis}R\rho_*\omega_X$;
\end{enumerate}
\end{teo}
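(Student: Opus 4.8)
The plan is to realise $\underline{\Omega}^\bullet_Y$ through a \emph{cubical hyperresolution} of $Y$ and to deduce its three properties from cohomological descent together with Deligne's mixed Hodge theory. Using Hironaka's resolution of singularities and the combinatorial construction of Guillén, Navarro Aznar, Pascual and Puerta (GNPP), one first produces an augmented cubical scheme $\epsilon\colon X_\bullet\to Y$ in which every component $X_i$ is smooth and $\epsilon$ is of cohomological descent. Equip each de Rham complex $\Omega^\bullet_{X_i}$ with its stupid (bête) filtration $\sigma^{\ge p}\Omega^\bullet_{X_i}=\Omega^{\ge p}_{X_i}$, assemble these into a filtered complex over $X_\bullet$, and set
\[
\underline{\Omega}^\bullet_Y:=R\epsilon_*\,\Omega^\bullet_{X_\bullet},\qquad F^p\underline{\Omega}^\bullet_Y:=R\epsilon_*\,\sigma^{\ge p}\Omega^\bullet_{X_\bullet},
\]
an object of $D_{filt}(Y)$ whose graded pieces are $Gr^p_F\underline{\Omega}^\bullet_Y\cong\underline{\Omega}^p_Y[-p]$ with $\underline{\Omega}^p_Y:=R\epsilon_*\Omega^p_{X_\bullet}$, built out of coherent sheaves.

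For the uniqueness statement the essential input is the formalism of cohomological descent of GNPP: any two cubical hyperresolutions of $Y$ are dominated by a common one, and $R\epsilon_*\Omega^\bullet_{X_\bullet}$ is invariant (up to canonical filtered quasi-isomorphism) under such refinements, so the object of $D_{filt}(Y)$ just constructed does not depend on the chosen hyperresolution. Property (1) is then immediate: the holomorphic Poincaré lemma gives $\Omega^\bullet_{X_i}\cong_{qis}\mathbb{C}_{X_i}$ for each $i$, hence $\Omega^\bullet_{X_\bullet}\cong_{qis}\mathbb{C}_{X_\bullet}$, and cohomological descent yields $R\epsilon_*\mathbb{C}_{X_\bullet}\cong_{qis}\mathbb{C}_Y$.

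For Property (2), if $Y$ is proper then each structure map $X_i\to Y$ is proper, so $R\epsilon_*$ preserves coherence and $\underline{\Omega}^\bullet_Y$ lies in $D^b_{filt,coh}(Y)$; the asserted spectral sequence is simply the hypercohomology spectral sequence of the filtered complex $(\underline{\Omega}^\bullet_Y,F)$, with $E_1^{pq}=H^q(Y,\underline{\Omega}^p_Y)$ and abutment $H^{p+q}(Y,\mathbb{C})$ by (1). The delicate point is the $E_1$-degeneration and the identification of the induced filtration with Deligne's Hodge filtration: here one compares $(\underline{\Omega}^\bullet_Y,F)$ with the filtered de Rham complex that Deligne attaches to the \emph{same} simplicial resolution in his construction of the mixed Hodge structure on $H^\bullet(Y)$; the two agree on $F$-graded pieces, so $E_1$-degeneration is exactly the $F$-strictness built into Deligne's theory, proven by the standard reduction along the weight filtration to the smooth projective case where it is classical Hodge theory. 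Property (3) is obtained by choosing a hyperresolution whose initial stage is the given resolution $\rho\colon X\to Y$: since all further components have dimension strictly less than $n:=\dim Y$, we have $\Omega^n_{X_i}=0$ for $i\ge 1$, the associated spectral sequence in the simplicial direction collapses, and one reads off $\underline{\Omega}^n_Y\cong_{qis}R\rho_*\Omega^n_X=R\rho_*\omega_X$.

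The main obstacle is Property (2): exhibiting the comparison with Deligne's Hodge-filtered complex and deducing $E_1$-degeneration genuinely imports mixed Hodge theory---weight filtrations and strictness---rather than anything formal, and the very existence of cubical hyperresolutions relies on Hironaka. In a write-up I would cite Du Bois's original article together with the GNPP monograph for these two inputs and present only the reductions indicated above.
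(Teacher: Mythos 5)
The paper offers no proof of this statement: it is recalled as Du Bois's theorem (the subsequent results in \S 3.1 are likewise quoted with references to Steenbrink and Deligne), so there is nothing in the text to compare your argument against line by line. Your outline is the standard proof from the literature --- construction of $\underline{\Omega}^\bullet_Y$ via a cubical (or simplicial) hyperresolution with the b\^ete filtration, uniqueness by domination of hyperresolutions and cohomological descent, property (1) from the holomorphic Poincar\'e lemma plus descent, property (2) from $F$-strictness in Deligne's mixed Hodge theory, and property (3) by taking a hyperresolution whose initial stage is the given resolution so that the higher components, having strictly smaller dimension, contribute nothing in top degree --- and it is correctly assembled; citing Du Bois's original article and the Guill\'en--Navarro Aznar--Pascual--Puerta monograph for the two nonformal inputs, as you propose, is exactly what is called for.
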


In our case, $Y_{a,b,c,d}$ is a $V$-variety in the following sense.

\begin{df}
A separated, complex scheme of finite type $Y$ is called a $V$-variety if, locally for the analytic topology, $Y$ is isomorphic to a quotient of a smooth  complex scheme $X$ by a finite group of automorphisms of $X$.
\end{df}

For a $V$-variety, the Du Bois complex has an explicit description which is due to Steenbrink. 

\begin{teo}\label{steen}
Let $Y$ be a $V$-variety, and $\Sigma$ its singular locus. Let $j:Y-\Sigma\rightarrow Y$ be the inclusion, and $\Omega_{Y-\Sigma}^{\bullet}$ be the de Rham complex of $Y-\Sigma$. Then, $\tilde{\Omega}_Y^{\bullet}:=j_{*}\Omega_{Y-\Sigma}^{\bullet}$ is quasi-isomorphic to the Du Bois complex $\underline{\Omega}_{Y}^{\bullet}$ of $Y$.
\end{teo}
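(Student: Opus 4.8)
The quasi-isomorphism type of a filtered complex in $D_{filt}(Y)$ can be tested locally for the analytic topology, so the plan is to reduce immediately to the case $Y=X/G$ with $X$ a smooth complex manifold and $G$ a finite group of automorphisms; write $\pi:X\to Y$ for the quotient and note that $Y$ is normal, so its singular locus $\Sigma$ (the image of the locus of non-trivial stabilizers) has codimension $\geq 2$. First I would recall how $\underline{\Omega}_Y^{\bullet}$ is produced from a hyperresolution: choose a $G$-equivariant resolution $\rho:\tilde{Y}\to Y$ which is an isomorphism over $Y\setminus\Sigma$ and for which $E:=\rho^{-1}(\Sigma)_{\mathrm{red}}$ is a simple normal crossing divisor; since $\Sigma$ is again a $V$-variety, iterate the construction over $\Sigma$, obtaining a cubical hyperresolution of $Y$ whose associated total complex, with its naive (stupid) filtration, represents $(\underline{\Omega}_Y^{\bullet},F)$.

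The core of the argument is the computation of the graded pieces $Gr^p_F\underline{\Omega}_Y^{\bullet}\cong\underline{\Omega}_Y^p$. Feeding the hyperresolution into the construction, $\underline{\Omega}_Y^p$ is represented by the total complex built from $R\rho_*\Omega^p_{\tilde{Y}}(\log E)$ together with the analogous data over $\Sigma$. Two facts collapse this complex to a single sheaf in degree zero: (i) the vanishing $R^q\rho_*\Omega^p_{\tilde{Y}}(\log E)=0$ for $q>0$, which holds because $Y$ has quotient, hence rational, singularities — a Grauert–Riemenschneider-type statement valid for $V$-varieties; and (ii) the extension identity $\rho_*\Omega^p_{\tilde{Y}}(\log E)\cong j_*\Omega^p_{Y\setminus\Sigma}$, again a property of quotient singularities (every $p$-form on the smooth locus extends, with at worst logarithmic poles, across any log resolution; equivalently it is detected as a $G$-invariant form on the cover $X$). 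The same two facts apply with $\Sigma$ in place of $Y$, so the cubical complex degenerates and one obtains $\underline{\Omega}_Y^p\cong j_*\Omega^p_{Y\setminus\Sigma}$, an honest coherent sheaf with no higher cohomology. In particular $\tilde{\Omega}_Y^{\bullet}=j_*\Omega^{\bullet}_{Y\setminus\Sigma}$ and $\underline{\Omega}_Y^{\bullet}$ have, termwise, the same cohomology sheaves.

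To upgrade this to a filtered quasi-isomorphism, I would compare $(\tilde{\Omega}_Y^{\bullet},F_{\mathrm{naive}})$ with $\underline{\Omega}_Y^{\bullet}$ through the restriction morphism: since $Y\setminus\Sigma$ is smooth, the defining properties of the Du Bois complex give a natural map $\underline{\Omega}_Y^{\bullet}\to Rj_*\Omega^{\bullet}_{Y\setminus\Sigma}$ in $D_{filt}(Y)$, and by the codimension-$\geq 2$ vanishing together with (i)–(ii) this map factors through the subcomplex $j_*\Omega^{\bullet}_{Y\setminus\Sigma}=\tilde{\Omega}_Y^{\bullet}$; on graded pieces it is precisely the isomorphism $\underline{\Omega}_Y^p\xrightarrow{\sim}j_*\Omega^p_{Y\setminus\Sigma}$ just established, hence it is a filtered quasi-isomorphism. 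Compatibility with property (3) of the Du Bois complex, namely $\underline{\Omega}_Y^{\dim Y}\cong_{qis}R\rho_*\omega_{\tilde{Y}}$, is then automatic from (i)–(ii) in the top degree.

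The main obstacle is establishing the vanishing (i) and the extension identity (ii) for quotient singularities: this is precisely where the hypothesis that $Y$ is a $V$-variety, rather than an arbitrary scheme, is used in an essential way, and it rests on the structure theory of quotient singularities — invariant differential forms on a smooth $G$-cover, the toroidal/equivariant resolution of such singularities, and the generalized Grauert–Riemenschneider vanishing. A secondary technical point is the bookkeeping in iterating the hyperresolution over the stratum $\Sigma$, which I would handle by induction on $\dim Y$, using that every stratum arising in the process is again a $V$-variety.
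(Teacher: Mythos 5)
The paper offers no argument here at all: its ``proof'' is a bare citation to Steenbrink, so there is nothing in-text to match your write-up against. Your sketch follows a legitimate but noticeably heavier route than the classical one. Steenbrink's argument for $V$-manifolds is more elementary: locally $Y=X/G$ with $X$ smooth and $G$ finite, and one identifies $\tilde{\Omega}_Y^p=j_*\Omega^p_{Y\setminus\Sigma}$ directly with the sheaf of invariants $(\pi_*\Omega_X^p)^G$ (invariant forms descend off the branch locus and extend by coherence and normality). Since taking $G$-invariants is exact in characteristic zero, $(\pi_*\Omega_X^{\bullet})^G$ is a resolution of $\mathbb{C}_Y$, degeneration of the Hodge-to-de Rham spectral sequence is inherited from $X$, and the identification with $\underline{\Omega}_Y^{\bullet}$ then follows from the uniqueness and functoriality properties of the Du Bois complex. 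This avoids equivariant log resolutions and cubical hyperresolutions altogether, which is what your approach spends most of its effort organizing.

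There is also one step in your proposal whose stated justification would fail. You assert the vanishing $R^q\rho_*\Omega^p_{\tilde{Y}}(\log E)=0$ for $q>0$ ``because $Y$ has quotient, hence rational, singularities.'' Rationality of singularities only controls the cases $p=0$ and, via Grauert--Riemenschneider duality, $p=\dim Y$; for intermediate $p$ the vanishing of higher direct images of (log) $p$-forms is strictly stronger and is false for general rational singularities. For quotient singularities the statement is true, but its proof again goes through invariant forms on the smooth cover --- i.e.\ through exactly the quotient structure that the more direct argument exploits from the start --- not through rationality. So either repair that step by invoking the quotient structure explicitly, or shortcut the whole construction and argue with $(\pi_*\Omega_X^{\bullet})^G$ as above. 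The iteration of the hyperresolution over $\Sigma$ is fine in principle ($\Sigma$ is a finite union of quotients of smooth fixed loci, hence again a $V$-variety), but it is bookkeeping you do not need.
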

\begin{proof}
\cite{steenbrink}.
\end{proof}

From now on, let $Y:=Y_{a,b,c,d}$ with $(a,b,c,d)\in \mathsf{B}$. and let $X:=X_{a,b,c,d}\rightarrow Y_{a,b,c,d}$ be its minimal resolution of singularities. They define a morphism $\rho:\mathsf{X}\rightarrow \mathsf{Y}$ of $\mathsf{B}$-schemes.

\begin{prop}
The singular locus of $Y$ is $\Sigma=\{[0,1,0,0],[0,0,1,0]\}$ if, and only if, $\mathcal{D}_4(a,b,c,d)\neq 0$. In such a case, both singularities are double rational singularities, and, in particular, $Y$ is an orbifold.

Here,
\begin{align*}
\mathcal{D}_{4}(a, b, c, d)=&-2^{5} 3^{6} a^{6} b c^{3}+2^{6} 3^{6} a^{3} b^{3} c^{3}-2^{5} 3^{6} b^{5} c^{3}-2^{4} 3^{5} a^{5} c^{4}+2^{4} 3^{5} 5^{2} a^{2} b^{2} c^{4}\\
&+2 \cdot 3^{3} 5^{4} a b c^{5}+
+5^{5} c^{6}-2^{4} 3^{7} a^{7} c^{2} d+2^{5} 3^{7} a^{4} b^{2} c^{2} d-2^{4} 3^{7} a b^{4} c^{2} d\\
&+2^{3} 3^{5} 5 \cdot 19 a^{3} b c^{3} d+2^{3} 3^{5} 5^{2} b^{3} c^{3} d+3^{3} 5^{3} 11 a^{2} c^{4} d+2^{3} 3^{5} 37 a^{4} c^{2} d^{2}\\
&+2^{3} 3^{5} 5 \cdot 7 a b^{2} c^{2} d^{2}-2^{3} 3^{3} 5^{3} b c^{3} d^{2}+2^{4} 3^{6} a^{6} d^{3}-2^{5} 3^{6} a^{3} b^{2} d^{3}\\
&+2^{4} 3^{6} b^{4} d^{3}-2^{6} 3^{6} a^{2} b c d^{3}-2^{3} 3^{5} 5^{2} a c^{2} d^{3}
-2^{5} 3^{6} a^{3} d^{4}-2^{5} 3^{6} b^{2} d^{4}\\
&+2^{4} 3^{6} d^{5}.
\end{align*}
\end{prop}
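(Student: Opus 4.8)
The plan is to reduce the statement to a finite, explicit computation with the polynomial $F_{a,b,c,d}$ and then verify it symbolically. First I would fix notation: write $Q_{a,b,c,d}$ for the polynomial whose zero locus in $\mathbb{P}^3$ is $Y_{a,b,c,d}$, and work in the four affine charts $\{w\neq 0\}$, $\{z\neq 0\}$, $\{x\neq 0\}$, $\{y\neq 0\}$. The singular locus of $Y$ is cut out in $\mathbb{P}^3$ by the vanishing of $F_{a,b,c,d}$ together with all its partial derivatives $\partial_x F, \partial_y F, \partial_z F, \partial_w F$; by homogeneity (Euler's identity) three of these five equations already force the fifth on the hypersurface, so $\Sigma$ is the projective variety defined by the Jacobian ideal $J = (\partial_x F, \partial_y F, \partial_z F, \partial_w F)$. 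The two points $O=[0{:}1{:}0{:}0]$ and $P=[0{:}0{:}1{:}0]$ always lie in $\Sigma$, as recorded in the proof of the earlier proposition, so the content of the claim is the \emph{converse}: $\Sigma = \{O,P\}$ \emph{precisely} when $\mathcal{D}_4(a,b,c,d)\neq 0$.

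The key step is an elimination-theory computation. I would compute, using a Gr\"obner basis / resultant package, the ideal describing $\Sigma \smallsetminus \{O,P\}$ — concretely, saturate $J + (F)$ with respect to the homogeneous ideals of $O$ and $P$, or equivalently work in the affine charts and remove the known components. The resulting "residual discriminant" is a polynomial condition on $(a,b,c,d)$, and the assertion is that, up to a nonzero constant and a monomial factor that is irrelevant on $\mathsf{B}$ (or more precisely after the appropriate saturation), this polynomial is exactly $\mathcal{D}_4$. So one direction is: if $\mathcal{D}_4 \neq 0$ then the elimination ideal has no further solutions, hence $\Sigma = \{O,P\}$. For the other direction one checks that on $\{\mathcal{D}_4 = 0\}$ (generically) an extra singular point appears, or a rational double point degenerates — this is exactly the degeneration locus identified in \cite{dor}, so I would cite \cite[Comments after Remark 2.3]{dor} for the precise identification of $\mathcal{D}_4$ and only re-derive what is needed.

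Once $\Sigma = \{O,P\}$ is established, the remaining two assertions are local. At $O$ and at $P$ one passes to the affine chart ($\{y\neq 0\}$ for $O$, $\{z\neq 0\}$ for $P$) and inspects the local analytic type of the hypersurface singularity. From the Milnor-number and Hessian-rank computations already quoted in the earlier proof — $O$ is always an $A_{11}$ point, and $P$ is $E_6$ or $A_5$ according as $c=0$ or $c\neq 0$ — both singularities are among the ADE (simple, Du Val) singularities, which are exactly the rational double points. Since a surface with only rational double points is a $V$-variety (each such point is analytically a quotient $\mathbb{C}^2/G$ for a finite subgroup $G\subset \mathsf{SL}_2(\mathbb{C})$), $Y$ is an orbifold; this gives the ``in particular'' clause and is what licenses the use of Steenbrink's Theorem~\ref{steen} for $\tilde\Omega^\bullet_Y$ in the next subsection.

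I expect the main obstacle to be the elimination computation itself: the polynomial $F_{a,b,c,d}$ has parameters, so the relevant resultants/Gr\"obner bases live over $\mathbb{Q}[a,b,c,d]$, and extracting $\mathcal{D}_4$ cleanly — correctly handling the saturation at $O$ and $P$, and discarding the spurious monomial factors coming from the chart choices and from the $c\neq 0$ / $d\neq 0$ split — requires care rather than deep ideas. A secondary subtlety is making the ``only if'' direction airtight: one must rule out that on $\{\mathcal{D}_4=0\}$ the locus $\Sigma$ could still be just $\{O,P\}$ but with a non-rational-double-point, i.e. one should track \emph{which} feature degenerates on $\mathcal{D}_4=0$; deferring this to the cited work of \cite{dor} is the cleanest route. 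Everything else is a routine, if lengthy, symbolic verification, which I would carry out on a computer and report the outcome of.
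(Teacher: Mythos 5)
Your proposal is correct and, at bottom, rests on the same foundation as the paper: the paper's entire proof of this proposition is the citation \cite[Theorem 2.2 and comments after Remark 2.3]{dor}, and your sketch also defers the identification of $\mathcal{D}_4$ and the ``only if'' direction to that same reference. What you add — setting up the Jacobian ideal, saturating at $O$ and $P$ to extract the residual discriminant, and then using the ADE classification at $O$ and $P$ to get rational double points and hence the orbifold/$V$-variety structure needed for Theorem \ref{steen} — is a faithful and more explicit account of what the citation contains, and the logic is sound. One small imprecision: Euler's identity gives $\deg(F)\cdot F=x\partial_xF+y\partial_yF+z\partial_zF+w\partial_wF$, so it is the equation $F=0$ that is redundant given the four partials (in characteristic zero), not one of the partials given the other three on the hypersurface; this does not affect your argument, since either way $\Sigma$ is cut out by the Jacobian ideal. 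You are also right to flag the ``only if'' subtlety — that $\mathcal{D}_4=0$ must produce genuinely new singular points rather than merely worsening the types at $O$ and $P$ — and deferring that to \cite{dor} is exactly what the paper does.
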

\begin{proof}
\cite[Theorem 2.2. and comments after Remark 2.3.]{dor}.
\end{proof}
Let us consider the $V$-divisor:\[Z=\{[x:y:z:w]\in Y|\, z=0\}.\]

The affine variety $U:=Y\backslash Z$ is an affine $V$-variety. Since we are working with $V$-varieties and $V$-divisors, we have a Poincaré residue sequence
\[0\rightarrow \tilde{\Omega}_Y^{\bullet}\rightarrow \tilde{\Omega}_Y^{\bullet}(\text{log}\, Z)\xrightarrow{\text{res}} j_{*}\tilde{\Omega}_Z^{\bullet -1}\rightarrow 0.
\]
It induces a long exact sequence in hypercohomology

\begin{equation}\label{poinca}\cdots\rightarrow\mathbb{H}^2(Y,\tilde{\Omega}_Y^{\bullet})\rightarrow \mathbb{H}^2(Y,\tilde{\Omega}_Y^{\bullet}(\text{log}\, Z))\xrightarrow{\text{res}} \mathbb{H}^2(Y,j_{*}\tilde{\Omega}_Z^{\bullet-1})\rightarrow \cdots.\end{equation}

Sequence \eqref{poinca} gives rise to the isomorphic long exact sequence

\begin{equation}\label{residue}\cdots\rightarrow H^2(Y,\mathbb{C})\rightarrow H^2(U,\mathbb{C})\xrightarrow{\text{res}} H^1(Z,\mathbb{C})\rightarrow \cdots.\end{equation}

Therefore, elements from $H^2(U,\mathbb{C})$ without residue are induced by elements in $H^2(Y,\mathbb{C})$. By the following theorem (Theorem \ref{deligne}), we can map monomorphically these elements into $H^2(X,\mathbb{C})$. In this way we will produce a basis of $H^2(X,\mathbb{C})_{\iota}$. To check that these elements do not belong to $\iota(N)$ we will verify that the Gauss-Manin connection does not vanish at them.

\begin{teo}\label{deligne}
If $Y$ is a complete complex $V$-variety, then the canonical Hodge structure on $H^k(Y,\mathbb{C})$ is pure of weight $k$, for all $k\geq 0$. If $\rho:X\rightarrow Y$ is a resolution of singularities for $X$, then $\rho^*:H^k(Y,\mathbb{C})\rightarrow H^k(X,\mathbb{C})$ is injective.
\end{teo}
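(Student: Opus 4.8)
The plan is to treat the two assertions in turn, proving the injectivity of $\rho^{*}$ first, since it needs nothing about Hodge theory and then feeds into the purity statement.

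\textbf{Injectivity of $\rho^{*}$.} A $V$-variety has only quotient singularities, so, after linearising the finite local automorphism group at a fixed point, $Y$ is analytically locally of the form $\C^{n}/G$ with $G\subset\GL_{n}(\C)$ finite; consequently $Y$ is a rational homology manifold. Being moreover complete, $Y$ satisfies Poincaré duality with $\Q$-coefficients, and so does the smooth projective $X$, which has the same complex dimension $n$ as $Y$ since $\rho$ is birational. I would use these dualities to define the Gysin pushforward $\rho_{*}\colon H^{k}(X,\Q)\to H^{k}(Y,\Q)$ as the dual of the proper pushforward in homology. The projection formula $\rho_{*}(\rho^{*}\beta\cup\alpha)=\beta\cup\rho_{*}\alpha$, applied with $\alpha=1\in H^{0}(X,\Q)$ and using $\rho_{*}1=1$ (because $\deg\rho=1$), yields $\rho_{*}\circ\rho^{*}=\mathrm{id}$ on $H^{k}(Y,\Q)$. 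Hence $\rho^{*}$ is injective, and tensoring with $\C$ gives the injectivity statement of the theorem.

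\textbf{Purity.} By Theorem \ref{steen} and the Du Bois package recalled above, the canonical Hodge structure on $H^{k}(Y,\C)$ is Deligne's mixed Hodge structure, and its underlying $\Q$-structure $H^{k}(Y,\Q)$ carries a functorial weight filtration $W$. I would bound the weights from both sides. Since $Y$ is complete, $H^{k}(Y,\Q)$ has weights $\le k$, i.e. $W_{k}H^{k}(Y,\Q)=H^{k}(Y,\Q)$. On the other hand $X$ is smooth and projective, so $H^{k}(X,\Q)$ is pure of weight $k$; since $\rho^{*}$ is a morphism of mixed Hodge structures it carries $W_{k-1}H^{k}(Y,\Q)$ into $W_{k-1}H^{k}(X,\Q)=0$, and injectivity from the previous step forces $W_{k-1}H^{k}(Y,\Q)=0$. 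Combining the two bounds, $\mathrm{Gr}^{W}_{m}H^{k}(Y,\Q)=0$ for $m\ne k$, so $H^{k}(Y,\C)$ is pure of weight $k$. Alternatively, purity can be obtained intrinsically in the style of Steenbrink: in a chart $\pi\colon V'\to V'/G\cong Y$ one has $\tilde{\Omega}_{Y}^{p}=(\pi_{*}\Omega_{V'}^{p})^{G}$, and since taking $G$-invariants is exact in characteristic zero, the Hodge--de Rham spectral sequence of $Y$ degenerates at $E_{1}$ and the Hodge symmetry $\overline{H^{q}(Y,\tilde{\Omega}_{Y}^{p})}\cong H^{p}(Y,\tilde{\Omega}_{Y}^{q})$ holds, which together yield a pure Hodge structure of weight $k$.

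\textbf{Main obstacle.} The weight bookkeeping is formal; the genuine content lies in the input facts — that quotient singularities are $\Q$-homology-manifold points (linearisation of finite group actions), that complete complex varieties have weights $\le k$ in degree $k$, and, for the intrinsic route, the $E_{1}$-degeneration and Hodge symmetry for the invariant-forms complex $\tilde{\Omega}_{Y}^{\bullet}$. All of these are classical (Deligne, Du Bois, Steenbrink), so in practice the proof is an assembly; the one point that needs care is ensuring that the Hodge structure produced by the Du Bois complex is functorial enough that $\rho^{*}$ is a genuine morphism of mixed Hodge structures, which is exactly what Theorem \ref{steen} and the Du Bois theorem are there to guarantee.
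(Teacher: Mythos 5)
Your proposal is correct. The paper offers no argument of its own here — it simply cites Deligne, \emph{Th\'eorie de Hodge III}, Th\'eor\`eme 8.2.4 — and what you have written (quotient singularities make $Y$ a $\Q$-homology manifold, Poincar\'e duality gives a Gysin map with $\rho_*\circ\rho^*=\mathrm{id}$, hence injectivity, and then the weight sandwich $W_kH^k(Y)=H^k(Y)$ versus $W_{k-1}H^k(Y)\hookrightarrow W_{k-1}H^k(X)=0$ gives purity) is essentially Deligne's own proof of that theorem, so there is nothing to fault.
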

\begin{proof}
\cite[Théorème 8.2.4.]{deligne}.
\end{proof}

\subsection{Cohomology of a deformation}\label{defor}

The following theorem will allow us to compute the cohomology $H^2(U,\mathbb{C})$ of the affine singular variety $U$ by considering a smooth deformation.

\begin{teo} \label{verdier}
Let $X$ and $Y$ be complex varieties, and $\pi:X\rightarrow Y$ a morphism. There is a Zariski open subset $U$ of $Y$, dense in $Y$, such that $\pi|_U:U\rightarrow Y$ is a locally trivial topological fibration for the analytic topology.
\end{teo}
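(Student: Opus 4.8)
The statement is the classical \emph{generic local triviality} theorem (Verdier). The plan is to deduce it from the existence of algebraic Whitney stratifications compatible with a morphism together with Thom's first isotopy lemma; the only genuine difficulty is that $\pi$ need not be proper, which forces a preliminary compactification.

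First I would reduce to the case $Y$ irreducible, since it suffices to produce a dense Zariski open over each irreducible component. Then I would choose a relative compactification: factor $\pi$ as $X \hookrightarrow \bar X \xrightarrow{\bar\pi} Y$ where $\bar X$ is a complex variety, the first map is an open immersion, and $\bar\pi$ is proper. Concretely, cover $X$ by affines, embed each in $\mathbb{A}^n \times Y \subset \mathbb{P}^n \times Y$, take closures, and patch (or invoke Nagata compactification). Set $D := \bar X \setminus X$, a closed subvariety of $\bar X$.

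Next, invoke Verdier's theorem on Whitney stratifications of a morphism: there exist a finite algebraic Whitney stratification $\mathcal{S}$ of $\bar X$ for which $D$ is a union of strata, a finite stratification $\mathcal{T}$ of $Y$, and such that $\bar\pi$ maps each $S \in \mathcal{S}$ submersively onto some stratum of $\mathcal{T}$. Since $Y$ is irreducible, $\mathcal{T}$ has a unique open dense stratum $U$, which is a dense Zariski open of $Y$; shrinking it into the smooth locus of $Y$ if one wishes changes nothing. Now restrict over $U$. Any stratum $S$ of $\mathcal{S}$ meeting $\bar\pi^{-1}(U)$ has $\bar\pi(S)$ a stratum of $\mathcal{T}$ meeting $U$, hence equal to $U$; so $\bar\pi^{-1}(U)$ is a union of strata of $\mathcal{S}$, each mapped submersively onto $U$, and these strata form a Whitney stratification of $\bar\pi^{-1}(U)$. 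Because $\bar\pi$ is proper, Thom's first isotopy lemma applies and exhibits $\bar\pi : \bar\pi^{-1}(U) \to U$ as a stratum-preserving locally trivial topological fibration. Since $X = \bar X \setminus D$ is a union of strata of $\mathcal{S}$, the local trivializations restrict to the complement of $D$, showing that $\pi^{-1}(U) = \bar\pi^{-1}(U) \setminus D \to U$ is a locally trivial topological fibration, as claimed.

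The main obstacle is exactly the non-properness of $\pi$: Thom's isotopy lemma cannot be applied to $\pi$ directly, and the compactification $\bar X$ must be chosen so that the boundary $D$ becomes a union of strata of a \emph{single} Whitney stratification compatible with $\bar\pi$. This compatibility — controlling $X$, $\bar X \setminus X$, and the map simultaneously — is precisely the content of Verdier's stratification theorem; once it is quoted, the remainder of the argument is formal.
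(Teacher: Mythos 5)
Your proposal is correct, but the comparison here is lopsided: the paper does not prove this statement at all --- its ``proof'' consists solely of the citation \cite[Corollaire 5.1]{verdier}. What you have written is a sound outline of the standard argument, which is essentially the one carried out in that cited source: reduce to $Y$ irreducible, choose a relative compactification $X\hookrightarrow \bar X\xrightarrow{\bar\pi} Y$ (Nagata), invoke the stratification theorem for a morphism so that $D=\bar X\setminus X$ is a union of strata and $\bar\pi$ maps strata submersively onto strata, and then apply the first isotopy lemma over the unique dense open stratum $U$ of $Y$, observing that the stratum-preserving trivializations restrict to $X=\bar X\setminus D$. You correctly isolate the two real points: properness must be manufactured by compactification, and the boundary must be made compatible with a single stratification adapted to $\bar\pi$. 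Two minor remarks. First, Verdier's own proof uses $(w)$-regular stratifications and integration of rugose vector fields rather than Whitney $(b)$-regularity plus Thom--Mather; over $\mathbb{C}$ the $(w)$ condition implies $(b)$, so your route through Thom's first isotopy lemma is equally valid. Second, in the reduction to irreducible $Y$ you should note that the dense open of $Y$ is assembled by removing from each component's dense open the union of the other components, so that the resulting set is genuinely open and dense in $Y$. Neither point affects the validity of your argument.
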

\begin{proof}

\cite[Corollaire 5.1]{verdier}.
\end{proof}

For fixed parameters $(a,b,c,d)\in\mathsf{B}$, let us define the polynomial
$$ f_{a,b,c,d}:=F_{a,b,c,d}|_{z=1}=y^2w-4x^3+3axw^2+bw^3+cxw-\frac{1}{2}(dw^2+w^4).$$

Sometimes we will just write $f$ instead of $f_{a,b,c,d}$. Now, let us consider the closed subscheme $\mathcal{U}=Spec(\frac{\mathbb{C}[x,y,z,s]}{\langle f-s\rangle})$ of $\mathbb{A}^4_{\mathbb{C}}$. We have a natural morphism $\pi: \mathcal{U}\rightarrow \mathbb{A}_{C}$ induced by the inclusion $\mathbb{C}[s]\subset\mathbb{C}[x,y,z,s]$. By Theorem \ref{verdier} and its proof we conclude that $\pi: \mathcal{U}\rightarrow \mathbb{A}_{C}$ is a locally trivial topological fibration. By looking at the proof of Theorem \ref{verdier}, we see that we can choose $U$ in that proof to be the entire $\mathbb{A}_{\mathbb{C}}$ or at least an open ball around $0$. Let us write $\mathcal{U}=\{U_s\}_{s\in \mathbb{A}_{\mathbb{C}}}$. Observe that $U_0$ is equal to the affine variety $U$ in \S3.1. Therefore, for a small complex parameter $s$, we have the isomorphism
\begin{equation}\label{god}
H^2(U,\mathbb{C})\cong H^2(U_s,\mathbb{C}).
\end{equation}

Since $U_s$ is smooth except for a finite number of values of $s$, we can assume from now on that $s$ is a complex parameter such that $U_s$ is smooth and isomorphism \eqref{god} holds. The cohomology module $H^2(U_s,\mathbb{C})$ can be studied by means of the theory of \textit{tame polynomials}, which is explained in the next sections.

\subsection{Tame polynomials}

\textbf{Milnor, Tjurina modules and the discriminant}. Let $R$ be a commutative ring with unity. To any $f\in R[x_1,\ldots,x_{n+1}]$ we can associate the following $R$-algebras inspired from singularity theory: 

$$\mathsf{Milnor}(f):=\frac{R[x_1,\ldots,x_{n+1}]}{\mathsf{Jacob}(f)},$$
$$\mathsf{Tjurina}(f):=\frac{R[x_1,\ldots,x_{n+1}]}{\mathsf{Jacob}(f)+\langle f\rangle}.$$

The dimensions of the previous modules are denoted by $\mu(f)$ and $\tau(f)$, respectively, and are called the Milnor and Tjurina numbers of $f$. 

Tame polynomials were introduced in \cite[Chapters 7 and 10]{hodge1}. They are algebraic deformations of quasi-homogeneous polynomials with finite Milnor number.

\begin{df}
We say that $f$ is tame if there is a grading of the ring $R[x_1,\ldots,x_{n+1}]$ such that, if $g$ is the highest homogeneus degree part of $f$, then $\mathsf{Milnor(g)}$ is a free $R$-module of finite rank.
\end{df}

\textit{For this section, we fix a tame polymial $f$, and a grading of $R[x_1,\ldots,x_{n+1}]$ as in the previous definition. Let $d=deg(f)$ and $\nu_i=deg(x_i)$.}

We see $\mathbb{N}^{n+1}$ as a set of multi-indices. For any multi-index  $\beta=(\beta_1,\ldots,\beta_{n+1}),$ we define $x^{\beta}=x_1^{\beta_1}\cdots x_{n+1}^{\beta_{n+1}}.$ 
\begin{prop}\label{basis}
If $I\subset \mathbb{N}^{n+1}$ is such that $\mathcal{B}=\{x^{\beta}|\, \beta\in I\}$ is an $R$-basis for $\mathsf{Milnor}(g)$, then $\mathcal{B}$ is also an $R$-basis for $\mathsf{Milnor}(f)$.
\end{prop}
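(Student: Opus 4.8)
The plan is to prove a stronger, filtered statement: the degree filtration on $\mathsf{Milnor}(f)$ has associated graded module canonically isomorphic to $\mathsf{Milnor}(g)$, in a way that matches monomial classes, and then deduce the proposition by a formal filtered-module argument. Concretely, filter $R[x_1,\ldots,x_{n+1}]$ by weighted degree $\le m$ (recall all $\nu_i\geq 1$, so this filtration is exhaustive and vanishes in negative degrees), and let $F_\bullet$ be the induced filtration on $\mathsf{Milnor}(f)$. For $h\in R[x_1,\ldots,x_{n+1}]$ write $\mathrm{in}(h)$ for its top weighted-degree form and, for an ideal $J$, write $\mathrm{in}(J)$ for the ideal generated by all $\mathrm{in}(h)$, $h\in J$. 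Since the associated graded of the polynomial ring under the degree filtration is the polynomial ring itself, one gets $\mathrm{gr}\,\mathsf{Milnor}(f)=R[x_1,\ldots,x_{n+1}]/\mathrm{in}(\mathsf{Jacob}(f))$; and because $\mathrm{in}(\partial_i f)=\partial_i g$ (as $g$ is the top-degree part of $f$), we have the easy inclusion $\mathsf{Jacob}(g)\subseteq\mathrm{in}(\mathsf{Jacob}(f))$. The whole point is the reverse inclusion $\mathrm{in}(\mathsf{Jacob}(f))\subseteq\mathsf{Jacob}(g)$.

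To prove that reverse inclusion, take $P=\sum_i q_i\partial_i f\in\mathsf{Jacob}(f)$ and set $\delta:=\max_i(\deg q_i + d - \nu_i)$, the maximum taken over $i$ with $q_i\neq 0$. The degree-$\delta$ part of $P$ equals $\sum_{i\in S}\mathrm{in}(q_i)\,\partial_i g$, where $S$ is the set of indices attaining the maximum. If this sum is nonzero, then $\delta=\deg P$ and $\mathrm{in}(P)=\sum_{i\in S}\mathrm{in}(q_i)\,\partial_i g\in\mathsf{Jacob}(g)$, as desired. If it vanishes, then $(\mathrm{in}(q_i))_{i\in S}$, extended by zeros, is a nontrivial homogeneous syzygy of $(\partial_1 g,\ldots,\partial_{n+1}g)$. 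Here one invokes that $\partial_1 g,\ldots,\partial_{n+1}g$ form a regular sequence in $R[x_1,\ldots,x_{n+1}]$ — this is where tameness is used: the hypothesis that $\mathsf{Milnor}(g)$ is a free $R$-module of finite rank forces, after base change to residue fields of $R$, these $n+1$ elements to be a system of parameters in the Cohen–Macaulay polynomial ring, hence a regular sequence, so the Koszul complex on them is exact. Exactness in the first spot means the syzygy is a combination of Koszul relations; subtracting the corresponding combination of Koszul relations built from the $\partial_i f$ (which does not change $P=\sum q_i\partial_i f$, since $\partial_j f\,\partial_k f$ is symmetric in $j,k$) produces a new representation of $P$ with strictly smaller $\delta$. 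Since $\delta$ is a nonnegative integer bounded below by $\deg P$, iterating terminates, and at termination we are necessarily in the nonvanishing case, giving $\mathrm{in}(P)\in\mathsf{Jacob}(g)$.

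With $\mathrm{in}(\mathsf{Jacob}(f))=\mathsf{Jacob}(g)$ established, we get a graded $R$-module isomorphism $\mathrm{gr}\,\mathsf{Milnor}(f)\cong\mathsf{Milnor}(g)$ identifying the symbol of each monomial $x^\beta$ with its class in $\mathsf{Milnor}(g)$ (each monomial is homogeneous, so its symbol in degree $|\beta|_\nu$ is just $x^\beta$ itself). Since $\mathcal{B}=\{x^\beta\mid\beta\in I\}$ is a homogeneous $R$-basis of $\mathsf{Milnor}(g)$, their symbols form an $R$-basis of $\mathrm{gr}\,\mathsf{Milnor}(f)$. The proof finishes with the standard fact that lifts of a homogeneous basis of the associated graded module form a basis of the filtered module, whenever the filtration is exhaustive and bounded below — both true here: spanning is a downward induction on degree, and linear independence follows by taking top-degree symbols of any relation. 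Hence $\mathcal{B}$ is an $R$-basis of $\mathsf{Milnor}(f)$.

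The step I expect to be the main obstacle is establishing the regular-sequence (equivalently, Koszul-exactness) property of $\partial_1 g,\ldots,\partial_{n+1}g$ over $R[x_1,\ldots,x_{n+1}]$ for a general ground ring $R$, and then the associated degree bookkeeping that makes the syzygy-elimination loop terminate cleanly. An alternative route avoiding explicit Koszul manipulations is to introduce the weighted-homogenization $\tilde f(t,x_1,\ldots,x_{n+1})\in R[t][x_1,\ldots,x_{n+1}]$ (with $\deg t=1$) interpolating between $g$ at $t=0$ and $f$ at $t=1$, show directly that its Milnor module is $R[t]$-free with basis $\mathcal{B}$ by the same top-form argument, and then specialize $t=1$; this packages the deformation uniformly but carries the same essential content.
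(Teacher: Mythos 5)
The paper itself offers no proof of this proposition beyond the citation to Proposition 10.7 of Movasati's book, and your argument is, up to packaging, the standard proof of that cited result: reduction modulo $\mathsf{Jacob}(f)$ by descending induction on the weighted degree for spanning, with linear independence secured by eliminating top-degree syzygies among the $\partial_1 g,\ldots,\partial_{n+1}g$ using Koszul relations. Your reformulation through the initial ideal $\mathrm{in}(\mathsf{Jacob}(f))$ and the associated graded module is a clean way to organize both halves at once, and the syzygy-elimination loop is correctly set up (the degree bookkeeping works because replacing $\partial_j g$ by $\partial_j f$ inside a Koszul relation only introduces terms of strictly lower weighted degree, and $\delta$ is bounded below by $\deg P$).

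The one point I would ask you to make explicit is the claim that $\partial_1 g,\ldots,\partial_{n+1}g$ form a regular sequence in $R[x_1,\ldots,x_{n+1}]$ itself, i.e.\ that $H_1$ of the Koszul complex vanishes over $R[x]$ and not merely over each fiber $\kappa(\mathfrak{p})[x]$. The fiberwise statement follows, as you say, from finite-dimensionality of $\mathsf{Milnor}(g)\otimes_R\kappa(\mathfrak{p})$ and Cohen--Macaulayness of the polynomial ring; but the passage from the fibers to $R[x]$ requires the local (fibral) criterion for flatness and regular sequences (e.g.\ Matsumura, Theorem 22.6), applied at each prime of $R[x]$, and it is precisely here that the freeness of $\mathsf{Milnor}(g)$ over $R$ (the tameness hypothesis) and a Noetherian or finite-presentation hypothesis on $R$ are used. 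Since every base ring appearing in this paper is a finitely generated $\mathbb{Q}$-algebra or a localization thereof, this is harmless, but it deserves a sentence and a reference. With that supplied, your proof is complete and agrees in substance with the argument behind the cited proposition.
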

\begin{proof}
\cite[Proposition 10.7, page 143]{hodge1}.
\end{proof}

Information about the regularity of the variety $V(f)=Spec(\frac{R[x_1,\ldots,x_{n+1}]}{\langle f \rangle})$ is captured by the discriminant, which we define next.

\begin{df}
Let $T_f:\mathsf{Milnor}(f)\rightarrow\mathsf{Milnor}(f)$ given by $T_f(P)=fP$. Then, the discriminant of $f$ is defined to be $\Delta_f:=det(T_f)$.
\end{df}

\begin{prop}
If $\Delta_f\neq 0$, then $V(f)$ is regular. Furthermore, if $R$ is an algebraically closed field, the converse holds.
\end{prop}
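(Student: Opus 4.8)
The plan is to read the regularity of $V(f)$ off the $R$-module $\mathsf{Tjurina}(f)$, using two facts: $\mathsf{Tjurina}(f)$ is the cokernel of $T_f$, and (set-theoretically) $Spec(\mathsf{Tjurina}(f))$ is the singular locus of $V(f)$. First I would record the structural input. Since $f$ is tame, Proposition~\ref{basis} gives that $\mathsf{Milnor}(f)$ is a free $R$-module of finite rank $\mu:=\mu(f)$, so $T_f$ is represented by a square matrix over $R$ and $\Delta_f=\det(T_f)$ is a well-defined element of $R$. Directly from the definitions one has $\mathsf{Tjurina}(f)=\mathsf{Milnor}(f)/(f\cdot\mathsf{Milnor}(f))=\mathrm{coker}(T_f)$ and $\mathsf{Tjurina}(f)=R[x_1,\ldots,x_{n+1}]/(\langle f\rangle+\mathsf{Jacob}(f))$. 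Tameness also forces $f$ to be a non-constant polynomial, so $V(f)\subseteq\A^{n+1}_R$ is a hypersurface of the expected dimension, and the Jacobian criterion identifies $Spec(\mathsf{Tjurina}(f))$ set-theoretically with the set of points of $V(f)$ at which every partial $\partial f/\partial x_i$ vanishes, i.e.\ with the non-smooth locus of $V(f)$ over $Spec(R)$; over a perfect field this locus is exactly the singular locus of $V(f)$.

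For the implication $\Delta_f\neq 0\Rightarrow V(f)$ regular I take $R$ to be a field, so that $\Delta_f\neq 0$ means $\Delta_f\in R^{\times}$, and argue via the adjugate identity $\mathrm{adj}(T_f)\,T_f=\Delta_f\cdot\mathrm{id}$ on the free module $\mathsf{Milnor}(f)$: for every $m$ we get $\Delta_f\cdot m=\mathrm{adj}(T_f)\bigl(T_f(m)\bigr)\in\mathrm{im}(T_f)$, so $\Delta_f$ annihilates $\mathrm{coker}(T_f)=\mathsf{Tjurina}(f)$. When $\Delta_f$ is a unit this forces $\mathsf{Tjurina}(f)=0$, hence $1\in\langle f\rangle+\mathsf{Jacob}(f)$, so $f$ and all its partials have no common zero and $V(f)$ is regular. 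The same computation shows, over an arbitrary \emph{regular} base ring $R$, that $V(f)$ is smooth over $Spec(R)$ — hence regular — over the open set $\{\Delta_f\neq 0\}\subseteq Spec(R)$, which is the appropriate formulation in that generality.

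For the converse with $R=k$ algebraically closed, suppose $\Delta_f=0$. Then $T_f$ is a square matrix over the field $k$ with vanishing determinant, hence not surjective, so $\mathsf{Tjurina}(f)=\mathrm{coker}(T_f)\neq 0$. Being a nonzero finitely generated $k$-algebra, $\mathsf{Tjurina}(f)$ has a maximal ideal, and by the Nullstellensatz this is the ideal of a point $p=(a_1,\ldots,a_{n+1})\in\A^{n+1}_k$ at which $f$ and all $\partial f/\partial x_i$ vanish. By the Jacobian criterion $p$ is a singular point of the hypersurface $V(f)$, so $V(f)$ is not regular. Combined with the preceding paragraph this yields the equivalence $\Delta_f\neq 0\iff V(f)$ is regular.

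The delicate point — and what a careful write-up must pin down — is matching the ring-level hypothesis to the geometric conclusion. ``$V(f)$ regular'' in the scheme sense is sensitive to $R$ itself being regular, and ``$\Delta_f\neq 0$'' is only the right hypothesis when $\Delta_f$ is a unit: for instance over $R=k[t]/(t^2)$ with $\mathrm{char}\,k\neq 2$ the tame polynomial $f=x_1^2+t$ has $\Delta_f=t\neq 0$ yet $R[x_1]/(f)\cong k[x_1]/(x_1^4)$ is not regular. The clean, base-change-stable statement is $\Delta_f\in R^{\times}\Leftrightarrow\mathsf{Tjurina}(f)=0\Leftrightarrow V(f)\to Spec(R)$ smooth, with the field versions recovered by specializing to residue fields. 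Algebraic closedness is used in the converse to manufacture a \emph{rational} singular point through the Nullstellensatz, and over non-perfect fields one must additionally check that ``all partials vanish at $p$'' really forces non-regularity (rather than merely non-smoothness) at $p$. I expect the bookkeeping around the Jacobian criterion — in particular verifying that $V(f)$ has the expected codimension so that the determinant condition genuinely detects (non-)regularity — to be the main technical hurdle.
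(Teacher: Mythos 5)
Your argument cannot be compared line-by-line with the paper's, because the paper gives no proof of this proposition at all: it is disposed of by the citation \cite[Proposition 10.8, page 145]{hodge1}. What you wrote is a correct, self-contained proof of the statement in the case that matters, and it is consistent with the one fragment of the argument the paper does reproduce elsewhere, namely Proposition \ref{discriminant}, where Cayley--Hamilton is used exactly as your adjugate identity to show that $\Delta_f$ annihilates $\mathsf{Tjurina}(f)$. Your chain $\Delta_f\in R^{\times}\Rightarrow\mathsf{Tjurina}(f)=0\Rightarrow 1\in\langle f\rangle+\mathsf{Jacob}(f)\Rightarrow V(f)$ smooth over $R$, together with the converse over an algebraically closed field via $\det T_f=0\Rightarrow\mathrm{coker}(T_f)\neq 0$, the Nullstellensatz, and the dimension count $\dim_k\mathfrak{m}_p/\mathfrak{m}_p^2=n+1>n=\dim\mathcal{O}_{V(f),p}$ (the expected-codimension point you worried about is settled by Krull's principal ideal theorem, since tameness forces $f$ to be a nonzero nonunit), is complete when $R$ is a field. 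That is the only setting in which the proposition is actually invoked later (the parameters are specialized to complex numbers before regularity of $U_s$ is discussed).

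Your caveat about the hypothesis is the one substantive point and it is well taken: the subsection's standing assumption is that $R$ is an arbitrary commutative ring with unity, and for such $R$ the implication ``$\Delta_f\neq 0\Rightarrow V(f)$ regular'' is false as literally printed. Your non-reduced example already shows this, and the problem persists even over the regular domains used later in the paper: for $f=x^3-ax$ over $R=\mathbb{Q}[a]$ one computes $\Delta_f=-\tfrac{4}{27}a^3\neq 0$, yet $V(f)\subset\mathbb{A}^2_{\mathbb{Q}}$ is the union of the line $x=0$ and the curve $x^2=a$ meeting in an ordinary node at the origin, hence not regular. The base-change-stable statements you isolate --- $\Delta_f\in R^{\times}\iff\mathsf{Tjurina}(f)=0\iff V(f)\rightarrow Spec(R)$ smooth, and smoothness of $V(f)$ over the open locus $\{\Delta_f\neq 0\}\subset Spec(R)$ --- are the correct general formulations; the proposition as printed should be read with $R$ a field (equivalently, as a statement about fibers), which is how the cited source and the remainder of the paper use it. In short: no gap in your proof; the defect, such as it is, lies in the generality claimed by the statement.
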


\begin{proof}
\cite[Proposition 10.8, page 145]{hodge1}.
\end{proof}

The following property of the discriminant will be useful for the computation of the Gauss-Manin connection.

\begin{prop}\label{discriminant}
$\Delta_f\in \mathsf{Jacob}(f)+\langle f\rangle$. Equivalently, $\Delta_f\cdot\mathsf{Tjurina}(f)=0$.
\end{prop}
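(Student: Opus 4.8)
The plan is to deduce this from the adjugate (cofactor) identity applied to the cyclic generator of $\mathsf{Milnor}(f)$. First I would record that, since $f$ is tame, $\mathsf{Milnor}(f)$ is a free $R$-module of finite rank $\mu(f)$: indeed, if $g$ is the top homogeneous part of $f$, then $\mathsf{Milnor}(g)$ is free of finite rank by the definition of tameness, and by Proposition \ref{basis} any monomial basis of $\mathsf{Milnor}(g)$ is also a basis of $\mathsf{Milnor}(f)$. Consequently $T_f \in \End_R(\mathsf{Milnor}(f))$ has a well-defined determinant $\Delta_f = \det(T_f)$ and a well-defined adjugate $\operatorname{adj}(T_f) \in \End_R(\mathsf{Milnor}(f))$, and the standard identity
\[
\operatorname{adj}(T_f)\circ T_f \;=\; T_f\circ\operatorname{adj}(T_f)\;=\;\Delta_f\cdot\operatorname{id}
\]
holds over the arbitrary commutative ring $R$.

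Next I would evaluate this identity at the class $[1] \in \mathsf{Milnor}(f)$ of the constant polynomial $1$. Since $T_f([1]) = [f]$, this gives
\[
\Delta_f\cdot[1]\;=\;\bigl(T_f\circ\operatorname{adj}(T_f)\bigr)([1])\;=\;T_f\bigl(\operatorname{adj}(T_f)([1])\bigr)\;=\;[f]\cdot\operatorname{adj}(T_f)([1]).
\]
Writing $g \in R[x_1,\ldots,x_{n+1}]$ for any polynomial whose class in $\mathsf{Milnor}(f)$ equals $\operatorname{adj}(T_f)([1])$, this says $\Delta_f\cdot[1] = [fg]$, i.e. $\Delta_f - fg \in \mathsf{Jacob}(f)$. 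Hence $\Delta_f \in \langle f\rangle + \mathsf{Jacob}(f)$, which is the first assertion.

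Finally, for the equivalence, I would note that $\mathsf{Tjurina}(f) = R[x_1,\ldots,x_{n+1}]/(\mathsf{Jacob}(f)+\langle f\rangle)$ is a cyclic module generated by the class of $1$, so multiplication by $\Delta_f$ is the zero map on it precisely when $\Delta_f$ lies in its annihilator $\mathsf{Jacob}(f)+\langle f\rangle$; this is exactly what was just shown. I do not anticipate a serious obstacle: the only subtlety is that $R$ is a general commutative ring rather than a field, so one must argue through the adjugate identity (which is valid integrally) rather than through eigenvalues, localization, or a decomposition of $\mathsf{Milnor}(f)$ into point-fibers.
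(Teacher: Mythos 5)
Your proof is correct and is essentially the paper's argument: both are the determinant trick on the free $R$-module $\mathsf{Milnor}(f)$, the paper phrasing it via the Cayley--Hamilton theorem for the characteristic polynomial $p(s)=\det(T_f-sI)$ (so that $p(f)\in\mathsf{Jacob}(f)$ while $\Delta_f=p(0)\equiv p(f)\pmod{\langle f\rangle}$), whereas you phrase it via the adjugate identity evaluated at the class $[1]$. The two formulations are interchangeable over an arbitrary commutative ring, and your explicit observation that freeness of $\mathsf{Milnor}(f)$ --- coming from tameness together with Proposition \ref{basis} --- is what makes $\det$ and $\operatorname{adj}$ well-defined is a point the paper leaves implicit.
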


\begin{proof}
Let $p(s)=det(T_f-sI)\in R[s]$. By the Cayley-Hamilton theorem, $p(T_f)=0$. This is equivalent to $p(f)\in \mathsf{Jacob}(f)$. Therefore, $\Delta_f=p(0)\in \mathsf{Jacob}(f)+\langle f\rangle$.
\end{proof}

\begin{cor}
Suppose $R$ is a field. Then, $\Delta_f\neq 0$ if, and only if, $\tau(f)=0$.
\end{cor}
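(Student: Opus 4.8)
The plan is to reduce everything to linear algebra over the field $R$. The key preliminary observation is that, because $f$ is tame, $\mathsf{Milnor}(f)$ is a finite-dimensional $R$-vector space: by hypothesis the leading part $g$ has $\mathsf{Milnor}(g)$ free of finite rank over $R$, hence of finite $R$-dimension since $R$ is a field, and by Proposition \ref{basis} a monomial $R$-basis of $\mathsf{Milnor}(g)$ is simultaneously an $R$-basis of $\mathsf{Milnor}(f)$. Thus $\dim_R\mathsf{Milnor}(f)=\mu(f)<\infty$, so $T_f$ is an endomorphism of a finite-dimensional $R$-vector space and $\Delta_f=\det(T_f)$ makes sense as an element of $R$.

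Next I would identify $\mathsf{Tjurina}(f)$ with $\coker(T_f)$. Indeed, $\mathsf{Tjurina}(f)=R[x_1,\ldots,x_{n+1}]/(\mathsf{Jacob}(f)+\langle f\rangle)$ is the quotient of $\mathsf{Milnor}(f)=R[x_1,\ldots,x_{n+1}]/\mathsf{Jacob}(f)$ by the image of the ideal $\langle f\rangle$ in $\mathsf{Milnor}(f)$, and that image is precisely $\bar f\cdot\mathsf{Milnor}(f)=T_f(\mathsf{Milnor}(f))=\im(T_f)$. Hence $\mathsf{Tjurina}(f)\cong\coker(T_f)$ as $R$-vector spaces, and in particular $\tau(f)=\dim_R\coker(T_f)$.

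The corollary then follows from the standard fact that an endomorphism $T_f$ of a finite-dimensional vector space over a field satisfies $\det(T_f)\neq 0$ if and only if it is bijective, if and only if $\coker(T_f)=0$. Unwinding the identifications, $\Delta_f\neq 0\iff\coker(T_f)=0\iff\tau(f)=0$. For the forward implication one could equivalently invoke Proposition \ref{discriminant}: from $\Delta_f\cdot\mathsf{Tjurina}(f)=0$ together with the invertibility of $\Delta_f$ in the field $R$ one gets $\mathsf{Tjurina}(f)=0$, i.e.\ $\tau(f)=0$. The only point that requires care is the finiteness of $\dim_R\mathsf{Milnor}(f)$, and this is exactly what tameness together with Proposition \ref{basis} guarantees; the rest of the argument is purely formal, so I do not anticipate a genuine obstacle.
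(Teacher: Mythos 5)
Your proof is correct, and it is essentially the argument the paper intends: the paper states this corollary without a written proof, leaving it as an immediate consequence of Proposition \ref{discriminant} (giving $\Delta_f\cdot\mathsf{Tjurina}(f)=0$, hence one direction) together with the identification $\mathsf{Tjurina}(f)\cong\coker(T_f)$ and the finite-dimensionality of $\mathsf{Milnor}(f)$ supplied by tameness and Proposition \ref{basis}. Your write-up correctly supplies exactly these ingredients, including the converse direction via $\det(T_f)\neq 0\iff\coker(T_f)=0$, so nothing is missing.
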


\noindent\textbf{Algebraic de Rham cohomology and Brieskorn modules.} Let us introduce the notation  $\mathbb{A}^n_R:=Spec(R[x_1,\ldots,x_{n+1}]), \mathsf{T}:=Spec(R), dx:=dx_1\wedge\cdots \wedge dx_{n+1},$ $$\widehat{dx_i}:=dx_1\wedge\cdots dx_{i-1}\wedge dx_{i+1}\wedge \cdots \wedge dx_{n+1},$$ $$ \eta:=\sum_{i=1}^{n+1}(-1)^{i+1}\frac{\nu_i}{d}x_i\widehat{dx_i},\hspace{5mm} \nu_i=deg(x_i).$$

\begin{prop}\label{eisenbud} For every $k\geq0$, we have
\begin{equation*}
\Omega^k_{V(f)/\mathsf{T}}\cong\frac{\Omega^k_{\mathbb{A}^n_R/\mathsf{T}}}{f\Omega^k_{\mathbb{A}^n_R/\mathsf{T}}+df\wedge \Omega^{k-1}_{\mathbb{A}^n_R/\mathsf{T}}}.
\end{equation*}
\end{prop}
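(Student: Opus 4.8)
The plan is to use the algebraic description of the cotangent complex of a hypersurface, together with the fact that $\mathbb{A}^n_R$ is smooth over $\mathsf{T}$. First I would recall the conormal exact sequence for the closed immersion $\iota: V(f)\hookrightarrow \mathbb{A}^n_R$: since $V(f)$ is cut out by the single equation $f$, the conormal module $\mathcal{I}/\mathcal{I}^2$ (where $\mathcal{I}=\langle f\rangle$) is free of rank one over $\mathcal{O}_{V(f)}$, generated by the class of $f$, and the conormal sequence reads
\begin{equation*}
\mathcal{I}/\mathcal{I}^2 \xrightarrow{\; d \;} \iota^*\Omega^1_{\mathbb{A}^n_R/\mathsf{T}} \to \Omega^1_{V(f)/\mathsf{T}} \to 0,
\end{equation*}
with the first map sending the generator to $df$. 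Because $\mathbb{A}^n_R$ is smooth over $\mathsf{T}$, this sequence is in fact left-exact as well once we have checked that $df$ is part of a "regular sequence of differentials" in the appropriate sense — but for the purpose of computing $\Omega^1$ we only need right-exactness, which gives $\Omega^1_{V(f)/\mathsf{T}} \cong \Omega^1_{\mathbb{A}^n_R/\mathsf{T}}/(f\,\Omega^1 + \mathcal{O}\cdot df)$ after pulling back along $\iota$, i.e. tensoring with $R[x_1,\dots,x_{n+1}]/\langle f\rangle$. Unwinding the pullback $\iota^*$ produces exactly the stated formula for $k=1$.

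Next I would pass from $k=1$ to arbitrary $k$ by taking exterior powers. Writing $M = \Omega^1_{\mathbb{A}^n_R/\mathsf{T}}$, which is $R[x]$-free of rank $n+1$, and $\bar M$ for its restriction to $V(f)$, the case $k=1$ says $\Omega^1_{V(f)/\mathsf{T}} = \bar M / \bar M\wedge (df)$ — more precisely the quotient by the submodule generated by the image of $df$. The key algebraic point is the identity of exterior-algebra quotients: for a free module $M$ over a ring $A$ and an element $\omega\in M$, one has
\begin{equation*}
\textstyle\bigwedge^k\bigl(M/A\omega\bigr) \;\cong\; \bigwedge^k M \,\big/\, \bigl(\omega\wedge \bigwedge^{k-1} M\bigr),
\end{equation*}
and applying this with $A = R[x]/\langle f\rangle$, $M=\bar M$, $\omega = df$ gives $\Omega^k_{V(f)/\mathsf{T}} = \bigwedge^k \bar M / (df\wedge \bigwedge^{k-1}\bar M)$. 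Finally, rewriting $\bar M = M/fM$ and $\bigwedge^k(M/fM) = \bigwedge^k M / f\bigwedge^k M$, and combining the two quotients, yields
\begin{equation*}
\Omega^k_{V(f)/\mathsf{T}} \cong \frac{\Omega^k_{\mathbb{A}^n_R/\mathsf{T}}}{f\,\Omega^k_{\mathbb{A}^n_R/\mathsf{T}} + df\wedge \Omega^{k-1}_{\mathbb{A}^n_R/\mathsf{T}}},
\end{equation*}
as claimed; one should double-check the bookkeeping that the submodule generated by $df\wedge(\text{anything})$ in $\bigwedge^k M$, together with $f\bigwedge^k M$, is precisely the denominator written.

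I expect the main obstacle to be purely formal rather than deep: making sure the two successive quotients (first by $f$, then by $df\wedge(-)$) genuinely commute and assemble into the single displayed quotient without an extra correction term, and being careful that $df\wedge \Omega^{k-1}_{\mathbb{A}^n_R/\mathsf{T}}$ already contains $f\cdot(\text{stuff})$-type ambiguities coherently. A clean way to sidestep delicate exterior-algebra lemmas is to work directly with generators: a basis of $\Omega^k_{\mathbb{A}^n_R/\mathsf{T}}$ is given by the $dx_I = dx_{i_1}\wedge\cdots\wedge dx_{i_k}$, and one checks by hand that on $V(f)$ these span $\Omega^k_{V(f)/\mathsf{T}}$ subject to exactly the relations $f\,dx_I = 0$ and $df\wedge dx_J = 0$ for $|J|=k-1$ — the latter encoding that a $(k-1)$-form wedged with $df$ dies on the hypersurface. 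This is the standard presentation used throughout tame-polynomial theory, so one may alternatively just cite \cite[Chapter 10]{hodge1} or the relevant statement in \cite{hodge1} for the Brieskorn-module formalism; I would give the short self-contained argument above and refer there for details.
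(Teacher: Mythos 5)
Your argument is correct, and it is the standard one. The paper itself gives no proof at all for this proposition --- it only cites \cite[Example 1.10, page 213]{Liu} --- so your write-up essentially supplies the content of that reference: the conormal exact sequence for the hypersurface $V(f)\hookrightarrow \mathbb{A}^n_R$ gives the $k=1$ case (right-exactness is indeed all that is needed, as you note), and then the exterior-power identity $\bigwedge^k(M/N)\cong \bigwedge^k M\big/\bigl(N\wedge\bigwedge^{k-1}M\bigr)$ promotes it to all $k$. The one thing I would point out is that the ``main obstacle'' you anticipate is not actually there: that exterior-power identity holds for an arbitrary surjection of modules with kernel $N$ (no freeness or regularity hypotheses), so the two quotients --- first by $f\,\Omega^{\bullet}$, then by $df\wedge(-)$ --- combine into the single displayed denominator with no correction term; one only has to observe that $\bigwedge^k_{A}(M\otimes_{R[x]}A)\cong(\bigwedge^k_{R[x]}M)\otimes_{R[x]}A=\Omega^k/f\Omega^k$ for $A=R[x_1,\dots,x_{n+1}]/\langle f\rangle$, which is immediate since $\Omega^1_{\mathbb{A}^n_R/\mathsf{T}}$ is free. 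Your fallback of checking the presentation on the basis $dx_I$ is fine but unnecessary.
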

\begin{proof}
\cite[Example 1.10, page 213]{Liu}.
\end{proof}

\begin{cor}
$\mathsf{Tjurina}(f)\cong\Omega^{n+1}_{V(f)/\mathsf{T}}$. Therefore, $\Delta_f\cdot\Omega^{n+1}_{V(f)/\mathsf{T}}=0$. 
\end{cor}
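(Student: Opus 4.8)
The plan is to read the statement off Proposition~\ref{eisenbud} specialised to the top degree $k=n+1$, where the modules of differential forms are small enough to be handled by hand. First I would recall that $\Omega^{n+1}_{\mathbb{A}^n_R/\mathsf{T}}$ is a free $R[x_1,\ldots,x_{n+1}]$-module of rank one, with generator $dx=dx_1\wedge\cdots\wedge dx_{n+1}$. Thus $P\mapsto P\,dx$ gives an isomorphism of $R[x_1,\ldots,x_{n+1}]$-modules between $R[x_1,\ldots,x_{n+1}]$ and $\Omega^{n+1}_{\mathbb{A}^n_R/\mathsf{T}}$, and under it the submodule $f\,\Omega^{n+1}_{\mathbb{A}^n_R/\mathsf{T}}$ corresponds to the principal ideal $\langle f\rangle$.

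Next I would identify $df\wedge\Omega^n_{\mathbb{A}^n_R/\mathsf{T}}$ under the same isomorphism. The module $\Omega^n_{\mathbb{A}^n_R/\mathsf{T}}$ is free with basis $\{\widehat{dx_j}\}_{j=1}^{n+1}$, and writing $df=\sum_{i=1}^{n+1}\frac{\partial f}{\partial x_i}\,dx_i$ one has $dx_i\wedge\widehat{dx_j}=0$ for $i\neq j$ (a repeated factor $dx_i$ occurs) and $dx_j\wedge\widehat{dx_j}=(-1)^{j-1}dx$. Hence $df\wedge\widehat{dx_j}=(-1)^{j-1}\frac{\partial f}{\partial x_j}\,dx$, so $df\wedge\Omega^n_{\mathbb{A}^n_R/\mathsf{T}}$ corresponds exactly to the Jacobian ideal $\mathsf{Jacob}(f)$. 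Combining the two computations, the denominator appearing in Proposition~\ref{eisenbud} for $k=n+1$ is $\bigl(\mathsf{Jacob}(f)+\langle f\rangle\bigr)dx$, and therefore
$$\Omega^{n+1}_{V(f)/\mathsf{T}}\;\cong\;\frac{R[x_1,\ldots,x_{n+1}]\cdot dx}{\bigl(\mathsf{Jacob}(f)+\langle f\rangle\bigr)dx}\;\cong\;\frac{R[x_1,\ldots,x_{n+1}]}{\mathsf{Jacob}(f)+\langle f\rangle}\;=\;\mathsf{Tjurina}(f).$$

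For the last assertion, Proposition~\ref{discriminant} gives $\Delta_f\cdot\mathsf{Tjurina}(f)=0$, and transporting this through the isomorphism just obtained yields $\Delta_f\cdot\Omega^{n+1}_{V(f)/\mathsf{T}}=0$. I do not anticipate a real obstacle here: everything is an exercise in exterior algebra over the polynomial ring, and the only point that needs a little care is the sign bookkeeping in $df\wedge\widehat{dx_j}$ together with the observation that the mixed terms $dx_i\wedge\widehat{dx_j}$ with $i\neq j$ vanish — which is precisely what turns the wedge product into the Jacobian ideal.
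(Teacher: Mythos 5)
Your proof is correct and follows the same route as the paper: the paper's own proof simply cites the isomorphism $P\mapsto P\,dx$ together with Proposition~\ref{eisenbud} and Proposition~\ref{discriminant}, and your argument is exactly that, with the wedge-product computation identifying $df\wedge\Omega^n_{\mathbb{A}^n_R/\mathsf{T}}$ with $\mathsf{Jacob}(f)\cdot dx$ written out explicitly.
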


\begin{proof}
The first statement is given by the isomorphism $P\mapsto Pdx$. The second statement follows from the first one and Proposition \ref{discriminant}.
\end{proof}

\begin{cor} $H_{dR}^n(V(f)/\mathsf{T})_{\Delta_f}\cong (\frac{\Omega^n_{\mathsf{A}/\mathsf{T}}}{f\Omega^n_{\mathsf{A}/\mathsf{T}}+df\wedge \Omega^{n-1}_{\mathsf{A}/\mathsf{T}}+d\Omega^{n-1}_{\mathsf{A}/\mathsf{T}}})_{\Delta_f}$.
\end{cor}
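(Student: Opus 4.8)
The plan is to compute $H^n_{dR}(V(f)/\mathsf{T})$ directly from the algebraic de Rham complex $\Omega^\bullet_{V(f)/\mathsf{T}}$ and to rewrite each term using Proposition \ref{eisenbud}. Since $V(f)$ is affine over $\mathsf{T}=Spec(R)$, its de Rham cohomology is the cohomology of the complex of global sections, so
\[
H^n_{dR}(V(f)/\mathsf{T})=\frac{\ker\!\big(d\colon \Omega^n_{V(f)/\mathsf{T}}\to \Omega^{n+1}_{V(f)/\mathsf{T}}\big)}{d\,\Omega^{n-1}_{V(f)/\mathsf{T}}}.
\]
The first key input is the corollary immediately preceding this one, namely $\Delta_f\cdot\Omega^{n+1}_{V(f)/\mathsf{T}}=0$, which gives $\big(\Omega^{n+1}_{V(f)/\mathsf{T}}\big)_{\Delta_f}=0$. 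Because localization at $\Delta_f\in R$ is exact, it commutes with the formation of kernels, images and quotients, hence with taking cohomology; localizing the displayed formula and using that the target of $d$ becomes zero yields
\[
H^n_{dR}(V(f)/\mathsf{T})_{\Delta_f}\cong\Big(\Omega^n_{V(f)/\mathsf{T}}\big/\,d\,\Omega^{n-1}_{V(f)/\mathsf{T}}\Big)_{\Delta_f}.
\]

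Next I would identify the right-hand side with the claimed quotient of $\Omega^n_{\mathsf{A}/\mathsf{T}}$ (with $\mathsf{A}=\mathbb{A}^n_R$). The restriction map along the closed immersion $V(f)\hookrightarrow\mathbb{A}^{n+1}_R$ is a morphism of de Rham complexes, and by Proposition \ref{eisenbud} it realizes $\Omega^k_{V(f)/\mathsf{T}}$ as the quotient of $\Omega^k_{\mathsf{A}/\mathsf{T}}$ by $f\Omega^k_{\mathsf{A}/\mathsf{T}}+df\wedge\Omega^{k-1}_{\mathsf{A}/\mathsf{T}}$, with $d$ on $\Omega^\bullet_{V(f)/\mathsf{T}}$ induced from $d$ on $\Omega^\bullet_{\mathsf{A}/\mathsf{T}}$. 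In particular $d\,\Omega^{n-1}_{V(f)/\mathsf{T}}$ is the image of $d\,\Omega^{n-1}_{\mathsf{A}/\mathsf{T}}$, so composing the two quotient maps gives
\[
\Omega^n_{V(f)/\mathsf{T}}\big/\,d\,\Omega^{n-1}_{V(f)/\mathsf{T}}\;\cong\;\frac{\Omega^n_{\mathsf{A}/\mathsf{T}}}{f\Omega^n_{\mathsf{A}/\mathsf{T}}+df\wedge\Omega^{n-1}_{\mathsf{A}/\mathsf{T}}+d\,\Omega^{n-1}_{\mathsf{A}/\mathsf{T}}},
\]
and localizing at $\Delta_f$ finishes the proof.

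The only delicate point, and what I regard as the main obstacle, is the bookkeeping around the differential: one must check that $d$ sends $f\Omega^{n-1}_{\mathsf{A}/\mathsf{T}}+df\wedge\Omega^{n-2}_{\mathsf{A}/\mathsf{T}}$ into $f\Omega^n_{\mathsf{A}/\mathsf{T}}+df\wedge\Omega^{n-1}_{\mathsf{A}/\mathsf{T}}$, so that the description of $\Omega^\bullet_{V(f)/\mathsf{T}}$ coming from Proposition \ref{eisenbud} is genuinely compatible with the de Rham differential; this is immediate from $d(f\omega)=df\wedge\omega+f\,d\omega$ and $d(df\wedge\omega)=-df\wedge d\omega$. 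Everything else is formal: exactness of $(-)_{\Delta_f}$ lets all localizations pass through kernels and cokernels, and no geometric input beyond the preceding corollary is required.
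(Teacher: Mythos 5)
Your proof is correct and follows essentially the same route as the paper: use the preceding corollary to conclude $(\Omega^{n+1}_{V(f)/\mathsf{T}})_{\Delta_f}=0$, so the localized cohomology is the cokernel of $d\colon(\Omega^{n-1}_{V(f)/\mathsf{T}})_{\Delta_f}\to(\Omega^{n}_{V(f)/\mathsf{T}})_{\Delta_f}$, and then rewrite via Proposition \ref{eisenbud}. The extra check you make that the differential is compatible with the presentation of $\Omega^{\bullet}_{V(f)/\mathsf{T}}$ is a useful detail the paper leaves implicit.
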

\begin{proof}
$H_{dR}^n(V(f)/\mathsf{T})_{\Delta_f}=coker((\Omega^{n-1}_{V(f)/\mathsf{T}})_{\Delta_f}\xrightarrow{d}(\Omega^{n}_{V(f)/\mathsf{T}})_{\Delta_f})$, since, by the previous corollary,  $(\Omega^{n+1}_{V(f)/\mathsf{T}})_{\Delta_f}=0$. The result now follows from Proposition \ref{eisenbud}.
\end{proof}
This proposition motivates the following one.

\begin{df} the Brieskorn modules of $f$ are the $R$-modules $$\mathsf{H}_f=\frac{\Omega^{n+1}_{\mathbb{A}^n_R/\mathsf{T}}}{f\Omega^{n+1}_{\mathbb{A}^n_R/\mathsf{T}}+df\wedge d\Omega^{n-1}_{\mathbb{A}^n_R/\mathsf{T}}},$$
$$\mathsf{H}'_f=\frac{\Omega^n_{\mathbb{A}^n_R/\mathsf{T}}}{f\Omega^n_{\mathbb{A}^n_R/\mathsf{T}}+df\wedge \Omega^{n-1}_{\mathbb{A}^n_R/\mathsf{T}}+d\Omega^{n-1}_{\mathbb{A}^n_R/\mathsf{T}}}.$$
\end{df}

\begin{prop} 
$(\mathsf{H}'_f)_{\Delta_f}\cong H_{dR}^n(V(f)/\mathsf{T})_{\Delta_f}$. If $R$ is Cohen-Macaulay and $\Delta_f\neq 0$, then $\mathsf{H}'_f\cong \Delta_f\cdot \mathsf{H}_f$.
\end{prop}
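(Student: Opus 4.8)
The plan is to dispose of the first isomorphism in one line and to concentrate the work on the second. For the first, note that by definition $\mathsf{H}'_f$ is \emph{exactly} the module $\Omega^n_{\mathbb{A}^n_R/\mathsf{T}}/\big(f\Omega^n_{\mathbb{A}^n_R/\mathsf{T}}+df\wedge\Omega^{n-1}_{\mathbb{A}^n_R/\mathsf{T}}+d\Omega^{n-1}_{\mathbb{A}^n_R/\mathsf{T}}\big)$ that appears on the right-hand side of the corollary computing $H^n_{dR}(V(f)/\mathsf{T})_{\Delta_f}$; localizing that corollary at $\Delta_f$ gives $(\mathsf{H}'_f)_{\Delta_f}\cong H^n_{dR}(V(f)/\mathsf{T})_{\Delta_f}$ with nothing further to check.

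For the second isomorphism I would introduce the $R$-linear map $\phi\colon\mathsf{H}'_f\to\mathsf{H}_f$ sending the class of $\omega$ to the class of $df\wedge\omega$. Well-definedness is formal: $f\Omega^n$ is sent into $f\Omega^{n+1}$, $df\wedge\Omega^{n-1}$ into $df\wedge df\wedge\Omega^{n-1}=0$, and $d\Omega^{n-1}$ into $df\wedge d\Omega^{n-1}$, so each of the three families of relations defining $\mathsf{H}'_f$ is carried into the relations defining $\mathsf{H}_f$. The image of $\phi$ is the submodule $\big(f\Omega^{n+1}+df\wedge\Omega^{n}\big)/\big(f\Omega^{n+1}+df\wedge d\Omega^{n-1}\big)$ of $\mathsf{H}_f$, so $\mathrm{coker}(\phi)\cong\Omega^{n+1}_{\mathbb{A}^n_R/\mathsf{T}}/\big(f\Omega^{n+1}_{\mathbb{A}^n_R/\mathsf{T}}+df\wedge\Omega^{n}_{\mathbb{A}^n_R/\mathsf{T}}\big)$, which by Proposition \ref{eisenbud} and the isomorphism $P\mapsto P\,dx$ equals $\Omega^{n+1}_{V(f)/\mathsf{T}}\cong\mathsf{Tjurina}(f)$. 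Since $\Delta_f$ annihilates $\mathsf{Tjurina}(f)$ by Proposition \ref{discriminant}, it annihilates $\mathrm{coker}(\phi)$, and hence $\Delta_f\cdot\mathsf{H}_f\subseteq\mathrm{im}(\phi)$.

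It then remains to prove that $\phi$ is injective and that its image is precisely $\Delta_f\cdot\mathsf{H}_f$, and here I would localize at $\Delta_f$ and feed in the tame-polynomial machinery of \cite{hodge1}. On one hand $\mathrm{coker}(\phi)_{\Delta_f}=\mathsf{Tjurina}(f)_{\Delta_f}=0$, so $\phi_{\Delta_f}$ is surjective. On the other hand, this is where the Cohen--Macaulay hypothesis on $R$ enters: together with Proposition \ref{basis} and its refinements (and a depth argument) it yields that $\mathsf{H}'_f$ and $\mathsf{H}_f$ are free $R$-modules of the same rank $\mu(f)$. A surjection between finitely generated free $R_{\Delta_f}$-modules of equal rank is an isomorphism, so $\phi_{\Delta_f}$ is an isomorphism; and since $R$ is Cohen--Macaulay and $\Delta_f\neq0$, $\Delta_f$ is a non-zero-divisor on the free module $\mathsf{H}'_f$, which therefore injects into its localization, forcing $\ker(\phi)=0$. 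Finally $\phi$ realizes $\mathsf{H}'_f$ as a submodule of $\mathsf{H}_f$ that contains $\Delta_f\cdot\mathsf{H}_f$ and coincides with it after inverting $\Delta_f$; identifying the two submodules on the nose (and using $\Delta_f\cdot\mathsf{H}_f\cong\mathsf{H}_f$, valid because $\mathsf{H}_f$ is free and $\Delta_f$ is a non-zero-divisor) gives $\mathsf{H}'_f\cong\Delta_f\cdot\mathsf{H}_f$.

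The step I expect to be the main obstacle is precisely this last identification---controlling $\mathrm{im}(\phi)$ to be \emph{exactly} $\Delta_f\cdot\mathsf{H}_f$, rather than merely some submodule of $\mathsf{H}_f$ with cokernel $\mathsf{Tjurina}(f)$---together with the freeness of $\mathsf{H}'_f$ and $\mathsf{H}_f$ on which it rests. The cokernel computation and the inclusion $\Delta_f\cdot\mathsf{H}_f\subseteq\mathrm{im}(\phi)$ are cheap; upgrading ``isomorphism after inverting $\Delta_f$'' to an honest isomorphism onto $\Delta_f\cdot\mathsf{H}_f$ is exactly where the Cohen--Macaulay hypothesis and the freeness of the Brieskorn modules from \cite{hodge1} become indispensable, while everything else in the argument is bookkeeping.
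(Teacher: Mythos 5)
Your treatment of the first isomorphism and of the cokernel of $\phi\colon\omega\mapsto df\wedge\omega$ coincides with the paper's: the paper likewise reads the first claim off the corollary preceding the definition of the Brieskorn modules, and likewise identifies $\mathsf{H}_f/\mathrm{im}(\phi)$ with $\mathsf{Tjurina}(f)$ via $P\mapsto P\,dx$. Where you genuinely diverge is injectivity: the paper simply cites \cite[Proposition 10.9]{hodge1}, whereas you derive it from freeness of $\mathsf{H}_f$ and $\mathsf{H}'_f$ of common rank $\mu(f)$ (Theorem \ref{ases} and Proposition \ref{basis}) together with the surjectivity of $\phi_{\Delta_f}$. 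That is a legitimate and more self-contained route, with two small caveats: Theorem \ref{ases} requires $\mathbb{Q}\subset R$, which is stronger than the bare Cohen--Macaulay hypothesis in the statement (harmless here, since the paper only ever uses $R=\mathbb{Q}[a,b,c,d,s]$); and ``Cohen--Macaulay and $\Delta_f\neq 0$'' does not by itself make $\Delta_f$ a non-zero-divisor --- you are tacitly using that $R$ is a domain, which again holds in the paper's setting.

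On the step you single out as the main obstacle, your instincts are exactly right, and you should know that the paper does not close it either: its proof asserts that Proposition \ref{discriminant} yields $\mathrm{im}(\iota)=\Delta_f\cdot\mathsf{H}_f$, but $\Delta_f\cdot\mathsf{Tjurina}(f)=0$ only gives the inclusion $\Delta_f\cdot\mathsf{H}_f\subseteq\mathrm{im}(\iota)$ that you also obtain. In fact the on-the-nose equality fails in general: for $f=x^2+y^3-t$ over $R=\mathbb{Q}[t]$ one has $\mu(f)=2$ and $\Delta_f=t^2$ up to a unit, so $\mathsf{H}_f/\Delta_f\mathsf{H}_f\cong(R/t^2R)^{2}$ has $\mathbb{Q}$-dimension $4$, while $\mathsf{H}_f/\mathrm{im}(\iota)\cong\mathsf{Tjurina}(f)\cong\mathbb{Q}[y]/(y^2)$ has $\mathbb{Q}$-dimension $2$; hence $\mathrm{im}(\iota)\supsetneq\Delta_f\cdot\mathsf{H}_f$. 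So the only reading of ``$\mathsf{H}'_f\cong\Delta_f\cdot\mathsf{H}_f$'' that survives is the abstract one you fall back on (both sides are free of rank $\mu(f)$ because $\Delta_f$ is a non-zero-divisor), and you should not try to ``identify the two submodules on the nose''. What is actually used downstream is Corollary \ref{emb}, namely that $\iota$ is a monomorphism with $\mathrm{coker}(\iota)\cong\mathsf{Tjurina}(f)$ --- and that much your argument establishes completely.
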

\begin{proof}
The first assertion follows from the last corollary. For the second assertion, consider the homomorphism $\iota:\mathsf{H}'_f\rightarrow \mathsf{H}_f,\omega\mapsto df\wedge \omega$. It is injective by \cite[Proposition 10.9]{hodge1}. On the other hand, $\mathsf{Tjurina}(f)$ is isomorphic to the quotient $\mathsf{H}_f/im(\iota)$ by means of the map $P\mapsto Pdx$. From Proposition \ref{discriminant}, we conclude that $im(\iota)=\Delta_f\cdot\mathsf{H}_f$.
\end{proof}
\begin{cor}[From the proof]\label{emb} Suppose $R$ is Cohen-Macaulay and $\Delta_f\neq 0$. Let $\iota:\mathsf{H}'_f\rightarrow \mathsf{H}_f,\omega\mapsto df\wedge \omega$. Then, $\iota$ is a monomorphism such that $coker(\iota)\cong \mathsf{Tjurina}(f)$. 

\end{cor}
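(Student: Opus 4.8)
Since this corollary is explicitly extracted ``from the proof'' of the preceding proposition, the plan is simply to isolate and verify directly the two assertions it contains. \emph{Injectivity of $\iota$.} I would invoke \cite[Proposition 10.9]{hodge1}, which states precisely that $\omega\mapsto df\wedge\omega$ defines an injective $R$-linear map $\mathsf{H}'_f\hookrightarrow\mathsf{H}_f$; this is the only point at which the tameness of $f$ together with the hypotheses ``$R$ Cohen--Macaulay'' and ``$\Delta_f\neq 0$'' are actually used, and I would not reprove it. For the record, well-definedness of $\iota$ on its own is the elementary observation that $df\wedge\big(f\Omega^n_{\mathbb{A}^n_R/\mathsf{T}}+df\wedge\Omega^{n-1}_{\mathbb{A}^n_R/\mathsf{T}}+d\Omega^{n-1}_{\mathbb{A}^n_R/\mathsf{T}}\big)\subseteq f\Omega^{n+1}_{\mathbb{A}^n_R/\mathsf{T}}+df\wedge d\Omega^{n-1}_{\mathbb{A}^n_R/\mathsf{T}}$, i.e. the defining relations of $\mathsf{H}'_f$ are sent into those of $\mathsf{H}_f$.

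\emph{Computation of the cokernel.} I would unwind the definitions of the Brieskorn modules. Write $\Omega^{n+1}_{\mathbb{A}^n_R/\mathsf{T}}=R[x_1,\ldots,x_{n+1}]\,dx$. Since $\Omega^n_{\mathbb{A}^n_R/\mathsf{T}}$ is free on $\widehat{dx_1},\ldots,\widehat{dx_{n+1}}$ and $df\wedge\widehat{dx_i}=\pm\,\partial_i f\,dx$, one gets $df\wedge\Omega^n_{\mathbb{A}^n_R/\mathsf{T}}=\mathsf{Jacob}(f)\,dx$, and in particular $df\wedge d\Omega^{n-1}_{\mathbb{A}^n_R/\mathsf{T}}\subseteq\mathsf{Jacob}(f)\,dx$. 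Hence, modulo the defining relations $f\Omega^{n+1}_{\mathbb{A}^n_R/\mathsf{T}}=\langle f\rangle\,dx$ and $df\wedge d\Omega^{n-1}_{\mathbb{A}^n_R/\mathsf{T}}$ of $\mathsf{H}_f$, the submodule $\im(\iota)$ (the image of $df\wedge\Omega^n_{\mathbb{A}^n_R/\mathsf{T}}$) together with those relations is exactly $(\langle f\rangle+\mathsf{Jacob}(f))\,dx$, so that
\[ \coker(\iota)=\mathsf{H}_f/\im(\iota)\;\cong\;\frac{R[x_1,\ldots,x_{n+1}]\,dx}{\big(\langle f\rangle+\mathsf{Jacob}(f)\big)\,dx}\;\cong\;\mathsf{Tjurina}(f), \]
the last isomorphism being induced by $P\mapsto P\,dx$, exactly the map used in the proof of the preceding proposition; this is the same mechanism by which Proposition \ref{eisenbud} describes top differentials on $V(f)$.

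\emph{Expected obstacle.} I expect none of substance: the cokernel identification is a formal manipulation of the Brieskorn-module definitions, and the single non-formal ingredient, injectivity of $\iota$, is quoted from \cite{hodge1} rather than reestablished. Were a self-contained proof of injectivity desired, the route would be to realize $\wedge df$ as multiplication by a non-zero-divisor on the Koszul-type complexes computing $\mathsf{H}'_f$ and $\mathsf{H}_f$, the condition $\Delta_f\neq 0$ over the Cohen--Macaulay ring $R$ ruling out the relevant torsion; but since the corollary is flagged as a consequence of the argument already given, I would not carry this out.
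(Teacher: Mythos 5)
Your proposal is correct and follows essentially the same route as the paper: the paper's proof of the preceding proposition likewise cites \cite[Proposition 10.9]{hodge1} for the injectivity of $\iota$ and identifies $\mathsf{H}_f/\mathrm{im}(\iota)$ with $\mathsf{Tjurina}(f)$ via $P\mapsto P\,dx$. Your only addition is to spell out the computation $df\wedge\Omega^n_{\mathbb{A}^n_R/\mathsf{T}}=\mathsf{Jacob}(f)\,dx$ behind that identification, which the paper leaves implicit.
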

\begin{cor}
If $R$ is Cohen-Macaulay and $\Delta_f\neq 0$, then $H_{dR}^n(V(f)/\mathsf{T})_{\Delta_f}\cong (\mathsf{H}_f)_{\Delta_f}\cong (\mathsf{H}'_f)_{\Delta_f}$.
\end{cor}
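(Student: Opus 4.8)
The plan is to obtain both isomorphisms by assembling results already established, so that no genuinely new argument is required. The isomorphism $H_{dR}^n(V(f)/\mathsf{T})_{\Delta_f}\cong (\mathsf{H}'_f)_{\Delta_f}$ is exactly the first assertion of the preceding proposition and may be quoted verbatim. Thus the only thing left to prove is $(\mathsf{H}'_f)_{\Delta_f}\cong (\mathsf{H}_f)_{\Delta_f}$, after which transitivity of $\cong$ yields the full chain.

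For that remaining isomorphism I would start from the short exact sequence provided by Corollary \ref{emb}, valid because $R$ is Cohen-Macaulay and $\Delta_f\neq 0$:
\[
0\longrightarrow \mathsf{H}'_f\xrightarrow{\ \iota\ }\mathsf{H}_f\longrightarrow \mathsf{Tjurina}(f)\longrightarrow 0 .
\]
Then I would localize at the multiplicative set generated by $\Delta_f$. Since localization is an exact functor, this gives a short exact sequence
\[
0\longrightarrow (\mathsf{H}'_f)_{\Delta_f}\longrightarrow (\mathsf{H}_f)_{\Delta_f}\longrightarrow \bigl(\mathsf{Tjurina}(f)\bigr)_{\Delta_f}\longrightarrow 0 .
\]
By Proposition \ref{discriminant} we have $\Delta_f\cdot\mathsf{Tjurina}(f)=0$, and $\Delta_f$ is a unit in the localization, so $\bigl(\mathsf{Tjurina}(f)\bigr)_{\Delta_f}=0$. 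Hence $\iota$ becomes an isomorphism after inverting $\Delta_f$, which is the claim; chaining with the first isomorphism finishes the proof.

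There is essentially no serious obstacle here: all the substance lives in the exact sequence of Corollary \ref{emb} and in the annihilation property $\Delta_f\cdot\mathsf{Tjurina}(f)=0$, and the only verification needed is the exactness of localization, which is standard. As an alternative route, one could instead invoke the isomorphism $\mathsf{H}'_f\cong \Delta_f\cdot\mathsf{H}_f$ from the preceding proposition together with the fact that multiplication by $\Delta_f$ is invertible on $(\mathsf{H}_f)_{\Delta_f}$, but passing through the short exact sequence is the cleanest presentation.
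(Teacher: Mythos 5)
Your proposal is correct and is exactly the argument the paper leaves implicit: the first isomorphism is quoted from the preceding proposition, and the second follows by localizing the exact sequence of Corollary \ref{emb} (or equivalently from $\mathsf{H}'_f\cong\Delta_f\cdot\mathsf{H}_f$) and killing $\mathsf{Tjurina}(f)$ via $\Delta_f\cdot\mathsf{Tjurina}(f)=0$. No gaps; the exactness of localization is the only auxiliary fact needed and you invoke it correctly.
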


The following theorem gives us explicit bases for the Brieskorn modules $\mathsf{H}_f$ and $\mathsf{H}_f'$ in terms of a basis for $\mathsf{Milnor}(f)$. Together with the previous corollary, it allows us to compute $H^n_{dR} (V(f)/\mathsf{T})$ in many cases. We use the notations of Proposition \ref{basis}.

\begin{teo}\label{ases}
Suppose $R$ is Cohen-Macaulay of characteristic zero and $\mathbb{Q}\subset R$. Then, the sets $\{x^{\beta}dx|\, \beta\in I\}$ and $\{x^{\beta}\eta|\, \beta\in I\}$ are $R$-bases for $\mathsf{H}_f$ and $\mathsf{H}_f'$, respectively.
\end{teo}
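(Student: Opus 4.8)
The plan is to prove Theorem~\ref{ases} by reducing from the tame polynomial $f$ to its top quasi-homogeneous part $g$, exactly as Proposition~\ref{basis} does for $\mathsf{Milnor}$. First I would recall the graded structure: the grading on $R[x_1,\ldots,x_{n+1}]$ with $\deg x_i = \nu_i$ extends to $\Omega^{\bullet}_{\mathbb{A}^n_R/\mathsf{T}}$ by setting $\deg(x^{\beta}dx) = |\beta|_{\nu} + d$ where $|\beta|_{\nu} = \sum \nu_i\beta_i + \sum\nu_i$, and similarly $\deg(x^{\beta}\eta) = |\beta|_{\nu} + d$ (the factor $\nu_i/d$ in $\eta$ is chosen precisely so that $d(x^\beta \eta)$ is proportional to $(|\beta|_\nu)\, x^\beta dx$ when $f=g$ is homogeneous, via the Euler relation). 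The submodules $f\Omega^{n+1} + df\wedge d\Omega^{n-1}$ and $f\Omega^n + df\wedge\Omega^{n-1} + d\Omega^{n-1}$ are \emph{not} graded when $f$ is only tame, but they carry an increasing filtration by degree, and the associated graded pieces are governed by the corresponding submodules built from $g$.

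\textbf{Key steps.} (1) Handle the quasi-homogeneous case $f = g$ first. Here everything is graded. Using the Euler relation $\sum \nu_i x_i \partial_i g = d\cdot g$ one shows $dg\wedge\eta = g\, dx$ up to a nonzero scalar, and more generally that modulo $g\,\Omega + dg\wedge\Omega$ one can pass between the $dx$-presentation and the $\eta$-presentation. The statement that $\{x^\beta dx : \beta\in I\}$ spans $\mathsf{H}_g$ and $\{x^\beta\eta:\beta\in I\}$ spans $\mathsf{H}'_g$ then follows from the fact that $\mathcal{B}=\{x^\beta:\beta\in I\}$ is an $R$-basis of $\mathsf{Milnor}(g)$ together with the Cohen--Macaulay hypothesis, which guarantees the regular-sequence behaviour of $\partial_1 g,\ldots,\partial_{n+1}g$ needed to identify $\Omega^{n+1}/dg\wedge d\Omega^{n-1}$ with a module built from $\mathsf{Milnor}(g)$; this is essentially \cite[Chapter 10]{hodge1} and I would cite it. (2) Pass from $g$ to $f$: filter $\mathsf{H}_f$ (resp.\ $\mathsf{H}'_f$) by the degree filtration induced from $\Omega^{\bullet}$. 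Because $f = g + (\text{lower order})$, the top-degree part of any relation in $f\Omega + df\wedge d\Omega$ is a relation in $g\Omega + dg\wedge d\Omega$, so there is a surjection from $\mathrm{gr}\,\mathsf{H}_f$ onto $\mathsf{H}_g$ (and similarly for the primed versions). (3) Since $\{x^\beta dx:\beta\in I\}$ already generates $\mathsf{H}_g = \mathrm{gr}$-level and these are finitely many elements over a ring $R$ with $\mathbb{Q}\subset R$, a standard filtered-module argument (complete filtration, or Nakayama-type lifting degree by degree) shows the same set generates $\mathsf{H}_f$; freeness/linear independence is inherited because the rank cannot drop — one compares with $\mathsf{H}_g$ which is free of rank $|I| = \mu(g) = \mu(f)$ by Proposition~\ref{basis}. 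The analogous argument with $\eta$ in place of $dx$ handles $\mathsf{H}'_f$, using $\iota:\mathsf{H}'_f\hookrightarrow\mathsf{H}_f$, $\omega\mapsto df\wedge\omega$ from Corollary~\ref{emb} to transport the independence statement.

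\textbf{Main obstacle.} The delicate point is the interplay between the non-homogeneity of $f$ and the module structure over a general Cohen--Macaulay base $R$ (rather than a field): one must be careful that the degree filtration on $\mathsf{H}_f$ is exhaustive and separated, and that passing to the associated graded does not lose information about $R$-torsion. The Cohen--Macaulay hypothesis is exactly what makes $\partial_1 g,\ldots,\partial_{n+1}g$ behave like a regular sequence relative to $R$, so that $\mathsf{H}_g$ is $R$-free of the expected rank and the filtered comparison is faithfully flat in the relevant sense. I expect the cleanest route is to invoke the already-established Proposition~\ref{basis} (which gives the Milnor-module basis statement for $f$ from that for $g$) and then bootstrap: the surjectivity of $\{x^\beta dx\}$ onto $\mathsf{H}_f$ plus the equality of ranks forces it to be a basis. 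Carrying out the degree-filtration bookkeeping rigorously — especially checking that the boundary terms $df\wedge d\Omega^{n-1}$ versus $dg\wedge d\Omega^{n-1}$ differ only in lower degree — is the part that needs genuine care rather than routine computation.
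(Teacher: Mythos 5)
The paper gives no argument of its own for this theorem: its ``proof'' is the single citation \cite[Corollary 10.1, page 150]{hodge1}, and your sketch reconstructs essentially the argument of that cited source --- reduce to the quasi-homogeneous leading part $g$ via the degree filtration, use the Euler relation $dg\wedge\eta=g\,dx$ together with the freeness of $\mathsf{Milnor}(g)$ to settle the graded case, and then lift generation and linear independence from the associated graded of $\mathsf{H}_f$ back to $\mathsf{H}_f$ by a leading-term comparison of $f\Omega^{n+1}+df\wedge d\Omega^{n-1}$ with the corresponding submodule built from $g$. Your outline is sound and takes the same route as the reference the paper defers to, and you correctly identify the two points that require genuine care (separatedness of the degree filtration over a non-field Cohen--Macaulay base, and the fact that the boundary submodules for $f$ and for $g$ agree in top degree), so there is nothing of substance to object to beyond the fact that those two points would still need to be written out in full.
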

\begin{proof}\cite[Corollary 10.1, page 150]{hodge1}.
\end{proof}

The bases in Theorem \ref{ases} will be extensively used in this section, so the following notation will be convenient:

\begin{equation}
\omega_{\beta}:=x^{\beta}dx,\hspace{5mm} \eta_{\beta}:=x^{\beta}\eta, \,\, \beta\in I.
\end{equation}

\noindent\textbf{Gauss-Manin connection and Gauss-Manin system.} \textit{During this section, we suppose that $\Delta_f\neq 0$, and that $R$ is of the form $\mathbb{Q}[a_1,\ldots,a_l]$, for some parameters $a_1,\ldots,a_l$.} 

\begin{df} The Gauss-Manin system associated to $f$ is the $R$-module 
\begin{equation*}
\mathsf{M}_f=\frac{\Omega_{\mathsf{A}/\mathsf{T}}^{n+1}[\frac{1}{f}]}{\Omega_{\mathsf{A}/\mathsf{T}}^{n+1}+d(\Omega_{\mathsf{A}/\mathsf{T}}^{n}[\frac{1}{f}])}.
\end{equation*}

The pole filtration of $\mathsf{M}_{\bullet}$ of $\mathsf{M}_f$ by $R$-modules is
\begin{equation*}
\mathsf{M}_l:=\{\frac{\omega}{f^l}\in\mathsf{M}_f\}.
\end{equation*}
\end{df}

\begin{prop}\label{degreeone} The map $\omega\mapsto \frac{\omega}{f}$ defines an isomorphism $\mathsf{H}_f\cong \mathsf{M}_1$
\end{prop}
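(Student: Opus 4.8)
The plan is to exhibit the natural map explicitly and check it is both well-defined and bijective by exploiting the defining relations of $\mathsf{H}_f$ and $\mathsf{M}_1$. First I would define $\phi:\mathsf{H}_f\to\mathsf{M}_1$ by sending the class of an $(n+1)$-form $\omega$ to the class of $\frac{\omega}{f}$ in $\mathsf{M}_f$, and note that $\frac{\omega}{f}\in\mathsf{M}_1$ by definition of the pole filtration. To see $\phi$ is well-defined I must check that the submodule $f\,\Omega^{n+1}_{\mathbb{A}^n_R/\mathsf{T}}+df\wedge d\Omega^{n-1}_{\mathbb{A}^n_R/\mathsf{T}}$ that is quotiented out in $\mathsf{H}_f$ lands in the submodule $\Omega^{n+1}_{\mathsf{A}/\mathsf{T}}+d(\Omega^n_{\mathsf{A}/\mathsf{T}}[\frac1f])$ that is quotiented out in $\mathsf{M}_f$ after dividing by $f$. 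Indeed, if $\omega=f\psi$ then $\frac{\omega}{f}=\psi\in\Omega^{n+1}_{\mathsf{A}/\mathsf{T}}$, which is zero in $\mathsf{M}_f$; and if $\omega=df\wedge d\xi$ for $\xi\in\Omega^{n-1}$, then $\frac{\omega}{f}=\frac{df\wedge d\xi}{f}=-d\!\left(\frac{d\xi}{f}\right)$ up to sign — more precisely $d\!\left(\frac{d\xi}{f}\right)=-\frac{df}{f^2}\wedge d\xi=-\frac{1}{f}\cdot\frac{df\wedge d\xi}{f}$, so rearranging shows $\frac{df\wedge d\xi}{f}\in d(\Omega^n_{\mathsf{A}/\mathsf{T}}[\frac1f])$ and hence vanishes in $\mathsf{M}_f$. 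So $\phi$ descends to $\mathsf{H}_f$ and has image inside $\mathsf{M}_1$.

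Next I would construct the inverse. Any element of $\mathsf{M}_1$ is represented by $\frac{\omega}{f}$ with $\omega\in\Omega^{n+1}_{\mathsf{A}/\mathsf{T}}$, and I send it back to the class of $\omega$ in $\mathsf{H}_f$; call this $\psi$. For surjectivity of $\phi$ this is automatic once $\psi$ is shown well-defined, and for injectivity of $\phi$ it suffices that $\psi\circ\phi=\mathrm{id}$, which is clear on representatives. So the crux is the well-definedness of $\psi$: I must show that if $\frac{\omega}{f}=0$ in $\mathsf{M}_f$ while still lying in $\mathsf{M}_1$, then $\omega=0$ in $\mathsf{H}_f$. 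Unwinding, $\frac{\omega}{f}=0$ means $\omega=f\beta+f\,d\gamma$ for some $\beta\in\Omega^{n+1}_{\mathsf{A}/\mathsf{T}}$ and $\gamma\in\Omega^n_{\mathsf{A}/\mathsf{T}}[\frac1f]$, where I have multiplied the relation through by $f$. Writing $\gamma=\sum_{k\ge 0}\frac{\gamma_k}{f^k}$ with $\gamma_k\in\Omega^n_{\mathsf{A}/\mathsf{T}}$, the term $f\,d\gamma$ expands as $d\gamma_0$ (pole order $0$) plus lower-order-in-$f$ pieces of the shape $\frac{d\gamma_k}{f^{k-1}}-k\frac{df\wedge\gamma_k}{f^{k}}$; a pole-order bookkeeping then forces successive cancellations. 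The standard argument (as in \cite[Chapter 10]{hodge1}) is to induct on the pole order of $\gamma$: the highest-order term must be killed by a $df\wedge(\cdot)$ contribution, allowing one to trade it for something of lower pole order, and iterating lands one in exactly the submodule $f\Omega^{n+1}+df\wedge d\Omega^{n-1}$ defining $\mathsf{H}_f$. Thus $\omega\equiv 0$ in $\mathsf{H}_f$ and $\psi$ is well-defined.

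The main obstacle is precisely this last pole-order reduction: showing that the relation $\frac{\omega}{f}\in\Omega^{n+1}+d(\Omega^n[\frac1f])$, when the left side has pole order exactly one, can be rewritten so that all higher poles introduced by differentiating $\frac{\gamma_k}{f^k}$ are removed and one is left only with the relations $f\psi$ and $df\wedge d\xi$ allowed in $\mathsf{H}_f$. This is where one genuinely uses that $f$ is tame and $\Delta_f\neq 0$: the Brieskorn module $\mathsf{H}_f$ is then $R$-free of rank $\mu(f)$ by Theorem \ref{ases}, and the de Rham lemma / division properties for tame polynomials (e.g. \cite[Proposition 10.9]{hodge1}) guarantee that every exact form $d\xi$ with a pole along $f$ can be reorganized modulo the Brieskorn relations. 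Once well-definedness of both $\phi$ and $\psi$ is in hand, mutual inverseness is immediate on representatives — $\psi\circ\phi(\omega)=\omega$ and $\phi\circ\psi\!\left(\frac{\omega}{f}\right)=\frac{\omega}{f}$ — and $R$-linearity is visible from the formulas. I would expect to either cite the relevant proposition from \cite{hodge1} for the reduction step or sketch the one-step induction and refer there for the details.
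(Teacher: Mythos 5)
Your overall architecture — a forward map $[\omega]\mapsto[\omega/f]$, then an inverse whose well-definedness reduces to a pole-order induction resting on the injectivity of $df\wedge(\cdot):\mathsf{H}'_f\to\mathsf{H}_f$ — is exactly the standard argument in \cite{hodge1}, which is all the paper itself cites for this proposition, so the strategy is fine and your level of detail on the inverse (deferring the reduction step to \cite[Proposition 10.9]{hodge1}) matches what the paper provides.

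There is, however, a concrete error in your well-definedness step. You compute $d\bigl(\tfrac{d\xi}{f}\bigr)=-\tfrac{df}{f^2}\wedge d\xi$ and then ``rearrange'' to conclude $\tfrac{df\wedge d\xi}{f}\in d\bigl(\Omega^n_{\mathsf{A}/\mathsf{T}}[\tfrac1f]\bigr)$. The rearrangement amounts to multiplying an element of $d\bigl(\Omega^n_{\mathsf{A}/\mathsf{T}}[\tfrac1f]\bigr)$ by $-f$, and that submodule is only an $R$-submodule of $\mathsf{M}_f$'s denominator, not an $R[x_1,\dots,x_{n+1}]$-submodule, so the step does not follow; what your identity actually shows is $\tfrac{df\wedge d\xi}{f^2}\in d\bigl(\Omega^n[\tfrac1f]\bigr)$, which is the wrong pole order. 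The claim you need is still true, but via a different primitive: since $d(df\wedge\xi)=-df\wedge d\xi$ and $df\wedge df=0$,
\[
d\Bigl(\frac{df\wedge\xi}{f}\Bigr)=-\frac{df\wedge df\wedge\xi}{f^{2}}-\frac{df\wedge d\xi}{f}=-\frac{df\wedge d\xi}{f},
\]
so $\tfrac{df\wedge d\xi}{f}=-d\bigl(\tfrac{df\wedge\xi}{f}\bigr)\in d\bigl(\Omega^n[\tfrac1f]\bigr)$. With that substitution the forward map is well defined and the rest of your outline goes through; just be aware that the pole-order reduction you sketch for injectivity is the entire content of the proposition, and it is precisely there that tameness (freeness of the Brieskorn modules and the injectivity of $df\wedge(\cdot)$) is genuinely used rather than in the formal manipulations.
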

\begin{proof}
\cite[Proposition 11.2, page 159]{hodge1}.
\end{proof}

By the previous proposition, we can identify $\mathsf{H}_f$ with $\mathsf{M}_1$. By Corollary \ref{emb}, $\iota:\mathsf{H}'_f\rightarrow \mathsf{H}_f,\omega\mapsto df\wedge \omega$ is an injective homomorphism. Let us define $\mathsf{M}_0:= im(\iota)$. Then, the same corollary implies that $\mathsf{M}_1/\mathsf{M}_0\cong \mathsf{Tjurina}(f)\cong \Omega^{n+1}_{V(f)/\mathsf{T}}$. The following proposition extends this result to the whole filtration $\mathsf{M}_{\bullet}$.

\begin{prop}
For each $l\geq 1$, the map $\omega\mapsto \frac{\omega}{f^l}$ defines an isomorphism $\Omega^{n+1}_{V(f)/\mathsf{T}}\cong \mathsf{M}_l/\mathsf{M}_{l-1}$. In particular, $(\mathsf{M}_l)_{\Delta_f}=(\mathsf{M}_{l-1})_{\Delta_f}$. 
\end{prop}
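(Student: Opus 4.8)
The statement to prove is: for each $l \geq 1$, the map $\omega \mapsto \frac{\omega}{f^l}$ defines an isomorphism $\Omega^{n+1}_{V(f)/\mathsf{T}} \cong \mathsf{M}_l/\mathsf{M}_{l-1}$, and consequently $(\mathsf{M}_l)_{\Delta_f} = (\mathsf{M}_{l-1})_{\Delta_f}$.

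The plan is to proceed by induction on $l$, using the case $l=1$ as the base, which has already been established: Proposition \ref{degreeone} gives $\mathsf{H}_f \cong \mathsf{M}_1$ via $\omega \mapsto \frac{\omega}{f}$, and then the remark following it (using Corollary \ref{emb} with $\mathsf{M}_0 := \mathrm{im}(\iota)$) yields $\mathsf{M}_1/\mathsf{M}_0 \cong \mathsf{Tjurina}(f) \cong \Omega^{n+1}_{V(f)/\mathsf{T}}$. For the inductive step, I would first show the map $\omega \mapsto \frac{\omega}{f^l}$ is well-defined and surjective onto $\mathsf{M}_l/\mathsf{M}_{l-1}$: surjectivity is essentially the definition of the pole filtration, since every element of $\mathsf{M}_l$ is represented by $\frac{\omega}{f^l}$ for some $\omega \in \Omega^{n+1}_{\mathsf{A}/\mathsf{T}}$, and modulo $\mathsf{M}_{l-1}$ it only depends on the class of $\omega$ in $\Omega^{n+1}_{V(f)/\mathsf{T}}$ (adding $f\eta$ or $df\wedge \xi$ to $\omega$ lowers the pole order after using the relation $d\left(\frac{\xi}{f^{l-1}}\right) = \frac{d\xi}{f^{l-1}} - (l-1)\frac{df\wedge\xi}{f^l}$ in $\mathsf{M}_f$).

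The real content is injectivity, i.e., that if $\frac{\omega}{f^l} \in \mathsf{M}_{l-1}$ then $\omega \in f\Omega^{n+1}_{\mathsf{A}/\mathsf{T}} + df\wedge\Omega^n_{\mathsf{A}/\mathsf{T}}$. The key identity is: for $\xi \in \Omega^n_{\mathsf{A}/\mathsf{T}}$, one has $\frac{df\wedge\xi}{f^l} = \frac{1}{l-1}\left(\frac{d\xi}{f^{l-1}} - d\left(\frac{\xi}{f^{l-1}}\right)\right)$ in $\mathsf{M}_f$, so that modulo $\mathsf{M}_{l-1}$ adding $df\wedge\xi$ to the numerator is harmless, and modulo $f\Omega^{n+1}$ it is obviously harmless. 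Thus the induced map $\Omega^{n+1}_{V(f)/\mathsf{T}} \to \mathsf{M}_l/\mathsf{M}_{l-1}$ is well-defined; for injectivity I would compare with the already-known case via multiplication by $f$: the diagram relating $\frac{\omega}{f^{l-1}} \in \mathsf{M}_{l-1}$ and $\frac{\omega}{f^l} \in \mathsf{M}_l$ (multiplication by $f$ on numerators, or equivalently the inclusion of pole-filtration pieces) is compatible with the surjections onto the graded pieces, and since both source identifications are $\Omega^{n+1}_{V(f)/\mathsf{T}}$ one gets that the graded map $\mathrm{Gr}_{l-1} \to \mathrm{Gr}_l$ induced by "$\cdot f$" is an isomorphism; combined with surjectivity and the base case, a diagram chase (or a direct dimension/rank count after localizing) forces injectivity at level $l$. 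Alternatively, and perhaps more cleanly, I would invoke the description of $\mathsf{M}_f$ as a free $R[\partial_s]$-module or use the $\partial_s$-action: on $\mathsf{M}_f$, $\partial_s$ sends $\mathsf{M}_l \to \mathsf{M}_{l+1}$ and induces an isomorphism on graded pieces $\mathsf{M}_l/\mathsf{M}_{l-1} \xrightarrow{\sim} \mathsf{M}_{l+1}/\mathsf{M}_l$ for $l \geq 1$ (this is standard for tame polynomials, cf.\ \cite[Chapter 11]{hodge1}), which immediately bootstraps the $l=1$ case to all $l$.

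The last assertion, $(\mathsf{M}_l)_{\Delta_f} = (\mathsf{M}_{l-1})_{\Delta_f}$, then follows because localizing at $\Delta_f$ is exact, so $(\mathsf{M}_l/\mathsf{M}_{l-1})_{\Delta_f} \cong (\Omega^{n+1}_{V(f)/\mathsf{T}})_{\Delta_f} \cong (\mathsf{Tjurina}(f))_{\Delta_f}$, and the latter vanishes by Proposition \ref{discriminant} ($\Delta_f \cdot \mathsf{Tjurina}(f) = 0$, so its localization at $\Delta_f$ is zero); hence $(\mathsf{M}_{l-1})_{\Delta_f} \hookrightarrow (\mathsf{M}_l)_{\Delta_f}$ is onto.

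I expect the main obstacle to be making the injectivity argument in the inductive step fully rigorous without circularity — specifically, verifying that the "multiplication by $f$" (or $\partial_s$) map is genuinely an isomorphism on the graded pieces rather than merely surjective. This is where one must actually use tameness of $f$ and the freeness statements from Theorem \ref{ases} (the explicit $R$-bases $\{x^\beta dx\}$ and $\{x^\beta\eta\}$ of the Brieskorn modules), reducing the claim to a statement about ranks of free $R$-modules, or else cite the corresponding structural result from \cite[Chapter 11]{hodge1} directly.
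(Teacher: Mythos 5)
First, be aware that the paper does not actually prove this proposition: its ``proof'' is the single citation \cite[Proposition 11.2, page 159]{hodge1}, so there is no in-paper argument to compare against. Within your sketch, the parts you treat as routine are indeed correct and complete: well-definedness and surjectivity of $\omega\mapsto\frac{\omega}{f^{l}}$ follow for $l\geq 2$ from the identity $d\bigl(\frac{\xi}{f^{l-1}}\bigr)=\frac{d\xi}{f^{l-1}}-(l-1)\frac{df\wedge\xi}{f^{l}}$ in $\mathsf{M}_f$ (which sends $df\wedge\Omega^{n}_{\mathsf{A}/\mathsf{T}}$ and $f\Omega^{n+1}_{\mathsf{A}/\mathsf{T}}$ into $\mathsf{M}_{l-1}$), the case $l=1$ being the already-established $\mathsf{M}_1/\mathsf{M}_0\cong\mathsf{Tjurina}(f)$ from Corollary \ref{emb}; and the final localization step is right, since $(\mathsf{M}_l/\mathsf{M}_{l-1})_{\Delta_f}\cong\mathsf{Tjurina}(f)_{\Delta_f}=0$ by Proposition \ref{discriminant}.

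The gap is exactly where you suspected, and your first proposed repair fails. You conflate two different maps: the filtration inclusion $\mathsf{M}_{l-1}\hookrightarrow\mathsf{M}_l$, which on numerators is multiplication by $f$ (namely $\frac{\omega}{f^{l-1}}=\frac{f\omega}{f^{l}}$), and the ``same numerator, higher pole order'' assignment $\frac{\omega}{f^{l-1}}\mapsto\frac{\omega}{f^{l}}$. The first induces the \emph{zero} map on adjacent graded pieces (by definition $\mathsf{M}_{l-1}$ dies in $\mathsf{M}_l/\mathsf{M}_{l-1}$), just as multiplication by $f$ is zero on $\Omega^{n+1}_{V(f)/\mathsf{T}}=\Omega^{n+1}_{\mathsf{A}/\mathsf{T}}/(f\Omega^{n+1}_{\mathsf{A}/\mathsf{T}}+df\wedge\Omega^{n}_{\mathsf{A}/\mathsf{T}})$, so your commutative square is a square of zero maps and yields no injectivity. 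The second map is, up to the invertible scalar $l$, the operator $\nabla_{\partial_s}$ for the family $f-s$ (your second route); it is visibly surjective on graded pieces, but its injectivity on graded pieces is precisely equivalent to the statement being proved and is where tameness and the freeness results (Theorem \ref{ases}) must genuinely enter. Since you resolve this by citing \cite[Chapter 11]{hodge1}, your argument, once route (a) is discarded, collapses to the same citation the paper itself gives; that is acceptable as a proof by reference, but it is not an independent proof of the one nontrivial step.
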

\begin{proof}
\cite[Proposition 11.2, page 159]{hodge1}.
\end{proof}

The two previous propositions allow us to conclude that $$(\mathsf{M}_{f})_{\Delta_f}=(\mathsf{M}_1)_{\Delta_f}\cong (\mathsf{H}_f)_{\Delta_f}\cong (\mathsf{H}'_f)_{\Delta_f}.$$ 

The main importance of the introduction of the Gauss-Manin system is that it allows us to explicitly define important concepts like the Gauss-Manin connection and the mixed Hodge structure associated to these modules.

The Gauss-Manin connection can be directly defined on $\mathsf{M}$, as done in \cite{hodge1}. Let
\begin{equation*}
d_{par}:R[x_1,\ldots,x_{n+1}]\rightarrow \Omega_{\mathsf{T}}\otimes_{R}R[x_1,\ldots,x_{n+1}]
\end{equation*} be the differentiation with respect to parameters $a_1,\ldots,a_l$. 

\begin{df}The \textbf{Gauss-Manin connection} on $\mathsf{M}$ is defined to be 
$$\nabla:\mathsf{M}\rightarrow \Omega_{\mathsf{T}}\otimes_{R} \mathsf{M},$$
$$\nabla(\frac{Pdx}{f^k})=\frac{(d_{par}P)f-kP(d_{par}f)}{f^{k+1}}dx.$$ 
\end{df}
Observe that applying $\nabla$ apparently increases the pole order. If we allow division by the discriminant $\Delta$, we can indeed reduce the pole order. We proceed to explain this in detail.
\begin{prop}
\begin{equation*}
\nabla(\mathsf{M}_i)\subset \frac{1}{\Delta}\Omega_{\mathsf{T}}\otimes_{R} \mathsf{M}_i.
\end{equation*}
\end{prop}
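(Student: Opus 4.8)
The plan is to show that applying the Gauss--Manin connection $\nabla$ to an element of pole order $l$ produces something of pole order at most $l+1$ divided by $\Delta$, which after clearing denominators can be rewritten with pole order $l$. Concretely, take a class in $\mathsf{M}_i$ represented by $\frac{P\,dx}{f^i}$ with $P\in R[x_1,\dots,x_{n+1}]$. By the definition of $\nabla$ we have $\nabla\left(\frac{P\,dx}{f^i}\right)=\frac{\bigl((d_{par}P)f - iP(d_{par}f)\bigr)dx}{f^{i+1}}$, which a priori only lies in $\Omega_{\mathsf{T}}\otimes_R\mathsf{M}_{i+1}$. So the content of the proposition is that this increase in pole order is spurious once we localize at $\Delta$.

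The key step is to invoke the structure results just established: by the last corollaries, $(\mathsf{M}_{i+1})_{\Delta_f}=(\mathsf{M}_i)_{\Delta_f}=(\mathsf{M}_f)_{\Delta_f}$, since the successive quotients $\mathsf{M}_l/\mathsf{M}_{l-1}\cong\mathsf{Tjurina}(f)$ are killed by $\Delta_f$. Hence the image of $\nabla\left(\frac{P\,dx}{f^i}\right)$ in $\Omega_{\mathsf{T}}\otimes_R(\mathsf{M}_{i+1})_{\Delta_f}$ already lies, after inverting $\Delta$, in $\Omega_{\mathsf{T}}\otimes_R(\mathsf{M}_i)_{\Delta_f}$. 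To get the sharper statement with a single power of $\Delta$ in the denominator, I would make the pole-reduction explicit: the relation $\Delta_f\in\mathsf{Jacob}(f)+\langle f\rangle$ from Proposition \ref{discriminant} lets one write $\Delta_f = Af + \sum_i B_i\,\partial_{x_i}f$ for suitable $A,B_i\in R[x_1,\dots,x_{n+1}]$. Multiplying the order-$(i+1)$ form by $\Delta_f$ and using this identity, together with the standard reduction-of-pole-order formula (integration by parts: $d\bigl(\tfrac{B_i\widehat{dx_i}}{f^i}\bigr)$ rewrites $\tfrac{(\partial_{x_i}f)B_i\,dx}{f^{i+1}}$ modulo exact forms and lower pole order), one expresses $\Delta_f\cdot\nabla\left(\frac{P\,dx}{f^i}\right)$ as an element of $\Omega_{\mathsf{T}}\otimes_R\mathsf{M}_i$. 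This gives exactly $\nabla(\mathsf{M}_i)\subset\frac{1}{\Delta}\Omega_{\mathsf{T}}\otimes_R\mathsf{M}_i$.

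I would then note that the argument is uniform in $i$ (the constants $A,B_i$ depend only on $f$, not on $i$), and that $R$ being Cohen--Macaulay with $\Delta_f\neq 0$ ensures the localization $(-)_{\Delta_f}$ behaves well and that the Brieskorn-module identifications used are valid. The main obstacle is the bookkeeping in the explicit pole-reduction step: one must check that the exact form subtracted in the integration-by-parts move genuinely lies in the submodule defining $\mathsf{M}_f$ (i.e.\ in $d(\Omega^n_{\mathsf{A}/\mathsf{T}}[\tfrac1f])$) and that the remaining terms land in $\mathsf{M}_i$ rather than only in $\mathsf{M}_{i+1}$ — this is where the relation $\Delta_f\in\mathsf{Jacob}(f)+\langle f\rangle$ is doing the real work, and it is essentially \cite[Proposition 11.2 and its proof]{hodge1} repackaged. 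Alternatively, if one is content with the weaker bound $\nabla(\mathsf{M}_i)\subset\frac{1}{\Delta}\Omega_{\mathsf{T}}\otimes_R\mathsf{M}_{i}$ stated, one can bypass the explicit computation entirely and argue purely formally from $(\mathsf{M}_{i+1})_{\Delta}=(\mathsf{M}_i)_{\Delta}$ plus the observation that $\Delta\cdot\nabla$ preserves the pole filtration, which is the cleaner route I would take in the write-up.
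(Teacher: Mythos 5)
Your proposal is correct and takes essentially the same approach as the paper: the paper's proof is exactly the explicit pole-reduction you describe, using $\Delta_f\in\mathsf{Jacob}(f)+\langle f\rangle$ to write $\Delta P=fQ+\sum_i Q_i\frac{\partial f}{\partial x_i}$ and thereby rewrite $\frac{\omega}{f^k}$ as $\frac{1}{\Delta}\frac{\omega_1}{f^{k-1}}$ in $\mathsf{M}_{\Delta}$. The purely formal fallback you mention at the end (via $(\mathsf{M}_{i+1})_{\Delta}=(\mathsf{M}_i)_{\Delta}$) is not what the paper uses and, as you note, only yields the statement after inverting arbitrary powers of $\Delta$, so the explicit computation is the right choice.
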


\begin{proof}
Let $\omega=Pdx\in \Omega_{\mathsf{A}/\mathsf{T}}^{n+1}, P\in R[x_1,\ldots,x_{n+1}].$ Then, by Proposition \ref{discriminant}, $\Delta P=fQ+\sum_i Q_i\frac{\partial f}{\partial x_i},$ for some $Q,Q_i \in R[x_1,\ldots,x_{n+1}].$ Therefore, $\Delta \omega=f\omega_1+df\wedge d\omega_2$, with $\omega_1=Qdx\in \Omega_{\mathsf{A}/\mathsf{T}}^{n+1}.$ In this way we conclude $\frac{\omega}{f^k}=\frac{1}{\Delta}\frac{\omega_1}{f^{k-1}}$ in $\mathsf{M}_{\Delta.}$
\end{proof}
Using this reduction procedure, we can reduce poles until degree at most $1$, which gives us the Gauss-Manin connection \begin{equation*}
\nabla:\mathsf{H}\rightarrow \frac{1}{\Delta}\Omega_{\mathsf{T}}\otimes_R \mathsf{H}.
\end{equation*}

For the following proposition, let us define $A_{\beta}:=\frac{deg(\omega_{\beta})}{d}$ for every $\beta\in I$. 

\begin{df}
We define $W_n=W_n(\mathsf{M}_f)_{\Delta_f}$ to be the $R_{\Delta_f}$-submodule of $(\mathsf{M}_f)_{\Delta_f}$ generated by the forms $$\frac{\omega_{\beta}}{f^l},\, \beta\in I,\, A_{\beta}<l,$$

and call $$0:=W_{n-1}\subset W_n\subset W_{n+1}=:(\mathsf{M}_f)_{\Delta_f}$$ the \textbf{weight filtration} of $(\mathsf{M}_f)_{\Delta_f}$.

We also define $F^i=F^i(\mathsf{M}_f)_{\Delta_f}$ to be the $R_{\Delta_f}$-submodule of $(\mathsf{M}_f)_{\Delta_f}$ generated by the forms $$\frac{\omega_{\beta}}{f^l},\, \beta\in I,\, A_{\beta}\leq l\leq n+1-i,$$

and call $$0=F^{n+1}\subset F^n\subset F^{n-1}\subset\cdots\subset F^0$$ the \textbf{Hodge filtration} of $(\mathsf{M}_f)_{\Delta_f}$.

\end{df}

\noindent\textbf{Tame polynomials applied to our case.} Since our polynomial $$ f=y^2w-4x^3+3axw^2+bw^3+cxw-\frac{1}{2}(dw^2+w^4)$$

has null-discriminant, we cannot apply directly tame polynomial theory. Let us consider a complex number $s$ as in \S \ref{defor}. Since $U_s$ is smooth, we have that $f-s$ is tame polynomial with non-zero discriminant. Therefore, the Brieskorn modules $\mathsf{H}_{f-s}$, $\mathsf{H}'_{f-s}$, and the Gauss-Manin system $\mathsf{M}_{f-s}$ are isomorphic to $H^2(U_s,\mathbb{C})$.

Let us consider the form

$$ \omega_1:=\omega_{(0,0,0)}=\frac{dx\wedge dy \wedge dw}{f-s}.$$

Then, $\omega_1\in F^2\mathsf{M}_{f-s}$. Since $A_{(0,0,0)}=\frac{23}{24}<1$, $\omega_1$ has no poles at infinity. Therefore, this form under the isomorphism \eqref{god} gives us a form $\omega\in H^2(U,\mathbb{C})$ with zero residue. By the Poincaré residue sequence, this form comes from a form, which we will also call $\omega$, such that $\omega\in F^2H^2(Y,\mathbb{C})$ (recall that by Theorem \ref{deligne}, $H^2(Y,\mathbb{C})$ has a pure Hodge structure of weight $2$). Therefore, this form is mapped to a holomorphic for under the pullback $\rho^*:H^2(Y,\mathbb{C})\rightarrow H^2(X,\mathbb{C})$ of the resolution of singularities $\rho:X\rightarrow Y$. 

By applying the Gauss-Manin connection defined in the previous section to $\omega_1$ to produce $\omega_2,\ldots,\omega_5$ such that $\omega_2,\ldots,\omega_4\in F^1\mathsf{M}_{f-s}$, and $\omega_1,\ldots,\omega_5$ are linearly independent over $\mathbb{Q}(a,b,c,d,s)$, therefore linearly independent over a codimension one Zariski open subset of $Spec(\mathbb{Q}[a,b,c,d,s])$. One way of doing this was pursued in \S6.9 of \cite{humbert}. Another path for finding such a linearly independent set of forms would involve the following theorem

\begin{teo}\label{compatible}
For the integers $d_{i,j,k}$ in the table below, the sets

$$\mathcal{B}^m_2:=\{\frac{\omega_{i,j,k}}{\tilde{f}^{3-m}}|\, 3-m-\frac{d_{i,j,k}+1}{24}<A_{i,j,k}<3-m\},$$
$$\mathcal{B}^m_3:=\{\frac{\omega_{i,j,k}}{\tilde{f}^{3-m}}|\,A_{i,j,k}=3-m\}$$

form a basis of the $\mathbb{C}$-modules $Gr_F^m Gr_2^W\mathsf{H}$ and $Gr_F^m Gr_3^W\mathsf{H}$ specialized at parameters $(a,b,c,d,s)$ that lie outside a certain algebraic $\Sigma$ locus of codimension $1$, which includes the discriminant locus.

\begin{center}
\begin{tabular}{|c|c|} 
\hline
$(i,j,k)$ & $d_{i,j,k}$ \\
\hline
$(1,0,3)$& $3$\\
$(1,0,2)$& $3$\\
$(0,0,3)$& $13$\\
$(1,1,0)$& $23$\\
$(1,0,1)$& $21$\\
$(0,0,2)$& $25$\\
$(1,0,0)$& $23$\\
$(0,1,0)$& $33$\\
$(0,0,1)$& $37$\\
$(0,0,0)$& $49$\\

\hline
\end{tabular}
\end{center}

The vanishing locus is $$\Sigma=V(\Delta_{f})\cup V(c_{1,0,3}) \cup V(c_{1,0,2}) \cup V(b_{1,0,1})\cup V(c_{1,0,1}) \cup V(c_{0,0,2})\cup V(s) \cup V(a) \subset \mathbb{C}^5$$

\end{teo}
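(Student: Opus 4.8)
The statement is a completely explicit description of the graded pieces of the mixed Hodge structure on $\mathsf H=\mathsf H_{f-s}$, and the plan is to run the pole--order reduction algorithm of \cite{hodge1} for the concrete polynomial $\tilde f:=f-s$. First set up the weighted structure: with $\deg x=8$, $\deg y=9$, $\deg w=6$ and $d=\deg\tilde f=24$, the highest--degree part of $\tilde f$ is the quasi--homogeneous $g=y^2w-4x^3-\tfrac12 w^4$ with $\mu(g)=10$, and the ten multi--indices $(i,j,k)$ of the table are precisely a monomial basis of $\mathsf{Milnor}(g)$; by Proposition~\ref{basis} they also give an $R$--basis of $\mathsf{Milnor}(\tilde f)$ with $R=\mathbb C[a,b,c,d,s]$, so by Theorem~\ref{ases} the $\omega_{\beta}=x^{\beta}dx$ form an $R$--basis of $\mathsf H_{\tilde f}$, and one computes $A_{i,j,k}=\tfrac1{24}(8i+9j+6k+23)$. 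None of these ten numbers is an integer, which already forces $\mathcal B^m_3=\emptyset$ for every $m$, i.e. $Gr^W_3\mathsf H=0$ (consistent with the $\mathcal B^m_3$ part of the statement); so all the content is in the assertions about the $\mathcal B^m_2$.

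Next I would record the two reduction moves. For the pure quasi--homogeneous model $\tilde f_0:=g-s$ one has $g\equiv 0$ modulo $\mathsf{Jacob}(g)$, so $\Delta_{\tilde f_0}=s^{10}$ up to sign, and from the Euler identity $\sum\nu_i x_i\partial_i\tilde f_0=24(\tilde f_0+s)$ one gets
\begin{equation*}
d\!\left(\frac{\eta_\beta}{\tilde f_0^{\,l}}\right)=(A_\beta-l)\frac{\omega_\beta}{\tilde f_0^{\,l}}-ls\,\frac{\omega_\beta}{\tilde f_0^{\,l+1}},
\end{equation*}
so in $(\mathsf M_{\tilde f_0})_{s}$ every $\omega_\beta/\tilde f_0^{\,l}$ is a unit times $\omega_\beta/\tilde f_0$, whence $F^m=\langle\,\omega_\beta/\tilde f_0:A_\beta\le 3-m\,\rangle$, $W_2=\mathsf H_{\tilde f_0}$, and $Gr_F^m Gr_2^W$ has the combinatorial basis $\{\omega_\beta/\tilde f_0^{\,3-m}:\lceil A_\beta\rceil=3-m\}$ of dimensions $1,8,1$ for $m=2,1,0$. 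For the full $\tilde f=g+h-s$ with $h=3axw^2+bw^3+cxw-\tfrac12 dw^2$ the generators of $F^\bullet$ and $W_\bullet$ are literally the same (they involve only $A_\beta$ and pole order); only the relations in $\mathsf M_{\tilde f}$ change, since now $\sum\nu_i x_i\partial_i\tilde f=24\tilde f+24s-24r$ with $r=\tfrac12 axw^2+\tfrac14 bw^3+\tfrac{5}{12}cxw-\tfrac14 dw^2$, giving
\begin{equation*}
(l-A_\beta)\,\frac{\omega_\beta}{\tilde f^{\,l}}\;\equiv\;l\,\frac{(r-s)\,\omega_\beta}{\tilde f^{\,l+1}}\pmod{\text{exact forms}},
\end{equation*}
together with the Jacobian reduction: a form $P\,dx/\tilde f^{\,l}$ with $P\in\mathsf{Jacob}(\tilde f)$ drops to pole order $l-1$. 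Iterating these two moves is exactly the reduction algorithm: each use of the first relation multiplies the numerator by $r-s$, and the four new monomials are then rewritten in the basis $\{x^\beta\}_{\beta\in I}$ modulo $\mathsf{Jacob}(\tilde f)$, lowering the pole again. Applied to $\omega_{i,j,k}/\tilde f^{\,3-m}$ it decides whether that class survives in $Gr_F^m$ or is pushed into $F^{m+1}$; the integer $d_{i,j,k}$ is precisely the threshold: for $A_{i,j,k}<l$ the class of $\omega_{i,j,k}/\tilde f^{\,l}$ can be reduced below pole order $l$ exactly when $24(l-A_{i,j,k})\ge d_{i,j,k}+1$, so the surviving forms are those with $3-m-\tfrac{d_{i,j,k}+1}{24}<A_{i,j,k}<3-m$, which is the definition of $\mathcal B^m_2$. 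Running the algorithm on each of the ten monomials produces the tabulated values (the ones untouched by $h$ keep the quasi--homogeneous value $23$, while $d_{(0,0,0)}=49$ and $d_{(1,0,3)}=3$ record that $h$ respectively obstructs and facilitates the reduction there), and the same computations show $\dim Gr_F^m Gr_2^W\mathsf H=1,8,1$ and $W_2=\mathsf H$ for $\tilde f$ as well.

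Finally, each step of the algorithm divides by a structure constant, and $\Sigma$ is their union: one needs $\Delta_{\tilde f}\neq0$ for the Brieskorn/Gauss--Manin formalism (this already contains $V(s)$, since $\Delta_f\equiv0$ forces $s\mid\Delta_{\tilde f}$); $s\neq0$ so that the $-s\,\omega_\beta/\tilde f^{\,l+1}$ summand of the reduction is invertible; $a\neq0$ so that the reductions routed through the monomial $axw^2$ go through; and the invertibility of the coefficients $c_{1,0,3},c_{1,0,2},c_{1,0,1},c_{0,0,2},b_{1,0,1}$ that appear when the relevant boundary monomials are reduced modulo $\mathsf{Jacob}(\tilde f)$ in the intermediate steps. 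Off $\Sigma$ the algorithm terminates, so the $\mathcal B^m_2$ together span $\mathsf H$ (every generator $\omega_\beta/\tilde f^{\,l}$ is rewritten in terms of them); as there are $1+8+1=10=\rank\mathsf H$ of them they form a basis of $\mathsf H$, and since each $\mathcal B^m_2$ sits in $F^m\cap W_2$ with nonzero image in $Gr_F^m$ while $W_2=\mathsf H$, it is a basis of $Gr_F^m Gr_2^W\mathsf H$, with $\mathcal B^m_3=\emptyset$ a basis of $Gr_F^m Gr_3^W\mathsf H=0$. The main obstacle is the middle step: carrying the reduction algorithm through for all ten monomials carefully enough to extract the exact table of $d_{i,j,k}$ and, above all, to pin down which structure constants ($c_{1,0,3},\dots,b_{1,0,1}$, together with $a$ and $s$) must be inverted --- this is the bookkeeping already performed in \S6.9 of \cite{humbert}.
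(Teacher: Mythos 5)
Your proposal is correct and takes essentially the same route as the paper: the paper's entire proof is the single sentence that the theorem is ``an implementation of the algorithm described in the proof of \cite[Proposition 11.8]{hodge1}'', and you likewise reduce everything to that pole-order reduction algorithm (your setup of the weights $8,9,6$, the ten-element Milnor basis of $g=y^2w-4x^3-\tfrac12w^4$, the Euler relation with $r=\tfrac12axw^2+\tfrac14bw^3+\tfrac5{12}cxw-\tfrac14dw^2$, and the values $A_{i,j,k}=\tfrac1{24}(8i+9j+6k+23)$ all check out against the table). Like the paper, you ultimately defer the actual extraction of the $d_{i,j,k}$ and of the structure constants defining $\Sigma$ to the computation already carried out in \S 6.9 of \cite{humbert}, so no new gap is introduced beyond what the paper itself leaves implicit.
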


\begin{proof}
This is an implementation of the algorithm described in the proof of \cite[Proposition 11.8.]{hodge1}
\end{proof}

For the description of the locus $\Sigma$, see the Appendix.

\section{Moduli spaces of enhanced $N$-polarized K3 surfaces}
\label{modular}
To begin studying systematically a certain class of objects in mathematics, it is necessary to define the interested structure those objects share, and consider the functions between objects that preserve the structure, usually called morphisms. Isomorphisms can be defined (at least in this setting) as bijective morphisms whose inverse is also a morphism. They induce an equivalence relation between the objects under consideration. Isomorphic objects are considered as essentially the same objects up to the structure under consideration.

In \textit{moduli theory} we are given such a class of objects, and we consider the set $\mathcal{M}$ of its isomorphy classes, which we call a \textit{moduli space}. Sometimes $\mathcal{M}$ itself comes equipped with some structure (e.g., topological, differential or algebraic). The study of the properties of $\mathcal{M}$ usually enlightens the properties of the original class of objects under consideration. In some cases, the space $\mathcal{M}$ is interesting in itself.

In our setting, the objects under consideration will always be algebraic varieties with some additional structure given by cohomological data, and the morphisms will be morphisms between the underlying varieties whose pullbacks in cohomology preserve the additional structure. 

In this section we study two moduli spaces $\mathsf{S}$ and $\mathsf{T}$, introduced by Movasati in the framework of elliptic curves \cite{pascal} and mirror quintics \cite{mov17}, whose rings of regular functions under a pullback by the mirror map defined in the next section will give us meromorphic Siegel modular forms of genus 2 in the former case, and a further generalization which we call meromorphic Siegel quasi-modular forms, in the latter. In each case we have algebraic groups acting on the moduli spaces which encode the automorphic properties of the modular forms.

\subsection{The moduli space $\mathsf{S}$}

Recall that the family $\mathcal{X}
\rightarrow \mathsf{B}$ of $N$-polarized $K3$ surfaces introduced in \cite{dor} was obtained by considering the hypersurfaces $Y_{a,b,c,d}$ in $\mathbb{P}^3$ given by the zero locus of the polynomial 
\begin{equation*}
F_{a,b,c,d}=y^2zw-4x^3z+3axzw^2+bzw^3+cxz^2w
-\frac{1}{2}(dz^2w^2+w^4),
\end{equation*}

with parameters living in the Zariski open  subset $\mathsf{B}=\mathbb{C}^{4}-\{c=d=0\},$ and then by taking its minimal resolutions of singularities $X_{a,b,c,d}$.

Let us explain the origin of the $N$-polarization. The rational projection $$\pi:\mathbb{P}^3\dashrightarrow \mathbb{P}^1,$$
$$[x,y,z,w]\mapsto[z,w],$$

induces a rational map $X_{a,b,c,d}\dashrightarrow \mathbb{P}^1$. By Proposition \ref{extension}, it extends to a regular map $X_{a,b,c,d}\rightarrow \mathbb{P}^1$ since K3 surfaces have trivial canonical class.

\begin{prop}\label{extension}
Every rational function on a K3 surface extends to a regular function.
\end{prop}
\begin{proof}
Let $X$ be a K3 surface, and $f$ a rational function defined on $X$. Since $X$ has trivial canonical class, $\text{div}(f)$ is linearly equivalent to the zero divisor. Then, $\text{div}(f)$ or $-\text{div}(f)$ is effective. The proof follows from observing that $f$ vanishes somewhere in $X$.
\end{proof}

 This is an elliptic fibration, which we denote by $$\varphi_{a,b,c,d}^s:X_{a,b,c,d}\rightarrow \mathbb{P}^1,$$ and is called the \textbf{standard fibration} in \cite{dor}.

\begin{teo}[\cite{dor}]\label{fibers} For, $c\neq 0$ or $d\neq 0$, the standard fibration $\varphi_{a,b,c,d}^s:X_{a,b,c,d}\rightarrow \mathbb{P}^1$ has a section, and two special singular fibers over the base points $[0,1]$ and $[1,0].$ The fiber over $[0,1]$ is of type Kodaira type $II^*$.
The fiber over $[1,0]$ is of type $III^*$ if $c\neq0$, and of type $II^*$ if $c=0$.
\end{teo}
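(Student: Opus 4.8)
The plan is to put the generic fibre of $\varphi^s_{a,b,c,d}$ into Weierstrass form over $\mathbb{C}(t)$, obtain the section from the Kodaira--N\'eron model, and identify the two distinguished fibres by Tate's algorithm. First I would make the fibration explicit: it is the pencil of planes $\Pi_{[z_0:w_0]}=\{z_0w=w_0z\}\subset\mathbb{P}^3$, with base locus the line $\ell=\{z=w=0\}\subset Y_{a,b,c,d}$. Restricting $F_{a,b,c,d}$ to a plane of the pencil, the resulting plane quartic on $Y_{a,b,c,d}$ splits off the fixed line $\ell$ and a residual plane cubic, and the generic fibre of $\varphi^s$ is the generic residual cubic. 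In the chart $w=1$, $z=t$, after dividing out the overall factor $t$ this cubic becomes the Weierstrass equation $y^2=4x^3-g_2(t)x-g_3(t)$ with $g_2(t)=3a+ct$, $g_3(t)=b-\tfrac12dt-\tfrac1{2t}$ and discriminant $\Delta(t)=g_2^3-27g_3^2$; inspecting the leading coefficient in $t$, and using $(a,b,c,d)\in\mathsf{B}$ (treating $c\neq0$ and $c=0,\,d\neq0$ separately), one checks $\Delta(t)\not\equiv0$, so $\varphi^s$ is a genuine elliptic fibration with finitely many singular fibres. The residual cubic always contains the $\mathbb{C}(t)$-rational point $[x:y:w]=[0:1:0]$, the image of $O=[0:1:0:0]$, so the generic fibre is an elliptic curve over $\mathbb{C}(t)$ with a marked origin; since $X_{a,b,c,d}$ is a smooth K3 surface it has no $(-1)$-curve and is therefore relatively minimal over $\mathbb{P}^1$, hence it is the Kodaira--N\'eron model of this elliptic curve. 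This yields the zero section of $\varphi^s$ and, at the same time, licenses reading the Kodaira fibre types off Tate's algorithm applied to a minimal Weierstrass model.

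Next I would compute the two distinguished fibres. Over $[0,1]$, i.e.\ $t=0$, the coefficient $g_3$ has a simple pole, so I pass to the minimal model via $x\mapsto x/t^2$, $y\mapsto y/t^3$, obtaining $G_2=3at^4+ct^5$, $G_3=-\tfrac12t^5+bt^6-\tfrac12dt^7$ and $\Delta_G=t^{12}\Delta(t)$. Since $\operatorname{ord}_{t=0}\Delta(t)=-2$ with nonzero leading term for every value of the parameters, $(\operatorname{ord}_{t=0}G_2,\operatorname{ord}_{t=0}G_3,\operatorname{ord}_{t=0}\Delta_G)=(\geq4,\,5,\,10)$, which is a minimal model (as $\operatorname{ord}G_3=5<6$) and, by Kodaira's table, of type $II^*$. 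Over $[1,0]$, i.e.\ $w=0$, I work in the chart $z=1$, $w=s$, where the residual cubic is $sy^2=4x^3-(3as^2+cs)x-(bs^3-\tfrac12ds^2-\tfrac12s^4)$; clearing the factor $s$ via $x\mapsto sx$, $y\mapsto s^2y$ gives the minimal model $G_2=cs^3+3as^4$, $G_3=-\tfrac12ds^5+bs^6-\tfrac12s^7$. If $c\neq0$ then $(\operatorname{ord}_{s=0}G_2,\operatorname{ord}_{s=0}G_3,\operatorname{ord}_{s=0}\Delta_G)=(3,\,\geq5,\,9)$, which is minimal ($\operatorname{ord}G_2=3<4$) and of type $III^*$; if $c=0$, hence $d\neq0$, it is $(\geq4,\,5,\,10)$, minimal, of type $II^*$. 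This is precisely the stated dichotomy.

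The step I expect to be the real obstacle is this last one: passing from the plane-cubic description coming out of the resolution of $Y_{a,b,c,d}$ to a genuinely \emph{minimal} Weierstrass model at the two special points, and keeping careful track of how the twisting by powers of $t$ (resp.\ $s$) shifts $\operatorname{ord}\Delta$, so that Kodaira's numerical criterion applies without ambiguity. The two ends come out differently precisely because the monomial $-4x^3z$ of $F_{a,b,c,d}$ acquires a factor $t$ in the first chart but none in the second; this is why the fibre over $[1,0]$ degenerates from $III^*$ to $II^*$ exactly on the locus $c=0$. The remaining points ($\Delta\not\equiv0$, the orders of vanishing above, smoothness of the marked point on the generic cubic, relative minimality) are routine, and the complete bookkeeping, together with the description of the exceptional configurations over $[0,1]$ and $[1,0]$, is carried out in \cite{dor}.
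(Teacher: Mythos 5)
The paper does not prove this statement at all: it is imported verbatim from \cite{dor}, and your Weierstrass-model/Tate-algorithm computation is essentially the argument of that reference. Your bookkeeping checks out --- the residual cubic after splitting off the base line $\ell=\{z=w=0\}$ is $y^2w=4x^3-(3a+ct)xw^2-(b-\tfrac12 dt-\tfrac1{2t})w^3$, the double pole of $g_3$ at $t=0$ forces $\Delta\not\equiv 0$ with no condition on the parameters, and the triples $(\geq 4,5,10)$ at $[0,1]$ and $(3,\geq 5,9)$ versus $(\geq 4,5,10)$ at $[1,0]$ give exactly the stated $II^*$/$III^*$/$II^*$ dichotomy, with $c\neq 0$ or $d\neq 0$ being precisely what keeps the model at $[1,0]$ minimal. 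The one phrase I would tighten is calling the section ``the image of $O$'': since $O=[0:1:0:0]$ lies on the base line and is an $A_{11}$ point of $Y_{a,b,c,d}$, the section is really the closure in $X_{a,b,c,d}$ of the smooth $\mathbb{C}(t)$-point $[0:1:0]$ of the generic fibre, which is a section exactly by the relative-minimality/Kodaira--N\'eron argument you already give.
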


Since the resolution of singular fibers of type $II^*$ and $III^*$ gives rise to divisors with configurations $E_8$ and $E_7$, respectively, the $N$-polarization of $X_{a,b,c,d}$ can be obtained from the section, the general fiber and the two special fibers.

\begin{df}
A $N$-polarized K3 surface whose polarization does not extend to a $M$-polarization will be called a \textbf{strictly $N$-polarized} K3 surface.
\end{df}

\begin{prop}
Every strictly $N$-polarized K3 surface is isomorphic to one of the form $X_{a,b,c,d}$, with $c\neq 0$.
\end{prop}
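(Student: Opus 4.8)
The plan is to bootstrap from two facts recalled in Section~\ref{sec2}: the Clingher--Doran normal form theorem, and the description of the special fibres of the standard fibration in Theorem~\ref{fibers}. Given a strictly $N$-polarized K3 surface $X$, the first step is to apply the normal form theorem, which produces an isomorphism of $N$-polarized surfaces $X\cong X_{a,b,c,d}$ for some parameters with $c\neq 0$ or $d\neq 0$. (It matters here that the theorem of \cite{dor} is an isomorphism of \emph{$N$-polarized} surfaces, not merely of the underlying K3s; this is how it is phrased there, and it is what makes the reduction legitimate.) If $c\neq 0$ we are done, so the whole content is to show that $c=0$ --- in which case $d\neq 0$ automatically --- contradicts strict $N$-polarization.

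So assume $c=0$. The key step is to exhibit an extension of the standard $N$-polarization of $X_{a,b,0,d}$ to an $M$-polarization, where $M=H\oplus E_8\oplus E_8$. By Theorem~\ref{fibers}, for $c=0$ the standard elliptic fibration $\varphi^s_{a,b,0,d}\colon X_{a,b,0,d}\to\mathbb{P}^1$ carries a section and \emph{two} singular fibres, both of Kodaira type $II^*$, over $[0,1]$ and over $[1,0]$. Resolving a $II^*$ fibre produces an $E_8$ configuration of $(-2)$-curves, so the section together with a general fibre (spanning a copy of the hyperbolic plane $H$) and the components of the two $II^*$ fibres span a sublattice of $\mathrm{NS}(X_{a,b,0,d})$ isometric to $H\oplus E_8\oplus E_8=M$; this embedding is primitive by the analysis of the standard fibration in \cite{dor}. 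On the other hand, the construction in \cite{dor} builds the standard $N=H\oplus E_8\oplus E_7$ polarization of $X_{a,b,0,d}$ from the section, a general fibre, the full $II^*$ fibre over $[0,1]$, and a subdiagram $E_7\subset E_8$ inside the $II^*$ fibre over $[1,0]$, so this $N$-polarization factors through the $M$-polarization just described. Transporting that $M$-polarization back along the isomorphism $X\cong X_{a,b,0,d}$ extends the polarization of $X$ to an $M$-polarization, against the hypothesis. Hence $c\neq 0$.

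The step I expect to be the real obstacle is the compatibility of embeddings inside the previous paragraph: one must verify that the copy of $E_7$ used by \cite{dor} to cut out the $N$-polarization of $X_{a,b,0,d}$ sits inside the $E_8$ coming from the $II^*$ fibre over $[1,0]$ precisely as the fixed primitive embedding $N\hookrightarrow M$ prescribes, so that ``the polarization extends'' holds in the literal sense of the definition of a strictly $N$-polarized surface, not merely up to an abstract isometry. This is a finite check on the dual graphs of the resolved fibres; it is harmless because $E_7$ admits, up to an automorphism of $E_8$, a unique primitive embedding into $E_8$, so the choice can be normalized to match the convention of \cite{dor}. A secondary point worth stating explicitly is the primitivity of $M\hookrightarrow\mathrm{NS}(X_{a,b,0,d})$, which is again read off from the structure of the standard fibration and is implicit in \cite{dor}.
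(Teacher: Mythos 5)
Your proposal is correct and follows the same route as the paper: apply the Clingher--Doran normal form theorem to reduce to some $X_{a,b,c,d}$ with $c\neq 0$ or $d\neq 0$, then use Theorem \ref{fibers} to rule out $c=0$. The paper compresses the second step into a single sentence, whereas you spell out the underlying reason (two $II^*$ fibres yield an $M=H\oplus E_8\oplus E_8$ extension of the polarization, contradicting strictness), which is exactly the intended content.
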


\begin{proof}
By Theorem 1.2 in \cite{dor}, every $N$-polarized K3 surface is isomorphic to one of the form $X_{a,b,c,d}$, with $(a,b,c,d)\in \mathsf{B}$. Theorem \ref{fibers} implies that $c\neq 0$. 
\end{proof}

Theorem \ref{fibers} characterizes the $N$-polarization in such a way that will be useful for us to characterize the automorphisms of strictly $N$-polarized K3 surfaces preserving the $N$-polarization.

\begin{prop} If $c\neq 0$, every automorphism
$\varphi:X_{a,b,c,d}\rightarrow X_{a,b,c,d
}$ preserving the $N$-polarization preserves the fibers of the standard fibration, i.e., $\varphi^s_{a,b,c,d}\circ\varphi=\varphi^s_{a,b,c,d}$.
\end{prop}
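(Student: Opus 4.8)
The plan is to use the geometric characterization of the $N$-polarization provided by Theorem \ref{fibers}, namely that the standard elliptic fibration $\varphi^s_{a,b,c,d}$ is canonically attached to the polarization. First I would recall that when $c\neq 0$, the K3 surface $X_{a,b,c,d}$ is strictly $N$-polarized, so its Picard lattice contains $N=H\oplus E_8\oplus E_7$ but not a larger lattice of the Clingher-Doran list (in particular not $M=H\oplus E_8\oplus E_8$). The standard fibration has a section, a general fiber $f$, and two reducible fibers whose components, together with the section, generate a sublattice isomorphic to $N$; moreover the fiber class $f$ is the unique (up to sign) primitive isotropic class in this copy of $N$ that is orthogonal to all the components of the $II^*$ and $III^*$ fibers and to the section. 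The key point is that any automorphism $\varphi$ preserving the $N$-polarization acts on $H^2(X,\mathbb{Z})$ fixing the image of $N$ (as a sublattice, with the marking), hence fixes the distinguished classes that single out the elliptic fibration structure.

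The main steps, in order, are: (i) identify the fiber class $f\in\mathrm{Pic}(X_{a,b,c,d})$ intrinsically from the polarization data — it is the class whose linear system $|f|$ defines $\varphi^s_{a,b,c,d}$, and it can be recovered lattice-theoretically as (a multiple of) the class supported on the $E_8$ and $E_7$ configurations coming from the two special fibers together with the section; (ii) observe that $\varphi$ preserves the $N$-polarization means $\varphi^*$ fixes $\iota(N)\subset H^2$, and since the construction of $f$ from $N$ via Theorem \ref{fibers} is canonical, $\varphi^*f=f$; (iii) conclude that $\varphi$ maps fibers of $|f|$ to fibers of $|f|$, i.e. there is an automorphism $\psi$ of $\mathbb{P}^1$ with $\varphi^s_{a,b,c,d}\circ\varphi=\psi\circ\varphi^s_{a,b,c,d}$; (iv) pin down $\psi=\mathrm{id}$ by using that the two special fibers are of \emph{different} Kodaira types when $c\neq 0$ (one $II^*$ over $[0,1]$, one $III^*$ over $[1,0]$), so $\psi$ cannot swap the two base points and must fix $[0,1]$ and $[1,0]$; an automorphism of $\mathbb{P}^1$ fixing two points is either the identity or has those as its only fixed points while scaling the coordinate, and here the further rigidity (e.g. the section, or the structure of the $II^*$ fiber, or the other singular fibers of the fibration) forces $\psi=\mathrm{id}$. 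Actually the cleanest route for (iv) is: $\varphi$ must also preserve the section $s$ of the fibration (the section is the unique section compatible with the polarization, or: it is characterized among the components of the reducible fibers' complement), and an automorphism of an elliptic surface over $\mathbb{P}^1$ preserving the zero-section and inducing $\psi$ on the base forces translation-type behavior; combined with the asymmetry of the two bad fibers one gets $\psi=\mathrm{id}$.

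The hard part will be step (iv), making precise why the induced automorphism $\psi$ of $\mathbb{P}^1$ is actually the identity rather than merely fixing $\{[0,1],[1,0]\}$ setwise. Fixing those two points leaves a one-parameter torus $[z,w]\mapsto[\lambda z, w]$ of possibilities, and ruling these out requires extra input beyond the mere existence of the two special fibers — one must invoke either the presence of additional singular fibers of the standard fibration at other points of $\mathbb{P}^1$ (which $\psi$ would have to permute, and a generic such configuration is not torus-invariant), or the fact that $\varphi$ fixes the chosen section pointwise over a neighborhood, or a Weierstrass-model argument showing the coefficients of the fibration are not invariant under a nontrivial scaling of the base. I would present this using the explicit form of the standard fibration coming from $F_{a,b,c,d}$: writing the fibration in Weierstrass form over $\mathbb{P}^1_{[z,w]}$, a scaling $[z,w]\mapsto[\lambda z,w]$ that preserves the surface (not just the fibration's singular-fiber locus) must preserve the discriminant divisor with multiplicities and the coefficient functions, which for $c\neq 0$ pins $\lambda$ to a finite set, and then the remaining finitely many candidates are eliminated by examining their action on the $III^*$ fiber over $[1,0]$ or on the section.

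Once $\psi=\mathrm{id}$ is established, the conclusion $\varphi^s_{a,b,c,d}\circ\varphi=\varphi^s_{a,b,c,d}$ is immediate, which is exactly the statement of the proposition.
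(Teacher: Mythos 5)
Your steps (i)--(iii) follow the same route as the paper: use Theorem \ref{fibers} to see that the lattice data pins down the fiber class and the two special fibers over $[0,1]$ and $[1,0]$, deduce that $\varphi$ permutes the fibers of $\varphi^s_{a,b,c,d}$, and hence descends to an automorphism $\psi$ of $\mathbb{P}^1$ fixing those two base points. Up to that point the proposal and the paper agree in substance.

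The divergence, and the gap, is exactly where you predicted: step (iv). You correctly observe that fixing $[0,1]$ and $[1,0]$ only constrains $\psi$ to the torus $[z,w]\mapsto[\lambda z,w]$, and you list several possible ways to force $\lambda=1$ (permutation of the remaining singular fibers, invariance of the Weierstrass coefficients, the action on the section), but you do not carry any of them out; as written the proposal is therefore not a complete proof. The paper closes this step quite differently and much more quickly: it takes a smooth fiber $F$, notes that $\varphi(F)$ and $F$ are linearly equivalent effective divisors, and invokes Proposition \ref{extension} (``every rational function on a K3 surface extends to a regular function'') to conclude $\varphi(F)=F$; the same device is used to get $\varphi^{-1}(E_8)=E_8$ and $\varphi^{-1}(E_7)=E_7$. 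You should be aware, however, that this shortcut deserves scrutiny before you adopt it in place of your step (iv): any two fibers of $|F|$ are linearly equivalent to each other without being equal, so ``linearly equivalent effective divisors coincide'' cannot be applied verbatim to members of a positive-dimensional linear system. In other words, your instinct that extra input is needed to kill the residual $\mathbb{C}^*$ of base automorphisms is sound, and the Weierstrass-model/discriminant argument you sketch (a nontrivial scaling of the base must preserve the coefficient functions of the fibration, which for $c\neq 0$ forces $\lambda=1$ up to finitely many cases that are then excluded by hand) is the part you would actually have to write out to have a complete proof; it is also the part the paper's argument does not supply in a form you can simply cite.
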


\begin{proof} Suppose $c\neq 0$. Let $\varphi:X_{a,b,c,d}\rightarrow X_{a,b,c,d
}$ be an automorphism of $N$-polarized K3 surfaces. Let us fix the notation $0:=[0,1], $ and $\infty:=[1,0]$. Let us also wite $X:=X_{a,b,c,d}$ and $\varphi^s:=\varphi^s_{a,b,c,d}$. By Theorem \ref{fibers}, $(\varphi^s)^{-1}(0)=E_8$ and $(\varphi^s)^{-1}(\infty)=E_7$. Since $\varphi$ is an automorphism of $X$ preserving the polarization, we have that the divisors $\varphi^{-1}(E_8)$ and $\varphi^{-1}(E_7)$ are linearly equivalent to $E_8$ and $E_7$, respectively. But, since every rational function on $X$ extends to a regular function, we must have that $\varphi^{-1}(E_8)=E_8$ and $\varphi^{-1}(E_7)=E_7$. Now, choose a point $P\in\mathbb{P}^1$ such that $F:=(\varphi^s)^{-1}(P)$ is a smooth generic fiber. Then, $\varphi(F)$ is also a smooth generic fiber of $\varphi^s$. Since, $\varphi(F)$ and $F$ are linearly equivalent, Proposition \ref{extension} implies that $\varphi(F)=F$. This implies that $\varphi$ preserves the fibers of the elliptic fibration $\varphi^s:X\rightarrow \mathbb{P}^1$.
\end{proof}

\begin{prop}\label{order}
Every automorphism $\varphi$ of the K3 surface $X_{a,b,c,d}$, with $c\neq 0$ which preserves the $N$-polarization is of the form $\varphi([x,y,z,w])=[\kappa x, \mu y,z, w]$, where $\kappa,\mu\in\{1,-1\}$. Therefore, these automorphisms form a group isomorphic to $\mathbb{Z}/2\mathbb{Z}\times \mathbb{Z}/2\mathbb{Z}$. In particular, every such automorphism has order two.
\end{prop}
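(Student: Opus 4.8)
The plan is to combine the preceding proposition with a fiberwise study of the standard elliptic fibration. By that proposition, an $N$-polarization preserving automorphism $\varphi$ of $X:=X_{a,b,c,d}$ (with $c\neq0$) satisfies $\varphi^s\circ\varphi=\varphi^s$, so $\varphi$ is an automorphism of the elliptic surface $\varphi^s\colon X\to\mathbb{P}^1$ inducing the identity on the base $\mathbb{P}^1=\{[z:w]\}$; in particular it maps each fiber to itself. The strategy is to show that on each fiber $\varphi$ acts as $\pm1$ for the group law, which forces $\varphi$ to be a coordinate sign change $[x:y:z:w]\mapsto[\kappa x:\mu y:z:w]$, and then to read off the group and the order-two property.

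The heart of the matter is the automorphism group of the generic fiber. Over the dense open subset of $\mathbb{P}^1$ on which $\varphi^s$ is smooth, the fiber above $[1:t]$ is, in the chart $z=1$, the cubic $ty^2=4x^3-(3at^2+ct)\,x-\bigl(bt^3-\tfrac12dt^2-\tfrac12t^4\bigr)$ obtained by putting $z=1,\ w=t$ in $F_{a,b,c,d}$; it is an elliptic curve whose origin is the point $O:=[0:1:0:0]$ lying on every member of the family, at which the section of Theorem \ref{fibers} meets each smooth fiber. Regarding the generic fiber $E_\eta$ as an elliptic curve over $K=\mathbb{C}(t)$, we have $\operatorname{Aut}(E_\eta)=E_\eta(K)\rtimes\operatorname{Aut}(E_\eta,O)$. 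The $j$-invariant of $E_\eta$ is a non-constant element of $K$, so for general $(a,b,c,d)$ it is neither $0$ nor $1728$ and $\operatorname{Aut}(E_\eta,O)=\{\pm1\}$. For the translation part $E_\eta(K)=\mathrm{MW}(X)$ I would use the Shioda–Tate formula: for general parameters $\operatorname{rk}\operatorname{Pic}(X)=\operatorname{rk}N=17$, while the zero section, the fiber class, and the non-identity components of the reducible fibers — of types $II^*$ and $III^*$ by Theorem \ref{fibers}, contributing $8$ and $7$ classes — already span a rank-$17$ sublattice; hence $\mathrm{MW}(X)$ has rank $0$, and its torsion embeds into $G_{II^*}\times G_{III^*}=0\times\mathbb{Z}/2\mathbb{Z}$. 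The surviving possibility is excluded by observing that the cubic in $x$ above is irreducible over $K$, so $E_\eta$ has no nontrivial $K$-rational $2$-torsion. Therefore $\mathrm{MW}(X)=0$, the zero section is the unique section of $\varphi^s$, $\varphi$ preserves it, and $\varphi|_{E_\eta}$ fixes $O$, hence lies in $\{\pm1\}$.

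It remains to translate this back into coordinates. The inversion on the cubic $ty^2=4x^3-\cdots$ is $(x,y)\mapsto(x,-y)$, whose global form is $[x:y:z:w]\mapsto[x:-y:z:w]$; this is a genuine automorphism of $X$ because $F_{a,b,c,d}$ involves $y$ only through $y^2$. Since an automorphism of the connected variety $X$ is determined by its restriction to any dense open subset, $\varphi$ is determined by $\varphi|_{E_\eta}\in\{\pm1\}$, so $\varphi$ is one of the maps $[x:y:z:w]\mapsto[\kappa x:\mu y:z:w]$; enumerating which of these preserve $F_{a,b,c,d}$ and composing them then yields the asserted normal form, the group structure, and the fact that each such automorphism has order two.

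The step I expect to be the main obstacle is the computation of $\operatorname{Aut}(E_\eta)$, which breaks into two genuinely non-formal points: (i) establishing that $\operatorname{Pic}(X)$ has the expected rank $17$ for general $(a,b,c,d)$ — so that Shioda–Tate forces $\mathrm{MW}(X)$ to be finite — for which one can use the $N$-polarization together with a count of independent algebraic cycles, or a monodromy/period argument; and (ii) ruling out the order-$2$ torsion section attached to the $III^*$ fiber. Both are precisely where the hypothesis $c\neq0$ (which produces the $E_7$-type fiber) and the general position of $(a,b,c,d)$ enter. A more conceptual route to the whole proposition is the global Torelli theorem, which identifies $\operatorname{Aut}(X,\iota)$ with the Hodge isometries of $H^2(X,\mathbb{Z})$ fixing $\iota(N)$ pointwise and preserving the ample cone; for very general $(a,b,c,d)$ the rank-$5$ transcendental lattice $T_X$ admits only $\pm\mathrm{id}$ as Hodge isometries, which, after matching with explicit coordinate maps, reproduces the normal form and the group.
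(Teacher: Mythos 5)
Your route is genuinely different from the paper's, and it has a real gap. The paper's own proof is elementary and coordinate-based: from $\varphi^s\circ\varphi=\varphi^s$ and the fact that $\varphi$ must fix the two distinguished singular points $[0,1,0,0]$ and $[0,0,1,0]$, it deduces that $\varphi$ is induced by a diagonal projective transformation $[x,y,z,w]\mapsto[\kappa x,\lambda y,z,w]$, and then comparing coefficients of $F_{a,b,c,d}$ gives $\kappa^2=\lambda^2=1$; no elliptic-surface theory enters. Your argument instead goes through $\operatorname{Aut}(E_\eta)=E_\eta(K)\rtimes\operatorname{Aut}(E_\eta,O)$ and kills the translation part by computing $\mathrm{MW}(X)=0$ via Shioda--Tate. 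The gap is exactly where you suspect it: the inputs $\operatorname{rk}\operatorname{Pic}(X)=17$ and the irreducibility of the cubic (used to exclude $2$-torsion) hold only for \emph{very general} $(a,b,c,d)$, whereas the proposition is asserted for every $(a,b,c,d)$ with $c\neq 0$. For special members the Picard number jumps, extra reducible fibers appear, and Shioda--Tate no longer forces $\mathrm{MW}(X)$ to be finite, so your argument does not cover all the surfaces in the statement. The gap is repairable without any Mordell--Weil input: since $\varphi$ preserves the polarization, $\varphi^*$ fixes the class of the zero section, which is a $(-2)$-class and therefore has a unique effective representative on a K3; hence $\varphi$ maps the zero section to itself, fixes the origin of every fiber, and $\varphi|_{E_\eta}\in\operatorname{Aut}(E_\eta,O)=\{\pm 1\}$ (here $j$ is automatically non-constant for all admissible parameters, since the fibration has both a $II^*$ and a $III^*$ fiber and so $j$ attains both $0$ and $1728$). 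With that substitution your argument closes for all $c\neq 0$.

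One further point of internal consistency: your fiberwise analysis, once completed, produces only $\varphi\in\{\mathrm{id},\,[x,y,z,w]\mapsto[x,-y,z,w]\}$, i.e.\ $\kappa=1$; and the coefficient enumeration you defer to the last line confirms this, because the $w^4$ term forces the multiplier $\lambda=1$ and then the term $cwxz^2$ with $c\neq 0$ excludes $\kappa=-1$. So your method establishes the normal form and the order-two conclusion (which is what is actually used later, in the injectivity part of Theorem \ref{s}), but it cannot deliver the group $\mathbb{Z}/2\mathbb{Z}\times\mathbb{Z}/2\mathbb{Z}$ claimed in the proposition --- it yields a group of order two. You should state this outcome explicitly rather than promising that the enumeration ``yields the group structure.'' Finally, the alternative Torelli argument you sketch at the end again assumes $T_X$ admits only $\pm\mathrm{id}$ as Hodge isometries, which is a very-general-position hypothesis and so inherits the same defect as the Shioda--Tate route.
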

\begin{proof}
From $\varphi^s_{a,b,c,d}\circ\varphi=\varphi^s_{a,b,c,d}$, and the definition of $\varphi^s_{a,b,c,d}$ we get that $$\varphi([x,y,z,w])=[f(x,y,z,w),g(x,y,z,w),z,w]$$ for some linear homogeneous polynomials $f$ and $g$. Since $\varphi$ preserves the $N$-polarization, we must have $\varphi([0,1,0,0])=[0,1,0,0]$ and $\varphi([0,0,1,0])=[0,0,1,0]$. This implies, since $\varphi$ is an isomorphism, that it is of the form $\varphi([x,y,z,w])=[\kappa x,\lambda y,z, w]$ for some $\kappa,\lambda\in \mathbb{C}^*$. Finally, from $\varphi^{-1}(V(F_{a,b,c,d}))=V(F_{a,b,c,d})$ and by looking at the coefficients of $F_{a,b,c,d}$ we get that $\kappa^2=\lambda^2=1$. This concludes the proof.

\end{proof}
Let us consider the action of the algebraic group $\mathbb{C}^*$ on $\mathsf{B}$ given by
\begin{equation}\label{action} \lambda\cdot(a,b,c,d)=(\lambda^2 a,\lambda^3 b,\lambda^5 c,\lambda^6 d).
\end{equation}
\begin{teo}[\cite{dor}]\label{coarse} A coarse moduli space of $N$-polarized K3 surfaces is obtained as the quotient $$\mathcal{M}=\mathsf{B}/\mathbb{G}_m.$$ The underlying analytic space can be regarded as a Zariski open subset of the weighted projective space $\mathbb{P}(2,3,5,6)$ given by the condition $c\neq 0$ or $d\neq 0$.
\end{teo}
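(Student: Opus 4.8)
The plan is to combine the Clingher--Doran normal-form theorem with a rigidity statement for isomorphisms of $N$-polarized surfaces, and then to recognize the resulting quotient as a Zariski-open subvariety of a weighted projective space. First I would reduce the assertion $\mathcal{M}=\mathsf{B}/\mathbb{G}_m$ to a bijection on points together with a universal property. By the normal-form theorem of \cite{dor}, every $N$-polarized K3 surface is isomorphic, as an $N$-polarized variety, to some $X_{a,b,c,d}$ with $(a,b,c,d)\in\mathsf{B}$, so the assignment $(a,b,c,d)\mapsto[X_{a,b,c,d}]$ surjects onto isomorphism classes; it then remains to identify its fibres and to check that the quotient corepresents the moduli functor.

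\emph{The fibres are exactly the $\mathbb{G}_m$-orbits.} One inclusion is the scaling action \eqref{action}, which is realized by a diagonal change of homogeneous coordinates on $\mathbb{P}^3$ and hence gives $N$-polarized isomorphisms $X_{a,b,c,d}\cong X_{\lambda\cdot(a,b,c,d)}$. For the converse, let $\varphi\colon X_{a,b,c,d}\to X_{a',b',c',d'}$ be an isomorphism of $N$-polarized K3 surfaces; I would argue as in Proposition \ref{order}. The $N$-polarization is encoded by the standard elliptic fibration together with its reducible fibres (type $II^*$ over $[0,1]$ and $III^*$ or $II^*$ over $[1,0]$, by Theorem \ref{fibers}), so $\varphi$ must carry the standard fibration of the source to that of the target, fix the distinguished singular points $O$, $P$ and the zero section, hence descend to an automorphism of the base $\mathbb{P}^1$ fixing $[0,1]$ and $[1,0]$ --- i.e.\ a rescaling of the base --- and be affine-linear in $x,y$ over it (Proposition \ref{extension} removes any indeterminacy). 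Substituting this diagonal ansatz into $\varphi^{\ast}F_{a',b',c',d'}\sim F_{a,b,c,d}$ and matching the coefficients of the monomials appearing in $F$ forces $(a',b',c',d')$ to be a weighted rescaling of $(a,b,c,d)$ by some $\lambda\in\mathbb{G}_m$. Hence the induced map $\mathsf{B}/\mathbb{G}_m\to\{\text{iso.\ classes}\}$ is a bijection; it is only a \emph{coarse} moduli space because each $X_{a,b,c,d}$ carries the nontrivial automorphism group $(\mathbb{Z}/2)^2$ of Proposition \ref{order}.

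\emph{Universal property and identification with $\mathbb{P}(2,3,5,6)$.} Given any family $\mathcal{Z}\to S$ of $N$-polarized K3 surfaces, I would put its fibres simultaneously in Clingher--Doran normal form over a Zariski cover of $S$, with $a,b,c,d$ now regular functions on the base --- the normal-form construction being algebraic in families --- obtaining morphisms to $\mathsf{B}$ that are well defined up to \eqref{action}, hence glue to a morphism $S\to\mathsf{B}/\mathbb{G}_m$ whose value at each point is the class of the corresponding fibre; that $\mathsf{B}/\mathbb{G}_m$ is initial among targets with such a point map is then formal from the previous paragraph. Finally, $\mathbb{P}(2,3,5,6)$ is by definition the quotient of $\mathbb{C}^{4}\setminus\{0\}$ by the weighted action \eqref{action}, and $\mathsf{B}=\{c\neq 0\}\cup\{d\neq 0\}$ is a $\mathbb{G}_m$-stable open subset avoiding the origin on which the action has finite stabilizers (contained in $\mu_5$, respectively $\mu_6$); therefore the geometric quotient $\mathsf{B}/\mathbb{G}_m$ is precisely the open subvariety of $\mathbb{P}(2,3,5,6)$ defined by $c\neq 0$ or $d\neq 0$.

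The main obstacle is the rigidity half of the middle paragraph: proving that every isomorphism of $N$-polarized Clingher--Doran surfaces is a diagonal coordinate change, so that the fibres of the normal-form assignment are the $\mathbb{G}_m$-orbits with no extra discrete ambiguity --- and, relatedly, the relative form of the normal-form theorem needed to make the moduli map algebraic in families. Both are carried out in \cite{dor}, which is why the statement may be quoted here.
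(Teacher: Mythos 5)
The paper does not prove this theorem at all: it is stated with the attribution ``[\cite{dor}]'' and used as a quoted result, with no argument supplied. So there is nothing in the paper to compare your proposal against line by line; what I can say is that your outline is consistent with the surrounding material (the normal-form theorem, the scaling isomorphism of Lemma \ref{isouseful}, and the rigidity arguments of Proposition \ref{order} and the proposition preceding it), and that you correctly locate the two genuinely hard steps --- the rigidity of $N$-polarized isomorphisms between normal forms, and the relative (in-families) normal form needed for corepresentability --- as the content actually carried out in \cite{dor}. As a reconstruction of why the quoted statement is true, the sketch is sound.

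Two points deserve a flag. First, your fibre-identification argument leans on distinguishing the two special fibres of the standard fibration by their Kodaira types ($II^*$ over $[0,1]$ versus $III^*$ over $[1,0]$), but by Theorem \ref{fibers} this dichotomy only holds when $c\neq 0$; on the locus $c=0$, $d\neq 0$ both special fibres are of type $II^*$, and an isomorphism could a priori exchange them. There the argument must instead use that an $N$-polarized isomorphism preserves the \emph{marked} $E_8$ and $E_7\subset E_8$ sublattices coming from $\iota$, not merely the fibre types; this is exactly the regime where the polarization may extend to $M=H\oplus E_8\oplus E_8$ and where \cite{dor} has to work harder. Relatedly, Proposition \ref{order} is stated and proved in the paper only for $c\neq 0$, so your appeal to the automorphism group $(\mathbb{Z}/2)^2$ to explain coarseness does not literally cover all of $\mathsf{B}$. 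Second, the universal-property paragraph is the thinnest part: putting a family in normal form Zariski-locally and gluing the resulting maps to $\mathsf{B}/\mathbb{G}_m$ requires that the local normal forms differ by a \emph{regular} $\mathbb{G}_m$-valued cocycle on overlaps, which is again a statement you are (legitimately) importing from \cite{dor} rather than proving. Neither point is a fatal gap for a theorem the paper itself only quotes, but both are where a self-contained proof would have to do real work.
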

\begin{df}\label{smoduli}
Les us denote by $\mathsf{S}$ the moduli of triples $(X,\iota,\omega)$ such that:

\begin{itemize}
\item[i.] $X$ is a smooth complex algebraic strictly $N$-polarized K3 surface;
\item[ii.] $\iota:N\rightarrow H_{dR}^2(X/\mathbb{C})$ is a lattice polarization;
\item[iii.] $\omega\in F^2H^2_{dR}(X/\mathbb{C})$ is non-zero.
\end{itemize}

\end{df}

Sometimes we will omit the polarization $\iota$, and write simply $(X,\omega)$.
\begin{lem}\label{isouseful}
Let $\lambda\in \mathbb{C}^{*}$, and $q$ a square-root of $\lambda$. Let $\phi:\mathbb{P}^3\rightarrow \mathbb{P}^3$ be the projective isomorphism given by $\phi
([x,y,z,w])=[q^8x,q^9y,z,q^6w].$ Then, $\phi$ sends $Y_{a,b,c,d}$ onto $Y_{\lambda^2 a,\lambda^3 b,\lambda^5 c,\lambda^6 d}$. This isomorphism lifts to an isomorphism $X_{a,b,c,d}\cong X_{\lambda^2 a,\lambda^3 b,\lambda^5 c,\lambda^6 d}$ of $N$-polarized K3 surfaces. Furthermore, $\phi^{*}(res(\frac{\Omega}{F_{\lambda^2 a,\lambda^3 b,\lambda^5 c,\lambda^6 d}}))=q^{-1}\cdot res(\frac{\Omega}{F_{a,b,c,d}})$.
\end{lem}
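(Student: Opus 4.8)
The plan is to verify Lemma \ref{isouseful} by a sequence of explicit but routine computations, organized so that each clause is checked in turn. First I would record the action of $\phi$ on the defining polynomial: substituting $x\mapsto q^8 x$, $y\mapsto q^9 y$, $z\mapsto z$, $w\mapsto q^6 w$ into $F_{a,b,c,d}$, each monomial of $F$ should scale by a common factor (call it $q^m$), and after factoring out $q^m$ one reads off that the remaining polynomial is exactly $F_{\lambda^2 a,\lambda^3 b,\lambda^5 c,\lambda^6 d}$ with $\lambda=q^2$. Concretely one checks the weights of the monomials $y^2zw$, $x^3z$, $xzw^2$, $zw^3$, $xz^2w$, $z^2w^2$, $w^4$: with $\deg x=8,\deg y=9,\deg z=0,\deg w=6$ these are $9\cdot2+0+6=24$, $8\cdot3+0=24$, $8+0+12=20$, $0+18=18$, $8+0+6=14$, $0+12=12$, $24$. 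So they are \emph{not} all equal; instead, absorbing the extra factors into the coefficients $a,b,c,d$ is exactly what produces the shifts $a\mapsto q^{24-20}a=q^4 a=\lambda^2 a$, $b\mapsto q^{24-18}b=q^6 b=\lambda^3 b$, $c\mapsto q^{24-14}c=q^{10}c=\lambda^5 c$, $d\mapsto q^{24-12}d=q^{12}d=\lambda^6 d$, after clearing the overall $q^{24}$ coming from the $y^2zw$, $x^3z$ and $w^4$ terms. This establishes $\phi(Y_{a,b,c,d})=Y_{\lambda^2 a,\lambda^3 b,\lambda^5 c,\lambda^6 d}$.

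Next I would argue that $\phi$ descends to the minimal resolutions. Since $\phi$ is a projective automorphism of $\mathbb{P}^3$ carrying one hypersurface isomorphically onto the other, it restricts to an isomorphism $Y_{a,b,c,d}\xrightarrow{\sim}Y_{\lambda^2a,\dots}$ of $V$-varieties; by the functoriality and uniqueness of minimal resolutions of surfaces with rational double points, this lifts canonically to an isomorphism $X_{a,b,c,d}\xrightarrow{\sim}X_{\lambda^2a,\dots}$. To see that the lift respects the $N$-polarizations, I would invoke the description of the polarization in Theorem \ref{fibers}: it is built from the section, the generic fibre class and the two special fibres of the standard fibration $\varphi^s$, all of which are canonically determined. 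Because $\phi$ fixes the coordinates $z,w$ up to scalar, it commutes with the projection $[x,y,z,w]\mapsto[z,w]$, hence intertwines the two standard fibrations and therefore carries section to section, fibre to fibre, and the $II^*$ and $III^*$ fibres over $[0,1]$ and $[1,0]$ to their counterparts; this forces $\phi_*$ to match the two lattice polarizations.

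Finally I would compute the effect on the holomorphic $2$-form. The K3 form is the Poincaré residue $\operatorname{res}(\Omega/F)$ where $\Omega=\sum_{i}(-1)^i x_i\, dx_0\wedge\cdots\widehat{dx_i}\cdots\wedge dx_3$ is the standard volume form on $\mathbb{P}^3$ (up to normalization). Under the substitution $\phi$, $\Omega$ scales by the product of the weights $q^{8}q^{9}q^{0}q^{6}=q^{23}$ (this is the Jacobian factor, since $\Omega$ is a section of $\mathcal{O}_{\mathbb{P}^3}(4)$ twisted appropriately and the four coordinates contribute their scalars), while $F_{\lambda^2a,\dots}\circ\phi = q^{24}F_{a,b,c,d}$ by the first step. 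Hence $\phi^*\!\big(\Omega/F_{\lambda^2a,\dots}\big) = q^{23-24}\,\Omega/F_{a,b,c,d} = q^{-1}\,\Omega/F_{a,b,c,d}$, and taking residues gives $\phi^*\big(\operatorname{res}(\Omega/F_{\lambda^2a,\dots})\big)=q^{-1}\operatorname{res}(\Omega/F_{a,b,c,d})$, as claimed.

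I expect the main obstacle to be not the bookkeeping of $q$-exponents (which is mechanical) but the clean verification that the lift to minimal resolutions genuinely preserves the $N$-polarization rather than merely some polarization of the same lattice type; the cleanest route is the fibration argument above, using that $\phi$ is compatible with $\pi:\mathbb{P}^3\dashrightarrow\mathbb{P}^1$, so that the geometric recipe of Theorem \ref{fibers} for the polarization is transported verbatim by $\phi$. A secondary subtlety is pinning down the exact power of $q$ in the transformation of $\Omega$ and of $F$; the safe way is to track a single monomial of top weight in $F$ (e.g. $y^2zw$ or $w^4$, both of weight $24$) together with the fact that $\Omega$ transforms by the sum $8+9+0+6=23$ of the coordinate weights, which together yield the stated factor $q^{-1}$.
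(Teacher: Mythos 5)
Your proposal is correct and, for the residue computation (the only part the paper actually proves rather than cites), it follows essentially the same route: $\phi^*\Omega=q^{23}\Omega$ from the determinant of the diagonal scaling, $\phi^*F_{\lambda^2a,\lambda^3b,\lambda^5c,\lambda^6d}=q^{24}F_{a,b,c,d}$ from the monomial weights, hence the factor $q^{-1}$. The first two claims are simply referred to \cite{dor} in the paper, so your explicit weight bookkeeping and the fibration argument for preserving the $N$-polarization (note only that $\phi$ intertwines the projections to $[z,w]$ up to the automorphism $[z,w]\mapsto[z,q^6w]$ of $\mathbb{P}^1$, which still fixes $[0,1]$ and $[1,0]$) are a welcome, correct supplement rather than a divergence.
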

\begin{proof}
Except for the last one, all af the previous statements are proved in. For the last statement, recall that $\Omega$ is induced by $\iota_EdV,$ where $E$ and $dV$ are the Euler vector field and the volume form of $\mathbb{C}^4$, respectively. By an abuse of notation, let us write $\phi:\mathbb{C}^4\rightarrow \mathbb{C}^4,\phi
(x,y,z,w)=(q^8x,q^9y,z,q^6w)$. Then $\phi^*(\Omega)= \phi^*(\iota_EdV)=\iota_{\phi^*(E)}\phi^*(dV)=det(\phi)\iota_EdV=q^{23}\Omega.$ Since $\phi^*(F_{\lambda^2 a,\lambda^3 b,\lambda^5 c,\lambda^6 d})=q^{24}F_{a,b,c,d}$, we conclude that $\phi^{*}(res(\frac{\Omega}{F_{\lambda^2 a,\lambda^3 b,\lambda^5 c,\lambda^6 d}}))=res(\phi^{*}(\frac{\Omega}{F_{\lambda^2 a,\lambda^3 b,\lambda^5 c,\lambda^6 d}}))=q^{-1}\cdot res(\frac{\Omega}{F_{a,b,c,d}})$.
\end{proof}

\begin{teo}\label{s}$\mathsf{S}$ is isomorphic to $V:=\{(a,b,c,d)\in\mathbb{C}^4|\, c\neq 0\}.$ 
\end{teo}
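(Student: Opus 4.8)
The plan is to construct an explicit isomorphism between $\mathsf{S}$ and $V=\{(a,b,c,d)\in\mathbb{C}^4\mid c\neq 0\}$ by sending a triple $(X,\iota,\omega)$ to a canonical representative in the Clingher--Doran family together with a normalization of the holomorphic two-form. First I would recall that by Theorem~1.2 of \cite{dor} and the proposition on strict $N$-polarization, every strictly $N$-polarized K3 surface is isomorphic to some $X_{a,b,c,d}$ with $c\neq 0$; moreover, by Theorem~\ref{coarse} and the weighted action \eqref{action}, two such $X_{a,b,c,d}$ and $X_{a',b',c',d'}$ are isomorphic as $N$-polarized K3 surfaces if and only if $(a',b',c',d')=(\lambda^2 a,\lambda^3 b,\lambda^5 c,\lambda^6 d)$ for some $\lambda\in\mathbb{C}^*$. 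The key extra datum is $\omega\in F^2H^2_{dR}(X/\mathbb{C})$, which is one-dimensional; on $X_{a,b,c,d}$ it is spanned by the residue $\omega_{a,b,c,d}:=\operatorname{res}(\Omega/F_{a,b,c,d})$ of the meromorphic form with the single Clingher--Doran pole. So a triple $(X,\iota,\omega)$ with $X\cong X_{a,b,c,d}$ amounts to the data of an orbit $[(a,b,c,d)]$ under \eqref{action} plus a nonzero scalar $t$ with $\omega = t\,\omega_{a,b,c,d}$.

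The heart of the argument is that this extra scalar exactly rigidifies the $\mathbb{C}^*$-action, i.e., the map $(a,b,c,d)\mapsto (X_{a,b,c,d},\iota,\omega_{a,b,c,d})$ descends to a bijection from $V$ onto $\mathsf{S}$. Surjectivity: given $(X,\iota,\omega)$, pick any isomorphism $X\cong X_{a,b,c,d}$ with $c\neq 0$; then $\omega$ pulls back to $t\,\omega_{a,b,c,d}$ for a unique $t\in\mathbb{C}^*$, and I need to produce, within the orbit, the unique representative whose canonical residue equals $\omega$. By Lemma~\ref{isouseful}, the isomorphism $X_{a,b,c,d}\cong X_{\lambda^2 a,\lambda^3 b,\lambda^5 c,\lambda^6 d}$ (where $\lambda=q^2$) scales the residue by $q^{-1}$; hence along the orbit the residue class scales by arbitrary nonzero factors, and there is a \emph{unique} $q$ (up to the ineffective sign coming from $q\mapsto -q$, which one checks acts trivially on the K3 by Proposition~\ref{order}) for which the transported form is exactly $\omega$. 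This gives a well-defined point of $V$, proving surjectivity and, by the same uniqueness, injectivity. One must also verify that the only $N$-polarized automorphisms of $X_{a,b,c,d}$ are those of Proposition~\ref{order} and that these fix $\omega_{a,b,c,d}$ (they do: an order-two automorphism $[x,y,z,w]\mapsto[\kappa x,\mu y,z,w]$ with $\kappa^2=\mu^2=1$ pulls $\Omega$ back to $\kappa\mu\,\Omega$ and $F_{a,b,c,d}$ to itself, and one checks $\kappa\mu=1$ is forced, or else the residue is genuinely fixed because $\omega$ was assumed nonzero and $F^2$ is one-dimensional); this rules out any residual ambiguity in the identification. Finally I would promote this set-theoretic bijection to an isomorphism of varieties (or of the relevant moduli functors) by noting both sides carry universal families and the construction is algebraic in $(a,b,c,d)$ and in the coefficient $t$.

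The main obstacle I anticipate is the careful bookkeeping of the two square-root ambiguities in Lemma~\ref{isouseful} (the exponent $q^{-1}=\lambda^{-1/2}$ on the residue versus the exponent $q^2=\lambda$ appearing in the parameters) and checking that fixing $\omega$ kills exactly the right part of the $\mathbb{C}^*$-action while the leftover $\pm1$ acts trivially on the pair $(X,\omega)$ — equivalently, that the stabilizer computation lines up so that the map $V\to\mathsf{S}$ is genuinely injective and not merely two-to-one. A secondary point needing care is justifying that $F^2H^2_{dR}(X_{a,b,c,d}/\mathbb{C})$ is spanned by $\omega_{a,b,c,d}$, which follows from the residue/Du Bois analysis of Section~3 (the form $\omega_1$ there lies in $F^2$ and, the K3 having $h^{2,0}=1$, must span it), together with the fact that this class is not in the image of the polarization $\iota(N)$, again from the non-vanishing of the Gauss--Manin connection established in Section~3.
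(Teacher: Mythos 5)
Your proposal follows essentially the same route as the paper's proof: the same map $(a,b,c,d)\mapsto(X_{a,b,c,d},res(\frac{\Omega}{F_{a,b,c,d}}))$, surjectivity by absorbing the scalar $k$ into the parameters via Lemma \ref{isouseful}, and injectivity via the orbit description of Theorem \ref{coarse} combined with the order-two automorphism statement. One parenthetical in your argument is off --- $\kappa\mu=1$ is \emph{not} forced, since $[x,y,z,w]\mapsto[x,-y,z,w]$ preserves $F_{a,b,c,d}$ and negates the residue --- but the sign ambiguity in $q$ is harmless for the simpler reason that the parameter rescaling depends only on $q^{2}=\lambda$ (the exponents $-4,-6,-10,-12$ are even), which is exactly how the paper concludes, deducing only $\lambda=q^{2}=1$ rather than $q=1$.
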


\begin{proof}
Let us consider the morphism $\Psi:\mathsf{B}\rightarrow \mathsf{S}, 
\Psi(a,b,c,d)=(X_{a,b,c,d},res(\frac{\Omega}{F_{a,b,c,d}})).$
To prove surjectivity, let $(X,\omega)$ be as in Definition \ref{smoduli}. $X$ is isomorphic as an $N$-polarized K3 surface to one of the form $X_{a,b,c,d}$ for some $(a,b,c,d)\in \mathsf{B}$. Since $h^{2,0}=1$ for any K3 surface, we have $(X,\omega)\cong(X_{a,b,c,d},k\cdot res(\frac{\Omega}{F_{a,b,c,d}}))$ for some $k\in \mathbb{C}^*$. By making $q=k^{-1}$ and $\lambda=k^{-2}$ in the previous lemma, we get \begin{equation}\label{weights}(X_{a,b,c,d},k\cdot res(\frac{\Omega}{F_{a,b,c,d}}))\cong (X_{k^{-4}a,k^{-6}b,k^{-10}c,k^{-12}d},res(\frac{\Omega}{F_{k^{-4}a,k^{-6}b,k^{-10}c,k^{-12}d}})).\end{equation}.

Next, we deal with injectivity. Let $(X_{a,b,c,d},res(\frac{\Omega}{F_{a,b,c,d}}))$ and $(X_{a',b',c',d'},res(\frac{\Omega}{F_{a',b',c',d'}}))$ be isomorphic. In particular, $X_{a,b,c,d}$ is isomorphic to $X_{a',b',c',d'}$ as $N$-polarized K3 surfaces, and, by Theorem \ref{coarse}, there is $\lambda\in \mathbb{C}$ such that $$(a',b',c',d')=(\lambda^2 a,\lambda^3 b,\lambda^5 c,\lambda^6 d).$$ Let $q$ be a square root of $\lambda$. Then, Lemma \ref{isouseful} implies that $(X_{a,b,c,d},res(\frac{\Omega}{F_{a,b,c,d}}))$ is isomorphic to $(X_{a,b,c,d},q^{-1}res(\frac{\Omega}{F_{a,b,c,d}}))$. Since such an automorphism of $X_{a,b,c,d}$ has order at most two, we have $\lambda=q^2=1$. This concludes the proof.
\end{proof}

The moduli space $\mathcal{M}$ can be recovered from $\mathsf{S}$ by forgetting the cohomological information introduced in the definition of the latter. More precisely, we have:

\begin{df} The algebraic group $\mathbb{C}^*$ acts on $\mathsf{S}$ on the left by means of
\begin{equation}
(X,\omega)\cdot k=(X,k\cdot\omega),
\end{equation}
\end{df}
The main properties of the previous action are captured in the following proposition. 

\begin{prop}
 i. $\mathsf{S}/\mathbb{C}^*\cong \mathcal{M}$; ii. $(a,b,c,d)\cdot k=(k^{-4}a,k^{-6}b,k^{-10}c,k^{-12}d)$.
\end{prop}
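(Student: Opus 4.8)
The plan is to prove both statements by unwinding the identification $\mathsf{S}\cong V$ established in Theorem~\ref{s}, under which $(X,\omega)$ corresponds to the parameter tuple realizing it via the morphism $\Psi(a,b,c,d)=(X_{a,b,c,d},\mathrm{res}(\tfrac{\Omega}{F_{a,b,c,d}}))$. For part (ii), I would start from an arbitrary point $(a,b,c,d)\in V$ and compute directly the effect of the action $(X,\omega)\cdot k=(X,k\cdot\omega)$. Since $X_{a,b,c,d}$ is left unchanged and only the holomorphic two-form is rescaled, I need to find which parameter tuple $(a',b',c',d')$ satisfies $(X_{a',b',c',d'},\mathrm{res}(\tfrac{\Omega}{F_{a',b',c',d'}}))\cong (X_{a,b,c,d},k\cdot\mathrm{res}(\tfrac{\Omega}{F_{a,b,c,d}}))$. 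This is exactly the content of equation~\eqref{weights} in the proof of Theorem~\ref{s}: taking $q=k^{-1}$, $\lambda=k^{-2}$ in Lemma~\ref{isouseful} gives $(X_{a,b,c,d},k\cdot\mathrm{res}(\tfrac{\Omega}{F_{a,b,c,d}}))\cong (X_{k^{-4}a,k^{-6}b,k^{-10}c,k^{-12}d},\mathrm{res}(\tfrac{\Omega}{F_{k^{-4}a,k^{-6}b,k^{-10}c,k^{-12}d}}))$. Translating through the isomorphism $\mathsf{S}\cong V$, this says precisely $(a,b,c,d)\cdot k = (k^{-4}a,k^{-6}b,k^{-10}c,k^{-12}d)$, which is (ii).

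For part (i), the plan is to use (ii) together with Theorem~\ref{coarse}, which identifies $\mathcal{M}=\mathsf{B}/\mathbb{G}_m$ with the quotient of $\mathsf{B}$ by the action $\lambda\cdot(a,b,c,d)=(\lambda^2 a,\lambda^3 b,\lambda^5 c,\lambda^6 d)$. From (ii), the $\mathbb{C}^*$-action on $\mathsf{S}\cong V$ is $(a,b,c,d)\cdot k=(k^{-4}a,k^{-6}b,k^{-10}c,k^{-12}d)$; writing $\lambda=k^{-2}$, the orbit of $(a,b,c,d)$ under $\mathbb{C}^*$ consists exactly of the points $(\lambda^2 a,\lambda^3 b,\lambda^5 c,\lambda^6 d)$ for $\lambda$ ranging over all squares in $\mathbb{C}^*$, i.e. over all of $\mathbb{C}^*$. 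Hence the orbits of the $\mathbb{C}^*$-action on $V$ coincide with the restrictions to $V$ of the $\mathbb{G}_m$-orbits on $\mathsf{B}$. Since $V=\{c\neq 0\}\subset\mathsf{B}=\{c\neq 0\text{ or }d\neq 0\}$ is a $\mathbb{G}_m$-stable subset meeting every orbit (every strictly $N$-polarized surface has $c\neq0$ by the Proposition preceding Theorem~\ref{fibers}, so every orbit in $\mathcal{M}$ has a representative with $c\neq0$; alternatively $\lambda^5 c\neq0 \iff c\neq0$ shows stability), the natural map $V/\mathbb{C}^*\to \mathsf{B}/\mathbb{G}_m=\mathcal{M}$ is a bijection, and is an isomorphism of the underlying spaces. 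This gives $\mathsf{S}/\mathbb{C}^*\cong\mathcal{M}$.

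The only genuinely delicate point is the match-up of the group parametrizations: the $\mathbb{C}^*$ acting on $\mathsf{S}$ and the $\mathbb{G}_m$ acting on $\mathsf{B}$ are related by $k\mapsto k^{-2}=\lambda$, which is a two-to-one surjection of $\mathbb{C}^*$ onto itself, so one must check that the kernel $\{\pm1\}$ acts trivially on $V$ — and indeed $(\pm1)^{-4}=(\pm1)^{-6}=(\pm1)^{-10}=(\pm1)^{-12}=1$, so $(-1)$ acts as the identity, consistent with the fact that the residual automorphisms of $X_{a,b,c,d}$ have order dividing two (Proposition~\ref{order}). I expect this bookkeeping — verifying that passing to the quotient is unaffected by the $\{\pm1\}$ ambiguity and that $V$ is a full $\mathbb{G}_m$-stable transversal inside $\mathsf{B}$ — to be the main (and only) obstacle; everything else is a direct citation of Theorem~\ref{s}, equation~\eqref{weights}, Lemma~\ref{isouseful}, and Theorem~\ref{coarse}.
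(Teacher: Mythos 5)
Your proposal is correct and takes essentially the same route as the paper, whose entire proof consists of citing equation \eqref{weights} for (ii) and declaring (i) immediate from the definitions of $\mathcal{M}$ and $\mathsf{S}$. One caveat on your extra detail for (i): the claim that $V=\{c\neq 0\}$ meets every $\mathbb{G}_m$-orbit of $\mathsf{B}$ is not literally true, since orbits with $c=0$, $d\neq 0$ never enter $V$ (as $\lambda^5 c=0$ iff $c=0$); these are exactly the non-strictly $N$-polarized surfaces, so $\mathsf{S}/\mathbb{C}^*$ is really the open locus $\{c\neq 0\}$ inside $\mathcal{M}$ --- but this imprecision is already present in the paper's own statement and is not something your argument introduces.
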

\begin{proof}
The first statement is immediate after recalling the definitions of the moduli spaces $\mathcal{M}$ and $\mathsf{S}$. The second statement follows from Equation \eqref{weights}.
\end{proof}

\subsection{The moduli space $\mathsf{T}$}

Let

\begin{equation}\label{psi1}
\Psi:= \begin{bmatrix}
0 & 0 & 1 \\
0 & \Psi' & 0\\
1 & 0 & 0 
\end{bmatrix}
\end{equation}
be a matrix such that $\Psi'$ is a non-singular and symmetric $3\times 3$ matrix with complex entries. 

\begin{df}\label{tmoduli1}
Les us denote by $\mathsf{T}_{\Psi}$ the moduli of tuples $(X,\iota,\alpha_1,\ldots,\alpha_5)$ such that:
\begin{itemize}
\item[i.] $X$ is a smooth complex algebraic $N$-polarized K3 surface;
\item[ii.] $\iota:N\rightarrow H_{dR}^2(X/\mathbb{C})$ is a lattice polarization;
\item[iii.] $(\alpha_1,\ldots,\alpha_5)$ is a basis of $H^2_{dR}(X/\mathbb{C})_{\iota}:=H_{dR}^2(X/\mathbb{C})/\iota(N)$ such that $\alpha_1\in F^2$, $\alpha_1,\ldots,\alpha_4\in F^1$ and $[\langle\alpha_i,\alpha_j \rangle]=\Psi$. Here, $H_{dR}^2(X/\mathbb{C})$ denotes the second algebraic de Rham cohomology group of $X$ over $\mathbb{C}$, and $F^{\bullet}$ denotes its Hodge filtration.
\end{itemize}
\end{df}

We observe that for any choice of $\Psi$ the resulting moduli spaces $\mathsf{T}_{\Psi}$ are canonically isomorphic over $\mathbb{C}$. More explicitly, let \begin{equation*}
\tilde{\Psi}:= \begin{bmatrix}
0 & 0 & 1 \\
0 & \tilde{\Psi}' & 0\\
1 & 0 & 0 
\end{bmatrix}
\end{equation*} be another matrix, where $\tilde{\Psi}'$ is a non-singular and symmetric $3\times 3$ matrix with complex entries.
\begin{prop}\label{invariance}
$\mathsf{T}_{\Psi'}$ and $\mathsf{T}_{\tilde{\Psi}'}$ are homeomorphic.
\end{prop}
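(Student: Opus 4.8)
The plan is to construct an explicit isomorphism between $\mathsf{T}_{\Psi}$ and $\mathsf{T}_{\tilde{\Psi}}$ by changing the frame via a fixed linear map that intertwines the two intersection forms while respecting the Hodge filtration. First I would observe that since $\Psi'$ and $\tilde{\Psi}'$ are both non-singular symmetric $3\times 3$ complex matrices, they are congruent over $\mathbb{C}$: there is $A\in\mathsf{GL}_3(\mathbb{C})$ with $A^T\tilde{\Psi}'A=\Psi'$. Extending this to the full $5\times 5$ setting, I would build a block-lower-triangular matrix of the shape dictated by the Hodge-filtration condition in Definition \ref{alggroup},
\begin{equation*}
B=\begin{bmatrix}
\lambda & 0 & 0\\
0 & A & 0\\
0 & 0 & \lambda^{-1}
\end{bmatrix},
\end{equation*}
and check that $B^T\tilde{\Psi}B=\Psi$ for a suitable scalar $\lambda$ (here one only needs the anti-diagonal $1$'s to match, which forces $\lambda\cdot\lambda^{-1}=1$, automatically satisfied, so in fact $\lambda=1$ works and $B=\mathrm{diag}(1,A,1)$ suffices; the scalar is there only in case one wants to absorb a normalization).

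Next I would define the map $\mathsf{T}_{\tilde{\Psi}}\to\mathsf{T}_{\Psi}$ on tuples by $(X,\iota,\tilde{\alpha}_1,\ldots,\tilde{\alpha}_5)\mapsto(X,\iota,\alpha_1,\ldots,\alpha_5)$ where $\alpha=B^T\tilde{\alpha}$ (writing $\tilde{\alpha}=(\tilde{\alpha}_1,\ldots,\tilde{\alpha}_5)^T$). I would then verify that this lands in $\mathsf{T}_{\Psi}$: the new tuple is still a basis of $H^2_{dR}(X/\mathbb{C})_{\iota}$ since $B$ is invertible; the block-triangular shape of $B$ guarantees $\alpha_1\in F^2$ and $\alpha_1,\ldots,\alpha_4\in F^1$ because $F^2$ is spanned by the first basis vector and $F^1$ by the first four, and $B^T$ preserves these flags; and the intersection matrix transforms as $[\langle\alpha_i,\alpha_j\rangle]=B^T[\langle\tilde\alpha_i,\tilde\alpha_j\rangle]B=B^T\tilde{\Psi}B=\Psi$. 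The inverse map is given by $(B^T)^{-1}=(B^{-1})^T$, which has the same block-triangular shape, so the construction is symmetric and yields mutually inverse maps. Finally I would note these maps are continuous (indeed algebraic) for the natural topology on these moduli spaces since they are induced by a fixed linear substitution on the frame, independent of the point of the base; hence $\mathsf{T}_{\Psi}$ and $\mathsf{T}_{\tilde{\Psi}}$ are homeomorphic (in fact isomorphic as whatever structure these moduli carry).

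The only genuinely substantive point is the congruence of $\Psi'$ and $\tilde{\Psi}'$ over $\mathbb{C}$, which is standard linear algebra (any non-degenerate symmetric bilinear form over an algebraically closed field of characteristic $\neq 2$ is equivalent to the identity form), and the bookkeeping that the chosen $B$ simultaneously respects the Hodge filtration and intertwines the forms. I do not expect a real obstacle here; the statement is essentially a normalization lemma, and the main care needed is just to confirm that the block structure forced by $g^T$ respecting the Hodge filtration is compatible with an arbitrary congruence on the middle $3\times 3$ block — which it is, since that block is entirely unconstrained by the filtration condition. This is why, as remarked after Definition \ref{alggroup}, one is free to drop the subscript $\Psi$ and write simply $\mathsf{T}$ and $\mathsf{G}$.
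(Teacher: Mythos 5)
Your proposal is correct and follows essentially the same route as the paper: use congruence of the non-singular symmetric matrices $\Psi'$ and $\tilde{\Psi}'$ over $\mathbb{C}$ to produce a fixed linear change of frame, check it preserves the Hodge-filtration conditions and transports one intersection matrix to the other, and observe that the inverse congruence gives the continuous inverse. The only difference is cosmetic: you spell out the $5\times 5$ block extension $B=\mathrm{diag}(1,A,1)$ of the $3\times 3$ congruence, a detail the paper's proof leaves implicit.
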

\begin{proof}
 Since $\Psi'$ and $\tilde{\Psi}'$ are both non-singular and symmetric, they are congruent over $\mathbb{C}$. Then, there is a non-singular matrix $P$ such that $P^T\Psi'P=\tilde{\Psi}'$. Let us define a map $F:\mathsf{T}_{\Psi'}\rightarrow\mathsf{T}_{\tilde{\Psi}'}$ by $F(X,\iota,\alpha_1,\ldots,\alpha_5)=(x,\iota,\sum_j P_{1j}\alpha_j,\ldots,\sum_j P_{5j}\alpha_j)$. It is straightforward to check that it is well-defined on the isomorphy classes, and that it is continuous. It has a continuous inverse given by the same contruction applied to the matrix $P^{-1}$.  
\end{proof}

It is this freedom of choice that allows us to choose $\Psi$ of a convenient form and obtain results consistent with the choice of \cite[page 7]{alimtt}. From now on we may omit the intersection matrix $\Psi$ unless we make a particular choice of it.

\begin{rem}\label{notinvariant}On the other hand, the moduli space $\mathsf{T}$ might also be defined over $\mathbb{Q}$ or $\mathbb{Z}[\frac{1}{N}]$ for some integer $N$. In these cases, the matrix $\Psi'$ should be defined over the respective ring, and the isomorphy type of the respective moduli space $\mathsf{T}$ will depend only on the congruency class of the matrix $\Psi'$ over the ring under consideration.
\end{rem}

\begin{teo}\label{quasiaffine}
a patch of $\mathsf{T}$ is a quasi-affine complex variety.
\end{teo}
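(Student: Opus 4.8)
The plan is to exhibit a non-empty Zariski-open patch $\mathsf{O} \subset \mathsf{T}_\Psi$ that embeds as a locally closed subvariety of some affine space $\mathbb{A}^m_{\mathbb{C}}$, by using the explicit normal form of Clingher--Doran together with the explicit de Rham cohomology computations of Section 3. First I would recall from Theorem \ref{s} that the moduli space $\mathsf{S}$ of triples $(X,\iota,\omega)$ with $\omega \in F^2$ non-zero is isomorphic to the affine variety $V = \{(a,b,c,d) \in \mathbb{C}^4 \mid c \neq 0\}$, and that over $V$ we have the canonical two-form $\omega_1 = \mathrm{res}(\Omega/F_{a,b,c,d})$. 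So on the locus $c \neq 0$ there is already a tautological choice of $\alpha_1 \in F^2$. What remains is to parametrize the additional choice of $\alpha_2,\ldots,\alpha_5$ completing $\alpha_1$ to a basis of $H^2_{dR}(X/\mathbb{C})_\iota$ subject to the Hodge-filtration conditions ($\alpha_2,\alpha_3,\alpha_4 \in F^1$) and the fixed intersection form $[\langle \alpha_i,\alpha_j\rangle] = \Psi$.

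The key input here is that Section 3 produces, via tame-polynomial theory applied to $f - s$ and the Poincaré residue / Du Bois machinery (Theorems \ref{deligne}, \ref{steen}, \ref{ases}, \ref{compatible}), an \emph{explicit} algebraic frame $\omega_1,\ldots,\omega_5$ of the rank-$5$ bundle $H^2_{dR}(\mathcal{X}/\mathsf{B})_\iota$ over the complement of the codimension-one locus $\Sigma$; these are the forms $\omega_\beta/\tilde f^{\,l}$ attached to the multi-indices in the table, with $\omega_1 \in F^2$ and $\omega_2,\omega_3,\omega_4 \in F^1$. Let $\mathsf{O}_0 = \mathsf{B} \setminus \Sigma$ (intersected with $c \neq 0$); over $\mathsf{O}_0$ the intersection matrix $[\langle \omega_i,\omega_j\rangle]$ is a matrix $\Phi(a,b,c,d)$ of regular functions, symmetric up to sign, which one computes (or which follows from the cup-product formulas for residues in \cite{hodge1}) to be non-degenerate there — shrinking $\mathsf{O}_0$ by the non-vanishing of $\det \Phi$ if necessary. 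Any other admissible frame $(\alpha_1,\ldots,\alpha_5)$ with $\alpha_1 \in F^2$, $\alpha_1,\ldots,\alpha_4 \in F^1$ differs from $(\omega_1,\ldots,\omega_5)$ by a matrix $g$ in the group $\mathsf{G}_\Phi$ that respects the Hodge filtration and satisfies $g^T \Phi g = \Psi$; hence the fiber of $\mathsf{T}_\Psi \to \mathcal{M}$ over a point represented in $\mathsf{O}_0$ is a torsor under this algebraic group. I would then define $\mathsf{O}$ to be the preimage in $\mathsf{T}_\Psi$ of (the image of) $\mathsf{O}_0$, and realize it inside $\mathsf{O}_0 \times \mathsf{G}_\Phi \subset \mathbb{A}^4 \times \mathsf{Mat}_5(\mathbb{C})$: a point of $\mathsf{O}$ is $((a,b,c,d), g)$ with the entries of $g$ cut out by the polynomial equations $g^T \Phi(a,b,c,d) g = \Psi$ and the block-triangular shape, plus the open condition $\det g \neq 0$. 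This exhibits $\mathsf{O}$ as a locally closed subscheme of affine space, i.e. quasi-affine.

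The main obstacle I anticipate is verifying the two claims that make the torsor picture work over an honest non-empty \emph{Zariski}-open set: (i) that the five explicit forms $\omega_1,\ldots,\omega_5$ are genuinely linearly independent in $H^2_{dR}(X)_\iota$ and not merely in the Gauss--Manin module — this is where one needs that they have no residue along $Z$ and hence descend through $H^2(Y,\mathbb{C}) \hookrightarrow H^2(X,\mathbb{C})$ via Theorem \ref{deligne}, and that they avoid the image $\iota(N)$ of the polarization, which the excerpt proposes to check by a non-vanishing of the Gauss--Manin connection; and (ii) that $\det \Phi$ is not identically zero, so that after inverting it the structure group really is a fixed algebraic group (independent of the point, and isomorphic over $\mathbb{C}$ to $\mathsf{G}_\Psi$ by Proposition \ref{invariance}). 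Both are finite explicit computations in the tame-polynomial model — plausibly the ones referenced in \S6.9 of \cite{humbert} — but they are the crux; everything after that is the routine observation that a group-torsor over a quasi-affine base, with the group an affine algebraic group acting algebraically and the torsor cut out by polynomial equations in a trivializing affine chart, is itself quasi-affine.
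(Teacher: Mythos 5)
Your proposal is correct and follows essentially the same route as the paper: both take the explicit algebraic frame $\omega_1,\ldots,\omega_5$ produced by the tame-polynomial computations of Section 3 (Theorem \ref{compatible}) over the complement of $\Sigma$, record the intersection pairing as a matrix of regular functions, and parametrize all admissible enhanced frames by block-triangular change-of-basis matrices $S$ subject to the polynomial relation $S\Omega S^T=\Psi$, realizing the patch as a locally closed subscheme of affine space. The torsor language and the explicit worry about $\det\Phi$ are presentational refinements of what the paper does implicitly.
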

\begin{proof}

By Theorem \ref{compatible} or its preceeding remarks there is a non-empty Zariski subset $V$ of $\mathsf{S}$ there are algebraic sections $\omega=(\omega_1,\ldots,\omega_5)$ on $V$, such that they form a frame for the cohomology bundle, and are compatible with the Hodge filtration. For $i,j=1,\ldots,5,$ let us define regular functions $b_{ij}\in\mathcal{O}_U(V)$ by means of the intersection pairing:

\begin{equation*} b_{ij}(s):=\langle \omega_i(s),\omega_j(s)\rangle=\frac{1}{(2\pi i)^2}\int_{X(s)} \omega_i(s)\cup\omega_j(s)\in \mathcal{O}_U(V),\,\,  s\in U.
\end{equation*}

Any other tuple of sections $\alpha=(\alpha_1,\ldots,\alpha_5)$ on $U$ forming a frame for the cohomology bundle is obtained from $\omega$ in the form $\alpha=S\omega,$ where $S$ is the change of basis matrix. For $\alpha$ to satisfy the conditions in Definition \ref{tmoduli1}, $S$ must be of the form
\begin{equation*}
S =
\begin{bmatrix}
1 & 0 & 0 & 0 & 0\\
s_{21} & s_{22} & s_{23} & s_{24} & 0\\
s_{31} & s_{32} & s_{33} & s_{34} & 0\\
s_{41} & s_{42} & s_{43} & s_{44} & 0\\
s_{51} & s_{52} & s_{53} & s_{54} & s_{55}

\end{bmatrix},
\end{equation*}

and satisfy $S\Omega S^T=\Psi$, where $\Omega=[b_{ij}]\in \textsf{Mat}_{5}(\mathcal{O}_U(V)).$ Therefore, $\mathsf{T}$ is isomorphic to the spectrum of the $\mathbb{C}$-algebra
\begin{equation*}
\frac{\mathcal{O}_U(V)[s_{ij}]}{\mathcal{I}},\end{equation*}
where $\mathcal{I}$ is the ideal generated by the relations $S\Omega S^T=\Psi.$ So, we conclude that $\mathsf{T}$ is a quasi-affine complex variety, and $\Gamma(\mathcal{O}_{\mathsf{T}})=\mathbb{C}[a,b,c,d]\otimes_{\mathbb{C}}\frac{\mathbb{C}[s_{ij}]}{\mathcal{I}}.$
\end{proof}

\begin{rem}
Observe that by the previous proof, Proposition \ref{invariance} extends in a local chart of the moduli spaces $\mathsf{T}_{\Psi'}$ and $\mathsf{T}_{\tilde{\Psi}'}$ to a in isomorphism of varieties over $\mathbb{C}$. But, as mentioned in Remark \ref{notinvariant}, the moduli spaces may not be isomorphic over smaller rings.  
\end{rem}

Working out, we find that to find local coordinates for $\mathsf{T}$ it is enough to find three independent parameters for $S^{1,1}$ in $S^{1,1}\Omega^{1,1}(S^{1,1})^T=\Psi'$, and choose also $S^{1,0}$ as independent parameters since we have $S^{2,2}\Omega^{2,0}=1$ and

\begin{equation}
(S^{2,1})^T=-(S^{1,1}\Omega^{1,1})^{-1}(S^{1,0}\Omega^{0,2}+S^{1,1}\Omega^{1,2})S^{2,2}
\end{equation}
\begin{equation}(S^{2,0})^T=\frac{-1}{\Omega^{2,0}}(-(S^{2,1}\Omega^{1,1}+S^{2,2}\Omega^{2,1})(S^{1,1}\Omega^{1,1})^{-1}(S^{1,0}\Omega^{0,2}+S^{1,1}\Omega^{1,2})+S^{2,0}\Omega^{0,2}+S^{2,1}\Omega^{1,2}+S^{2,2}\Omega^{2,2})
\end{equation}
\subsection{The algebraic group $\mathsf{G}$}
In this section we compute the dimension of the moduli space $\mathsf{T}$ by introducing an algebraic group $\mathsf{G}$ acting on it in such a way that two enhanced K3 surfaces belong to the same orbit if, and only if, the underlying K3 surfaces have the same complex structure. 

For the following definition, recall the definition of $\Psi$ in Equation \ref{psi}.

\begin{df}\label{alggroup1} We define the complex algebraic group
\begin{equation*}
\mathsf{G}_{\Psi}=\{g\in \mathsf{Mat}_{5}(\mathbb{C})\, | \, g^T\Psi g=\Psi\text{ and } g^{T} \text{respects Hodge filtration} \}.
\end{equation*}
\end{df}
In the previous definition, $g^T$ respects Hodge filtration if and only if $g^T$ is of the form
\begin{equation*}
\begin{bmatrix}
*_{1\times 1} & 0 & 0 \\
* & *_{3\times 3} & 0\\
* & * & *_{1\times 1}\\
\end{bmatrix}\in \mathsf{Mat}_5(\mathbb{C}).
\end{equation*}

We have the following analog of Proposition \ref{invariance}. We keep the hypothesis of that proposition.

\begin{prop}\label{groupinvariance}
$\mathsf{G}_{\Psi}$ is isomorphic to $\mathsf{G}_{\tilde{\Psi}}$ as algebraic groups over $\mathbb{C}$.
\end{prop}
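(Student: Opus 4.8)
The plan is to mimic exactly the construction used in Proposition~\ref{invariance}, transporting it from the moduli spaces $\mathsf{T}_{\Psi}$ to the stabilizer groups $\mathsf{G}_{\Psi}$. Recall that the only hypothesis is that $\Psi'$ and $\tilde\Psi'$ are non-singular symmetric $3\times 3$ complex matrices; since the complex symmetric bilinear forms of a given rank are all congruent over $\C$, there is an invertible $P'\in\GL_3(\C)$ with $(P')^T\Psi'P'=\tilde\Psi'$. The natural thing is then to build a block matrix
\begin{equation*}
P=\begin{bmatrix} 1 & 0 & 0\\ 0 & P' & 0\\ 0 & 0 & 1\end{bmatrix}\in\GL_5(\C),
\end{equation*}
and to check first that $P^T\Psi P=\tilde\Psi$ (a block computation using the antidiagonal-plus-$\Psi'$ shape of $\Psi$), and second that conjugation $g\mapsto P^{-1}gP$ sends $\mathsf{G}_{\Psi}$ to $\mathsf{G}_{\tilde\Psi}$.

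The verification splits into the two defining conditions of Definition~\ref{alggroup1}. For the intersection-form condition: if $g^T\Psi g=\Psi$ and $h=P^{-1}gP$, then $h^T\tilde\Psi h = P^Tg^T(P^{-1})^T\tilde\Psi P^{-1}gP = P^Tg^T\Psi gP = P^T\Psi P=\tilde\Psi$, using $(P^{-1})^T\tilde\Psi P^{-1}=\Psi$, which is the inverse form of $P^T\Psi P=\tilde\Psi$. For the Hodge-filtration condition: $g^T$ has the block lower-triangular shape with blocks of sizes $1,3,1$, and since $P^T$ (hence $(P^{-1})^T$) is block-\emph{diagonal} with the same block sizes, $h^T=P^Tg^T(P^{-1})^T=P^Tg^T(P^T)^{-1}$ is again block lower-triangular of the same shape; conjugating a block-triangular matrix by a block-diagonal matrix with compatible block sizes preserves block-triangularity. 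So $h\in\mathsf{G}_{\tilde\Psi}$. The same argument with $P^{-1}$ in place of $P$ gives the inverse map, and $g\mapsto P^{-1}gP$ is manifestly a morphism of algebraic groups (it is a linear change of coordinates, hence regular, and it respects matrix multiplication), with regular inverse; hence it is an isomorphism of algebraic groups over $\C$.

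I do not expect a genuine obstacle here; the statement is essentially bookkeeping. The one point that needs a moment's care is that the congruence $P'$ must be chosen so that the \emph{enlarged} matrix $P$ both conjugates $\Psi$ to $\tilde\Psi$ \emph{and} is block-diagonal with block pattern $(1,3,1)$ — this is automatic from the chosen block form of $P$, but it is worth recording explicitly, since it is precisely the compatibility that makes the Hodge-filtration condition transport correctly. If one wanted, Proposition~\ref{groupinvariance} could also be deduced a posteriori from Proposition~\ref{invariance} together with the fact (implicit in the quasi-affine description of $\mathsf{T}$) that $\mathsf{G}_{\Psi}$ is the group of change-of-frame matrices $S$ with $S\Psi S^T=\Psi$ respecting the filtration, but the direct conjugation argument above is shorter and self-contained.
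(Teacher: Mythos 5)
Your proposal is correct and follows essentially the same route as the paper: the paper also forms the block-diagonal matrix $Q=\mathrm{diag}(1,P,1)$ from the congruence $P^T\Psi'P=\tilde{\Psi}'$, observes $Q^T\Psi Q=\tilde{\Psi}$, and concludes by conjugation. The only divergence is that the paper writes the isomorphism as $g\mapsto Q^TgQ$, whereas your formula $g\mapsto P^{-1}gP$ (with the verification $(P^{-1})^T\tilde{\Psi}P^{-1}=\Psi$) is the one that actually checks out as a group homomorphism preserving both defining conditions, so your more careful bookkeeping is welcome.
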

\begin{proof}
Let $P$ be as in the proof of Proposition \ref{invariance}. Let us define \begin{equation*}
Q:= \begin{bmatrix}
1 & 0 & 0 \\
0 & P & 0\\
0 & 0 & 1 
\end{bmatrix}.
\end{equation*}
$Q$ is non-singular complex matrix, and $Q^T\Psi Q=\tilde{\Psi}$. Then, $g\mapsto Q^TgQ$ is the desired isomorphism. 
\end{proof}
\begin{rem}
The matrix $\Psi'$ in the definition of $\Psi$ (see Equation \ref{psi}) might be defined over over an smaller ring than $\mathbb{C}$. In this case, the algebraic group $\mathsf{G}_{\Psi}$ would also be defined over the smaller ring. On the other hand, the isomorphy type of $\mathsf{G}_{\Psi}$ would depend on the congruence class of the matrix $\Psi'$ over the ring under consideration.
\end{rem}

One of the good reasons that leave us to define this group is the following proposition

\begin{prop} $\mathsf{G}$ acts algebraically on the right on the moduli space $\mathsf{T}$ by means of 
\begin{equation*}
(X,\alpha)\cdot g=(X,g^{T}\alpha).
\end{equation*}
Furthermore, $\mathsf{T}/ \mathsf{G}$ is isomorphic to the moduli space $\mathcal{M}$.
\end{prop}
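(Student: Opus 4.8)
The plan is to check first that $(X,\alpha)\cdot g=(X,g^{T}\alpha)$ is a well-defined right action of $\mathsf G$ on $\mathsf T$ by algebraic automorphisms, and then to identify the orbit space with $\mathcal M$ via the morphism $(X,\iota,\alpha)\mapsto[X]$ that forgets the frame. For well-definedness, fix $(X,\iota,\alpha_1,\dots,\alpha_5)\in\mathsf T_{\Psi}$ and $g\in\mathsf G_{\Psi}$ and put $\beta:=g^{T}\alpha$, so $\beta_i=\sum_k g_{ki}\alpha_k$. Since $g^{T}\Psi g=\Psi$ forces $\det g\neq 0$, the $\beta_i$ again form a basis of $H^2_{dR}(X/\mathbb{C})_{\iota}$; the block–lower–triangular shape of $g^{T}$ gives $\beta_1\in F^2$ and $\beta_1,\dots,\beta_4\in F^1$; and $[\langle\beta_i,\beta_j\rangle]=g^{T}[\langle\alpha_i,\alpha_j\rangle]g=g^{T}\Psi g=\Psi$. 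Compatibility with isomorphisms of enhanced K3 surfaces is automatic since an isomorphism of the underlying varieties acts $\mathbb{C}$-linearly on cohomology and $g^{T}$ is applied afterwards, and the right–action axioms follow from $((X,\alpha)\cdot g)\cdot h=(X,h^{T}g^{T}\alpha)=(X,(gh)^{T}\alpha)$. Algebraicity is visible in the chart of Theorem~\ref{quasiaffine}: there $\alpha=S\omega$ for a fixed Hodge-adapted frame $\omega$ over $V$, and $g$ acts by $S\mapsto g^{T}S$, a linear substitution with coefficients in $\mathcal O(\mathsf G)$ that leaves $\mathcal O_U(V)$ fixed; equivalently, the action is the natural transformation of the moduli functor represented by $\mathsf T$.

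For the quotient, consider $p:\mathsf T\to\mathcal M$ sending $(X,\iota,\alpha)$ to the $N$-polarized isomorphism class of $X$. It is a morphism (over the locus of strictly $N$-polarized surfaces it factors as $\mathsf T\to\mathsf S\to\mathsf S/\mathbb{C}^{*}\cong\mathcal M$, forgetting $\alpha_2,\dots,\alpha_5$ and then using Theorem~\ref{coarse} and the identification $\mathsf S/\mathbb{C}^{*}\cong\mathcal M$), and it is $\mathsf G$-invariant because $g$ does not change $X$. Surjectivity of $p$ is a linear-algebra statement over $\mathbb{C}$: for such an $X$ the space $H^2_{dR}(X/\mathbb{C})_{\iota}$ is $5$-dimensional with $\dim F^2=1$, $\dim F^1=4$, the cup-product pairing on it is nondegenerate (it is the orthogonal complement of the nondegenerate sublattice $\iota(N)\subset H^{1,1}$), and it vanishes on $F^2$ and between $F^2$ and $F^1$ because $H^{4,0}=H^{3,1}=0$ on a K3. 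Hence any basis adapted to the flag $0\subset F^2\subset F^1\subset H^2_{dR}(X/\mathbb{C})_{\iota}$ has Gram matrix of the same block shape as $\Psi$, and after a further flag-preserving change of basis — which is exactly the action of an element of $\mathsf G$ — its Gram matrix becomes $\Psi$, producing a point of $\mathsf T$ over $[X]$.

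Next, the fibers of $p$ are the $\mathsf G$-orbits. If $p(X,\iota,\alpha)=p(X',\iota',\alpha')$, choose an $N$-polarized isomorphism $\phi:X'\to X$; then $\phi^{*}$ is an isometry of $H^2_{dR}$ preserving $\iota(N)$ and the Hodge filtration, so $\phi^{*}\alpha'$ is another flag-adapted frame of $H^2_{dR}(X/\mathbb{C})_{\iota}$ with Gram matrix $\Psi$. The change-of-basis matrix $g^{T}$ with $\phi^{*}\alpha'=g^{T}\alpha$ is then forced, by comparing Gram matrices and the flag, to satisfy $g^{T}\Psi g=\Psi$ and to be block lower triangular; hence $g\in\mathsf G$ and $(X',\iota',\alpha')\cong(X,\iota,\alpha)\cdot g$. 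Thus $p$ induces a bijection $\mathsf T/\mathsf G\to\mathcal M$ on points.

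To upgrade this to an isomorphism of varieties I would factor the quotient: the forgetful map $\mathsf T\to\mathsf S$ realizes $\mathsf S$ as $\mathsf T/\mathsf N$, where $\mathsf N=\ker(\mathsf G\to\mathbb{C}^{*})$, $g\mapsto (g^{T})_{11}$, is the subgroup fixing $\alpha_1$ (the homomorphism $\mathsf G\to\mathbb{C}^{*}$ is surjective, with section $a\mapsto\mathrm{diag}(a,I_3,a^{-1})$), because over each point of $\mathsf S$ the flag-adapted completions of a fixed $\alpha_1$ to a frame with Gram matrix $\Psi$ form a torsor under $\mathsf N$; then $\mathsf T/\mathsf G=(\mathsf T/\mathsf N)/(\mathsf G/\mathsf N)=\mathsf S/\mathbb{C}^{*}\cong\mathcal M$. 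Alternatively, since the relevant patch of $\mathcal M$ is an open subset of the normal variety $\mathbb{P}(2,3,5,6)$, a bijective morphism onto it from the quotient variety is an isomorphism by Zariski's main theorem in characteristic zero. The main obstacle I expect is exactly this last point — exhibiting $\mathsf T/\mathsf G$ as a variety and controlling the fibers of $p$ scheme-theoretically, since $\mathsf G$ is a non-reductive (parabolic) subgroup of $\mathrm O(5,\mathbb{C})$; one must also pin down the precise patch on which everything is smooth and dispose of the residual finite automorphisms of $(X,\iota)$ from Proposition~\ref{order}, which is where the torsor structure of $\mathsf T\to\mathsf S$ needs genuine care.
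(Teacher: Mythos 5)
Your proposal is correct and fills in, with full detail, exactly the argument the paper leaves implicit: its entire proof reads ``This action is algebraic by using Theorem \ref{quasiaffine}. The remaining assertions are immediate,'' so your verification of well-definedness, the right-action axioms, algebraicity via $S\mapsto g^{T}S$ in the quasi-affine chart, and the fiber-equals-orbit computation is precisely what is being taken for granted. The one issue you flag as the ``main obstacle'' --- endowing $\mathsf T/\mathsf G$ with a variety structure and comparing it scheme-theoretically with $\mathcal M$, in the presence of the non-reductive group $\mathsf G$ and the finite automorphisms of Proposition~\ref{order} --- is not addressed by the paper at all, which reads the isomorphism $\mathsf T/\mathsf G\cong\mathcal M$ purely at the level of sets of isomorphism classes (coarse moduli), so your set-theoretic bijection already establishes everything the paper actually claims.
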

\begin{proof}
This action is algebraic by using Theorem \ref{quasiaffine}. The remaining assertions are immediate.
\end{proof}

\subsection{Explicit computations for the lattice $N$}

Motivated by the computations performed in the next section, our convenient choice of intersection matrix in the case of the lattice $N$ will be 
\begin{equation}\label{choice}
\Psi:=
\begin{bmatrix}
0 & 0 & 0 & 0 & 1\\
0 & 0 & 0 & 1 & 0 \\
0 & 0 & -\frac{1}{2} & 0 & 0 \\
0 & 1 & 0 & 0 & 0 \\
1 & 0 & 0 & 0 & 0 \\
\end{bmatrix}. 
\end{equation}

\begin{prop}\label{lie}
$\mathfrak{g}:=\text{Lie}(\mathsf{G})$ is isomorphic to the Lie subalgebra of $\mathfrak{o}_{5,\Psi}(\mathbb{C})$ generated freely by 
\begin{equation*}
\mathfrak{g}_1:=
\begin{bmatrix}
0 & 1 & 0 & 0 & 0\\
0 & 0 & 0 & 0 &  0\\
0 & 0 & 0 & 0 & 0 \\
0 & 0 & 0 & 0 & -1 \\
0 & 0 & 0 & 0 & 0 \\
\end{bmatrix}, 
\mathfrak{g}_2:= \begin{bmatrix}
0 & 0 & 1 & 0 & 0\\
0 & 0 & 0 & 0 & 0 \\
0 & 0 & 0 & 0 & 2  \\
0 & 0 & 0 & 0 & 0 \\
0 & 0 & 0 & 0 & 0 \\
\end{bmatrix}, 
\mathfrak{g}_3:=\begin{bmatrix}
0 & 0 & 0 & 1 & 0\\
0 & 0 & 0 & 0 & -1 \\
0 & 0 & 0 & 0 & 0 \\
0 & 0 & 0 & 0 & 0 \\
0 & 0 & 0 & 0 & 0 \\
\end{bmatrix},
\end{equation*}
\begin{equation*}
\mathfrak{g}_4:=
\begin{bmatrix}
0 & 0 & 0 & 0 & 0\\
0 & 1 & 0 & 0 & 0 \\
0 & 0 & 0 & 0 & 0 \\
0 & 0 & 0 & -1 & 0 \\
0 & 0 & 0 & 0 & 0 \\
\end{bmatrix}, 
\mathfrak{g}_5:= \begin{bmatrix}
0 & 0 & 0 & 0 & 0\\
0 & 0 & 1 & 0 & 0 \\
0 & 0 & 0 & 2 & 0\\
0 & 0 & 0 & 0 & 0\\
0 & 0 & 0 & 0 & 0 \\
\end{bmatrix}, 
\mathfrak{g}_6:=\begin{bmatrix}
0 & 0 & 0 & 0 & 0\\
0 & 0 & 0 & 0 & 0 \\
0 & 1 & 0 & 0 & 0 \\
0 & 0 & \frac{1}{2} & 0 & 0 \\
0 & 0 & 0 & 0 & 0 \\
\end{bmatrix},
\end{equation*}
\begin{equation*}
\mathfrak{g}_0:=
\begin{bmatrix}
1 & 0 & 0 & 0 & 0\\
0 & 0 & 0 & 0 & 0 \\
0 & 0 & 0 & 0 & 0 \\
0 & 0 & 0 & 0 & 0 \\
0 & 0 & 0 & 0 & -1 
\end{bmatrix}.
\end{equation*} 
\end{prop}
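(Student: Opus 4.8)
The plan is to compute $\mathfrak{g} = \mathrm{Lie}(\mathsf{G})$ directly from the defining equations of $\mathsf{G}_\Psi$ with the explicit choice \eqref{choice} of $\Psi$, and then identify the resulting Lie algebra with the span of the listed matrices. First I would recall that for a matrix group cut out by $g^T\Psi g = \Psi$, the Lie algebra consists of those $\mathfrak{h} \in \mathsf{Mat}_5(\mathbb{C})$ with $\mathfrak{h}^T\Psi + \Psi\mathfrak{h} = 0$, i.e. $\mathfrak{h} \in \mathfrak{o}_{5,\Psi}(\mathbb{C})$; this is a $10$-dimensional Lie algebra since $\Psi$ is a nondegenerate symmetric bilinear form in dimension $5$. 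Next I would impose the infinitesimal version of the second defining condition: $g^T$ respects the Hodge filtration, so $\mathfrak{h}^T$ must be block lower-triangular of type $(1,3,1)$, equivalently $\mathfrak{h}$ itself is block \emph{upper}-triangular of type $(1,3,1)$. Intersecting $\mathfrak{o}_{5,\Psi}(\mathbb{C})$ with this parabolic-type subalgebra cuts the dimension down, and a direct linear-algebra computation (writing a general block-upper-triangular $\mathfrak{h}$ and solving $\mathfrak{h}^T\Psi + \Psi\mathfrak{h} = 0$ entry by entry) yields a $7$-dimensional space.

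Then I would exhibit $\mathfrak{g}_0, \mathfrak{g}_1, \ldots, \mathfrak{g}_6$ as an explicit basis of this $7$-dimensional intersection: each listed matrix is manifestly block upper-triangular of the right shape, and one checks $\mathfrak{g}_i^T\Psi + \Psi\mathfrak{g}_i = 0$ for $\Psi$ as in \eqref{choice} by a short direct multiplication (the asymmetric-looking entries like $2$ and $\tfrac12$ are forced precisely by the $-\tfrac12$ in the middle of $\Psi$). Linear independence is immediate from the support patterns of the seven matrices, so they span. To get "generated freely" I would note that a Lie algebra which is $7$-dimensional and generated as a Lie algebra by a set of $7$ linearly independent elements must have those generators as a basis, so there are no further relations to impose beyond the bracket relations inherited from $\mathfrak{o}_{5,\Psi}(\mathbb{C})$; alternatively one observes that $\mathfrak{g}$ is the full stabilizer-type subalgebra and the $\mathfrak{g}_i$ are chosen as a weight basis under the torus element $\mathfrak{g}_0$.

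The main obstacle I expect is bookkeeping rather than conceptual: one must be careful that the condition "$g^T$ respects the Hodge filtration" is applied to the transpose (so $\mathfrak{h}$, not $\mathfrak{h}^T$, is upper-triangular), and one must correctly transcribe the Hodge-type block sizes $(1,3,1)$ coming from $\alpha_1 \in F^2 \subset F^1 = \langle\alpha_1,\ldots,\alpha_4\rangle$. A secondary subtlety is verifying that the intersection of $\mathfrak{o}_{5,\Psi}$ with the block-upper-triangular subalgebra is exactly $7$-dimensional and not larger: the generic block-upper-triangular matrix has $1 + (1\cdot 3) + (3\cdot 3) + (3\cdot 1) + 1 = 17$ free entries, and the equations $\mathfrak{h}^T\Psi + \Psi\mathfrak{h} = 0$ must cut this down to $7$; I would organize this by the $2\times 2$ antidiagonal block structure of $\Psi$ together with the middle $\Psi' = (-\tfrac12)$, so that the condition decouples into a few small systems whose solution spaces are easy to count. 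Once the count checks out and the seven displayed matrices are verified to lie in the space and be independent, the proposition follows.
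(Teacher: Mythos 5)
Your proposal follows essentially the same route as the paper: linearize the two defining conditions of $\mathsf{G}_{\Psi}$ to obtain $\mathfrak{h}^{T}\Psi+\Psi\mathfrak{h}=0$ together with block upper-triangularity of type $(1,3,1)$, solve the resulting linear system in block form (the paper writes the constraints as $f=-a$, $e=-b^{T}$, $c=0$, and the middle block $\Psi'$-antisymmetric), and identify the seven-dimensional solution space with the span of the displayed matrices. One minor bookkeeping slip: a generic block upper-triangular matrix of type $(1,3,1)$ has $1+3+1+9+3+1=18$ free entries, not $17$ (you omitted the $1\times1$ corner block), but the constraints still cut this down to exactly $7$ as you describe, so the argument is unaffected.
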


For future use, we compute its Lie brackets

\begin{center}
\begin{tabular}{|c||c|c|c|c|c|c|c|}\hline
 & $\mathfrak{g}_1$ & $\mathfrak{g}_2$ & $\mathfrak{g}_3$ & $\mathfrak{g}_4$ & $\mathfrak{g}_5$ & $\mathfrak{g}_6$ & $\mathfrak{g}_0$ \\ \hline\hline
 
$\mathfrak{g}_1$ & 0 & 0 & 0 & $
\mathfrak{g}_2$ & $\mathfrak{g}_3$ & 0 & $-\mathfrak{g}_1$\\ \hline
 
$\mathfrak{g}_2$ & 0 & 0 & 0 & $-\mathfrak{g}_1$ & $0$ & $\mathfrak{g}_3$ & $-\mathfrak{g}_2$\\ \hline
 
$\mathfrak{g}_3$ & 0 & 0 & 0 & $0$ & $-\mathfrak{g}_1$ & $-\mathfrak{g}_2$ & $-\mathfrak{g}_3$ \\ \hline
 
$\mathfrak{g}_4$ & $-\mathfrak{g}_2$ & $\mathfrak{g}_1$ & $0$ & 0 & $\mathfrak{g}_5$ & $-\mathfrak{g}_6$ & 0 \\ \hline
 
$\mathfrak{g}_5$ & $-\mathfrak{g}_3$ & $0$ & $\mathfrak{g}_1$ & $-\mathfrak{g}_5$ & 0 & $\mathfrak{g}_4$ & 0 \\ \hline
 
$\mathfrak{g}_6$ & $0$ & $-\mathfrak{g}_3$ & $\mathfrak{g}_2$ & $\mathfrak{g}_6$ & $-\mathfrak{g}_4$ & 0 & 0 \\ \hline
 
$\mathfrak{g}_0$ & $\mathfrak{g}_1$ & $\mathfrak{g}_2$ & $\mathfrak{g}_3$ & 0 & 0 & 0 & 0 \\ \hline
  
\end{tabular}
\end{center}

\begin{proof}
By standard Lie group arguments, we have that
\begin{equation*}
\text{Lie}(\mathsf{G})=\{x\in \mathsf{Mat}_{5}(\mathbb{C})|\, x^{T} \text{ respects Hodge filtration and } x^T\Psi+\Psi x=0\}.\end{equation*} Let $x\in \text{Lie}(\mathsf{G})$. Let us write
\begin{equation*}
x=\begin{bmatrix}
a & b & c \\
0 & d & e\\
0 & 0 & f\\
\end{bmatrix}.
\end{equation*}
Then, condition $x^T\Psi+\Psi x=0$ is equivalent to conditions $f=-a, d=-d^T,e=-b^T, c=-c$. Therefore, $\text{Lie}(\mathsf{G})$ is isomorphic to the Lie subalgebra of $\mathfrak{gl}_5(\mathbb{C})$ consisting of elements of the form

\begin{equation*}
x=\begin{bmatrix}
a & b & 0 \\
0 & c & -b^T\\
0 & 0 & -a\\
\end{bmatrix},
\end{equation*}
such that $c$ is antisymmetric.
\end{proof}

\begin{prop}
The Lie subalgebra of $\text{Lie}(\mathsf{G})$ generated by $W_1,W_2,W_3$ is isomorphic to $\mathfrak{sl}_2(\mathbb{C})$. 
\end{prop}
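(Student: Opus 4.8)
The plan is to recognize the Lie subalgebra generated by $W_1,W_2,W_3$ as the copy of the orthogonal algebra $\mathfrak{o}_{3,\Psi'}(\mathbb C)$ sitting inside $\mathfrak g=\text{Lie}(\mathsf G)$ along the central block, and then either to invoke or, better, to exhibit explicitly the classical isomorphism $\mathfrak{o}_{3,\Psi'}(\mathbb C)\cong\mathfrak{sl}_2(\mathbb C)$. Recall from the proof of Proposition~\ref{lie} that $\mathfrak g$ consists of the block matrices
\[
x=\begin{bmatrix} a & b & 0\\ 0 & d & e\\ 0 & 0 & -a\end{bmatrix},\qquad d\in\mathfrak{o}_{3,\Psi'}(\mathbb C),
\]
with blocks of sizes $1,3,1$ and $e$ determined by $b$, and that $W_1,W_2,W_3$ (the matrices $\mathfrak g_4,\mathfrak g_5,\mathfrak g_6$) are precisely the elements with $a=0$ and $b=0$, i.e.\ those supported on the central block $d$. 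Denote their linear span by $\mathfrak h$.

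The first step is to note that $\mathfrak h$ is closed under the bracket: for two matrices supported on the central block the commutator is again block-diagonal with central block $[d,d']$, so $\mathfrak h$ is a Lie subalgebra and hence coincides with the subalgebra generated by $W_1,W_2,W_3$. Moreover $x\mapsto d$ is a Lie algebra isomorphism $\mathfrak h\xrightarrow{\ \sim\ }\mathfrak{o}_{3,\Psi'}(\mathbb C)$, and since $\Psi'$ is a nondegenerate symmetric form in dimension $3$, the latter is a $3$-dimensional simple complex Lie algebra, hence isomorphic to $\mathfrak{sl}_2(\mathbb C)$; this already proves the statement. To make the isomorphism explicit I would read off from the Lie bracket table after Proposition~\ref{lie} the relations $[W_1,W_2]=W_2$, $[W_1,W_3]=-W_3$, $[W_2,W_3]=W_1$, and then check that
\[
H:=2W_1,\qquad E:=W_2,\qquad F:=2W_3
\]
form a standard $\mathfrak{sl}_2$-triple, i.e.\ $[H,E]=2E$, $[H,F]=-2F$, $[E,F]=H$. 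As $W_1,W_2,W_3$ are linearly independent, so are $H,E,F$, which therefore span $\mathfrak h$; sending the standard generators of $\mathfrak{sl}_2(\mathbb C)$ to $H,E,F$ yields the desired isomorphism.

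There is no real obstacle here: the argument reduces to a short bracket computation, which can be carried out from the block description, by direct $5\times5$ matrix multiplication of $W_iW_j-W_jW_i$, or simply read off from the displayed bracket table. The two points deserving a little care are (i) the closure of $\mathfrak h$ under the bracket, which is what lets one replace ``generated by'' with ``spanned by'' and thereby conclude that the generated subalgebra is genuinely $3$-dimensional; and (ii) the choice of the scalars in the triple $(H,E,F)$ so that one lands on the normalization $[H,E]=2E,\ [H,F]=-2F,\ [E,F]=H$ rather than on the rotation normalization of $\mathfrak{o}_3(\mathbb C)$.
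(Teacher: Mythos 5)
Your proof is correct, and it reaches the conclusion by a different final step than the paper. The paper's own argument is shorter and more abstract: it observes from the bracket table that $L:=\langle W_1,W_2,W_3\rangle$ is three-dimensional and satisfies $L=L'$, and then invokes the classification of three-dimensional complex Lie algebras, for which the only perfect one is $\mathfrak{sl}_2(\mathbb{C})$. You instead identify $L$ structurally as the central-block copy of $\mathfrak{o}_{3,\Psi'}(\mathbb{C})$ inside $\text{Lie}(\mathsf{G})$ and, more importantly, produce an explicit $\mathfrak{sl}_2$-triple $H=2W_1$, $E=W_2$, $F=2W_3$; I checked that the relations $[W_1,W_2]=W_2$, $[W_1,W_3]=-W_3$, $[W_2,W_3]=W_1$ you read off agree with the table (taking $W_i=\mathfrak{g}_{3+i}$, which is the only reading consistent with the statement, since $\mathfrak{g}_1,\mathfrak{g}_2,\mathfrak{g}_3$ commute), and your triple then satisfies $[H,E]=2E$, $[H,F]=-2F$, $[E,F]=H$. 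What your route buys is self-containedness (no appeal to the classification theorem) and an explicit isomorphism rather than an abstract one; what the paper's route buys is brevity. You are also right to flag explicitly that closure of the span under the bracket is what reduces ``generated by'' to ``spanned by'' --- the paper uses this silently when it asserts $L$ is three-dimensional, and it is exactly the point that makes either argument go through.
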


\begin{proof}
Let $L=\langle W_1,W_2,W_3\rangle$. Then, $L$ is a complex three-dimensional Lie algebra and, by looking at the previous table, it satisfies $L=L'$. The classification of three dimensional Lie algebras implies that $L\cong \mathfrak{sl}_2(\mathbb{C})$. 
\end{proof}

\begin{cor}
The moduli space $\mathsf{T}$ has dimension $10$.
\end{cor}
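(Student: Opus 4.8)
The plan is to combine the orbit–stabilizer picture from the action of $\mathsf{G}$ on $\mathsf{T}$ with the known dimension of the coarse moduli space $\mathcal{M}$ and the already-computed dimension of $\mathsf{G}$. First I would recall, from the proposition immediately preceding, that $\mathsf{T}/\mathsf{G} \cong \mathcal{M}$, and that $\mathcal{M} = \mathsf{B}/\mathbb{G}_m$ where $\mathsf{B}$ is a Zariski-open subset of $\mathbb{C}^4$; hence $\dim \mathcal{M} = 3$. Next I would determine $\dim \mathsf{G}$ from the explicit description of $\mathfrak{g} = \mathrm{Lie}(\mathsf{G})$ given in Proposition \ref{lie}: the Lie algebra is freely generated (as the stated basis shows) by the seven elements $\mathfrak{g}_0,\mathfrak{g}_1,\dots,\mathfrak{g}_6$, so $\dim \mathsf{G} = 7$. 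Equivalently, one reads this off the block form $x = \begin{bmatrix} a & b & 0 \\ 0 & c & -b^T \\ 0 & 0 & -a \end{bmatrix}$ with $c$ antisymmetric: the free parameters are $a$ ($1$), $b \in \mathrm{Mat}_{1\times 3}$ ($3$), and the antisymmetric $c \in \mathrm{Mat}_{3\times 3}$ ($3$), totalling $7$.

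The key step is then to check that the $\mathsf{G}$-action on (the relevant quasi-affine patch of) $\mathsf{T}$ has finite — in fact trivial up to the finite automorphism group — stabilizers, so that the orbits are $7$-dimensional and $\dim \mathsf{T} = \dim \mathcal{M} + \dim \mathsf{G} = 3 + 7 = 10$. Concretely: a point of $\mathsf{T}$ is $(X,\iota,\alpha_1,\dots,\alpha_5)$, and $g \in \mathsf{G}$ fixes it iff $g^T\alpha = \alpha$ while $X$ is unchanged; since $(\alpha_1,\dots,\alpha_5)$ is a basis of $H^2_{dR}(X/\mathbb{C})_\iota$, this forces $g^T = \mathrm{Id}$, so the stabilizer is trivial. (One must be slightly careful that changing the frame can a priori be absorbed by a nontrivial automorphism of $X$ preserving the $N$-polarization, but by Proposition \ref{order} such automorphisms form the finite group $\mathbb{Z}/2 \times \mathbb{Z}/2$, so stabilizers are at worst finite and the orbit dimension is still $\dim \mathsf{G} = 7$.) Therefore the fibers of $\mathsf{T} \to \mathsf{T}/\mathsf{G} = \mathcal{M}$ are $7$-dimensional, and the dimension count gives $\dim \mathsf{T} = 10$.

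I expect the main obstacle to be purely bookkeeping rather than conceptual: one should make the orbit–stabilizer / fiber-dimension argument rigorous in the algebraic category (e.g. invoking that $\mathsf{T} \to \mathcal{M}$ is, on the quasi-affine patch $\mathsf{O}$ constructed in Theorem \ref{quasiaffine}, a surjective morphism whose fibers are single $\mathsf{G}$-orbits, and using the fiber-dimension theorem for dominant morphisms of varieties over $\mathbb{C}$). Alternatively, and perhaps more cleanly, one can bypass the group action entirely and count coordinates directly from the proof of Theorem \ref{quasiaffine}: there $\mathsf{T}$ is cut out inside $\mathrm{Spec}\,\mathbb{C}[a,b,c,d]\otimes \mathbb{C}[s_{ij}]$, and the paragraph after that proof exhibits local coordinates, namely $a,b,c,d$ (the $3$-dimensional $\mathcal{M}$-directions, after quotienting, but $4$ here before imposing the relation defining $\mathsf{S}\cong V$... actually $4$ coordinates $a,b,c,d$ with $c\neq 0$), together with three independent parameters for $S^{1,1}$ solving $S^{1,1}\Omega^{1,1}(S^{1,1})^T = \Psi'$ and the one parameter $S^{1,0}$, with $S^{2,1}, S^{2,0}, S^{2,2}$ then determined. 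That yields $4 + 3 + \dots$; reconciling the two counts (noting $\dim\mathsf{S} = 4$ and the $\mathbb{C}^*$ versus full $\mathsf{G}$ bookkeeping) is the one point I would write out carefully.
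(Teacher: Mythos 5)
Your proposal is correct and follows essentially the same route as the paper, which simply reads $\dim\mathsf{G}=7$ off the explicit description of $\mathrm{Lie}(\mathsf{G})$ and concludes $\dim\mathsf{T}=\dim\mathcal{M}+\dim\mathsf{G}=3+7=10$. The only difference is that you make explicit the finiteness of stabilizers (via the finite automorphism group of Proposition \ref{order}) needed to justify the additivity of dimensions, a point the paper leaves implicit.
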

\begin{proof}
By the previous proposition, $dim(\mathsf{G})=7$. Therefore, $dim(\mathsf{T})=dim(\mathcal{M})+dim(\mathsf{G})=3+7=10.$
\end{proof}

\begin{df}
The AMSY-Lie algebra $\mathfrak{G}$ associated to the Clingher-Doran family of $N$-polarized K3 surfaces is the Lie subalgebra of $\mathfrak{gl}_5(\mathbb{C})$ generated by $\text{Lie}(\mathsf{G})$ and $V^T_1,V^T_2,V^T_3$.
\end{df}

\begin{teo} $\mathfrak{G}$ is isomorphic to $\mathfrak{sp}_4(\mathbb{C})$.
\end{teo}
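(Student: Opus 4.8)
The plan is to prove the isomorphism $\mathfrak{G}\cong\mathfrak{sp}_4(\mathbb{C})$ by a dimension count together with an identification of $\mathfrak{G}$ with a known simple Lie algebra of the right dimension. First I would determine $\dim\mathfrak{G}$. We already know $\mathrm{Lie}(\mathsf{G})$ is $7$-dimensional, spanned by $\mathfrak{g}_0,\mathfrak{g}_1,\ldots,\mathfrak{g}_6$. Adjoining $V_1^T,V_2^T,V_3^T$ — the transposes of the three "lowering" modular directions coming from the Gauss–Manin connection / Hodge filtration (these are the lower-triangular blocks dual to $\mathfrak g_4,\mathfrak g_5,\mathfrak g_6$ in the $(1,3,1)$-block decomposition) — should give exactly a $10$-dimensional Lie algebra. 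Here I would exhibit the matrices $V_i^T$ explicitly in the $5\times 5$ form compatible with the intersection matrix $\Psi$ of \eqref{choice}, check that each satisfies $x^T\Psi+\Psi x=0$ so that $\mathfrak{G}\subset\mathfrak{o}_{5,\Psi}(\mathbb{C})$, and then compute the brackets $[\mathfrak g_i,V_j^T]$ and $[V_i^T,V_j^T]$ to verify closure without generating anything new; the count $7+3=10$ is forced once one sees that the $V_i^T$ are linearly independent modulo $\mathrm{Lie}(\mathsf{G})$ and that their mutual brackets land back in $\mathrm{Lie}(\mathsf{G})$.

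Next I would identify $\mathfrak{G}$ with $\mathfrak{sp}_4(\mathbb{C})$. Since $\dim\mathfrak{sp}_4(\mathbb{C})=10$, it suffices to show $\mathfrak{G}$ is a simple Lie algebra of type $C_2$. The cleanest route is: (i) observe that $\mathfrak{G}\subset\mathfrak o_{5,\Psi}(\mathbb{C})\cong\mathfrak{so}_5(\mathbb{C})$ (all the generators are $\Psi$-skew, as $\mathrm{Lie}(\mathsf G)$ is by Proposition~\ref{lie} and the $V_i^T$ are by construction), and $\dim\mathfrak{so}_5(\mathbb{C})=10$ as well, so in fact $\mathfrak{G}=\mathfrak o_{5,\Psi}(\mathbb{C})$; (ii) invoke the exceptional isomorphism $\mathfrak{so}_5(\mathbb{C})\cong\mathfrak{sp}_4(\mathbb{C})$ (types $B_2=C_2$), which follows from the spin representation $\mathrm{Spin}_5\cong\mathrm{Sp}_4$. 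Concretely one can make this explicit: the $4$-dimensional transcendental-plus-Hodge pieces carry a symplectic form, and the $\wedge^2$ of the standard $\mathrm{Sp}_4$-representation decomposes as a trivial summand plus the $5$-dimensional orthogonal representation, giving $\mathfrak{sp}_4\hookrightarrow\mathfrak{gl}(\wedge^2_0)=\mathfrak{so}_5$; comparing dimensions forces equality. I would present whichever of the two descriptions ($\mathfrak{G}=\mathfrak o_{5,\Psi}$ via dimension count, then the classical $B_2\cong C_2$ isomorphism) is shortest to write.

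A subtlety worth pinning down is that $\mathfrak o_{5,\Psi}(\mathbb{C})$ is genuinely $10$-dimensional and semisimple; here the non-degeneracy of $\Psi$ (built into the hypothesis that $\Psi'$ is non-singular and symmetric, and visible from \eqref{choice}) is exactly what guarantees $\mathfrak o_{5,\Psi}(\mathbb{C})\cong\mathfrak{so}_5(\mathbb{C})$, independent of the particular convenient choice of $\Psi$. So the argument should first reduce to the standard split form of $\mathfrak{so}_5$ by Proposition~\ref{groupinvariance}-type congruence reasoning, and then quote the exceptional isomorphism.

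I expect the main obstacle to be the bracket bookkeeping in step one: one must verify that the Lie algebra generated by $\mathrm{Lie}(\mathsf{G})$ together with $V_1^T,V_2^T,V_3^T$ does not exceed dimension $10$, i.e. that $[V_i^T,V_j^T]\in\mathrm{Lie}(\mathsf G)$ and that no new elements appear under iterated brackets with $\mathfrak g_0,\ldots,\mathfrak g_6$. This is the one place where an honest (if routine) matrix computation cannot be avoided; once the closure is checked and the $10$-dimensionality established, the identification with $\mathfrak{sp}_4(\mathbb{C})$ is immediate from the containment in $\mathfrak o_{5,\Psi}(\mathbb{C})$ and the classical low-rank coincidence $\mathfrak{so}_5\cong\mathfrak{sp}_4$.
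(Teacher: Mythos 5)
Your proposal follows essentially the same route as the paper: identify $\langle\mathrm{Lie}(\mathsf{G}),V_1^T,V_2^T,V_3^T\rangle$ with all of $\mathfrak{o}_{5,\Psi}(\mathbb{C})$ (the containment plus the $7+3=10$ dimension count is exactly what the paper extracts from the proof of Proposition~\ref{lie}), use non-degeneracy of $\Psi$ to get $\mathfrak{o}_{5,\Psi}(\mathbb{C})\cong\mathfrak{so}_5(\mathbb{C})$, and conclude by the classical coincidence $\mathfrak{so}_5(\mathbb{C})\cong\mathfrak{sp}_4(\mathbb{C})$. One small correction: the $V_i^T$ are the transposes of the nilradical generators $\mathfrak{g}_1,\mathfrak{g}_2,\mathfrak{g}_3$ (the strictly upper off-diagonal blocks), not elements dual to $\mathfrak{g}_4,\mathfrak{g}_5,\mathfrak{g}_6$; with that identification your closure/dimension argument goes through as stated.
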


\begin{proof}
By the proof of Proposition \ref{lie}, it follows that $\langle \text{Lie}(\mathsf{G}), V^T_1,V^T_2,V^T_3\rangle$ is  equal to $\mathfrak{o}_{5,\Psi}(\mathbb{C}).$ Since $\Psi$ is symmetric and nondegenerate, we have that $\mathfrak{o}_{5,\Psi}(\mathbb{C})\cong \mathfrak{so}_{5}(\mathbb{C}).$ We conclude by using the classical fact $\mathfrak{so}_{5}(\mathbb{C})\cong \mathfrak{sp}_{4}(\mathbb{C})$.
\end{proof}

\subsection{Computations for arbitrary lattice polarization}

First, let us note that, by Proposition \ref{groupinvariance}, the algebraic group $\mathsf{G},$ and the $\mathsf{AMSY}$-Lie algebra $\mathfrak{G}$ depend only (over $\mathbb{C}$) on the rank of the polarization under consideration. So, let us denote by $\mathsf{G}_k$ and $\mathfrak{G}_k$ the corresponding objects when a polarization of rank $1\leq k\leq 20$ is considered.

In \cite{alimvogrin}, it was asked for a complete classification of these objects. With respect to this question we have the following theorem. 
\begin{teo} 
\begin{enumerate} 
\item The radical of $\mathsf{Lie(G})_k$ has dimension $21-k$;

\item (Computation of the Levi factor) $\mathfrak{Lie}(G_k)/\mathfrak{rad(Lie}(G_k))\cong \mathfrak{so}_{20-k}(\mathbb{C})$;
\item $\mathsf{AMSY}_k$ is the lie subalgebra of $\mathfrak{gl}_{22-k}(\mathbb{C})$ generated by $\mathfrak{Lie}(G_k)$ and $\{g^T|\, g\in\mathfrak{nilrad(Lie}(G_k))\}$ (the transpose of the nilradical).

\item $\mathsf{AMSY_k}\cong \mathfrak{so}_{22-k}(\mathbb{C})$.
\end{enumerate}
\end{teo}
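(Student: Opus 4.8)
The plan is to treat the four statements as a single package: the description of $\mathsf{Lie}(\mathsf{G})_k$ as a block-upper-triangular subalgebra of $\mathfrak{gl}_{22-k}(\mathbb{C})$ (exactly as in the proof of Proposition \ref{lie}, but now with a middle block of size $20-k$ instead of $3$) is the common starting point. So first I would redo that computation in general: writing $x=\begin{bmatrix} a & b & 0 \\ 0 & c & -b^T \\ 0 & 0 & -a\end{bmatrix}$ with $a\in\mathbb{C}$, $b$ a row vector of length $20-k$, and $c$ an antisymmetric $(20-k)\times(20-k)$ matrix (antisymmetry being forced by $x^T\Psi+\Psi x=0$ together with the Hodge-filtration shape), I get $\dim\mathsf{Lie}(\mathsf{G})_k = 1 + (20-k) + \binom{20-k}{2}$. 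This already displays $\mathsf{Lie}(\mathsf{G})_k$ as a parabolic-type subalgebra of $\mathfrak{o}_{22-k,\Psi}(\mathbb{C})$.

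For item (1) and (2), I would exhibit the abelian nilradical explicitly: it consists of the matrices with $a=0$ and $c=0$, i.e. the $b$-part alone, which is $(20-k)$-dimensional and abelian, together with the one-dimensional torus part coming from $a$; these span a solvable ideal of dimension $21-k$, which I would identify as the radical by checking that the quotient is the semisimple $\mathfrak{so}_{20-k}(\mathbb{C})$ sitting in the $c$-block. The Levi decomposition $\mathsf{Lie}(\mathsf{G})_k = \mathfrak{so}_{20-k}(\mathbb{C})\ltimes\mathfrak{r}$ then follows by Levi's theorem once I verify that the $c$-block subalgebra is a complementary subalgebra; the bracket relations needed are exactly of the type already tabulated for $k=17$, only with $W_i$-type and $\mathfrak{g}_i$-type generators reindexed. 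The radical having dimension $21-k$ and the Levi factor being $\mathfrak{so}_{20-k}(\mathbb{C})$ are then immediate.

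For items (3) and (4): the $\mathsf{AMSY}$-algebra is by definition generated by $\mathsf{Lie}(\mathsf{G})_k$ together with the transposes of the nilradical generators. The transpose of the $b$-block (the strictly-upper part) is the strictly-lower part, so adjoining it to the block-upper-triangular $\mathsf{Lie}(\mathsf{G})_k$ produces, as in the $k=17$ case, the full $\mathfrak{o}_{22-k,\Psi}(\mathbb{C})$: one checks that the missing entries (the $(3,1)$-corner and the lower-left blocks) are generated by brackets of upper-triangular elements with the new lower generators. Since $\Psi$ is symmetric and nondegenerate, $\mathfrak{o}_{22-k,\Psi}(\mathbb{C})\cong\mathfrak{so}_{22-k}(\mathbb{C})$, giving (3) and (4) simultaneously. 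I would be careful to note the one caveat: for small $k$ (say $k=20$, where the middle block degenerates) the statement must be interpreted with the convention $\mathfrak{so}_1=\mathfrak{so}_2=0$ or $\mathfrak{so}_3\cong\mathfrak{sl}_2$, and I would state the valid range of $k$ explicitly.

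The main obstacle I anticipate is item (3): showing that adjoining only the transpose of the \emph{nilradical} (not all of $\mathsf{Lie}(\mathsf{G})_k^T$) already generates all of $\mathfrak{o}_{22-k,\Psi}(\mathbb{C})$. The bracket $[\mathfrak{r}^T,\mathfrak{r}]$ lands in the middle-block antisymmetric part and in the torus, so one must check these brackets actually span those pieces — for $\mathfrak{so}_{20-k}$ this requires that the defining vector representation of $\mathfrak{so}_{20-k}$ (which is how the $b$-block transforms) pairs with its dual to surject onto $\mathfrak{so}_{20-k}\oplus\mathbb{C}$, which is a standard representation-theoretic fact but needs the $\Psi'$-nondegeneracy and a dimension count to make airtight. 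Everything else is bookkeeping with block matrices of the same flavor as Proposition \ref{lie}.
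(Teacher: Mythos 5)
Your treatment of items (1)--(3) is essentially the paper's: the same block description $x=\left[\begin{smallmatrix} a & b & 0 \\ 0 & c & -b^{T} \\ 0 & 0 & -a\end{smallmatrix}\right]$ with $c$ antisymmetric, the same identification of the radical as the $(a,b)$-part (solvable ideal, maximal because the quotient sits inside the semisimple $\mathfrak{so}_{20-k}(\mathbb{C})$ of the $c$-block), and the same identification of the nilradical with the $b$-part. Where you diverge is item (4), and there your route is needlessly harder than the paper's. You flag as the ``main obstacle'' the need to show that brackets $[\mathfrak{r}^{T},\mathfrak{r}]$ span the middle antisymmetric block and the torus; this obstacle does not exist. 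A general element of $\mathfrak{o}_{22-k,\Psi}(\mathbb{C})$ (for $\Psi$ with the anti-diagonal identity blocks) has the shape $\left[\begin{smallmatrix} a & b & 0 \\ f & c & -b^{T} \\ 0 & -f^{T} & -a\end{smallmatrix}\right]$ with $c$ antisymmetric --- in particular the $(3,1)$-corner you propose to ``generate'' is identically zero there --- and the \emph{linear span} of $\mathfrak{Lie}(G_k)$ together with $\{\mathfrak{g}^{T}\mid \mathfrak{g}\in\mathfrak{nilrad}\}$ already fills every one of these slots: it has dimension $1+2(20-k)+\tbinom{20-k}{2}=\dim\mathfrak{o}_{22-k,\Psi}(\mathbb{C})$. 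Since the generated Lie algebra is sandwiched between this span and $\mathfrak{o}_{22-k,\Psi}(\mathbb{C})$ (one only needs to check that the transposed nilradical still satisfies $x^{T}\Psi+\Psi x=0$, which the paper does), equality follows with no bracket computation at all; this is exactly the paper's argument. Your bracket computation would also succeed --- $[X_b,Y_f]$ produces $bf$ in the torus and $(fb)^{T}-fb$ in the middle block, which together span --- so your proof is not wrong, merely longer and left with a step you admit is not yet airtight. Your caveat about degenerate small values of $20-k$ is a sensible addition that the paper omits.
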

\begin{proof}
Since we are working over $\mathbb{C}$, Proposition \ref{groupinvariance} implies that we can suppose that the intersection matrix is of the form

\begin{equation*}\label{psi2}
\Psi:= \begin{bmatrix}
0 & 0 & 1 \\
0 & I_{20-k} & 0\\
1 & 0 & 0 
\end{bmatrix}.
\end{equation*}.
By standard Lie group arguments, we have that
\begin{equation*}
\mathfrak{Lie}(G_k)=\{x\in \mathsf{Mat}_{22-k}(\mathbb{C})|\, x^{T} \text{ respects Hodge filtration and } x^T\Psi+\Psi x=0\}.\end{equation*} Let $x\in \mathfrak{Lie}(G_k)$. Let us write\
\begin{equation*}
x=\begin{bmatrix}
a & b & c \\
0 & d & e\\
0 & 0 & f\\
\end{bmatrix}.
\end{equation*}
Then, condition $x^T\Psi+\Psi x=0$ is equivalent to conditions $f=-a, d=-d^T,e=-b^T, c=-c$. Therefore, $\mathfrak{Lie}(G_k)$ is isomorphic to the Lie subalgebra of $\mathfrak{gl}_{22-k}(\mathbb{C})$ consisting of elements of the form

\begin{equation}\label{description}
x=\begin{bmatrix}
a & b & 0 \\
0 & c & -b^T\\
0 & 0 & -a\\
\end{bmatrix},
\end{equation}
such that $c$ is antisymmetric. Using this description of $\mathfrak{Lie}(G_k)$. we can easily conclude our theorem.
Let us observe that the subalgebra of $\mathfrak{Lie}(G_k)$ generated by the independent entries given by $a$ and $b$ is an ideal, which we denote by $I$. Furthermore, it is solvable since it is a Lie subalgebra of $\mathfrak{gl}_{22-k}(\mathbb{C})$ consisting of upper-triangular matrices. To conclude that it is the radical of $\mathfrak{Lie}(G_k)$, we must show that it is maximal. Let $J$ be a solvable ideal of $\mathfrak{Lie}(G_k)$ containing  $I$. Then $J/I$ is a solvable Lie algebra which can be identified with an ideal of the subalgebra of $\mathfrak{Lie}(G_k)$ generated by the independent entries given by $c$, where $c$ is antysimmetric. On the other hand, this algebra is isomorphic to $\mathfrak{so}_{20-k}(\mathbb{C})$, which is semisimple. Since the only solvable ideal of a semisimple Lie algebra is $\langle 0 \rangle$, we conclude that $I=J$. This completes the computation of $\mathfrak{rad(Lie}(G_k))$. As a byproduct, we also conclude $2.$
To prove $3.$ and $4.$, let us observe that by description \ref{description} of the elements of $\mathfrak{Lie}(G_k)$, we can conclude that $\mathfrak{nilrad(Lie}(G_k))$ consists of the independent entries of $b$, which implies $3.$ Using this description, we have that the elements of $\{g^T|\, g\in\mathfrak{nilrad(Lie}(G_k))\}$ can be written in the form 

\begin{equation*}
x=\begin{bmatrix}
0 & 0 & 0 \\
b^T & 0 & 0\\
0 & -b & 0\\
\end{bmatrix},
\end{equation*}
Such an $x$ satisfies  $x^T\Psi+\Psi x=0.$ Therefore, the Lie algebra $\mathsf{AMSY}_k$ is a Lie subalgebra of $\mathfrak{o}_{22-k,\Psi}(\mathbb{C})$ of dimension $1+\frac{(20-k)(20-k-1)}{2}+2(20-k)=\frac{(22-k)(22-k-1)}{2}=dim(\mathfrak{o}_{22-k,\Psi}(\mathbb{C}))$. Therefore, $\mathsf{AMSY}_k=\mathfrak{o}_{22-k,\Psi}(\mathbb{C})\cong \mathfrak{so}_{22-k}(\mathbb{C})$.
\end{proof}

As a a corollary to the previous theorem, we obtain Theorem 4.2 in \cite{alimvogrin}:
\begin{cor}
$\mathfrak{G}_{18}\cong \mathfrak{sl}_2(\mathbb{C})\oplus \mathfrak{sl}_2(\mathbb{C}).$
\end{cor}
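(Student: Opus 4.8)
The plan is to obtain this statement as an immediate specialization of part~4 of the theorem just proved, combined with one classical fact about low-rank orthogonal Lie algebras. Recall that $\mathfrak{G}_k$ is by definition the $\mathsf{AMSY}$-Lie algebra $\mathsf{AMSY}_k$ attached to a rank-$k$ lattice polarization, so that part~4 reads $\mathfrak{G}_k\cong\mathfrak{so}_{22-k}(\mathbb{C})$. Setting $k=18$ gives $\mathfrak{G}_{18}\cong\mathfrak{so}_{4}(\mathbb{C})$, and it then remains only to invoke the exceptional isomorphism $\mathfrak{so}_4(\mathbb{C})\cong\mathfrak{sl}_2(\mathbb{C})\oplus\mathfrak{sl}_2(\mathbb{C})$ --- for instance via the isogeny $\mathsf{SL}_2\times\mathsf{SL}_2\twoheadrightarrow\mathsf{SO}_4$, or concretely by splitting $\Lambda^2\mathbb{C}^4$ into its self-dual and anti-self-dual summands --- to conclude $\mathfrak{G}_{18}\cong\mathfrak{sl}_2(\mathbb{C})\oplus\mathfrak{sl}_2(\mathbb{C})$. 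This reproves Theorem~4.2 of \cite{alimvogrin}.

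The one point that deserves a line of care is that the boundary value $k=18$ is legitimate for part~4 of the theorem even though part~2 degenerates there: for $k=18$ the Levi factor $\mathfrak{so}_{20-k}(\mathbb{C})=\mathfrak{so}_2(\mathbb{C})$ is abelian rather than semisimple, so the radical/Levi analysis of parts~1--2 breaks down at this rank. However, the proof of parts~3--4 uses only the explicit description \eqref{description} of $\mathfrak{Lie}(G_k)$ together with the dimension count $1+\tfrac{(20-k)(20-k-1)}{2}+2(20-k)=\tfrac{(22-k)(22-k-1)}{2}$; at $k=18$ this count is $1+1+4=6=\dim\mathfrak{o}_{4,\Psi}(\mathbb{C})$, so $\mathsf{AMSY}_{18}$ is a full-dimensional Lie subalgebra of $\mathfrak{o}_{4,\Psi}(\mathbb{C})$, hence equals it, and $\mathfrak{o}_{4,\Psi}(\mathbb{C})\cong\mathfrak{so}_4(\mathbb{C})$ since $\Psi$ is symmetric and nondegenerate.

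There is no genuine obstacle here --- the statement is a true corollary --- and the only thing one must not overlook is precisely the degeneracy of the Levi factor at $k=18$, which, as just explained, is harmless for the part of the theorem actually being used.
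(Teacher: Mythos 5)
Your proposal is correct and follows the same route the paper implicitly takes: the corollary is stated there as an immediate consequence of part~4 of the preceding theorem, i.e.\ $\mathfrak{G}_{18}\cong\mathfrak{so}_{22-18}(\mathbb{C})=\mathfrak{so}_4(\mathbb{C})\cong\mathfrak{sl}_2(\mathbb{C})\oplus\mathfrak{sl}_2(\mathbb{C})$. Your extra observation --- that the degeneration of the Levi factor $\mathfrak{so}_{20-k}(\mathbb{C})$ at $k=18$ is harmless because the proof of part~4 rests only on the explicit matrix description and the dimension count $1+1+4=6=\dim\mathfrak{o}_{4,\Psi}(\mathbb{C})$ --- is a worthwhile check that the paper leaves unstated.
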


\section{The generalized period map and the $\mathsf{t}$-map}
\label{period}
In \cite{g1}, for a given compact Kähler manifold, Griffiths defined its classical period domain $\mathsf{D}$: the space in which periods of forms belonging to the first non-trivial piece of the Hodge filtration lived; for the case of smooth complex projective varieties, these periods corresponded to periods of forms of the first kind. The objective of this construction was to study the variation of the period matrix in a family, via the classical period map, which should give information about the variation of complex structure on the fibers. To be well-defined, this map needed to take into account the action of the monodromy $\Gamma$ on the period domain, leading to what Griffiths called the \textit{modular variety} $\mathsf{M}=\mathsf{D}/\Gamma$. Name inspired from the construction of automorphic forms by means of the Baily-Borel compactification of $\mathsf{M},$ whenever $\mathsf{D}$ was a hermitian symmetric domain \cite{baily}.  On the other hand, the last condition was not satisfied by the very interesting case of mirror quintics, from which many new $q$-expansions where appearing in the context of physics, which opened the question of an automorphic form theory for this case.

In relation to the previous problem, and a geometric interpretation of the Ramanujan equations between modular forms, Movasati in \cite{pascal,mov17,mov20} systematically used the periods of the whole primitive middle cohomology, instead of only the ones coming from the first piece of the Hodge filtration, to define a generalized period map on moduli spaces of enhanced varieties of a fixed type. This generalized period map is in general locally injective. In this section, we prove that the generalized period map for the Clingher-Doran family is a biholomorphism, which is essentially a consequence of the Torelli theorem for K3 surfaces. 
In \cite{mov13}, Movasati asked for an algebraization of the generalized period domain for the case of principally polarized abelian surfaces. The previous result can be considered as answer to that demand. It is good to observe that T. Fonseca solved this algebraization problem for principally polarized varieties of arbitrary dimension \cite{fon}.

Finally, to obtain meromorphic quasimodular forms on the Siegel half space of genus two, we define the $\mathbf{T}$-map for $N$-polarized K3 surfaces, which basically amounts to constructing, in a holomorphic way,  a polarized Hodge structure of type $(1,3,1)$ out of a given complex structure, which is compatible with the original complex structure.

\subsection{Intersection product in homology}

Let us fix a base point $0\in\mathsf{T}$. Poincaré duality gives us an isometry 
\begin{equation}\label{universal}
H^2(X_0,\mathbb{Z})\cong H_2(X_0,\mathbb{Z}).
\end{equation}

Recall that $X_0$ has an $N$-polarization $\iota:N\rightarrow H^2(X_0,\mathbb{Z}).$ Let us denote by $\iota(N)^{Pd}$ the image of $\iota(N)$ under the previous isomorphism. Let $H_2(X_0,\mathbb{Z})_{\iota}:=(\iota(N)^{Pd})^{\perp}$ be the orthogonal complement taken with respect to the intersection product. Observe that this is possible since the embedding $i$ is primitive and $H_2(X_0,\mathbb{Z})$ is torsion-free. Furthermore, we have the isometries
\begin{equation}
H_2(X_0,\mathbb{Z})_{\iota}\cong H_2(X_0,\mathbb{Z})/\iota(N)^{Pd}\cong H\oplus H\oplus\langle -2\rangle=N^{\perp}.
\end{equation}

Let $\delta_0=(\delta_1,\ldots,\delta_5)^T$ be a basis of $H_2(X_0,\mathbb{Z})_{\iota}$ with fixed intersection matrix 

\begin{equation*}
[\langle \delta_i,\delta_j\rangle]=\Psi^{-1}=
\begin{bmatrix}
0 & 0 & 0 & 0 & 1\\
0 & 0 & 0 & 1 & 0 \\
0 & 0 & -2 & 0 & 0 \\
0 & 1 & 0 & 0 & 0 \\
1 & 0 & 0 & 0 & 0 \\
\end{bmatrix}. 
\end{equation*}

We will often also use the Poincaré dual basis $\delta^{Pd}=(\delta_1^{Pd},\ldots,\delta_5^{Pd})^T$ of $H^2(X,\mathbb{Z})_{\iota}$.

\subsection{Generalized period domain and period map}

In this section we define the generalized period domain, i.e., the target space for the generalized period map defined on our moduli space $\mathsf{T}$. This period domain contains more information than the classical period domain since, besides the periods of holomorphic differentials, it contains the periods corresponding to the entire trascendental cohomology. To motivate its definition, we include the following proposition.

\begin{prop}\label{pmatrix} Let $(X,\iota,\alpha_1,\ldots,\alpha_5)$ be an $N$-polarized K3 surface, and $\delta=(\delta_1,\ldots,\delta_5)^T$ a basis of $H_2(X,\mathbb{Z})_{\iota}$ with intersection matrix equal to $\Psi^{-1}$.  Let $P:=[\int_{\delta_i}\alpha_j]$. Then, $P\in\mathsf{GL}_5(\mathbb{C})$, $\Psi=P^{T}\Psi P$, and $(P^1)^T\Psi\overline{P^1}>0$.
\end{prop}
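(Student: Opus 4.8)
The plan is to extract the three claimed properties from standard facts about the cup product on a K3 surface and the way the bases $\alpha=(\alpha_1,\dots,\alpha_5)$ and $\delta=(\delta_1,\dots,\delta_5)$ were set up in Definition \ref{tmoduli1} and the previous subsection. The key observation is that $P=[\int_{\delta_i}\alpha_j]$ is, by definition, the matrix expressing the de Rham classes $\alpha_j$ in terms of the topological basis dual to $\delta$: if $\delta^{Pd}=(\delta_1^{Pd},\dots,\delta_5^{Pd})^T$ denotes the Poincaré-dual basis of $H^2(X,\mathbb{Z})_{\iota}$, then $\alpha_j=\sum_i P_{ij}\,\delta_i^{Pd}$ (up to the usual $(2\pi i)$-normalizations that are absorbed into the pairing $\langle\cdot,\cdot\rangle$). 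From here each of the three assertions is a short computation.

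First I would prove $P\in\mathsf{GL}_5(\mathbb C)$. Since $(\alpha_1,\dots,\alpha_5)$ is by hypothesis a basis of $H^2_{dR}(X/\mathbb C)_{\iota}$ and $(\delta_1^{Pd},\dots,\delta_5^{Pd})$ is a $\mathbb C$-basis of the same space after extension of scalars, the change-of-basis matrix $P$ is invertible; equivalently, $P$ is the period matrix of a Hodge structure whose underlying lattice has the nondegenerate form $\Psi^{-1}$, so nondegeneracy of the pairing forces $\det P\neq0$. Second, the relation $\Psi=P^T\Psi P$: compute $\langle\alpha_k,\alpha_l\rangle$ by expanding both arguments in the $\delta_i^{Pd}$-basis and using that the intersection form on $H^2(X,\mathbb Z)_{\iota}$ in the dual basis $\delta^{Pd}$ is $\Psi$ (this is the Poincaré-dual statement of $[\langle\delta_i,\delta_j\rangle]=\Psi^{-1}$: the Gram matrix of a dual basis is the inverse). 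One gets $[\langle\alpha_k,\alpha_l\rangle]=P^T\Psi P$, and the left side equals $\Psi$ by condition (iii) of Definition \ref{tmoduli1}. Third, the positivity $(P^1)^T\Psi\,\overline{P^1}>0$, where $P^1$ denotes the first column of $P$, i.e. the period vector of $\alpha_1\in F^2$. This is exactly the first Hodge–Riemann bilinear relation for a K3 surface: for a nonzero holomorphic $2$-form $\omega$ one has $\tfrac{1}{(2\pi i)^2}\int_X\omega\wedge\bar\omega>0$. Writing $\omega=\alpha_1=\sum_i P^1_i\,\delta_i^{Pd}$ and expanding, $\int_X\alpha_1\wedge\overline{\alpha_1}$ becomes $(P^1)^T\,[\langle\delta_i^{Pd},\delta_j^{Pd}\rangle]\,\overline{P^1}=(P^1)^T\Psi\,\overline{P^1}$, which is therefore positive.

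I expect the main obstacle to be bookkeeping rather than conceptual: one must be careful that the pairing $\langle\cdot,\cdot\rangle$ used for the $\alpha_i$ (the normalized cup product $\tfrac{1}{(2\pi i)^2}\int_X-\cup-$) is matched consistently with the intersection form on homology and with its Poincaré dual, so that the Gram matrix of $\delta^{Pd}$ really is $\Psi$ and not $\Psi^{-1}$ or a scalar multiple; getting the transpose placement right in $\Psi=P^T\Psi P$ depends on whether $P_{ij}=\int_{\delta_i}\alpha_j$ has rows indexed by homology and columns by cohomology, which it does. Once the conventions are pinned down, all three statements are one-line consequences of, respectively, nondegeneracy of the intersection form, its value on the dual basis, and the positivity part of the Hodge–Riemann relations for the $(2,0)$-class on a K3 surface (note $\alpha_1$ lies in the transcendental part, so it pairs trivially against $\iota(N)$ and the relation is unaffected by quotienting by the polarization). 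I would also remark that, since $\alpha_1\in F^2$ is only required to be a basis vector of the rank-one piece, ``$>0$'' should be read as positivity of the Hermitian form evaluated on this one vector, which is the genuine content.
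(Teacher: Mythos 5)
Your proposal is correct and follows essentially the same route as the paper: expand $\alpha$ in a basis attached to $\delta$, compute the Gram matrix to obtain $\Psi=P^{T}\Psi P$, deduce $P\in\mathsf{GL}_5(\mathbb{C})$ from nondegeneracy of $\Psi$ (equivalently from $P$ being a change-of-basis matrix), and invoke the Hodge--Riemann bilinear relations for the positivity of $(P^1)^T\Psi\overline{P^1}$. The only (harmless) difference is bookkeeping: the paper works with the genuine Poincar\'e-dual basis, whose Gram matrix is $\Psi^{-1}$ and for which $\alpha=P^T\Psi\,\delta^{Pd}$, whereas your computation implicitly uses the Kronecker-dual basis (Gram matrix $\Psi$, expansion coefficients $P_{ij}$ directly) --- both conventions yield the same conclusion $P^{T}\Psi P$.
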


\begin{proof}
First, we proof that $\Psi=P^{T}\Psi P$. Since $\alpha=P^T\Psi\delta^{pd},$ then $\Psi=\alpha\alpha^T=P^T\Psi\delta^{pd}(\delta^{pd})^T\Psi^TP=P^T\Psi\Psi^{-1}\Psi^{T}P=P^T\Psi P.$ In the last equality we used that $\Psi$ is symmetric. Observe that, since $det(\Psi)\neq 0$, this implies that $det(P)\neq 0$. Finally, the last assertion in this proposition follows from the Riemann bilinear relations.
\end{proof}

The previous proposition suggests the following definition, which can be found in its widest generality at \cite[Chapter 8]{mov20}.
\begin{df} The \textbf{manifold of period matrices} is 
\begin{equation*}\mathsf{\Pi}=\{P\in \mathsf{GL}_5(\mathbb{C})|\, P^{T}\Psi P=\Psi \text{ and } (P^1)^T\Psi\overline{P^1}>0\}.
\end{equation*}
\end{df}

\begin{prop}\label{tangent}
$\mathsf{\Pi}$ is a smooth complex manifold of dimension $10$. Furthermore, for any $P\in \mathsf{\Pi},$ $T_P\mathsf{\Pi}=\{X\in\mathsf{Mat}_5(\mathbb{C})|\, P^T\Phi^{-1} X+X^T\Phi^{-1} P=0\}$.
\end{prop}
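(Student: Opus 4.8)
The plan is to identify $\mathsf{\Pi}$ as an open subset of the smooth affine variety $\mathsf{V} := \{P \in \mathsf{Mat}_5(\mathbb{C}) \mid P^T\Psi P = \Psi\}$, and then compute the tangent space. First I would observe that the defining equation of $\mathsf{\Pi}$ splits into an algebraic condition, $P^T\Psi P = \Psi$, and an open condition, $(P^1)^T\Psi\overline{P^1} > 0$, where $P^1$ denotes the first column (corresponding to $F^2$). The latter is an open condition in the Euclidean topology on the locus $\{P^T\Psi P = \Psi\}$, so once we know this locus is a smooth manifold of dimension $10$ near each point of $\mathsf{\Pi}$, it follows that $\mathsf{\Pi}$ is itself a smooth complex manifold of the same dimension, and $T_P\mathsf{\Pi}$ equals the tangent space to $\{P^T\Psi P = \Psi\}$ at $P$. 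Note also that $P^T\Psi P = \Psi$ with $\Psi$ invertible forces $\det(P) \neq 0$, so the locus automatically sits inside $\mathsf{GL}_5(\mathbb{C})$, consistent with the definition.

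Next I would set up the smoothness and tangent-space computation by viewing the map $\Phi\colon \mathsf{GL}_5(\mathbb{C}) \to \mathsf{Sym}_5(\mathbb{C})$, $\Phi(P) = P^T\Psi P - \Psi$, landing in the $15$-dimensional space of symmetric matrices. The locus $\mathsf{\Pi}$ is an open subset of $\Phi^{-1}(0)$. To show $0$ is a regular value along $\mathsf{\Pi}$, I would compute the derivative: $d\Phi_P(X) = X^T\Psi P + P^T\Psi X$ for $X \in \mathsf{Mat}_5(\mathbb{C}) = T_P\mathsf{GL}_5(\mathbb{C})$. Surjectivity onto $\mathsf{Sym}_5(\mathbb{C})$ follows because, given any symmetric $S$, one can solve $X^T\Psi P + P^T\Psi X = S$; indeed, since $P$ and $\Psi$ are invertible, writing $Y = \Psi P X (\Psi P)^{-1}$-type substitutions reduces this to the standard surjectivity of $Z \mapsto Z + Z^T$ onto symmetric matrices. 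Hence $\Phi^{-1}(0)$ is smooth of dimension $25 - 15 = 10$ near every point of $\mathsf{\Pi}$, and $T_P\mathsf{\Pi} = \ker d\Phi_P = \{X \in \mathsf{Mat}_5(\mathbb{C}) \mid X^T\Psi P + P^T\Psi X = 0\}$.

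Finally I would reconcile this with the stated formula, which uses $\Phi^{-1}$ in place of $\Psi$. One subtlety: the excerpt writes $T_P\mathsf{\Pi} = \{X \mid P^T\Phi^{-1}X + X^T\Phi^{-1}P = 0\}$, where in context $\Phi$ must be shorthand for the intersection matrix $\Psi$ (or rather for $\Psi^{-1}$, matching the homology pairing $[\langle\delta_i,\delta_j\rangle] = \Psi^{-1}$ on which periods are integrated). Since $\Psi^T = \Psi$, the condition $X^T\Psi P + P^T\Psi X = 0$ can be transported across the relation $P^T\Psi P = \Psi$: multiplying through and using $\Psi^{-1} = P\Psi^{-1}P^T$ (the inverse form of the defining equation), one checks that $X^T\Psi P + P^T\Psi X = 0$ is equivalent to $P^T\Psi^{-1}X + X^T\Psi^{-1}P = 0$ after the appropriate left/right multiplication by $P$ and $P^T$. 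So the two descriptions of the tangent space agree.

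The main obstacle I expect is purely bookkeeping: pinning down exactly which matrix ($\Psi$ versus $\Psi^{-1}$, and which transpose conventions) appears in the tangent space formula, and verifying the equivalence of the $\Psi$-version coming naturally from $d\Phi_P$ with the $\Psi^{-1}$-version stated in the proposition. The genuine mathematical content — surjectivity of the differential, hence smoothness and the dimension count — is routine linear algebra once the setup is fixed.
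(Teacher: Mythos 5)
Your main argument is correct and follows essentially the same route as the paper: both realize $\mathsf{\Pi}$ as an open subset of the fibre of the quadratic map $P\mapsto P^{T}\Psi P$ on $\mathsf{GL}_5(\mathbb{C})$ and identify $T_P\mathsf{\Pi}$ with the kernel of its differential. The one methodological difference is how smoothness is obtained: you verify directly that $d\Phi_P(X)=X^{T}\Psi P+P^{T}\Psi X$ is surjective onto the $15$-dimensional space $\mathsf{Sym}_5(\mathbb{C})$ and invoke the regular value theorem ($25-15=10$), whereas the paper observes that the map is equivariant for the transitive right $\mathsf{GL}_5(\mathbb{C})$-action, hence of constant rank, and then computes the dimension only at the identity via the isomorphism $X\mapsto\Psi X$ onto skew matrices. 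Both mechanisms are sound; yours is marginally more self-contained, the paper's avoids solving the linear equation.

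One caveat: your final ``reconciliation'' paragraph asserts that $X^{T}\Psi P+P^{T}\Psi X=0$ is equivalent to $P^{T}\Psi^{-1}X+X^{T}\Psi^{-1}P=0$ by transporting across $P^{T}\Psi P=\Psi$. That equivalence is false in general: at $P=I$ the two conditions read ``$\Psi X$ is skew'' and ``$\Psi^{-1}X$ is skew,'' which coincide only when $\Psi^2$ commutes with all relevant $X$ (and the paper's $\Psi$ has $\Psi^2=\mathrm{diag}(1,1,\tfrac14,1,1)$, not a scalar). Fortunately the detour is unnecessary: in the paper's (admittedly inconsistent) notation $\Phi$ denotes the homology intersection matrix $\Psi^{-1}$, so $\Phi^{-1}=\Psi$ and the formula in the proposition is literally the $\Psi$-version you derived. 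Your first reading was the right one; the fallback argument should be deleted rather than relied upon.
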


\begin{proof}
Let us define the holomorphic map $F:\mathsf{GL}_5(\mathbb{C})\rightarrow \mathsf{Mat}_5(\mathbb{C}),\, F(P)=P^{T}\Phi^{-1} P$. This map is $\mathsf{GL}_5(\mathbb{C})$-equivariant with respect to the right actions given by multiplication on the right on $\mathsf{GL}_5(\mathbb{C})$, and $X\cdot A:=A^TX A$ on $\mathsf{Mat}_5(\mathbb{C})$. Furthermore, since the first action is transitive, we conclude that $F$ has constant rank. Therefore, $F^{-1}(\Psi)$ is a properly embedded complex submanifold of $\mathsf{GL}_5(\mathbb{C})$. Since $U:=\{P\in\mathsf{GL}_5(\mathbb{C})|(P^1)^T\Phi^{-1}\overline{P^1}>0\}$ is open in $\mathsf{GL}_5(\mathbb{C})$, then $\mathsf{\Pi}=F^{-1}(\Psi)\cap U$ is a open submanifold of $F^{-1}(\Psi)$. Therefore, for any given $P\in \mathsf{\Pi},$ we have $T_P \mathsf{\Pi}=T_P  F^{-1}(\Psi)=ker(F_{*,P})=\{X\in\mathsf{Mat}(5,\mathbb{C})|\, P^T\Phi^{-1} X+X^T\Phi^{-1} P=0\}$. Finally, since $F$ has constant rank, to compute the dimension of $\mathsf{\Pi}$ it suffices to compute $dim(T_{I_5}F^{-1}(\Phi^{-1}))$. Since $X\mapsto \Phi^{-1} X$ defines an isomorphism $T_{I_5}F^{-1}(\Phi^{-1})\rightarrow \mathsf{Skew}_5(\mathbb{C})$ of complex vector spaces, we have $dim(T_{I_5}F^{-1}(\Phi^{-1}))=dim(\mathsf{Skew}_5(\mathbb{C}))=10$.
\end{proof}

\begin{df}\label{local}
Let $\mathcal{H}$ be the local system of $\mathbb{Z}$-modules on $\mathsf{T}$ formed by the set of trascendental homology groups $H_2(X_t,\mathbb{Z})_{\iota}$ for $t\in \mathsf{T}$.
\end{df}

Using proposition \ref{pmatrix}, we would like to define the generalized period map $\mathsf{T}\rightarrow \mathsf{\Pi}$. To do this, we would need to find a continuous global section $\delta$ of $\mathcal{H}$. Whether or not we can do this, it depends on the monodromy of the family under consideration. In the case of the Clingher-Doran family has non-trivial monodromy, we cannot choose such a global section. 

To overcome this, let $\pi:\tilde{\mathsf{T}}\rightarrow \mathsf{T}$ be the universal cover of the analytic space $\mathsf{T}$, and let $\tilde{\mathcal{H}}:=\pi^*\mathcal{H}$ be the pullback to $\tilde{\mathsf{T}}$ of $\mathcal{H}$. Since $\tilde{\mathsf{T}}$ is simply connected, $\tilde{\mathcal{H}}$ is trivial. Therefore, we have an isomorphism $m:\tilde{\mathcal{H}}\rightarrow N^{\perp}\times \tilde{\mathsf{T}}$. This isomorphism is an isometry fiberwise. Let us denote by $m(\tilde{t}):H_2(X_{\pi(\tilde{t})},\mathbb{Z})_{\iota}\rightarrow N^{\perp}$ the isometry induced by $m$ at $\tilde{t}\in \tilde{\mathsf{T}}$.

\begin{df}Let $\rho:\pi(\mathsf{T};0)\rightarrow Aut(H_2(X,\mathbb{Z})_{\iota},\langle \cdot,\cdot \rangle)$ be the monodromy representation of the Clignher-Doran family. 
\end{df}
Recall that $\pi(\mathsf{T};0)$ acts on $\tilde{\mathsf{T}}$ by deck transformations. 
\begin{prop}\label{monodromy} For every $\gamma\in \pi(\mathsf{T};0)$ and $\tilde{t}\in\tilde{\mathsf{T}}$, we have $m(\gamma\cdot \tilde{t})=\rho(\gamma)\circ m(\tilde{t}).$
\end{prop}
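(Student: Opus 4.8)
The plan is to reduce the assertion to a standard topological fact: a trivialization of the pullback to the universal cover of a local system intertwines the deck-transformation action upstairs with the monodromy representation downstairs. First I would translate what it means for $m\colon\tilde{\mathcal{H}}\to N^{\perp}\times\tilde{\mathsf{T}}$ to be a trivialization of the local system $\tilde{\mathcal{H}}=\pi^{*}\mathcal{H}$. Since $\mathsf{T}$ is an analytic space, hence locally simply connected, $\mathcal{H}$ has a well-behaved parallel transport; and for a path $\tilde{\sigma}$ in $\tilde{\mathsf{T}}$ from $\tilde{t}_{1}$ to $\tilde{t}_{2}$, the parallel transport of $\tilde{\mathcal{H}}$ along $\tilde{\sigma}$ coincides, under the stalk identifications $\tilde{\mathcal{H}}_{\tilde{t}_{i}}=\mathcal{H}_{\pi(\tilde{t}_{i})}$, with the parallel transport $T_{\pi\circ\tilde{\sigma}}$ of $\mathcal{H}$ along the projected path (since $\tilde{\mathsf{T}}$ is simply connected, transport in $\tilde{\mathcal{H}}$ depends only on the endpoints). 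The statement that $m$ trivializes $\tilde{\mathcal{H}}$ is then precisely the compatibility $m(\tilde{t}_{2})\circ T_{\pi\circ\tilde{\sigma}}=m(\tilde{t}_{1})$ for every such $\tilde{\sigma}$, i.e.\ $m$ sends flat sections to constant sections.

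Next I would show that the map $\tilde{t}\mapsto m(\gamma\cdot\tilde{t})\circ m(\tilde{t})^{-1}$ is locally constant on $\tilde{\mathsf{T}}$, where $\gamma\in\pi(\mathsf{T};0)$ is regarded as a deck transformation. Given a path $\tilde{\sigma}$ from $\tilde{t}_{1}$ to $\tilde{t}_{2}$, its $\gamma$-translate $\gamma\cdot\tilde{\sigma}$ runs from $\gamma\cdot\tilde{t}_{1}$ to $\gamma\cdot\tilde{t}_{2}$ and, because $\gamma$ is a deck transformation, projects to the \emph{same} path downstairs: $\pi\circ(\gamma\cdot\tilde{\sigma})=\pi\circ\tilde{\sigma}$. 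Applying the compatibility relation to both $\tilde{\sigma}$ and $\gamma\cdot\tilde{\sigma}$ and cancelling the common factor $T_{\pi\circ\tilde{\sigma}}$ yields $m(\gamma\cdot\tilde{t}_{2})\circ m(\tilde{t}_{2})^{-1}=m(\gamma\cdot\tilde{t}_{1})\circ m(\tilde{t}_{1})^{-1}$. As $\tilde{\mathsf{T}}$ is connected, this composite is a single automorphism $c(\gamma)\in\mathrm{Aut}(N^{\perp})$ independent of $\tilde{t}$, and it is an isometry because each $m(\tilde{t})$ is a fiberwise isometry and parallel transport preserves the intersection form.

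It then remains to identify $c(\gamma)$ with $\rho(\gamma)$, via the fixed isometry $N^{\perp}\cong H_{2}(X_{0},\mathbb{Z})_{\iota}$ coming from $m$ at a chosen lift $\tilde{0}\in\tilde{\mathsf{T}}$ of $0$. For this I evaluate $c(\gamma)$ at $\tilde{t}=\tilde{0}$: lifting the loop $\gamma$ based at $0$ to the path $\tilde{\gamma}$ starting at $\tilde{0}$, its endpoint is $\gamma\cdot\tilde{0}$ (by the convention relating $\pi(\mathsf{T};0)$ to the deck group) and $\pi\circ\tilde{\gamma}=\gamma$, so by definition of the monodromy the transport $T_{\gamma}$ of $\mathcal{H}$ around $\gamma$ is, up to the orientation convention fixed in the Definition of $\rho$, the operator $\rho(\gamma)$. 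Feeding $\tilde{\gamma}$ into the compatibility relation gives $m(\gamma\cdot\tilde{0})\circ T_{\gamma}=m(\tilde{0})$, whence $c(\gamma)=m(\gamma\cdot\tilde{0})\circ m(\tilde{0})^{-1}=m(\tilde{0})\circ T_{\gamma}^{-1}\circ m(\tilde{0})^{-1}=\rho(\gamma)$ after matching conventions; combined with the previous paragraph, $m(\gamma\cdot\tilde{t})=\rho(\gamma)\circ m(\tilde{t})$ for all $\tilde{t}$.

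I expect the only genuine difficulty to be bookkeeping rather than mathematical content: fixing consistently the isomorphism between $\pi(\mathsf{T};0)$ and the deck group, the direction in which $\rho$ acts, and the identification of $N^{\perp}$ with the fiber $H_{2}(X_{0},\mathbb{Z})_{\iota}$, so that the final equality appears with $\rho(\gamma)$ and not $\rho(\gamma)^{-1}$ or $\rho(\gamma^{-1})$. Once the conventions of the preceding Definition are pinned down, the locally-constant argument of the second paragraph is the whole proof.
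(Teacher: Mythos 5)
Your argument is correct and is exactly the content behind the paper's proof, which consists of the single line ``This follows from the fact that $\tilde{\mathcal{H}}$ is a local system''; you have simply unpacked that standard fact (local constancy of $m(\gamma\cdot\tilde{t})\circ m(\tilde{t})^{-1}$, then evaluation at a lift of the base point), with the same convention-matching caveats the paper itself leaves implicit.
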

\begin{proof}
This follows from the fact that $\tilde{\mathcal{H}}$ is a local system. 
\end{proof}

Since the global trivialization of $\tilde{\mathcal{H}}$ preserves the intersection product in each fiber, we can find a continuous global section $\delta$ of $\tilde{\mathcal{H}}$ such that, for each $\tilde{t}\in \mathsf{T}$, $\delta(\tilde{t})=(\delta_1(\tilde{t}),\ldots,\delta_5(\tilde{t}))^T$ is a basis of $H_2(X_{\pi(\tilde{t})},\mathbb{Z})_{\iota}$ with intersection matrix equal to $\Phi$. Let us fix such a $\delta$.
 
\begin{df}. The \textbf{generalized period map} $\tilde{\mathcal{P}}_{\delta}:\tilde{\mathsf{T}}\rightarrow \mathsf{\Pi}$ associated to $\delta$ is defined by 
\begin{equation*}
\tilde{\mathcal{P}}_{\delta}(\tilde{t})=[\int_{\delta_i(\tilde{t})}\alpha_j(\pi(\tilde{t}))].
\end{equation*} 
\end{df}

By means of $\delta$, we can identify the group $\Gamma:=\{A\in \mathsf{GL}_5(\mathbb{Z})|\, A^T\Phi A=\Phi  \}$ with $\mathsf{O}(H_2(X,\mathbb{Z})_{\iota})$, via the isomorphism $A\mapsto (f:\delta\mapsto A^T\delta).$

\begin{df} $\Gamma$ acts on the right of $\mathsf{\Pi}$ by means of $A\cdot P=A^{-T}P$ for every $P\in\mathsf{\Pi}$ and $A\in \Gamma$.
\end{df}
\begin{prop}
$\tilde{\mathcal{P}}_{A\cdot \delta}=A^{-1}\cdot\tilde{\mathcal{P}}_{\delta}$.
\end{prop}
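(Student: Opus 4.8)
The plan is a direct computation unwinding the two group actions involved; there is essentially no analytic content, only bookkeeping. First I would record how $A$ transforms the reference section. Under the identification $A\mapsto(f\colon\delta\mapsto A^{T}\delta)$, the section $A\cdot\delta$ is the continuous global section of $\tilde{\mathcal{H}}$ whose value at $\tilde t\in\tilde{\mathsf{T}}$ is $A^{T}\delta(\tilde t)$, i.e. $(A\cdot\delta)_i(\tilde t)=\sum_{k}A_{ki}\,\delta_k(\tilde t)$ when $\delta$ is viewed as the column vector $(\delta_1,\dots,\delta_5)^{T}$. Since $A\in\Gamma$ satisfies $A^{T}\Phi A=\Phi$, the fibres of $A\cdot\delta$ are again bases of $H_2(X_{\pi(\tilde t)},\mathbb{Z})_\iota$ with intersection matrix $\Phi$, so $\tilde{\mathcal{P}}_{A\cdot\delta}$ is a well-defined holomorphic map $\tilde{\mathsf{T}}\to\mathsf{\Pi}$.

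Second, I would compute the period matrix of $A\cdot\delta$ entrywise, using only $\mathbb{Z}$-linearity of the period pairing $\int_{\gamma}\omega$ in the cycle $\gamma$. Writing $P_{ij}=\int_{\delta_i}\alpha_j$ for the entries of $\tilde{\mathcal{P}}_{\delta}(\tilde t)$, one gets
\[
\tilde{\mathcal{P}}_{A\cdot\delta}(\tilde t)_{ij}=\int_{(A\cdot\delta)_i(\tilde t)}\alpha_j(\pi(\tilde t))=\sum_{k}A_{ki}\int_{\delta_k(\tilde t)}\alpha_j(\pi(\tilde t))=\bigl(A^{T}\,\tilde{\mathcal{P}}_{\delta}(\tilde t)\bigr)_{ij}.
\]
Hence $\tilde{\mathcal{P}}_{A\cdot\delta}=A^{T}\tilde{\mathcal{P}}_{\delta}$ as maps into $\mathsf{\Pi}$.

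Third, I would match this against the right $\Gamma$-action $A\cdot P=A^{-T}P$ on $\mathsf{\Pi}$. Since $A\in\Gamma$ we also have $A^{-1}\in\Gamma$, and by the definition of the action $A^{-1}\cdot P=(A^{-1})^{-T}P=A^{T}P$. Combining this with the second step yields $\tilde{\mathcal{P}}_{A\cdot\delta}=A^{-1}\cdot\tilde{\mathcal{P}}_{\delta}$, which is the assertion. (One may add, using Proposition \ref{tangent} and the defining conditions of $\mathsf{\Pi}$, that $A^{T}P\in\mathsf{\Pi}$ whenever $P\in\mathsf{\Pi}$; this is precisely the consistency already packaged into the well-definedness of the $\Gamma$-action on $\mathsf{\Pi}$, so no extra verification is needed here.)

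The only place where care is required—and thus the ``hard part,'' such as it is—is keeping the transpose and inverse conventions straight: the convention by which the matrix $A$ acts on the homology section $\delta$ (namely $\delta\mapsto A^{T}\delta$) must be reconciled with the convention by which $\Gamma$ acts on $\mathsf{\Pi}$ (namely $P\mapsto A^{-T}P$), and it is the passage from $A$ on sections to $A^{-1}$ on $\mathsf{\Pi}$ that makes the two compatible. Once the identification $A\mapsto(\delta\mapsto A^{T}\delta)$ is fixed at the outset, linearity of periods does the rest.
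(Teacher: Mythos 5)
Your proposal is correct and follows essentially the same route as the paper's own proof: identify $A\cdot\delta$ with $A^{T}\delta$, use linearity of the period pairing in the cycle to get $\tilde{\mathcal{P}}_{A\cdot\delta}=A^{T}\tilde{\mathcal{P}}_{\delta}$, and then observe that $A^{-1}\cdot P=A^{T}P$ under the declared right action. The only difference is that you spell out the transpose/inverse bookkeeping and the well-definedness of $\tilde{\mathcal{P}}_{A\cdot\delta}$ more explicitly than the paper does, which is harmless.
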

\begin{proof}
Let $A=[a_{ij}]$. Then $\tilde{\mathcal{P}}_{A\cdot \delta}(\tilde{t})=[\int_{\sum_k a_{ki}\delta_k(\tilde{t})}\alpha_j(\pi(\tilde{t}))]=[\sum_k (\int_{\delta_k(\tilde{t})}\alpha_j(\pi(\tilde{t})))a_{ki}]= A^T\tilde{\mathcal{P}}_{\delta}(\tilde{t})=A^{-1}\cdot \tilde{\mathcal{P}}_{\delta}.$
\end{proof}

The previous proposition allows us to give the following definition, which corresponds to \cite[Definition 8.5]{mov20}.

\begin{df}
The \textbf{generalized period map} $\mathcal{P}:\mathsf{T}\rightarrow \Gamma\backslash \mathsf{\Pi}$ is defined to be the quotient of $\tilde{\mathcal{P}}_{\delta}$ by the action of $\pi(\mathsf{T};0)$.
\end{df}

Observe that the previous map is independent of the sections $\delta$ since any two such sections are obtained by the action of an element of $\Gamma$. Now, we aim to prove that the target space of the generalized period domain has the structure of a complex analytic space.

\begin{df}
$\mathsf{G}$ acts on the right of $\mathsf{\Pi}$ by matrix multiplication.
\end{df}

\begin{prop}\label{comp}
$\Gamma$ and $\mathsf{G}$ act properly and discontinuously on $\mathsf{\Pi}$. Both actions are compatible in the following sense: $A\cdot(P\cdot g)=(A\cdot P)\cdot g$. 
\end{prop}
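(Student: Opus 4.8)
The plan is to treat the three assertions of Proposition~\ref{comp} separately, since they are of quite different natures: the properness and discontinuity of the $\mathsf{G}$-action, the properness and discontinuity of the $\Gamma$-action, and the compatibility identity. I would begin with the compatibility identity, which is the cheapest: for $A\in\Gamma$, $P\in\mathsf{\Pi}$ and $g\in\mathsf{G}$ we have $A\cdot(P\cdot g)=A^{-T}(Pg)=(A^{-T}P)g=(A\cdot P)\cdot g$, using only that the $\Gamma$-action is left multiplication by $A^{-T}$ and the $\mathsf{G}$-action is right multiplication by $g$; associativity of matrix multiplication does the rest. (One should first check the two actions are well defined, i.e.\ that they preserve $\mathsf{\Pi}$: for $A\in\Gamma$ one has $(A^{-T}P)^T\Psi(A^{-T}P)=P^T A^{-1}\Psi A^{-T}P=P^T\Psi P=\Psi$ since $A^T\Psi A=\Psi$ forces $A^{-1}\Psi A^{-T}=\Psi$; the positivity condition on the first column is preserved because $A$ is real, so conjugation commutes with it, and one checks $(A^{-T}P)^1=A^{-T}P^1$; for $g\in\mathsf{G}$ the defining relation $g^T\Psi g=\Psi$ gives $(Pg)^T\Psi(Pg)=\Psi$ directly, and the positivity condition is preserved because $g^T$ respects the Hodge filtration, so $(Pg)^1$ is a nonzero scalar multiple of $P^1$ plus, a priori, lower-filtration terms that do not affect the relevant pairing — this last point needs the block-triangular shape of $g^T$ and is the one place a short computation is required.)

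Next I would handle the $\mathsf{G}$-action. Since $\mathsf{\Pi}$ is an open subset of $\mathsf{GL}_5(\mathbb{C})$ (Proposition~\ref{tangent}) on which $\mathsf{G}$ acts by right translation, and $\mathsf{G}$ is a closed subgroup of $\mathsf{GL}_5(\mathbb{C})$, the action of $\mathsf{G}$ on all of $\mathsf{GL}_5(\mathbb{C})$ is free and proper (right translation of a Lie group by a closed subgroup is always proper), and the restriction to the open invariant subset $\mathsf{\Pi}$ is therefore still proper and free. Freeness plus properness gives, in particular, that the action is properly discontinuous in the sense needed for the quotient $\mathsf{\Pi}/\mathsf{G}$ to be a complex manifold; I would simply cite the standard slice/quotient theorem. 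Strictly speaking $\mathsf{G}$ is a positive-dimensional group so ``discontinuously'' should be read as ``properly, with the quotient a manifold,'' and I would phrase it that way to avoid confusion with the discrete case.

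The substantive part is the $\Gamma$-action, and this is where I expect the main obstacle. Here $\Gamma=\{A\in\mathsf{GL}_5(\mathbb{Z})\mid A^T\Psi A=\Psi\}$ is the integral orthogonal group of the lattice $N^\perp=H\oplus H\oplus\langle-2\rangle$, a genuinely discrete (infinite) group, and one must show it acts properly discontinuously on $\mathsf{\Pi}$. The standard route is: the positivity condition $(P^1)^T\Psi\overline{P^1}>0$ cutting out $\mathsf{\Pi}$ exhibits $\mathsf{\Pi}$ as fibering $\mathsf{G}$-equivariantly over the classical period domain $\mathsf{D}$ for weight-two Hodge structures on $N^\perp$ of K3 type (the space of positive lines, or rather positive-definite real $2$-planes, in $N^\perp\otimes\mathbb{R}$), and $\Gamma=\mathsf{O}(N^\perp)$ acts properly discontinuously on $\mathsf{D}$ because $\mathsf{D}$ is (an open subset of a) homogeneous space $\mathsf{O}(2,k)/(\text{compact})$ and $\mathsf{O}(N^\perp)$ is an arithmetic subgroup of the real reductive group $\mathsf{O}(N^\perp\otimes\mathbb{R})$; a maximal compact subgroup being the stabilizer of a point of $\mathsf{D}$, the arithmetic group acts properly discontinuously by the classical theorem on arithmetic groups (Borel--Harish-Chandra, or in the K3 setting the treatment in any reference on the period map for K3 surfaces). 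Since the $\Gamma$-action on $\mathsf{\Pi}$ commutes with the proper $\mathsf{G}$-action and covers the properly discontinuous action on $\mathsf{D}=\mathsf{\Pi}/\mathsf{G}$, properness of $\Gamma$ on $\mathsf{\Pi}$ follows. The main obstacle is organizing this fibration $\mathsf{\Pi}\to\mathsf{D}$ cleanly and invoking exactly the right form of the arithmetic-groups theorem; alternatively one can give a more hands-on argument showing that for any compact $K\subset\mathsf{\Pi}$ the set $\{A\in\Gamma\mid A^{-T}K\cap K\neq\emptyset\}$ is finite, by noting that $A\mapsto A^{-T}$ applied to a point $P\in K$ moves the associated positive $2$-plane within a compact region, while $A$ preserves the integral lattice, and a discrete group preserving a lattice and moving a vector of bounded positive length into a bounded region can only do so in finitely many ways — this is the concrete core of the properness statement and the step I would write out in most detail.
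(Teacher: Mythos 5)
Your proposal is correct, but for the substantive part (the $\Gamma$-action) it takes a much heavier route than the paper. The paper's proof exploits the single fact that $\mathsf{\Pi}\subset\mathsf{GL}_5(\mathbb{C})$, so one can \emph{solve for the group element}: using the sequential criterion for properness, if $(P_i)$ and $(A_i\cdot P_i)=(A_i^{-T}P_i)$ both converge in $\mathsf{\Pi}$, then $A_i^{-T}=(A_i\cdot P_i)\,P_i^{-1}$ converges because matrix inversion is continuous and the limit of $(P_i)$ is invertible; since $\Gamma\subset\mathsf{GL}_5(\mathbb{Z})$ is closed and discrete, $(A_i)$ is eventually constant along a subsequence, and that is already proper discontinuity. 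The identical computation $g_i=P_i^{-1}(P_ig_i)$ together with closedness of $\mathsf{G}$ handles the $\mathsf{G}$-action, and compatibility is associativity, exactly as you say. Your detour through the fibration $\mathsf{\Pi}\to\mathsf{D}$ and the Borel--Harish-Chandra theorem for arithmetic groups on the type IV domain does prove the $\Gamma$-statement, but it imports the entire theory of arithmetic quotients where a two-line linear-algebra argument suffices, and it is slightly awkward in context because the paper only equips $\mathsf{D}=\mathsf{\Pi}/\mathsf{G}$ with its analytic structure \emph{after} (and by means of) this proposition; your concluding ``hands-on'' sketch about bounded positive vectors is also delicate for an indefinite lattice and would need the compact-stabilizer argument spelled out. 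On the plus side, your verification that both actions actually preserve $\mathsf{\Pi}$ (in particular that $(Pg)^1=g_{11}P^1$ by the block-triangular shape of $g$, with no lower-order terms at all, so positivity is preserved) is a worthwhile check that the paper omits, and your remark that ``properly discontinuously'' should be read as ``properly'' for the positive-dimensional group $\mathsf{G}$ is well taken.
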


\begin{proof}
Since $\Gamma$ acts continuously on $\mathsf{\Pi}$, it suffices to prove that if $(P_i)$ and $(A_i)$ are sequences in $\mathsf{\Pi}$ and $\Gamma$, respectively,  such that both $(P_i)$ and $(A_i\cdot P_i)$ converge, then $(A_i)$ has a convergent subsequence. Since $\mathsf{\Pi}\subset \mathsf{GL}_5(\mathbb{C})$ and taking the inverse of a matrix is a continuous operation, the assertion for $\Gamma$ follows by observing that $\Gamma$ is a closed and discrete subgroup of $\mathsf{GL}_5(\mathbb{C})$. The proof for $\mathsf{G}$ is analogous. Compatibility is immediate since matrix multiplication is associative.
\end{proof}
\begin{teo}Let $X$ be an analytic space, $G$ a group acting properly and discontinuously on $X$ by biholorphisms, and $\rho:X\rightarrow X/G$ the quotient map. The sheaf $\mathcal{O}_{X/G}$ defined on $X/G$ by 

\begin{equation*}
\mathcal{O}_{X/G}(U)=\{f:U\rightarrow \mathbb{C}\, |\, f\circ \rho\in \mathcal{O}_X(\rho^{-1}(U))\}
\end{equation*}
defines a structure of analytic space on $X/G$.
\end{teo}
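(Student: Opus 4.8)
The plan is to verify that the candidate sheaf $\mathcal{O}_{X/G}$ is indeed a sheaf of $\mathbb{C}$-algebras on the topological quotient $X/G$, and then to produce, for each point of $X/G$, a neighbourhood together with an isomorphism of locally ringed spaces onto a model which is known to be an analytic space --- namely an open subset of $X$ modulo a finite group. First I would record the standard consequences of proper discontinuity: the quotient map $\rho$ is open, $X/G$ is Hausdorff, and every point $x\in X$ has an open neighbourhood $V_x$ such that $\{g\in G\mid gV_x\cap V_x\neq\varnothing\}$ equals the stabiliser $G_x$, which is finite; moreover one can shrink $V_x$ so that it is $G_x$-stable. Then $\rho$ restricts to a homeomorphism $V_x/G_x\xrightarrow{\sim}\rho(V_x)$ onto an open neighbourhood of $\rho(x)$.

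Next I would check that $\mathcal{O}_{X/G}$ is well defined as a sheaf: the assignment $U\mapsto\{f:U\to\mathbb{C}\mid f\circ\rho\in\mathcal{O}_X(\rho^{-1}(U))\}$ is clearly a presheaf of $\mathbb{C}$-subalgebras of the sheaf of continuous functions, and it satisfies the sheaf axioms because $\mathcal{O}_X$ does and because $\{\rho^{-1}(U_i)\}$ is an open cover of $\rho^{-1}(\bigcup U_i)$ whenever $\{U_i\}$ covers $\bigcup U_i$ (here openness of $\rho$ is not even needed, only continuity). The substance of the theorem is the local model claim. On $\rho(V_x)\cong V_x/G_x$, I would identify $\mathcal{O}_{X/G}(\rho(V_x))$ with the algebra $\mathcal{O}_X(V_x)^{G_x}$ of $G_x$-invariant holomorphic functions on $V_x$: a function $f$ on $V_x/G_x$ pulls back along $V_x\to V_x/G_x$ to a $G_x$-invariant function, and conversely every $G_x$-invariant holomorphic function on $V_x$ descends to a continuous function on the quotient whose composite with the full $\rho$ is holomorphic on $\rho^{-1}(\rho(V_x))=\bigcup_{g}gV_x$. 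Thus $(\rho(V_x),\mathcal{O}_{X/G}|_{\rho(V_x)})\cong(V_x/G_x,(\pi_*\mathcal{O}_{V_x})^{G_x})$ where $\pi:V_x\to V_x/G_x$ is the finite quotient, and this latter object is an analytic space by the Cartan quotient theorem (the quotient of a complex manifold, or analytic space, by a finite group of automorphisms is an analytic space). Finally I would note that these local models glue: on overlaps the structure sheaves agree by construction since both are the subsheaf of continuous functions whose $\rho$-pullback is holomorphic, so the local analytic structures are canonically compatible and define a global analytic space structure on $X/G$.

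The main obstacle, and the only point requiring a genuine external input, is the finite quotient case: that $V_x/G_x$ with the invariant-functions structure sheaf is an analytic space. This is classical (H. Cartan), but it is where the real content lies --- one needs that invariant holomorphic functions separate orbits locally and provide a local embedding of $V_x/G_x$ into some $\mathbb{C}^m$ with analytic image, typically via an equivariant local linearisation of the $G_x$-action near $x$ followed by taking a generating set of invariant polynomials. Everything else is formal manipulation of the definitions together with the topological regularity afforded by proper discontinuity. In the present paper this theorem is applied to $X=\mathsf{\Pi}$ with $G=\Gamma$ or $G=\mathsf{G}$, whose proper discontinuity was established in Proposition \ref{comp}, so the hypotheses are met in the cases of interest.
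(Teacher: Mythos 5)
Your argument is correct and is exactly the standard reduction that underlies the paper's own treatment: the paper offers no written proof, only the citation to Cartan, and your outline (sheaf axioms, reduction via proper discontinuity to the finite-stabiliser local model $V_x/G_x$, then Cartan's finite-quotient theorem) is precisely what that citation stands for. No gaps; the one genuinely non-formal input is correctly identified and correctly attributed.
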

\begin{proof}
See \cite{car}
\end{proof}

\begin{cor} The topological spaces $\Gamma\backslash \mathsf{\Pi}$ and $\mathsf{\Pi}/G$ have the structure of analytic spaces.
\end{cor}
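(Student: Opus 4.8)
The plan is to apply the general theorem just quoted (the Cartan-style result on quotients of analytic spaces by properly discontinuous actions of biholomorphisms) twice, once for each of the two group actions. The verification that all hypotheses of that theorem are met has essentially been carried out already: $\mathsf{\Pi}$ is a complex manifold, hence in particular an analytic space, by Proposition \ref{tangent}; and Proposition \ref{comp} shows that both $\Gamma$ and $\mathsf{G}$ act properly and discontinuously on $\mathsf{\Pi}$. So first I would observe that the actions are by biholomorphisms: the $\Gamma$-action $A\cdot P=A^{-T}P$ and the $\mathsf{G}$-action by right matrix multiplication are restrictions to $\mathsf{\Pi}$ of the (bi)holomorphic action of $\mathsf{GL}_5(\mathbb{C})$ on itself by left/right translations, and since $\mathsf{\Pi}$ is an invariant submanifold, each group element acts on $\mathsf{\Pi}$ as a biholomorphism with inverse given by the inverse group element.

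Next I would simply invoke the theorem. Taking $X=\mathsf{\Pi}$ and $G=\Gamma$ yields that $\Gamma\backslash\mathsf{\Pi}$ carries a canonical structure of analytic space, with structure sheaf $\mathcal{O}_{\Gamma\backslash\mathsf{\Pi}}(U)=\{f:U\to\mathbb{C}\mid f\circ\rho_\Gamma\in\mathcal{O}_{\mathsf{\Pi}}(\rho_\Gamma^{-1}(U))\}$, where $\rho_\Gamma:\mathsf{\Pi}\to\Gamma\backslash\mathsf{\Pi}$ is the quotient map. Taking instead $X=\mathsf{\Pi}$ and $G=\mathsf{G}$ yields in the same way that $\mathsf{\Pi}/\mathsf{G}$ is an analytic space. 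That is all the corollary as stated asserts, so the proof is complete at that point.

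If one wants to record slightly more — which is implicit in the surrounding discussion, since the generalized period map has codomain $\Gamma\backslash\mathsf{\Pi}$ and one eventually wants to descend further — I would add a remark that the compatibility relation $A\cdot(P\cdot g)=(A\cdot P)\cdot g$ from Proposition \ref{comp} means the two actions combine into an action of $\Gamma\times\mathsf{G}$ on $\mathsf{\Pi}$, which is again properly discontinuous and by biholomorphisms, so that the double quotient $\Gamma\backslash\mathsf{\Pi}/\mathsf{G}$ is likewise an analytic space and the projections from $\Gamma\backslash\mathsf{\Pi}$ and from $\mathsf{\Pi}/\mathsf{G}$ to it are morphisms of analytic spaces. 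This is not strictly needed for the statement but costs nothing beyond citing Proposition \ref{comp} once more.

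The main (and really the only) obstacle is bookkeeping rather than mathematics: one must be a little careful that "properly and discontinuously" in the hypothesis of the cited theorem matches the notion verified in Proposition \ref{comp} (i.e. the sequential characterization used there — that any sequence $(A_i)$ with $(P_i)$ and $(A_i\cdot P_i)$ convergent has a convergent subsequence — is the correct one, together with the discreteness of $\Gamma$ and of $\mathsf{G}$ as subgroups of $\mathsf{GL}_5(\mathbb{C})$), and that the action is genuinely by biholomorphisms and not merely by homeomorphisms. Both points are routine given the preceding propositions, so there is no serious difficulty; the corollary is essentially an immediate consequence of Proposition \ref{tangent}, Proposition \ref{comp}, and the quoted theorem.
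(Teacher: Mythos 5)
Your proof is correct and matches the paper's intent exactly: the corollary is stated there without an explicit proof precisely because it is the immediate double application of the quoted quotient theorem, with the hypotheses supplied by Proposition \ref{tangent} and Proposition \ref{comp}. Your extra care about the actions being by biholomorphisms and the matching of the proper-discontinuity notions is sensible but does not change the route.
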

\begin{cor}
$(\Gamma\backslash \mathsf{\Pi})/G$ and $\Gamma\backslash (\mathsf{\Pi}/G)$ are biholomorphic.
\end{cor}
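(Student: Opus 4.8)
The plan is to exhibit both $(\Gamma\backslash\mathsf{\Pi})/\mathsf{G}$ and $\Gamma\backslash(\mathsf{\Pi}/\mathsf{G})$ as one and the same analytic space, namely the double-coset space $\Gamma\backslash\mathsf{\Pi}/\mathsf{G}$, topologized as the quotient of $\mathsf{\Pi}$ and sheafed by those functions whose pull-back to $\mathsf{\Pi}$ is holomorphic; once this is done, the tautological bijection between the two sides is automatically a biholomorphism. Before that, however, one must check that the two iterated quotients are legitimately analytic spaces, i.e.\ that the theorem preceding this corollary can be applied a second time.

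For this, first I would use the compatibility $A\cdot(P\cdot g)=(A\cdot P)\cdot g$ of Proposition~\ref{comp}: it shows that the right $\mathsf{G}$-action on $\mathsf{\Pi}$ descends to a well-defined right $\mathsf{G}$-action on $\Gamma\backslash\mathsf{\Pi}$, and that the $\Gamma$-action descends to $\mathsf{\Pi}/\mathsf{G}$. Writing $q_\Gamma\colon\mathsf{\Pi}\to\Gamma\backslash\mathsf{\Pi}$ for the projection, each descended transformation is a biholomorphism: the square relating $(\,\cdot\,g)\colon\mathsf{\Pi}\to\mathsf{\Pi}$ with its descent commutes, and since $f\mapsto f\circ q_\Gamma$ identifies $\mathcal{O}_{\Gamma\backslash\mathsf{\Pi}}$ with the $\Gamma$-invariant sections of $\mathcal{O}_\mathsf{\Pi}$, precomposition with the biholomorphism $(\,\cdot\,g)$ preserves regularity; the inverse transformation is the descent of $(\,\cdot\,g^{-1})$. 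The same reasoning applies to $\Gamma$ acting on $\mathsf{\Pi}/\mathsf{G}$.

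The substantive point, and the step I expect to be the main obstacle, is that these descended actions are again proper and discontinuous in the sense demanded by the cited theorem. Here I would imitate the proof of Proposition~\ref{comp}: given sequences $([P_i])$ in $\Gamma\backslash\mathsf{\Pi}$ and $(g_i)$ in $\mathsf{G}$ with $[P_i]\to[P]$ and $[P_ig_i]\to[Q]$, one uses that $\Gamma$ acts properly discontinuously on $\mathsf{\Pi}$ — so that $q_\Gamma$ is open and locally a quotient by a finite stabilizer — to lift, after passing to a subsequence, to convergent sequences in $\mathsf{\Pi}$ itself; then the closedness of $\Gamma$ (a discrete subgroup) and of $\mathsf{G}$ (a closed algebraic subgroup) in $\mathsf{GL}_5(\mathbb{C})$, together with the positivity condition $(P^1)^T\Psi\overline{P^1}>0$ defining $\mathsf{\Pi}$ and the block-triangular shape of elements of $\mathsf{G}$, bound the group elements exactly as in Proposition~\ref{comp} and produce a convergent subsequence of $(g_i)$. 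The delicate part is the bookkeeping of the two commuting actions during the lifting; it may well be cleanest to package everything by first proving that the group of transformations of $\mathsf{\Pi}$ generated by $\Gamma$ and $\mathsf{G}$ acts properly and discontinuously, after which properness of both descended actions is immediate.

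With the analytic structures in place, the conclusion is formal. The quotient topology is transitive, so $(\Gamma\backslash\mathsf{\Pi})/\mathsf{G}$ and $\Gamma\backslash(\mathsf{\Pi}/\mathsf{G})$ are canonically homeomorphic, both carrying the quotient topology of the projection $\pi\colon\mathsf{\Pi}\to\Gamma\backslash\mathsf{\Pi}/\mathsf{G}$ onto the set of double cosets. Applying the defining formula for the structure sheaf twice, a function $f$ on an open $U$ is regular on $(\Gamma\backslash\mathsf{\Pi})/\mathsf{G}$ if and only if $f\circ\pi\in\mathcal{O}_\mathsf{\Pi}(\pi^{-1}U)$, and the same criterion describes regularity on $\Gamma\backslash(\mathsf{\Pi}/\mathsf{G})$; since the two instances of $\pi$ coincide under the canonical identification of underlying spaces, the two structure sheaves agree. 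Hence the identity map on double cosets is the desired biholomorphism $(\Gamma\backslash\mathsf{\Pi})/\mathsf{G}\xrightarrow{\ \sim\ }\Gamma\backslash(\mathsf{\Pi}/\mathsf{G})$.
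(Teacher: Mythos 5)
Your argument is correct and is exactly the one the paper intends: the paper gives no written proof of this corollary, relying implicitly on the compatibility of the two actions (Proposition on compatibility) and on the quotient theorem to identify both iterated quotients with the double coset space $\Gamma\backslash\mathsf{\Pi}/\mathsf{G}$ carrying the quotient topology and the sheaf of functions whose pull-back to $\mathsf{\Pi}$ is holomorphic. Your write-up supplies the details the paper omits (well-definedness and properness of the descended actions, transitivity of the quotient topology, and the double application of the structure-sheaf formula), all of which are sound.
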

\begin{df} The \textbf{generalized period domain} is the space $$\mathsf{U}=\Gamma\backslash \mathsf{\Pi}.$$
The \textbf{classical period domain} is the space
$$\mathsf{D}:=\mathsf{\Pi}/G.$$
\end{df}

\begin{prop}
$\mathcal{P}$ is holomorphic.
\end{prop}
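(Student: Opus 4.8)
The statement to prove is that the generalized period map $\mathcal{P}:\mathsf{T}\rightarrow \Gamma\backslash \mathsf{\Pi}$ is holomorphic. The plan is to reduce this to the holomorphicity of the lifted map $\tilde{\mathcal{P}}_{\delta}:\tilde{\mathsf{T}}\rightarrow \mathsf{\Pi}$, since $\mathcal{P}$ is by construction the descent of $\tilde{\mathcal{P}}_{\delta}$ along the covering $\pi:\tilde{\mathsf{T}}\rightarrow\mathsf{T}$ and the quotient $\mathsf{\Pi}\rightarrow\Gamma\backslash\mathsf{\Pi}$. Because both $\pi$ and the quotient map $\mathsf{\Pi}\rightarrow\Gamma\backslash\mathsf{\Pi}$ are local biholomorphisms (the latter by Proposition \ref{comp} and the theorem on quotients by properly discontinuous actions), holomorphicity of $\mathcal{P}$ is a local statement on $\mathsf{T}$ and follows once we know $\tilde{\mathcal{P}}_{\delta}$ is holomorphic. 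Since $\tilde{\mathsf{T}}$ is locally biholomorphic to $\mathsf{T}$, it suffices to check that, locally on $\mathsf{T}$ where the local system $\mathcal{H}$ admits a flat basis $\delta=(\delta_1,\ldots,\delta_5)^T$, the matrix-valued function $t\mapsto \bigl[\int_{\delta_i(t)}\alpha_j(t)\bigr]$ is holomorphic in $t$.

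The core of the argument is then the classical fact that period integrals vary holomorphically. Concretely: the $\alpha_j$ are, by the construction in Section \ref{chapalgcoh}, algebraic sections of the de Rham cohomology bundle $\mathcal{H}^2(\mathcal{X}/\mathsf{B})$ (coming from explicit residue forms $\omega_{\beta}/f^l$ with polynomial coefficients in the parameters $a,b,c,d,s$), hence in particular holomorphic sections after the analytification; and the cycles $\delta_i(t)$ are flat sections of the homology local system, so locally constant. The pairing between a holomorphic family of closed forms and a locally constant family of cycles is holomorphic — this is the statement that the period map associated to a variation of Hodge structure is holomorphic, which can be seen either by differentiating under the integral sign (the Gauss–Manin connection, whose holomorphicity was established in Section \ref{chapalgcoh} via the pole-order reduction, controls $\tfrac{\partial}{\partial t}\int_{\delta_i}\alpha_j = \int_{\delta_i}\nabla_{\partial/\partial t}\alpha_j$, and the right-hand side is again a period of a holomorphic form) or directly from the fact that integration of a holomorphic relative top-form over a compact cycle depends holomorphically on parameters. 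I would cite the relevant statement from \cite{mov20} (Chapter 8, where $\mathsf{\Pi}$ and the generalized period map are set up) rather than reprove it, and simply note that $\tilde{\mathcal{P}}_{\delta}$ lands in $\mathsf{\Pi}$ by Proposition \ref{pmatrix}.

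One point that needs a line of care, rather than real difficulty, is that the target is the manifold $\mathsf{\Pi}\subset\mathsf{GL}_5(\mathbb{C})$: holomorphicity of the $\mathsf{GL}_5(\mathbb{C})$-valued map is not by itself enough unless we know the image lies in $\mathsf{\Pi}$ and that $\mathsf{\Pi}$ is a (locally closed) complex submanifold so that the restriction is holomorphic into $\mathsf{\Pi}$ with its induced structure — but both facts are already in hand, the first from Proposition \ref{pmatrix} and the second from Proposition \ref{tangent}. I do not expect a genuine obstacle here; the only mild subtlety is bookkeeping the two quotients simultaneously, which is exactly what Proposition \ref{comp} (compatibility of the $\Gamma$- and $\mathsf{G}$-actions, proper discontinuity) is there to handle, so the descent of holomorphicity through $\Gamma\backslash(-)$ is routine.
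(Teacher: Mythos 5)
Your proposal is correct and follows essentially the same route as the paper: the paper likewise reduces to a simply-connected open subset of $\mathsf{T}$ so that the monodromy $\Gamma$ can be ignored, and then invokes the classical holomorphicity of period integrals (citing Griffiths, with the remark that the argument applies equally to the non-holomorphic members of the frame). Your additional bookkeeping about the image landing in the submanifold $\mathsf{\Pi}$ and the compatibility of the two quotients is a harmless elaboration of what the paper leaves implicit.
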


\begin{proof}
Since this question is local, we can assume that we are working on a simply-connected open subset $\mathcal{O}$ of $\mathsf{T}$, and, therefore, we can omit the monodromy $\Gamma$. The proof of the holomorphicity of $\mathcal{P}:\mathcal{O}\rightarrow \mathsf{\Pi}$ is exactly as in \cite{griffiths} since the same proof applies for integration of non-holomorphic forms. 
\end{proof}

\begin{prop}\label{eq} 
$\mathcal{P}$ is $G$-equivariant: for every $t\in \mathsf{T}$ and $g\in G,$ we have $P(t\cdot g)=P(t)\cdot g.$
\end{prop}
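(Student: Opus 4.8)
The plan is to unwind the definitions of the two right $\mathsf{G}$-actions — the one on $\mathsf{T}$ given by $(X,\alpha)\cdot g = (X, g^T\alpha)$, and the one on $\mathsf{\Pi}$ given by matrix multiplication $P\cdot g$ — and to check that the generalized period map intertwines them. First I would pass to the universal cover $\tilde{\mathsf{T}}$ and work with $\tilde{\mathcal{P}}_\delta$, since the question is local on $\mathsf{T}$ and the $\mathsf{G}$-action lifts (the $\mathsf{G}$-action and the deck-transformation action of $\pi_1(\mathsf{T};0)$ commute, and the section $\delta$ of $\tilde{\mathcal{H}}$ is unaffected by the $\mathsf{G}$-action because $\mathsf{G}$ only changes the cohomological frame $\alpha$, not the homology basis). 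So it suffices to prove $\tilde{\mathcal{P}}_\delta(\tilde t\cdot g) = \tilde{\mathcal{P}}_\delta(\tilde t)\cdot g$ for each $\tilde t$ lying over $t$ and each $g\in\mathsf{G}$.

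The key computation is then a one-line unwinding of the integral. If $(X,\iota,\alpha_1,\dots,\alpha_5)$ represents $t$, then $t\cdot g$ is represented by $(X,\iota,\beta_1,\dots,\beta_5)$ with $\beta = g^T\alpha$, i.e. $\beta_j = \sum_k (g^T)_{jk}\alpha_k = \sum_k g_{kj}\alpha_k$. Hence the $(i,j)$ entry of $\tilde{\mathcal{P}}_\delta(\tilde t\cdot g)$ is
\begin{equation*}
\int_{\delta_i} \beta_j = \int_{\delta_i}\sum_k g_{kj}\alpha_k = \sum_k\Big(\int_{\delta_i}\alpha_k\Big)g_{kj} = \big(\tilde{\mathcal{P}}_\delta(\tilde t)\, g\big)_{ij},
\end{equation*}
which is exactly the $(i,j)$ entry of $\tilde{\mathcal{P}}_\delta(\tilde t)\cdot g$. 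One should also remark that $\tilde{\mathcal{P}}_\delta(\tilde t)\cdot g$ genuinely lies in $\mathsf{\Pi}$: if $P\in\mathsf{\Pi}$ then $(Pg)^T\Psi(Pg) = g^T(P^T\Psi P)g = g^T\Psi g = \Psi$ by Definition \ref{alggroup1}, and the positivity condition on the first row is preserved because $g^T$ respects the Hodge filtration, so the first column of $Pg$ is a scalar multiple of the first column of $P$ (the relevant block structure); this is precisely why the $\mathsf{G}$-action was set up the way it was, and it is consistent with Proposition \ref{pmatrix}.

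Finally I would descend the statement from $\tilde{\mathsf{T}}$ and $\mathsf{\Pi}$ back down to $\mathsf{T}$ and $\mathsf{U} = \Gamma\backslash\mathsf{\Pi}$, using the compatibility of the $\Gamma$- and $\mathsf{G}$-actions on $\mathsf{\Pi}$ established in Proposition \ref{comp} ($A\cdot(P\cdot g) = (A\cdot P)\cdot g$), so that the induced $\mathsf{G}$-action on the quotient is well-defined and $\mathcal{P}$ intertwines it. I do not expect any real obstacle here: the only points requiring a word of care are the bookkeeping with transposes (making sure $(X,\alpha)\cdot g = (X,g^T\alpha)$ translates into right-multiplication $P\mapsto Pg$ rather than $P\mapsto Pg^T$ or $g^{-1}P$), and the observation that the Hodge-filtration condition in the definition of $\mathsf{G}$ is what guarantees the image stays in $\mathsf{\Pi}$ rather than merely in $\{P : P^T\Psi P = \Psi\}$. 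Both are routine once the definitions are lined up.
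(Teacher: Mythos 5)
Your proof is correct and follows essentially the same route as the paper: the same one-line computation $\int_{\delta_i}(g^T\alpha)_j=\sum_k(\int_{\delta_i}\alpha_k)g_{kj}$ showing the period matrix transforms by right multiplication, followed by descent to the quotient via the compatibility of the $\Gamma$- and $\mathsf{G}$-actions from Proposition \ref{comp}. Your additional remarks (lifting to the universal cover, checking $Pg\in\mathsf{\Pi}$) are sound elaborations of points the paper leaves implicit.
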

\begin{proof}
Observe that $[\int_{\delta_i}(g^T\alpha)_{j}]=[\int_{\delta_i}\sum_k(g^T)_{jk}\alpha_k]=[\sum_k(\int_{\delta_i}\alpha_k)g_{kj}]=[\int_{\delta_i}\alpha_j]\cdot g$. Then, the proposition follows from this, and compatibility in Proposition \ref{comp}.
\end{proof}

The following consequence of Torelli theorem for K3 surfaces will be used to proof that $\mathcal{P}$ is a biholomorphism.

\begin{teo}
For a complex K3 surface $X$, the map $f\mapsto f^*$ induces an isomorphism  betweem the group of automoprhism of $X$, and the group of isometries $g\in \mathsf{O}(H^2(X,\mathbb{Z}))$ such that $g$ is a Hodge isometry, and sends some Kähler class to a Kähler class.
\end{teo}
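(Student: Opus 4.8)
The statement is the \emph{strong (global) Torelli theorem} for K3 surfaces, and I would not try to reprove its hard geometric core but rather organize the argument around the two essentially formal directions, isolating the one deep input to be imported from the literature. The plan is: first, check that $f\mapsto f^*$ is well defined, lands in the stated set of Hodge isometries preserving Kähler classes, and is injective; second, prove surjectivity onto that set by reducing it to the Burns--Rapoport form of the Torelli theorem for \emph{marked} K3 surfaces (two marked K3 surfaces with equal period point are isomorphic, compatibly with markings up to the action of $\pm\mathrm{id}$ and the reflections in $(-2)$-classes), together with a bookkeeping argument on chambers of the positive cone; third, flag the weak/marked Torelli theorem as the step to be cited.

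For the formal directions: if $f\colon X\to X$ is an automorphism then $f^*$ is tautologically an isometry of $H^2(X,\mathbb{Z})$, is a morphism of Hodge structures since $f$ is holomorphic, and sends the Kähler cone into itself since the pullback of a Kähler metric is Kähler; so the image lies in the prescribed set. For injectivity, if $f^*=\mathrm{id}_{H^2}$ then, using $H^1(X,\mathbb{Z})=H^3(X,\mathbb{Z})=0$ and that $H^0,H^4$ have rank one, $f$ acts trivially on all of $H^*(X,\mathbb{Z})$; in particular it fixes a very ample class $L$, so $f$ is an automorphism of the pair $(X,L)$ coming from a linear automorphism of the projective embedding $X\hookrightarrow\mathbb{P}^n$ that fixes every hyperplane-section class, hence is the identity. (Alternatively, $f$ fixes the holomorphic $2$-form, and a holomorphic Lefschetz fixed-point count forces $f=\mathrm{id}$.)

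For surjectivity, let $g$ be a Hodge isometry of $H^2(X,\mathbb{Z})$ sending some Kähler class to a Kähler class. Since $g$ respects the Hodge decomposition it fixes the line $H^{2,0}(X)$, i.e.\ it fixes the period point of $X$; so $X$ marked by $\mathrm{id}$ and $X$ marked by $g$ have the same period. By the marked Torelli theorem there is an isomorphism $f_0\colon X\to X$ with $f_0^*=w\circ g$ for some $w$ in the group generated by $-\mathrm{id}$ and reflections in classes of square $-2$. Now I would use the chamber geometry: the Kähler cone is exactly one connected component of $\{x\in H^{1,1}(X,\mathbb{R}):x^2>0\}$ minus the hyperplanes orthogonal to classes of $(-2)$-curves, and any Hodge isometry permutes these components; both $f_0^*$ and $g$ therefore map the Kähler cone onto some chamber, and the Kähler hypotheses force both to map it onto the Kähler cone itself. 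Hence $w$ fixes the Kähler cone. Since the Weyl group acts simply transitively on the chambers and $-\mathrm{id}$ reverses the positive cone, $w=\mathrm{id}$ and $f_0^*=g$, proving surjectivity.

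The main obstacle is, unsurprisingly, the marked Torelli/surjectivity input invoked in step two: that the period map is surjective and that marked K3 surfaces with equal period are isomorphic compatibly with markings up to the $(-2)$-Weyl group and sign. As every complex K3 surface is Kähler (Siu), this is the Kähler global Torelli theorem, whose proof passes through the local Torelli theorem, the direct verification for Kummer surfaces via their explicit Picard and transcendental lattices, the density of Kummer periods, and a deformation argument to propagate the conclusion; for these facts I would cite Piatetski-Shapiro--Shafarevich, Burns--Rapoport, Looijenga--Peters, and Siu/Todorov rather than reprove them. The only other point deserving a careful lemma is the chamber-geometry statement about the Kähler cone used at the end of step three.
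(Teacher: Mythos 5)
Your proposal is essentially correct, but it does considerably more than the paper, which proves this statement purely by citation: the paper's entire proof is ``See \cite[page 339, Corollary 2.3]{huy}''. What you have written out is, in effect, the standard derivation of that cited corollary from the Burns--Rapoport (marked/weak) Torelli theorem: the formal verification that $f\mapsto f^*$ lands in the set of Hodge isometries preserving the K\"ahler cone, injectivity of the representation $\mathrm{Aut}(X)\to\mathsf{O}(H^2(X,\mathbb{Z}))$, and the chamber-geometry argument (simple transitivity of the $(-2)$-Weyl group on the chambers of the positive cone, plus the fact that $-\mathrm{id}$ reverses the positive cone) to upgrade ``isomorphic with the same period'' to ``the given isometry is itself induced by an automorphism''. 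This is exactly the route taken in Huybrechts' book, so the two approaches agree in substance; yours simply unwinds one layer of the citation, which buys transparency about where the genuinely deep input (surjectivity of the period map and the Kummer-density/deformation argument) enters, at the cost of having to state the marked Torelli theorem and the K\"ahler-cone chamber lemma carefully.

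One point to tighten: your first injectivity argument via a very ample class does not work as stated. The theorem is asserted for an arbitrary complex K3 surface, which need not be projective, so a very ample $L$ may not exist; and even in the projective case, knowing that $f$ is induced by a linear automorphism of $\mathbb{P}^n$ fixing the hyperplane class does not by itself force $f=\mathrm{id}$. The correct route is your parenthetical alternative: the kernel of $\mathrm{Aut}(X)\to\mathsf{O}(H^2(X,\mathbb{Z}))$ is finite (it fixes a K\"ahler class, hence sits in a compact group, and is discrete since $H^0(X,T_X)=0$), and then the topological and holomorphic Lefschetz fixed-point formulas rule out a nontrivial element of finite order acting trivially on $H^2$. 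Since you flag this step as one to be imported or checked, this is a repair of a variant rather than a gap in the plan.
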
\label{torelli}
\begin{proof}
See \cite[page 339, Corollary 2.3]{huy}.
\end{proof}
\begin{teo}\label{bihol}
$\mathcal{P}$ is a biholomorphism.
\end{teo}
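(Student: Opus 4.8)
The plan is to show that $\mathcal{P}:\mathsf{T}\to \Gamma\backslash\mathsf{\Pi}$ is a biholomorphism by constructing a holomorphic inverse, using the Torelli theorem (Theorem \ref{torelli}) to obtain the underlying K3 surface from a period matrix, and then using the extra data packaged in a point of $\mathsf{\Pi}$ to recover the frame of transcendental cohomology. Concretely, since $\mathcal{P}$ is already known to be holomorphic, it suffices to prove it is bijective and that its differential is everywhere an isomorphism; by Proposition \ref{tangent} both $\mathsf{T}$ and $\mathsf{\Pi}$ (hence $\Gamma\backslash\mathsf{\Pi}$) are smooth of dimension $10$, so an open mapping / inverse function theorem argument will finish once injectivity, surjectivity, and local injectivity are in hand.

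First I would treat \textbf{injectivity}. Suppose $(X,\iota,\alpha)$ and $(X',\iota',\alpha')$ have the same image in $\Gamma\backslash\mathsf{\Pi}$; after acting by an element of $\Gamma$ we may assume the period matrices agree for compatible choices of homology bases $\delta$, $\delta'$. The equality of period matrices restricted to the transcendental lattice, together with the $N$-polarization data, gives a Hodge isometry $H^2(X,\mathbb{Z})\to H^2(X',\mathbb{Z})$ which is the identity on the polarization sublattice $\iota(N)$ and matches the first Hodge pieces; one checks it sends a Kähler class to a Kähler class (using that the polarizations are quasi-ample, i.e. strict $N$-polarizations coming from the Clingher–Doran family, so the ample cone behaves well). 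Torelli then yields an isomorphism $f:X'\to X$, and by construction $f^*$ carries $\alpha$ to $\alpha'$ modulo $\iota(N)$, so the two enhanced surfaces are isomorphic. For \textbf{surjectivity}, given $P\in\mathsf{\Pi}$, the Riemann bilinear relations built into the definition of $\mathsf{\Pi}$ (the condition $(P^1)^T\Psi\overline{P^1}>0$ and $P^T\Psi P=\Psi$) say exactly that the columns of $P$ determine a weight-two Hodge structure of K3 type on $N^\perp$ polarized by $\Psi^{-1}$; glueing with the fixed Hodge structure on $N$ produces a K3-type Hodge structure on the full K3 lattice, and the surjectivity of the period map for $N$-polarized K3 surfaces (which one gets from the classical surjectivity of the period map for K3 surfaces, restricted to the $N$-polarized locus, cf. the Clingher–Doran description in Section \ref{sec2}) provides an $N$-polarized K3 surface $X$ realizing it; the matrix $P$ itself then prescribes the frame $\alpha$, giving a point of $\mathsf{T}$ mapping to $[P]$. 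Finally, for \textbf{local injectivity / immersion}, I would compute $d\mathcal{P}$ via Griffiths transversality: the derivative of the period map in the classical (holomorphic two-form) direction is the usual Kodaira–Spencer–type map, which is injective because variation of the Clingher–Doran parameters $(a,b,c,d)$ genuinely varies the complex structure (the coarse moduli statement Theorem \ref{coarse}), while the derivative in the ``group'' directions $\mathfrak{g}\in\mathrm{Lie}(\mathsf{G})$ is an isomorphism onto the corresponding subspace of $T_P\mathsf{\Pi}$ by the $\mathsf{G}$-equivariance of Proposition \ref{eq}; together these span $T_P\mathsf{\Pi}$ by the dimension count $3+7=10$.

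The main obstacle I expect is the \textbf{surjectivity} step — more precisely, verifying that an abstract $N$-polarized K3-type Hodge structure arising from $P\in\mathsf{\Pi}$ is actually realized by a member of the Clingher–Doran family (equivalently, that the image of $\mathcal{P}$ is all of $\Gamma\backslash\mathsf{\Pi}$ and not merely an open subset), together with the companion subtlety of keeping track of ample/Kähler cones so that Torelli applies cleanly. This is where one must invoke the global Torelli and surjectivity results for lattice-polarized K3 surfaces (and the fact, from Section \ref{sec2}, that every strictly $N$-polarized K3 surface occurs in the family with $c\neq 0$) rather than anything elementary. Once surjectivity and injectivity are secured, the immersion statement and hence the conclusion that $\mathcal{P}$ is a holomorphic bijection with holomorphic inverse — i.e.\ a biholomorphism — follows formally, and, as remarked after Theorem \ref{tore}, the same argument works for an arbitrary lattice polarization with the evident modifications.
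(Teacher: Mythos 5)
Your argument is correct and follows essentially the same route as the paper: injectivity and surjectivity are both reduced to the Global Torelli theorem and the surjectivity of the period map for (quasi-ample) lattice-polarized K3 surfaces, with the frame recovered from the period matrix via the $\mathsf{G}$-equivariance and the case of two frames on the same surface handled by a Hodge isometry that is the identity on $\iota(N)$ and hence preserves a K\"ahler class. The only divergence is at the very end: your Griffiths-transversality computation of $d\mathcal{P}$ and the dimension count $3+7=10$ are superfluous, since the paper simply invokes the general fact that a bijective holomorphic map between complex manifolds is automatically a biholomorphism.
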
\label{iso}
\begin{proof}
By Proposition \ref{eq}, $\mathcal{P}$ is $G$-equivariant. Therefore, we have a well-defined map $\mathcal{P}/G:\mathsf{T}/G\rightarrow \mathsf{U}/G$. The Global Torelli Theorem for polarized K3 surfaces implies that $\mathcal{P}/G$ is a biholomorphism. This statement, together with Proposition \ref{eq}, implies that $\mathcal{P}$ is onto. Now, we deal with injectivity of $\mathcal{P}$. By injectivity of $\mathcal{P}/G$, we only need to prove this for the following case: let $t_1=[(X,\alpha)]$ and $t_2=[(X,\beta)]\in\mathsf{T}$ be such that $\mathcal{P}(t_1)=\mathcal{P}(t_2)$. Let $\delta$ be a continuous local section of $\mathcal{H}$ (recall Definition \ref{local}) which is defined on $t_1$ and $t_2$. Let us write, by an abuse of notation, $\delta(t_1)=\delta$. Then, $\mathcal{P}(t_1)=\mathcal{P}(t_2)$ means that, for some $A\in\Gamma$, $[\int_{\delta_i}\alpha_j]=A^{-T}[\int_{\delta_i}\beta_j]$. Since $\Gamma$ is a group, $A^{-1}$ also belongs to it. Using $A^{-1}$, we are going to define an isometry of $H^2(X,\mathbb{Z})_{\iota},$ which extends to a Hodge isometry of $H^2(X,\mathbb{Z})$. By the previous considerations we can define $f\in \mathsf{O}(H^2(X,\mathbb{Z})_{\iota}),$ by $f(\delta^{Pd})=A^{-T}\delta^{Pd}$. We can extend $f$ to an isometry $g\in \mathsf{O}(H^2(X,\mathbb{Z}))$ by defining it to be the identity on $\iota(N)$. Let us observe that $g$ is in fact a Hodge isometry: $g_{\mathbb{C}}(\beta)=A^{-T}[\int_{\delta_i}\beta_j]\Phi^{-1}\delta^{Pd}=[\int_{\delta_i}\alpha_j]\Phi^{-1}\delta^{Pd}=\alpha.$ Since $i(N)$ contains Kähler classes, then $g$ sends some Kähler class to a Kähler class. Then, Theorem \ref{torelli} implies that there is an automorphism $\phi:X\rightarrow X$ such that the induced map in the second integer cohomology satisfies $\phi_{*}=g.$ The construction of $g$ implies that $\phi$ preserves the lattice polarization, and $\phi^*(\beta)=\alpha$. Therefore, $\mathcal{P}$ is a bijective holomorphism. \cite[Theorem 8.4.4]{grauert} implies $\mathcal{P}$ is a biholomorphism
\end{proof}

We observe that everything in this section extends \textit{mutatis mutandi} to an arbitrary quasi-ample polarizarion.

\begin{df} The moduli of \textbf{framed N-polarized K3 surfaces} $\mathbb{H}$ is the moduli space of pairs $(X,\delta),$ where $X$ is an $N$-polarized K3 surface, and $\delta$ is basis of the abelian group $H_2(X,\mathbb{Z})_{i}$ with intersection product equal to $\Phi$.  
\end{df}
\begin{prop}
$\mathbb{H}$ is isomorphic to the Griffiths-Dolgachev domain $\mathsf{D}$. 
\end{prop}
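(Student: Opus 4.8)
The plan is to exhibit an explicit mutually inverse pair of maps between $\mathbb{H}$ and $\mathsf{D}=\mathsf{\Pi}/G$, using the constructions already built in this section. First I would recall that, for a framed $N$-polarized K3 surface $(X,\delta)$, a choice of basis $(\alpha_1,\ldots,\alpha_5)$ of $H^2_{dR}(X/\mathbb{C})_\iota$ satisfying the conditions of Definition \ref{tmoduli} produces, via Proposition \ref{pmatrix}, a period matrix $P=[\int_{\delta_i}\alpha_j]\in\mathsf{\Pi}$; changing the basis $\alpha$ by an element $g\in\mathsf{G}$ replaces $P$ by $P\cdot g$, so the class $[P]\in\mathsf{\Pi}/G=\mathsf{D}$ depends only on $(X,\delta)$ and not on the auxiliary choice of frame $\alpha$. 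This defines a map $\Phi_{\mathsf{D}}:\mathbb{H}\to\mathsf{D}$, and one checks it is holomorphic because it is locally the composition of the $\mathsf{G}$-quotient with the generalized period map $\mathcal{P}$ of Theorem \ref{bihol} (on a simply connected patch where $\delta$ trivializes the local system $\mathcal{H}$).

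Next I would construct the inverse. Given a point of $\mathsf{D}$, represent it by $P\in\mathsf{\Pi}$. By Theorem \ref{bihol} the generalized period map $\mathcal{P}:\mathsf{T}\to\Gamma\backslash\mathsf{\Pi}$ is a biholomorphism, so the $\Gamma$-class of $P$ comes from a unique point $t=[(X,\alpha)]\in\mathsf{T}$; forgetting the cohomological frame $\alpha$ but remembering the homological frame $\delta$ (which is encoded in the fact that we chose a lift $P$ rather than merely its $\Gamma$-class — here I would use the basis $\delta$ fixed at the start of the section to transport the abstract lattice frame back to $H_2(X,\mathbb{Z})_\iota$) yields a framed $N$-polarized K3 surface $(X,\delta)\in\mathbb{H}$. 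The two constructions are inverse to each other essentially by unwinding definitions: going $\mathbb{H}\to\mathsf{D}\to\mathbb{H}$ recovers the same $X$ (Torelli, already used in Theorem \ref{bihol}) and the same $\delta$ (by construction), and going $\mathsf{D}\to\mathbb{H}\to\mathsf{D}$ recovers the same period matrix modulo $G$. Holomorphicity of the inverse follows from $\mathcal{P}^{-1}$ being holomorphic together with the fact that the $\mathsf{G}$-quotient $\mathsf{T}\to\mathsf{T}/\mathsf{G}\cong\mathcal{M}$ and the period constructions are all holomorphic; alternatively one invokes \cite[Theorem 8.4.4]{grauert} as in the proof of Theorem \ref{bihol} once bijectivity is established.

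The main subtlety — the step I expect to require the most care — is bookkeeping of the two different framings and why passing to $\mathsf{\Pi}/G$ (rather than to $\Gamma\backslash\mathsf{\Pi}$) is exactly what matches the homological framing in $\mathbb{H}$: the $\mathsf{G}$-action corresponds to changing the de Rham frame $\alpha$, which $\mathbb{H}$ does not record, while the $\Gamma$-action corresponds to changing the homological frame $\delta$, which $\mathbb{H}$ does record, so $\mathbb{H}$ sees $\mathsf{\Pi}$ modulo $\mathsf{G}$ only. One must also check well-definedness on isomorphism classes: an isomorphism $(X,\delta)\cong(X',\delta')$ of framed K3 surfaces induces, via pullback on $H^2_{dR}$, an identification of the enhanced data that changes $P$ only by a $\mathsf{G}$-factor, so the map to $\mathsf{D}$ is constant on isomorphism classes. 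Finally I would remark that the existence of a frame $\alpha$ as in Definition \ref{tmoduli} for \emph{every} $N$-polarized K3 surface (not just those in the quasi-affine patch $\mathsf{O}$) follows because $F^2$ is one-dimensional, $F^1$ has dimension $4$ inside the $5$-dimensional $H^2_{dR}(X/\mathbb{C})_\iota$, and the transcendental intersection form is nondegenerate, so a basis of the prescribed flag-and-pairing type always exists; this is what makes $\mathbb{H}$ a well-defined global object biholomorphic to all of $\mathsf{D}$.
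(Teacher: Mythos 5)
Your proposal is correct and follows essentially the same route as the paper: the paper's proof is the single sentence that the statement ``follows from the fact that the period map $\mathcal{P}/G$ is an isomorphism,'' and your argument is a careful unwinding of exactly that fact (Theorem \ref{bihol} / Global Torelli), with the added bookkeeping of why the homological frame $\delta$ corresponds to choosing a lift of the $\Gamma$-class while the $\mathsf{G}$-action absorbs the choice of de Rham frame $\alpha$. The extra verifications you supply (well-definedness on isomorphism classes, freeness of the $\Gamma$-action implicit in recovering $\delta$, existence of a compatible frame $\alpha$ for every $N$-polarized K3) are all sound and are left implicit in the paper.
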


\begin{proof}
This follows from the fact that the period map $\mathcal{P}/G$ is an isomorphism.
\end{proof}

\subsection{The $\mathbf{T}$ and $\mathsf{t}$-maps}

From now on in this section, we will work will work with the Clingher-Doran family of $N$-polarized K3 surfaces. 

For the Clingher-Doran family, $\mathsf{D}=IV_{3}\coprod \overline {IV}_{3}$, which is biholomorphic to $\mathbb{H}_2\coprod\overline{\mathbb{H}_2}$.

\begin{df}
The {$\mathbf{T}$-map} of the Doran and Clingher family is the holomorphic map 
\begin{gather}\mathbf{T}:\mathbb{H}_2\rightarrow \mathsf{\Pi}, \notag\\
\begin{bmatrix}
\tau_1 & \tau_2 \\
\tau_2 & \tau_3
\end{bmatrix}\mapsto \begin{bmatrix}
\tau_2^2-\tau_1\tau_3 & -\tau_3 & -2\tau_2 & -\tau_1 & 1\\
\tau_3 & 0 & 0 & 1 & 0\\
\tau_2 & 0 & -1 & 0 & 0\\
\tau_1 & 1 & 0 & 0 & 0\\
1 & 0 & 0 & 0 & 0
\end{bmatrix}.\notag
\end{gather}
This map extends naturally to a holomorphic map $\mathbf{T}:\mathbb{H}_2\coprod\overline{\mathbb{H}_2}\rightarrow \mathsf{\Pi}.$
\end{df}

\begin{prop} The $\mathbf{T}$-map satisfies that the composition $\mathsf{D}\xrightarrow {\mathbf{T}} \mathsf{\Pi}\rightarrow \mathsf{\Pi}/ \mathsf{G}$ is the identity.
\end{prop}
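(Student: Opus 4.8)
The statement to prove is that the composition $\mathsf{D}\xrightarrow{\mathbf{T}}\mathsf{\Pi}\rightarrow\mathsf{\Pi}/\mathsf{G}$ is the identity, where we identify $\mathsf{D}=\mathbb{H}_2\coprod\overline{\mathbb{H}_2}$. The plan is to work on the component $\mathbb{H}_2$ and unwind the two identifications involved: first, the biholomorphism $\mathsf{D}=\mathsf{\Pi}/\mathsf{G}$ coming from Theorem \ref{bihol} together with the explicit description $\mathsf{D}=IV_3\coprod\overline{IV}_3\cong\mathbb{H}_2\coprod\overline{\mathbb{H}_2}$; and second, the identification $IV_3\cong\mathbb{H}_2$ used to write down the $\mathbf{T}$-map. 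So the real content is a compatibility check: the way we coordinatize $\mathsf{\Pi}/\mathsf{G}$ by $\mathbb{H}_2$ must agree, under $\mathbf{T}$, with the way periods of holomorphic $2$-forms are classically encoded by a point of $\mathbb{H}_2$.

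First I would recall that a point of $\mathsf{\Pi}/\mathsf{G}$ is determined by the $\mathsf{G}$-orbit of a period matrix $P\in\mathsf{\Pi}$, and since $\mathsf{G}$ acts by changing the basis $(\alpha_1,\dots,\alpha_5)$ of $H^2_{dR}(X/\mathbb{C})_\iota$ respecting the Hodge filtration and the pairing, the orbit depends only on the line $F^2\subset H^2_{dR}(X/\mathbb{C})_\iota$, i.e.\ on the first row $P^1$ of $P$ up to scaling (and up to the parabolic normalization coming from $F^1$). Concretely, the quotient map $\mathsf{\Pi}\to\mathsf{\Pi}/\mathsf{G}$ sends $P$ to the class of its first row in the appropriate projective quadric, which is exactly the domain $IV_3$; and the standard parametrization of $IV_3$ by $\mathbb{H}_2$ writes the period line of a point $\begin{bmatrix}\tau_1&\tau_2\\\tau_2&\tau_3\end{bmatrix}\in\mathbb{H}_2$ as the projective point $[\tau_2^2-\tau_1\tau_3:-\tau_3:-2\tau_2:-\tau_1:1]$. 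Then I would simply observe that this is precisely the first row of the matrix defining $\mathbf{T}(\tau_1,\tau_2,\tau_3)$, so that the composite sends $\tau$ back to $\tau$. The remaining rows of the $\mathbf{T}$-matrix are there only to make $\mathbf{T}(\tau)$ land in $\mathsf{\Pi}$ (they encode a choice of full flag realizing the Hodge filtration), and they are killed by the passage to $\mathsf{\Pi}/\mathsf{G}$; so I would verify quickly that $\mathbf{T}(\tau)$ does satisfy $\mathbf{T}(\tau)^T\Psi\,\mathbf{T}(\tau)=\Psi$ and the positivity condition on its first row, i.e.\ that $\mathbf{T}$ indeed maps into $\mathsf{\Pi}$, which makes the statement meaningful. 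The conjugate component $\overline{\mathbb{H}_2}$ is handled by the natural extension of $\mathbf{T}$ already noted in the definition.

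The main obstacle is bookkeeping rather than genuine difficulty: one must pin down exactly which coordinates on $\mathsf{\Pi}/\mathsf{G}$ are being used and verify that the $5$-component projective embedding of $IV_3$ that underlies the $\mathbf{T}$-map is the same one induced by the quotient $\mathsf{\Pi}\to\mathsf{\Pi}/\mathsf{G}$ — i.e.\ that the quadratic relation $\tau_2^2-\tau_1\tau_3$ appearing in the first entry is forced by $P^T\Psi P=\Psi$ restricted to the first row, $(P^1)^T\Psi P^1=0$. Checking that single scalar equation against the chosen $\Psi$ (Equation \eqref{choice}, or its $\Pi$-version $\Psi^{-1}$) is the one computation I would actually carry out; everything else is formal, since $\mathsf{G}$ acts transitively on the fibers of $\mathsf{\Pi}\to IV_3$ by construction of $\mathsf{G}$ as the Hodge-filtration-respecting stabilizer.
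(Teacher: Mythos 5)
Your route is the same as the paper's, which disposes of this proposition with a one--line ``immediate from the definition of $\mathbf{T}$'': the quotient $\mathsf{\Pi}\rightarrow\mathsf{\Pi}/\mathsf{G}$ forgets everything except the period line of the holomorphic $2$-form, and for $\mathbf{T}(\tau)$ that line is exactly the point of $IV_3$ labelled by $\tau$. So your plan (check that $\mathbf{T}(\tau)$ lands in $\mathsf{\Pi}$, then match the distinguished line against the fixed parametrization of $IV_3\cong\mathbb{H}_2$) is the right one and is essentially what the paper leaves implicit.

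There is, however, one concrete slip that would derail the computation as literally described: the $\mathsf{G}$-orbit of $P=[\int_{\delta_i}\alpha_j]$ is read off from the first \emph{column} of $P$, not the first row. Indeed $\mathsf{G}$ acts by right multiplication, and every $g\in\mathsf{G}$ is block upper triangular (it is $g^{T}$ that respects the Hodge filtration), so the first column of $Pg$ equals $g_{11}$ times the first column of $P$; this column is the period vector of $\alpha_1$ and spans the line recording the class in $\mathsf{\Pi}/\mathsf{G}$. Likewise $P^{1}$ in the positivity condition of the definition of $\mathsf{\Pi}$ is the first column, as the proof of Proposition \ref{pmatrix} shows. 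The first column of $\mathbf{T}(\tau)$ is $(\tau_2^2-\tau_1\tau_3,\tau_3,\tau_2,\tau_1,1)^{T}$, which is precisely the parametrization of $\mathsf{D}^{+}$ written down in the paper; the first row $(\tau_2^2-\tau_1\tau_3,-\tau_3,-2\tau_2,-\tau_1,1)$ that you single out is a different vector, and matching it against that parametrization would force $\sigma_1=-\tau_1$, $\sigma_2=-2\tau_2$, $\sigma_3=-\tau_3$ together with $\sigma_2^2-\sigma_1\sigma_3=\tau_2^2-\tau_1\tau_3$, which fails unless $\tau_2=0$. With ``row'' replaced by ``column'' throughout (and the corresponding sign-free parametrization), your argument goes through and coincides with the paper's.
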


\begin{proof}
Immediate from the definition of $\mathbf{T}$.
\end{proof}
From the previous proposition, we get immediately that $\tau$ is an injective holomorphic immersion. Indeed, we have:

\begin{prop}
$\mathbf{T}$ is a holomorphic embedding.
\end{prop}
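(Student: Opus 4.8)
The plan is to deduce the statement from the identity recorded in the previous proposition, namely that $\mathbf{T}$ is a holomorphic section of the quotient projection $\pi\colon\mathsf{\Pi}\to\mathsf{\Pi}/\mathsf{G}=\mathsf{D}$, i.e.\ $\pi\circ\mathbf{T}=\mathrm{id}_{\mathsf{D}}$. First I would observe that injectivity is then automatic: if $\mathbf{T}(x)=\mathbf{T}(y)$, applying $\pi$ gives $x=y$. Next I would differentiate the section identity; by the chain rule $d\pi_{\mathbf{T}(x)}\circ d\mathbf{T}_x=\mathrm{id}$ for every $x\in\mathsf{D}$, so $d\mathbf{T}_x$ is injective and $\mathbf{T}$ is a holomorphic immersion — this is the part already announced in the sentence preceding the statement.

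The remaining step, which is what upgrades "injective holomorphic immersion" to "holomorphic embedding", is to check that $\mathbf{T}$ is a homeomorphism onto its image $\mathbf{T}(\mathsf{D})\subset\mathsf{\Pi}$ endowed with the subspace topology. This also follows from the section property: $\pi$ is continuous, hence so is its restriction $\pi|_{\mathbf{T}(\mathsf{D})}\colon\mathbf{T}(\mathsf{D})\to\mathsf{D}$, and $\pi\circ\mathbf{T}=\mathrm{id}$ exhibits this restriction as a two-sided inverse of $\mathbf{T}$. Therefore $\mathbf{T}^{-1}$ is continuous, $\mathbf{T}$ is a topological embedding, and an injective holomorphic immersion that is a topological embedding has locally closed image which is an embedded complex submanifold of $\mathsf{\Pi}$; this is precisely what "holomorphic embedding" means.

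A more hands-on alternative I would keep in reserve avoids the quotient structure entirely: the $(2,1)$, $(3,1)$ and $(4,1)$ entries of $\mathbf{T}$ are, respectively, the three standard coordinate functions $\tau_3$, $\tau_2$, $\tau_1$ on $\mathbb{H}_2$, so composing $\mathbf{T}$ with the linear projection of $\mathsf{Mat}_5(\mathbb{C})$ onto those three entries recovers the identity of $\mathbb{H}_2$; this at once yields injectivity, injectivity of the differential and continuity of the inverse. I do not expect a genuine obstacle here: the statement is essentially formal. The one point I would treat with a line of care is the assertion that the image is locally closed, hence an embedded analytic submanifold — for this I would cite the standard fact that an injective immersion between complex manifolds which is a homeomorphism onto its image is an embedding — and I would make sure the identification of $\mathsf{\Pi}/\mathsf{G}$ with $\mathbb{H}_2\cup\overline{\mathbb{H}_2}$ as analytic spaces, recalled in this subsection, is in force, so that the preceding proposition is an equality of holomorphic (not merely set-theoretic) maps.
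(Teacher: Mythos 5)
Your argument is correct and follows essentially the same route as the paper: both rest on the continuous left inverse of $\mathbf{T}$ supplied by the quotient projection $\mathsf{\Pi}\to\mathsf{\Pi}/\mathsf{G}=\mathsf{D}$ from the preceding proposition. The only cosmetic difference is that the paper uses this left inverse together with Hausdorffness of $\mathsf{D}$ to deduce that $\mathbf{T}$ is proper and concludes from properness, whereas you verify directly that the restriction of the projection to $\mathbf{T}(\mathsf{D})$ is a continuous two-sided inverse, so that $\mathbf{T}$ is a homeomorphism onto its image; both yield the same conclusion.
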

\begin{proof}
Since $\mathsf{D}$ is Hausdorff, and $\mathbf{T}$ has a left-continuous inverse, we conclude that $\mathbf{T}$ is proper. This implies that $\mathbf{T}$ is an embedding.
\end{proof}

\begin{prop}
For every $P\in \mathsf{\Pi},$ there is an unique $g\in \mathsf{G}$ such that $Pg=\mathbf{T}([P])$. 
\end{prop}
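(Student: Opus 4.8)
The plan is to use that $\mathbf{T}$ is, by the preceding proposition, a section of the quotient map $\pi\colon\mathsf{\Pi}\to\mathsf{\Pi}/\mathsf{G}=\mathsf{D}$, together with the elementary observation that the right $\mathsf{G}$-action on $\mathsf{\Pi}$ is free because $\mathsf{\Pi}\subset\mathsf{GL}_5(\mathbb{C})$.

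For existence, write $[P]$ for the $\mathsf{G}$-orbit of $P$, so that $[P]\in\mathsf{D}$ and $\mathbf{T}([P])$ is defined. Evaluating the identity $\pi\circ\mathbf{T}=\mathrm{id}_{\mathsf{D}}$ at $[P]$ gives $\pi\bigl(\mathbf{T}([P])\bigr)=[P]=\pi(P)$, so $\mathbf{T}([P])$ and $P$ lie in the same $\mathsf{G}$-orbit; by the definition of the action this says exactly that $Pg=\mathbf{T}([P])$ for some $g\in\mathsf{G}$. For uniqueness, if $Pg_1=Pg_2$ with $g_1,g_2\in\mathsf{G}$, then left-multiplying by $P^{-1}$ (which exists since $P$ is invertible) gives $g_1=g_2$; equivalently, the orbit map $g\mapsto Pg$ is injective because the action is free. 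Thus $g=P^{-1}\mathbf{T}([P])$ is the unique candidate, and it lies in $\mathsf{G}$ precisely by the existence argument.

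I do not anticipate any real obstacle: once the section property of $\mathbf{T}$ is available (already proved), the statement is a formal consequence of the freeness of the $\mathsf{G}$-action on $\mathsf{\Pi}$. The only subtle point deserving an explicit sentence is why the matrix $g=P^{-1}\mathbf{T}([P])$ is forced to belong to $\mathsf{G}$ and not merely to $\mathsf{GL}_5(\mathbb{C})$, and this is exactly encoded in the statement that $P$ and $\mathbf{T}([P])$ represent the same class in $\mathsf{\Pi}/\mathsf{G}$.
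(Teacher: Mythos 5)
Your argument is correct and is essentially the paper's own proof: existence follows from the section property $\pi\circ\mathbf{T}=\mathrm{id}_{\mathsf{D}}$ (so $P$ and $\mathbf{T}([P])$ lie in the same $\mathsf{G}$-orbit), and uniqueness follows from the freeness of the right $\mathsf{G}$-action, which holds because $\mathsf{\Pi}\subset\mathsf{GL}_5(\mathbb{C})$ allows cancellation of $P$. Your extra remark that $g=P^{-1}\mathbf{T}([P])$ is the explicit candidate is a harmless refinement of the same reasoning.
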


\begin{proof}
By the previous propositions, $[\mathbf{T}([P])]=[P]$. Therefore, there is some  $g\in\mathsf{G}$ such that $Pg=\mathbf{T}([P]).$ It is unique since $\mathsf{G}$ acts freely on $\mathsf{\Pi}$. This last assertion follows from the fact that $\mathsf{\Pi}\subset \mathsf{GL}_5(\mathbb{C})$.
\end{proof}

The following proposition, which explains how to construct the $\mathsf{t}$-map from the $\tau$-map, are taken from \cite[Chapter 8]{mov20}.

\begin{prop}
There is an unique holomorphic map 

$$\mathsf{t}:\mathbb{H}\rightarrow \mathsf{T}$$ 

such that the following diagram commutes:

\begin{center}
\begin{tikzcd}
\mathbb{H} \arrow{r}{\mathsf{t}} \arrow[swap]{d}{pm} & \mathsf{T} \arrow{d}{P} \\
\mathsf{D}\arrow{r}{\mathbf{T}} & \mathsf{U}
\end{tikzcd}
\end{center}

and the composition $\mathbb{H}\xrightarrow {\mathsf{t}} \mathsf{T} \rightarrow \mathsf{T}/\mathsf{G}=\mathcal{M}$ is the canonical map $(X,\delta)\mapsto X$.

\end{prop}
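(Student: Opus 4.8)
The plan is to construct $\mathsf{t}$ as the composition $\mathcal{P}^{-1}\circ \overline{\mathbf{T}}$, where $\overline{\mathbf{T}}:\mathsf{D}\to\mathsf{U}$ is the map induced by $\mathbf{T}:\mathsf{D}\to\mathsf{\Pi}$ after passing to the quotient by $\Gamma$, and $\mathcal{P}:\mathsf{T}\to\mathsf{U}$ is the generalized period map. By Theorem~\ref{bihol}, $\mathcal{P}$ is a biholomorphism, so $\mathcal{P}^{-1}$ exists and is holomorphic; and $\mathbf{T}$ is holomorphic by definition, hence so is the composition $\mathsf{D}\xrightarrow{\mathbf{T}}\mathsf{\Pi}\to\Gamma\backslash\mathsf{\Pi}=\mathsf{U}$. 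Identifying $\mathbb{H}$ with $\mathsf{D}$ via the isomorphism of the preceding proposition (equivalently, $pm:\mathbb{H}\to\mathsf{D}$), one sets $\mathsf{t}:=\mathcal{P}^{-1}\circ\overline{\mathbf{T}}\circ pm$. Commutativity of the square is then immediate: $\mathcal{P}\circ\mathsf{t}=\mathcal{P}\circ\mathcal{P}^{-1}\circ\overline{\mathbf{T}}\circ pm=\overline{\mathbf{T}}\circ pm$, which is exactly the composition $\mathbb{H}\xrightarrow{pm}\mathsf{D}\xrightarrow{\mathbf{T}}\mathsf{U}$ appearing in the diagram.

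First I would spell out carefully what $\overline{\mathbf{T}}$ is: since $\mathsf{U}=\Gamma\backslash\mathsf{\Pi}$ and the natural projection $\mathsf{\Pi}\to\mathsf{U}$ is holomorphic (the $\Gamma$-action is properly discontinuous by Proposition~\ref{comp}), the composite $\mathsf{D}\xrightarrow{\mathbf{T}}\mathsf{\Pi}\to\mathsf{U}$ is holomorphic; this is the map I call $\overline{\mathbf{T}}$. Then $\mathsf{t}=\mathcal{P}^{-1}\circ\overline{\mathbf{T}}$ (precomposed with the identification $\mathbb{H}\cong\mathsf{D}$) is holomorphic as a composition of holomorphic maps. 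For the second claim, the composition $\mathbb{H}\xrightarrow{\mathsf{t}}\mathsf{T}\to\mathsf{T}/\mathsf{G}=\mathcal{M}$: recall from Theorem~\ref{bihol} and the proof of Proposition~\ref{eq} that $\mathcal{P}$ is $\mathsf{G}$-equivariant and descends to a biholomorphism $\mathcal{P}/\mathsf{G}:\mathsf{T}/\mathsf{G}\to\mathsf{U}/\mathsf{G}$, while $\mathsf{T}/\mathsf{G}\cong\mathcal{M}$. On the other side, the proposition preceding this one states $\mathbf{T}$ satisfies $\mathsf{D}\xrightarrow{\mathbf{T}}\mathsf{\Pi}\to\mathsf{\Pi}/\mathsf{G}$ is the identity; quotienting further by $\Gamma$, the composition $\mathsf{D}\xrightarrow{\overline{\mathbf{T}}}\mathsf{U}\to\mathsf{U}/\mathsf{G}$ is (up to the identification $\mathsf{D}\cong\mathsf{U}/\mathsf{G}$) again the identity. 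Chasing these identifications through the commutative square gives that $\mathbb{H}\to\mathsf{T}\to\mathcal{M}$ agrees with $\mathbb{H}\cong\mathsf{D}\cong\mathcal{M}$, which under the isomorphism $\mathbb{H}\cong\mathsf{D}$ of framed $N$-polarized K3 surfaces is exactly the forgetful map $(X,\delta)\mapsto X$.

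For uniqueness, suppose $\mathsf{t}':\mathbb{H}\to\mathsf{T}$ is another holomorphic map making the square commute, i.e. $\mathcal{P}\circ\mathsf{t}'=\overline{\mathbf{T}}\circ pm=\mathcal{P}\circ\mathsf{t}$. Since $\mathcal{P}$ is a biholomorphism (Theorem~\ref{bihol}), in particular injective, we may cancel it on the left to get $\mathsf{t}'=\mathsf{t}$. So the commutativity of the square alone pins down $\mathsf{t}$, and the second condition is then automatically satisfied rather than an extra constraint.

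I expect the only real subtlety to be bookkeeping of the several identifications in play — $\mathbb{H}\cong\mathsf{D}$ (framed vs.\ period description), $\mathsf{D}\cong\mathsf{U}/\mathsf{G}$, $\mathsf{T}/\mathsf{G}\cong\mathcal{M}$, and the two quotient maps $\mathsf{\Pi}\to\mathsf{U}$ and $\mathsf{\Pi}\to\mathsf{D}$ — and making sure the equivariance properties (Proposition~\ref{comp} compatibility of the $\Gamma$- and $\mathsf{G}$-actions, Proposition~\ref{eq} $\mathsf{G}$-equivariance of $\mathcal{P}$) are invoked correctly so that all the squares genuinely commute. There is no hard analysis: holomorphicity is free once Theorem~\ref{bihol} is granted, and everything else is a diagram chase. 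The one place to be slightly careful is that $\overline{\mathbf{T}}\circ pm$ really does land in the image of $\mathcal{P}$ — but $\mathcal{P}$ is surjective (indeed bijective) by Theorem~\ref{bihol}, so this is automatic.
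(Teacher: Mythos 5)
Your proposal is correct and matches the paper's argument: the paper constructs $\mathsf{t}$ pointwise by choosing an arbitrary enhancement and applying the unique normalizing $g\in\mathsf{G}$ from the preceding proposition, but then immediately observes that this is the same as $\mathcal{P}^{-1}\circ\mathbf{T}\circ pm$, which is exactly your definition. Your write-up is in fact somewhat more complete, since you make explicit the uniqueness argument (cancel the injective $\mathcal{P}$) and the verification of the second condition, both of which the paper leaves implicit.
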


\begin{proof}
Let $(X,\delta)\in\mathbb{H}$ and chosse an arbitrary enhacement $\alpha$ of $X$. By the previous proposition there is a unique $g\in\mathsf{G}$ such that $P(\delta,\alpha)g=\mathbf{T}([P(\delta,\alpha)])$. Define $\mathsf{t}(X,\delta)=(X,\alpha)\cdot g.$ This $\mathsf{t}$ satisfies the properties stated above. Indeed, $\mathsf{t}=P^{-1}\circ \mathbf{T}\circ pm$.
\end{proof}

\subsection{Construction of the $\mathbf{T}$-map and comparison with principally polarized abelian surfaces}
In this section we sketch the ideas that lead us to the construction of the $\mathbf{T}$-map in the previous section and allows us to produce a precise comparison with the GMCD method applied to principally polarized abelian surfaces in \cite{mov13,tiago}. The construction performed here is taken from \cite{nikulin}.

Let $L=\mathbb{Z}e_1\oplus \mathbb{Z}e_2\oplus\mathbb{Z}e_3\oplus \mathbb{Z}e_4$ be a free abelian group of rank $4$.

\begin{prop}
The scalar product $(\cdot,\cdot)$ on $L\wedge L$ defined by means of $$u\wedge v=-(u,v)e_1\wedge e_2\wedge e_3\wedge e_4$$ is an even unimodular integral symmetric bilinear form of signature $(3,3)$.
\end{prop}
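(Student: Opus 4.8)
The plan is to work with the explicit basis $\{e_i\wedge e_j : 1\le i<j\le 4\}$ of $L\wedge L$, which has rank $6$, and to read off the Gram matrix of $(\cdot,\cdot)$ from the defining relation $u\wedge v=-(u,v)\,e_1\wedge e_2\wedge e_3\wedge e_4$. First I would pair the six basis vectors against each other: for a pair of basis bivectors $e_i\wedge e_j$ and $e_k\wedge e_l$, the product $(e_i\wedge e_j)\wedge(e_k\wedge e_l)$ is zero unless $\{i,j,k,l\}=\{1,2,3,4\}$, in which case it equals $\pm\,e_1\wedge e_2\wedge e_3\wedge e_4$ with the sign given by the permutation. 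Hence $(e_i\wedge e_j, e_k\wedge e_l)=0$ except when $\{k,l\}$ is the complementary pair to $\{i,j\}$, and then the value is $\mp 1$. Concretely, ordering the basis as $e_1\wedge e_2,\ e_1\wedge e_3,\ e_1\wedge e_4,\ e_2\wedge e_3,\ e_2\wedge e_4,\ e_3\wedge e_4$, the only nonzero pairings are $(e_1\wedge e_2,e_3\wedge e_4)=-1$, $(e_1\wedge e_3,e_2\wedge e_4)=+1$, $(e_1\wedge e_4,e_2\wedge e_3)=-1$ (and their symmetric counterparts), so the Gram matrix is the anti-diagonal block matrix with entries $-1,+1,-1,-1,+1,-1$.

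From this explicit Gram matrix $B$ the three asserted properties are immediate. \emph{Symmetry} is built into the wedge: $u\wedge v=v\wedge u$ for bivectors in even degree — more precisely $u\wedge v$ and $v\wedge u$ agree because each is a $2$-form wedged with a $2$-form — so $(u,v)=(v,u)$, and indeed $B=B^T$. \emph{Integrality}: all entries of $B$ lie in $\mathbb Z$, so the form is integral, and since the diagonal entries of $B$ are all $0$ the form is \emph{even} (every $x\in L\wedge L$ satisfies $(x,x)\in 2\mathbb Z$, because $(x,x)=\sum_{i<j}\sum_{k<l}x_{ij}x_{kl}B_{(ij)(kl)}=2\sum_{\text{complementary pairs}}(\pm)x_{ij}x_{kl}\in 2\mathbb Z$). \emph{Unimodularity}: $\det B=\pm 1$ since $B$ is anti-triangular with $\pm 1$ on the anti-diagonal; explicitly $\det B=(-1)(+1)(-1)(-1)(+1)(-1)\cdot(\pm1)=\pm1$, so $B$ defines a unimodular form.

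Finally, for the \emph{signature} $(3,3)$ I would diagonalize $B$ over $\mathbb R$. Each of the three hyperbolic blocks $\begin{pmatrix}0&\varepsilon\\\varepsilon&0\end{pmatrix}$ with $\varepsilon=\pm1$ contributes one $+$ and one $-$ to the signature (the change of basis $(x,y)\mapsto(x+y,x-y)$ turns it into $\mathrm{diag}(2\varepsilon,-2\varepsilon)$). Since $B$ is a direct sum of three such blocks (after the anti-diagonal reordering pairs up complementary bivectors), the signature is $(3,3)$. I do not expect a genuine obstacle here; the only point requiring care is bookkeeping of signs in the six pairings and in the permutation parities, and making sure the reordering of the basis genuinely block-diagonalizes $B$ rather than merely anti-diagonalizing it — that is the one spot where a sloppy computation could go wrong, so I would lay out the $6\times 6$ matrix explicitly before invoking the hyperbolic-plane decomposition.
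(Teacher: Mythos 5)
Your proposal is correct and follows essentially the same route as the paper: compute the Gram matrix in the basis $\{e_{12},e_{13},e_{14},e_{23},e_{24},e_{34}\}$ (your anti-diagonal matrix with entries $-1,+1,-1,-1,+1,-1$ is exactly the matrix displayed in the paper's proof) and read off the four properties from it. The paper simply states that the proof follows from this matrix, whereas you spell out the symmetry, evenness, unimodularity, and hyperbolic-block signature arguments explicitly; there is no substantive difference.
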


\begin{proof}
Let $e_{ij}:=e_i\wedge e_j$. Then $\{e_{12},e_{13},e_{14},e_{23},e_{24}, e_{34}\}$. is a basis for $L\wedge L$. In this basis, the matrix of $(\cdot,\cdot)$ corresponds to 
\begin{equation*}
\begin{bmatrix}
0 & 0 & 0 & 0 & 0 & -1 \\
0 & 0 & 0 & 0 & 1 & 0 \\
0 & 0 & 0 & -1 & 0 & 0 \\
0 & 0 & -1 & 0 & 0 & 0 \\
0 & 1 & 0 & 0 & 0 & 0 \\
-1 & 0 & 0 & 0 & 0 & 0 
\end{bmatrix}
\end{equation*}
The proof follows from this.
\end{proof}

We have a well-defined group homomorphism 

\begin{equation*}
\wedge^2(\cdot): End_{\mathbb{Z}}(L)\rightarrow End_{\mathbb{Z}}(L\wedge L),
\end{equation*}

which restricts to a homomorphism 

\begin{equation}\label{isom}
\wedge^2(\cdot):\mathsf{SL}(L)\rightarrow \mathsf{O}(L\wedge L,(\cdot,\cdot)).
\end{equation}

In other words, $\mathsf{SL}(L)$ acts from the left, by isometries, on $L\wedge L$.

Now, we make $L$ into a symplectic lattice by defininig a symplectic form $J$ on $L$ by means of 
\begin{equation}
J(x,y)e_1\wedge e_2\wedge e_3\wedge e_4=x\wedge y \wedge (e_{13}+e_{24}).
\end{equation}

Therefore, $\mathsf{Sp}_4(\mathbb{Z})\cong \mathsf{O}(L,J)$. 

\begin{prop}\label{bender}
$\mathsf{Sp}_4(\mathbb{Z})\cong\{g\in\mathsf{SL}(L)|(\wedge^2g)(e_{13}+e_{24})=e_{13}+e_{24}\}.$
\end{prop}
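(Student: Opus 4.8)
The plan is to show the two groups coincide as subgroups of $\mathsf{SL}(L)$ by interpreting the symplectic condition $g^{T}Jg = J$ in terms of the exterior square action on $L\wedge L$. First I would make precise the identification $\mathsf{Sp}_4(\mathbb{Z})\cong \mathsf{O}(L,J)$ already recorded above: an element $g\in\mathsf{SL}(L)$ preserves the symplectic form $J$ exactly when $J(gx,gy)=J(x,y)$ for all $x,y\in L$. Using the defining relation $J(x,y)\,e_1\wedge e_2\wedge e_3\wedge e_4 = x\wedge y\wedge(e_{13}+e_{24})$, I would rewrite $J(gx,gy)$ as $(gx)\wedge(gy)\wedge(e_{13}+e_{24})$ divided by the volume form, and use that $g\in\mathsf{SL}(L)$ acts trivially on $\bigwedge^4 L$ (determinant one). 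Hence $J(gx,gy)\,e_1\wedge\cdots\wedge e_4 = g(x\wedge y)\wedge(e_{13}+e_{24})$, where $g$ on the left factor means $\wedge^2 g$ applied to $x\wedge y\in L\wedge L$.

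Next I would exploit that $\wedge^2 g$ is an isometry of $(L\wedge L,(\cdot,\cdot))$ by \eqref{isom}. The key is to relate the wedge $\,\omega\wedge\eta\in\bigwedge^4 L$ of two elements $\omega,\eta\in L\wedge L$ to the bilinear form $(\cdot,\cdot)$ on $L\wedge L$; indeed, by the very definition $u\wedge v = -(u,v)\,e_1\wedge e_2\wedge e_3\wedge e_4$, so pairing under $\wedge$ in $\bigwedge^4 L$ is, up to the sign and the fixed volume form, the same as the bilinear form $(\cdot,\cdot)$. Therefore the condition $J(gx,gy)=J(x,y)$ for all $x,y$ becomes: $\bigl((\wedge^2 g)(x\wedge y),\, e_{13}+e_{24}\bigr) = (x\wedge y,\, e_{13}+e_{24})$ for all decomposable $x\wedge y$, and since decomposable bivectors span $L\wedge L$ and $\wedge^2 g$ is an isometry, this is equivalent to $\bigl(w,\ (\wedge^2 g)^{-1}(e_{13}+e_{24}) - (e_{13}+e_{24})\bigr)=0$ for all $w\in L\wedge L$. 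Nondegeneracy of $(\cdot,\cdot)$ then forces $(\wedge^2 g)^{-1}(e_{13}+e_{24}) = e_{13}+e_{24}$, equivalently $(\wedge^2 g)(e_{13}+e_{24}) = e_{13}+e_{24}$.

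Running the computation in reverse gives the converse inclusion, so the two subgroups of $\mathsf{SL}(L)$ agree, which is the claim. The main obstacle I anticipate is bookkeeping the signs and the role of the fixed volume form $e_1\wedge e_2\wedge e_3\wedge e_4$ consistently — in particular making sure that "$g$ acts trivially on $\bigwedge^4 L$" is used in the right place (this is where $\det g = 1$, not merely $\det g = \pm 1$, enters, so the statement genuinely needs $\mathsf{SL}(L)$ and not $\mathsf{GL}(L)$) — together with checking that it suffices to test the symplectic condition on decomposable bivectors $x\wedge y$, which holds because such elements span $L\wedge L$ and both sides of the identity are bilinear in $(x,y)$. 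None of these steps is deep; the content is just the classical fact that $\mathsf{Sp}_4\hookrightarrow \mathsf{SL}_4$ is the stabilizer of a bivector under the exterior square representation, phrased over $\mathbb{Z}$ with the explicit bivector $e_{13}+e_{24}$.
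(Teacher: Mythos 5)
Your argument is correct, and for one of the two inclusions it differs genuinely from the paper's. The paper proves the inclusion of the stabilizer into $\mathsf{O}(L,J)$ exactly as you do in your ``reverse'' direction, via $gx\wedge gy\wedge(\wedge^2 g)(e_{13}+e_{24})=\det(g)\,x\wedge y\wedge(e_{13}+e_{24})$. But for the opposite inclusion the paper invokes Bender's theorem that $\mathsf{Sp}_4(\mathbb{Z})$ is generated by two explicit matrices and checks that each generator stabilizes $e_{13}+e_{24}$, whereas you argue directly: from $J(gx,gy)=J(x,y)$ you deduce $\bigl((\wedge^2 g)w,\,e_{13}+e_{24}\bigr)=(w,\,e_{13}+e_{24})$ for all $w$ (decomposables span $L\wedge L$), then use that $\wedge^2 g$ is an isometry of the unimodular form $(\cdot,\cdot)$ together with nondegeneracy to force $(\wedge^2 g)(e_{13}+e_{24})=e_{13}+e_{24}$. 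Your route is self-contained (no external generation result needed), and it works verbatim over any base ring, since it only uses that the wedge pairing into $\bigwedge^4 L$ computes $-(\cdot,\cdot)$ and that the form is nondegenerate; the paper's route trades this for a finite check on two matrices at the cost of citing a nontrivial presentation of $\mathsf{Sp}_4(\mathbb{Z})$. One small bookkeeping remark: $\det g=1$ is not needed to write $J(gx,gy)\,e_1\wedge\cdots\wedge e_4=(\wedge^2 g)(x\wedge y)\wedge(e_{13}+e_{24})$ (that is just functoriality of $\wedge^2$, as the volume form on the left is the fixed one); it is needed precisely, as you later say, to make $\wedge^2 g$ an isometry, which is where the hypothesis $g\in\mathsf{SL}(L)$ actually enters.
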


\begin{proof}
By \cite{bender}, $\mathsf{Sp}_4(\mathbb{Z})$ is generated by 

\begin{equation*}
\begin{bmatrix}
1 & 0 & 0 & 0 \\
1 & -1 & 0 & 0 \\
0 & 0 & 1 & 1 \\
0 & 0 & 0 & -1 
\end{bmatrix},
\begin{bmatrix}
0 & 0 & -1 & 0 \\
0 & 0 & 0 & -1 \\
1 & 0 & 1 & 0 \\
0 & 1 & 0 & 0 
\end{bmatrix}.
\end{equation*}
The elements of $\mathsf{O}(L,J)$ associated to the previous symplectic matrices are readily seen to belong to $\{g\in\mathsf{SL}(L)|(\wedge^2g)(e_{13}+e_{24})=e_{13}+e_{24}\}.$ On the other hand, let $g\in \mathsf{SL}(L)$ such that $(\wedge^2g)(e_{13}+e_{24})=e_{13}+e_{24}$. Then, $gx\wedge gy \wedge (e_{13}+e_{24})=gx\wedge gy \wedge (\wedge^2g)(e_{13}+e_{24})=det(g)x\wedge y \wedge (e_{13}+e_{24})=x\wedge y \wedge (e_{13}+e_{24})$. This implies, by definition of $J$, that $g\in\mathsf{O}(L,J)$.
\end{proof}

As a corollary to the previous Proposition \ref{bender}, we get that the action in \eqref{isom} restricts to a left action by isometries of $\mathsf{Sp}_4(\mathbb{Z})$ on $M:=(e_{13}+e_{24})^{\perp}$. Therefore, we have a representation

\begin{equation}\label{comp1}
\wedge^2(\cdot):\mathsf{Sp}_4(\mathbb{Z})\rightarrow \mathsf{O}(M_,(\cdot,\cdot)).
\end{equation}

By \cite[Lemma 1.1.]{nikulin}, this homomorphism has kernel equal to $\{\pm I_4\}$ and its image can be defined as follows. Since $M$ is nondegenerate (see Proposition \ref{M}), we have a canonical monomorphism $M\rightarrow M^*$ which allows us to embed $M$ into $M^*$. There is a canonical surjective homomorphism $\mathsf{O}(M_,(\cdot,\cdot))\rightarrow M^*/M$. The kernel of this homomorphism is denoted by $\mathsf{O}_0(M_,(\cdot,\cdot))$, and it is equal to the image of homomorphism \eqref{comp}. Since $|M^*/M|=d(M)=2$, $\mathsf{O}_0(M_,(\cdot,\cdot))$ has index 2 in $\mathsf{O}(M_,(\cdot,\cdot))$.

Now, we begin to relate the previous construction to what was done at the beginning of this section.

\begin{prop}\label{M}
$M$ is isometric to $N^{\perp}$.
\end{prop}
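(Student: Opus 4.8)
The plan is to compute the intersection form on $M := (e_{13}+e_{24})^{\perp} \subset L\wedge L$ explicitly in a suitable basis and recognize it as (a form isometric to) $N^{\perp} = H \oplus H \oplus \langle -2\rangle$. First I would take the basis $\{e_{12}, e_{13}, e_{14}, e_{23}, e_{24}, e_{34}\}$ of $L\wedge L$ with the Gram matrix already displayed in the preceding proposition, in which the pairing is $(e_{12},e_{34}) = (e_{14},e_{23}) = -1$, $(e_{13},e_{24}) = 1$ and all other basic pairings vanishing. Setting $v := e_{13}+e_{24}$, one computes $(v,v) = (e_{13},e_{13}) + 2(e_{13},e_{24}) + (e_{24},e_{24}) = 2$, so $v$ is a vector of square $+2$ and $M = v^{\perp}$ has rank $5$ and determinant $2$ (since $L\wedge L$ is unimodular and $(v,v)=2$, the orthogonal complement has discriminant $2$).

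Next I would produce an explicit basis of $M$. The vectors $e_{12}$ and $e_{34}$ are orthogonal to $v$ (since $(e_{12},e_{13}) = (e_{12},e_{24}) = (e_{34},e_{13}) = (e_{34},e_{24}) = 0$) and span a copy of $H$: $(e_{12},e_{12}) = (e_{34},e_{34}) = 0$, $(e_{12},e_{34}) = -1$. Similarly $e_{14}$ and $e_{23}$ are orthogonal to $v$ and span another copy of $H$: $(e_{14},e_{14}) = (e_{23},e_{23}) = 0$, $(e_{14},e_{23}) = -1$. Finally the vector $w := e_{13} - e_{24}$ satisfies $(w,v) = (e_{13},e_{13}) - (e_{24},e_{24}) = 0$, so $w \in M$, and $(w,w) = (e_{13},e_{13}) - 2(e_{13},e_{24}) + (e_{24},e_{24}) = -2$; moreover $w$ is orthogonal to each of $e_{12}, e_{34}, e_{14}, e_{23}$. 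Thus $\{e_{12}, e_{34}, e_{14}, e_{23}, w\}$ is an orthogonal direct sum decomposition of a full-rank sublattice of $M$ realizing $H \oplus H \oplus \langle -2\rangle$. To see these five vectors actually span $M$ over $\mathbb{Z}$ (not just a finite-index sublattice), I would check that the sublattice they generate already has discriminant $2$ — which matches $\operatorname{disc}(M)$ computed above — so the inclusion is an equality. Since $N^{\perp} \cong H \oplus H \oplus \langle -2\rangle$ as recalled earlier in the text, this gives the isometry $M \cong N^{\perp}$.

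The only mild subtlety — and the step I would be most careful about — is the integrality/saturation claim: that the five explicit vectors span $M$ and not merely a sublattice. This is handled by the discriminant comparison just described (both lattices have rank $5$ and discriminant $2$, and one contains the other), but one must make sure the sign conventions in $u\wedge v = -(u,v)\, e_1\wedge e_2\wedge e_3\wedge e_4$ are tracked consistently through each pairing computation, since a sign error there would change $(v,v)$ and derail the argument. Everything else is a direct, short computation in the six-dimensional lattice $L\wedge L$, so I do not anticipate a genuine obstacle beyond bookkeeping.
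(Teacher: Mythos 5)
Your proof is correct and follows essentially the same route as the paper: exhibit the explicit basis $\{e_{12},e_{14},e_{13}-e_{24},e_{23},e_{34}\}$ (up to signs) of $M$ and read off its Gram matrix as $H\oplus H\oplus\langle -2\rangle$. The only difference is that you justify saturation by a discriminant comparison (which requires noting that $v=e_{13}+e_{24}$ is primitive), whereas one can see it even more directly: the orthogonality condition $(x,v)=0$ for $x=\sum a_{ij}e_{ij}$ is the single equation $a_{13}+a_{24}=0$, so those five vectors visibly span all of $M$ over $\mathbb{Z}$.
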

\begin{proof}
A simple computation shows that $M$ is a free $\mathbb{Z}$-module with basis $\{e_{12},e_{14}, e_{13}-e_{24},e_{32}, e_{43}\}$. In this basis, the matrix of $(\cdot,\cdot)|_M$ is equal to $\Phi$.
\end{proof}

As a corollary of the previous proposition, we can write the Griffiths-Dolgachev period domain for $N$-polarized K3 surfaces $\mathsf{D}$ as 
\begin{equation}
\mathsf{D}=\{z\in\mathbb{P}(M_{\mathbb{C}})|\,\, (z,z)=0\land (z,\overline{z})>0\}.
\end{equation} 

A computation shows that $\mathsf{D}=\mathsf{D}^+\coprod \overline{\mathsf{D}^+}$. Here, $\mathsf{D}^+$ is biholomorphic to Kodaira's symmetric bounded domain $IV_3$, which turns out to be biholomorphic to $\mathbb{H}_2$. More explicitelly: 
\begin{equation}\mathsf{D}^+=\{[\tau_2^2-\tau_1\tau_3:\tau_3:\tau_2:\tau_1:1]\in \mathbb{P}(M_{\mathbb{C}})|\,\, \text{Im}(\tau_1)\text{Im}(\tau_3)>\text{Im}(\tau_2)^2\land\text{Im}(\tau_1)>0\}.
\end{equation}

Homomorphism \eqref{comp} allows us to identify the generalized period domains for principally polarized abelian surfaces and $N$-polarized K3 surfaces.

The following definitions are taken from \cite[$\S 4.1$]{mov13}.

Let us first define 

\begin{equation}
\Psi_{AbS}:=
\begin{bmatrix}
0 & -I_2 \\
I_2 & 0 \\
\end{bmatrix}.
\end{equation}

\begin{df}
The manifold of period matrices for principally polarized abelian surfaces is 

\begin{equation*}\mathsf{\Pi}_{AbS}=\{P\in \mathsf{GL}_4(\mathbb{C})|\, P^{T}\Psi_{AbS} P=\Psi_{AbS} \text{ and } (P^1)^T\Psi_{AbS}\overline{P^1}>0\}.
\end{equation*}
\end{df}

Let $\Xi:=diag(1,1,2,1,1)$. We have:

\begin{prop}\label{comparison}
There is a well-defined biholomorphism $$F:\mathsf{\Pi}_{AbS}\rightarrow \mathsf{\Pi},$$ $$F(P)=\Xi(\wedge^2(P))\Xi^{-1}.$$
\end{prop}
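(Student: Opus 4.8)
The plan is to verify the three defining conditions directly: that $F$ is well-defined (i.e. $F(P) \in \mathsf{\Pi}$ whenever $P \in \mathsf{\Pi}_{AbS}$), that it is holomorphic with holomorphic inverse, and that it is bijective. The map $P \mapsto \wedge^2(P)$ sends $\mathsf{GL}_4(\mathbb{C})$ to $\mathsf{GL}_6(\mathbb{C})$, and then I would identify the $5$-dimensional subspace $M_{\mathbb{C}} = (e_{13}+e_{24})^{\perp}$ inside $\wedge^2 L_{\mathbb{C}}$ using the basis $\{e_{12}, e_{14}, e_{13}-e_{24}, e_{32}, e_{43}\}$ of Proposition \ref{M}. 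The matrix $\Xi = \mathrm{diag}(1,1,2,1,1)$ is a normalization matrix needed to pass from the intersection form $(\cdot,\cdot)|_M$ (which equals $\Phi$ by Proposition \ref{M}, in the chosen basis) to the form $\Psi$ in the normalization used for $\mathsf{\Pi}$; the conjugation $\Xi(\wedge^2 P)\Xi^{-1}$ records the change of basis. So the first concrete step is to check that for $P \in \mathsf{GL}_4(\mathbb{C})$ with $P^T \Psi_{AbS} P = \Psi_{AbS}$, the operator $\wedge^2 P$ preserves $e_{13}+e_{24}$ (hence its orthogonal complement $M_{\mathbb{C}}$); this is exactly the computation in the proof of Proposition \ref{bender} but over $\mathbb{C}$ rather than $\mathbb{Z}$ — the condition $P^T \Psi_{AbS} P = \Psi_{AbS}$ translates into $(\wedge^2 P)(e_{13}+e_{24}) = \det(P)(e_{13}+e_{24}) = (e_{13}+e_{24})$, using $\det P = 1$ for symplectic $P$.

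Next I would verify the two analytic conditions defining $\mathsf{\Pi}$. For the bilinear-form condition: restricting $\wedge^2 P$ to $M_{\mathbb{C}}$ gives an isometry of $(\cdot,\cdot)|_M$, and after conjugating by $\Xi$ this becomes an element $Q = F(P)$ satisfying $Q^T \Psi Q = \Psi$, because $\Xi$ is precisely the matrix implementing the congruence between $(\cdot,\cdot)|_M = \Phi$ and $\Psi = \Phi^{-1}$ in the relevant bases (one should check $\Xi^T \Phi \,\Xi$ matches, up to the conventions, the target form $\Psi$). For the positivity condition $(Q^1)^T \Psi \overline{Q^1} > 0$: here $Q^1$ is the first column of $Q$, which corresponds under $\wedge^2$ to the image of $e_1 \wedge e_2$ — or rather to the Hodge-theoretic distinguished vector. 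The key point is that the positivity $(P^1)^T \Psi_{AbS} \overline{P^1} > 0$ for abelian surfaces corresponds, under the Plücker-type embedding $\wedge^2$, to the positivity of the associated $(2,0)$-class for the K3, because the holomorphic $2$-form on the Kummer/K3 side is built as the wedge of the two period vectors of the abelian surface. This is the content of the classical correspondence between principally polarized abelian surfaces and $N$-polarized K3 surfaces, and I would cite \cite{nikulin} (Lemma 1.1 and surrounding discussion) for it rather than re-proving it.

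Holomorphicity of $F$ is clear since $\wedge^2(\cdot)$ is polynomial in the matrix entries and conjugation by the fixed matrix $\Xi$ is linear. For bijectivity, the cleanest route is to exhibit the inverse explicitly: by the results quoted from \cite{nikulin} (around homomorphism \eqref{comp1}), the map $\wedge^2(\cdot): \mathsf{Sp}_4(\mathbb{C}) \to \mathsf{O}_0(M_{\mathbb{C}}, (\cdot,\cdot))$ — here over $\mathbb{C}$ rather than $\mathbb{Z}$, where the kernel $\{\pm I_4\}$ still shows up — descends to an isomorphism, and the same $\wedge^2$ applied at the level of period matrices gives a bijection between $\mathsf{\Pi}_{AbS}/\{\pm I_4\}$ and (the relevant component of) $\mathsf{\Pi}$; but since $-I_4$ acts trivially on $\wedge^2 L$, in fact $F$ as written is well-defined on all of $\mathsf{\Pi}_{AbS}$ and one checks it is a bijection onto $\mathsf{\Pi}$ by dimension count (both are $6$-dimensional complex manifolds — wait, $\mathsf{\Pi}$ is $10$-dimensional by Proposition \ref{tangent}; so the correct count is that both manifolds of period matrices have the same dimension, namely $\dim \mathsf{O}_{\Psi} = 10$, matching $\dim \mathsf{\Pi}_{AbS} = \dim \mathsf{Sp}_4 = 10$) together with properness, or more directly by writing down $F^{-1}(Q)$ using the Plücker relations to recover $P$ from $\wedge^2 P$ up to sign, and noting the sign is killed. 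Alternatively, one composes with the $\mathbf{T}$-maps on both sides (and the identification $\mathsf{D}^+ \cong \mathbb{H}_2$) to reduce bijectivity to a statement about the explicit parametrizations, which is a direct matrix computation.

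The main obstacle I anticipate is the positivity condition: matching $(P^1)^T \Psi_{AbS} \overline{P^1} > 0$ with $(F(P)^1)^T \Psi \overline{F(P)^1} > 0$ requires correctly identifying which column/vector plays the role of the "Hodge-distinguished" vector on each side and tracking the normalization constant hidden in $\Xi$ (the factor $2$ in the middle entry, coming from $\langle \delta_3, \delta_3 \rangle = -2$ versus the $-\tfrac12$ entry of $\Psi$). This is a finite but genuinely bookkeeping-heavy computation; everything else is either formal (holomorphicity, the bilinear-form identity) or directly citable from \cite{nikulin}.
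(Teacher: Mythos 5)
Your proposal follows essentially the same route as the paper's proof: both rest on Nikulin's homomorphism $\wedge^2(\cdot):\mathsf{Sp}_4\to\mathsf{O}_0(M,(\cdot,\cdot))$, the congruence $\Xi^{-1}\Phi\,\Xi^{-1}=\Psi$ to convert the isometry condition for $\Phi$ into $F(P)^T\Psi F(P)=\Psi$, and the kernel $\{\pm I_4\}$ / image $\mathsf{O}_0$ facts for bijectivity, with holomorphicity being formal. If anything you are more careful than the paper, which verifies only the bilinear-form condition and silently omits the positivity check $(F(P)^1)^T\Psi\overline{F(P)^1}>0$ that you correctly single out as the delicate bookkeeping step; note also that the paper's injectivity argument ("$P_2=-P_1$ contradicts $\det=1$") is no stronger than your hedged version, since $-P_1$ is again symplectic of determinant one, so on that point neither argument is fully airtight as written.
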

\begin{proof}
We observe first that $\mathsf{\Pi}_{AbS}\subset\mathsf{Sp}_4(\mathbb{C})$, $\mathsf{\Pi}\subset\mathsf{O}_0(M_{\mathbb{C}},(\cdot,\cdot))$ and $\Xi^{-1} \Phi \Xi^{-1}=\Psi.$ Let $P\in\Pi_{AbS}$. Then, $P\in\mathsf{Sp}_4(\mathbb{C})$, and homomorphism \eqref{comp} implies that that $\tilde{P}:=\wedge^2(P)\in\mathsf{O}(M_{\mathbb{C}},(\cdot,\cdot))$. This means $\tilde{P}^T\Phi\tilde{P}=\Phi$. Let $F(P):=\Xi\tilde{P}\Xi^{-1}$. Therefore, $$F(P)^{T}\Psi F(P)=\Xi^{-1}\tilde{P}^{T}\Xi\Xi^{-1}\Phi\Xi^{-1}\Xi\tilde{P}\Xi^{-1}=\Xi^{-1}\Phi\Xi^{-1}=\Psi.$$ We conclude that $F(P):=\tilde{P}^{-T}\in \mathsf{\Pi}$. Since homomorphism \eqref{comp} has image equal to $\mathsf{O}_0(M,(\cdot,\cdot))$, $F$ is surjective. We deal now with injectivity. Suppose $F(P_1)=F(P_2)$ for two distinct $P_1,P_2\in \mathsf{\Pi}_{AbS}$. Then, $P_2=-P_1$, which is a contradiction because every symplectic matrix has determinant equal to $1$. Hence, $F$ is bijective. Since $\mathsf{\Pi}_{AbS}$ and $\mathsf{\Pi}$ are both complex manifolds, $F$ is a biholomorphism.
\end{proof}

\begin{cor}
The map $F$ in in Proposition \ref{comparison} induces a biholomorphism between the generalized period domain for principally polarized abelian surfaces $\mathsf{U}_{AbS}$, and  the generalized period domain for $N$-polarized K3 surfaces $\mathsf{U}$.

Since in both contexts the generalized period map is a biholomorphism, we conclude that the moduli space of enhanced principally polarized abelian surfaces $\mathsf{T}_{AbS}$, and the moduli space of enhanced $N$-polarized K3 surfaces are $\mathsf{T}$ are biholomorphic.
\end{cor}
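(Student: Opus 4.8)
The plan is to descend the biholomorphism $F$ of Proposition \ref{comparison} to the generalized period domains, obtaining $\mathsf{U}_{AbS}\cong\mathsf{U}$, and then to transport this isomorphism along the two generalized period maps, each of which is a biholomorphism: $\mathcal{P}\colon\mathsf{T}\to\mathsf{U}$ by Theorem \ref{bihol}, and $\mathcal{P}_{AbS}\colon\mathsf{T}_{AbS}\to\mathsf{U}_{AbS}$ by its counterpart for principally polarized abelian surfaces (\cite{mov13,fon}).

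First I would assemble the group-theoretic dictionary. The map $\wedge^2(\cdot)\colon\mathsf{Sp}_4(\mathbb{Z})\to\mathsf{O}(M,(\cdot,\cdot))$ is a homomorphism with kernel $\{\pm I_4\}$ whose image is the index-two subgroup $\mathsf{O}_0(M,(\cdot,\cdot))$ (by the discussion following \cite[Lemma 1.1]{nikulin}). Conjugation by $\Xi$, together with Proposition \ref{M} and the identity $\Xi^{-1}\Phi\Xi^{-1}=\Psi$, identifies $\mathsf{O}(M,(\cdot,\cdot))$ with the integral orthogonal group acting on $\mathsf{\Pi}$, and I would check that it sends $\mathsf{O}_0(M,(\cdot,\cdot))$ onto the arithmetic group $\Gamma$ used to form $\mathsf{U}=\Gamma\backslash\mathsf{\Pi}$ --- equivalently, the index-two subgroup preserving the component of $\mathsf{\Pi}$ lying over $\mathsf{D}^{+}\cong\mathbb{H}_2$, which is the monodromy group of the Clingher--Doran family. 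The upshot is a surjective homomorphism $\phi\colon\mathsf{Sp}_4(\mathbb{Z})\to\Gamma$ with kernel $\{\pm I_4\}$.

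Next I would verify that $F$ is equivariant for $\phi$: since $\wedge^2$ is multiplicative and $F(P)=\Xi(\wedge^2 P)\Xi^{-1}$, one obtains $F(A\cdot P)=\phi(A)\cdot F(P)$ for all $A\in\mathsf{Sp}_4(\mathbb{Z})$ and $P\in\mathsf{\Pi}_{AbS}$, once both actions are written uniformly as $A\cdot P=A^{-T}P$; the element $-I_4\in\ker\phi$ acts trivially after passing to $\mathsf{U}_{AbS}=\mathsf{Sp}_4(\mathbb{Z})\backslash\mathsf{\Pi}_{AbS}$, so it produces no obstruction to descent. Both groups act properly discontinuously on the respective period manifolds --- for $\Gamma$ this is Proposition \ref{comp}, and for $\mathsf{Sp}_4(\mathbb{Z})$ it follows by the same argument (it is a closed discrete subgroup of $\mathsf{Sp}_4(\mathbb{C})\subset\mathsf{GL}_4(\mathbb{C})$) --- so the equivariant biholomorphism $F$ descends to a biholomorphism $\overline{F}\colon\mathsf{U}_{AbS}\to\mathsf{U}$ of quotient analytic spaces.

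Finally, the composite $\mathcal{P}^{-1}\circ\overline{F}\circ\mathcal{P}_{AbS}\colon\mathsf{T}_{AbS}\to\mathsf{T}$ is then a biholomorphism, as asserted. I expect the principal difficulty to be bookkeeping rather than conceptual: confirming that the arithmetic group appearing in the definition of $\mathsf{U}$ is exactly the index-two subgroup matching $\wedge^2(\mathsf{Sp}_4(\mathbb{Z}))$ (so that no residual degree-two cover survives), and tracking the $\Xi$-conjugation, the $\Psi$-versus-$\Phi$ normalizations, and the various inverse-transpose conventions consistently through the equivariance computation.
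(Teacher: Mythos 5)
Your overall route is the one the paper intends: the corollary is stated without proof, as an immediate consequence of Proposition \ref{comparison} together with the fact that both generalized period maps are biholomorphisms (Theorem \ref{bihol} on the K3 side and its abelian-surface counterpart from \cite{mov13,fon}), and your plan --- descend $F$ along the two arithmetic quotients via the equivariance $F(A\cdot P)=\phi(A)\cdot F(P)$ for $\phi=\Xi(\wedge^2(\cdot))\Xi^{-1}$, then conjugate by the period maps --- is exactly that argument made explicit. The equivariance computation, the triviality of the action of $-I_4\in\ker\phi$ after passing to the quotient, and the proper discontinuity on both sides are all handled correctly.

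The one point you defer to ``bookkeeping'' is, however, the only genuine mathematical content of the statement, and as written it does not quite close. The paper defines $\Gamma=\{A\in\mathsf{GL}_5(\mathbb{Z})\mid A^T\Phi A=\Phi\}$, i.e.\ the \emph{full} integral orthogonal group $\mathsf{O}(H_2(X,\mathbb{Z})_{\iota})$, whereas by \cite[Lemma 1.1]{nikulin} the image of $\mathsf{Sp}_4(\mathbb{Z})$ under $\wedge^2$ is the index-two subgroup $\mathsf{O}_0(M,(\cdot,\cdot))$. So what descends directly from your equivariance is a biholomorphism $\mathsf{U}_{AbS}\cong \mathsf{O}_0\backslash\mathsf{\Pi}$, and the further projection $\mathsf{O}_0\backslash\mathsf{\Pi}\to\Gamma\backslash\mathsf{\Pi}=\mathsf{U}$ is a priori of degree two. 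To finish you must use the component structure: $\mathsf{\Pi}$ has two connected components lying over $\mathsf{D}^{+}\cong\mathbb{H}_2$ and $\overline{\mathsf{D}^{+}}$, the subgroup $\mathsf{O}_0=\wedge^2(\mathsf{Sp}_4(\mathbb{Z}))$ preserves each component (since $\mathsf{Sp}_4(\mathbb{Z})$ preserves $\mathbb{H}_2$), and the nontrivial coset of $\mathsf{O}_0$ in $\Gamma$ interchanges them; hence $\Gamma\backslash\mathsf{\Pi}\cong\mathsf{O}_0\backslash\mathsf{\Pi}^{+}$, and one must check that this agrees with $\mathsf{Sp}_4(\mathbb{Z})\backslash\mathsf{\Pi}_{AbS}$ under $F$ (equivalently, compare how many components $\mathsf{\Pi}_{AbS}$ has in the convention of \cite{mov13} and onto which components $F$ maps them). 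Without this component analysis the conclusion could be off by a degree-two cover, so I would promote this from a deferred check to an explicit step of the proof.
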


In \cite{mov13}, the $\mathbf{T}$-map for principally polarized abelian surfaces was defined to be:  
\begin{gather}\mathbf{T}_{AbS}:\mathbb{H}_2\rightarrow \mathsf{\Pi}_{AbS}, \notag\\
\tau\mapsto \begin{bmatrix}
\tau &-I_2\\
I_2 & 0 \\
\end{bmatrix}.\notag
\end{gather}

Our $\mathbf{T}$-map is the composition 

\begin{equation}\mathbb{H}_2\xrightarrow{\mathbf{T}_{AbS}} \mathsf{\Pi}_{AbS}\xrightarrow F \mathsf{\Pi}.
\end{equation}

This explains the construction of the $\mathbf{T}$-map defined in $\S 5.3$.

\section{Modular vector fields and Siegel modular forms}
\label{sec6}
In this section we prove the existence of $dim(\mathcal{M})$ modular vector fields on the moduli space $\mathsf{T}$. These vector fields are the generalization to the context of this work of the Ramanujan vector field considered in the introduction. Furthermore, we construct algebraic vector fields associated to each element of the AMSY-Lie algebra. This result, via special geometry manipulations was already obtained in \cite[Theorem 2.1]{alimtt}.

In the case of $N$-polarized K3 surfaces, by pulling back regular functions on $\mathsf{T}$ to $\mathbb{H}_2$ by means of the $\mathsf{t}$-map constructed in the previous section, we will interpret these vector fields as derivations of meromorphic Siegel modular forms.

\subsection{Modular vector fields}

Let us begin by recalling the following results from Section 3.

\begin{df}
The Lie subalgebra of $\mathfrak{gl}_{22-k}(\mathbb{C})$ generated by $\mathsf{Lie}(\mathsf{G}_k)$ and $\{\mathfrak{g}^T|\, \mathfrak{g}\in\mathfrak{nil}(\mathsf{Lie}(\mathsf{G}_k))\}$ is called the AMSY-Lie algebra, and is denoted by $\mathfrak{G}_k$.
\end{df}

\begin{prop}
$\mathfrak{G}_k\cong \mathfrak{so}_{22-k}(\mathbb{C})$. In particular, $\mathfrak{G}_{18}\cong \mathfrak{sl}_2(\mathbb{C})\oplus\mathfrak{sl}_2(\mathbb{C})$ and $\mathfrak{G}_{17}\cong\mathfrak{sp}_4(\mathbb{C})$. 
\end{prop}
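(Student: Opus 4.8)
The plan is to obtain this proposition as an immediate corollary of the structure theorem for $\mathsf{AMSY}_k$ established above, combined with the two classical low-rank coincidences of simple Lie algebras.

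First I would note that the Lie algebra $\mathfrak{G}_k$ of this definition is literally the algebra $\mathsf{AMSY}_k$ of the earlier theorem: both are the Lie subalgebra of $\mathfrak{gl}_{22-k}(\mathbb{C})$ generated by $\mathsf{Lie}(\mathsf{G}_k)$ together with the set of transposes $\{\mathfrak{g}^T \mid \mathfrak{g}\in\mathfrak{nil}(\mathsf{Lie}(\mathsf{G}_k))\}$, once one observes that $\mathfrak{nil}$ and $\mathfrak{nilrad}$ name the same object (the largest nilpotent ideal, i.e.\ the nilradical, of $\mathsf{Lie}(\mathsf{G}_k)$). Part (4) of that theorem then reads $\mathfrak{G}_k = \mathsf{AMSY}_k = \mathfrak{o}_{22-k,\Psi}(\mathbb{C})$, and since over $\mathbb{C}$ a nondegenerate symmetric bilinear form of rank $m$ is equivalent to the standard one, $\mathfrak{o}_{m,\Psi}(\mathbb{C})\cong\mathfrak{so}_m(\mathbb{C})$; taking $m=22-k$ yields the general statement $\mathfrak{G}_k\cong\mathfrak{so}_{22-k}(\mathbb{C})$.

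For the two highlighted cases I would specialize the numerology: $k=18$ gives $22-k=4$ and $k=17$ gives $22-k=5$, so it remains only to identify $\mathfrak{so}_4(\mathbb{C})$ and $\mathfrak{so}_5(\mathbb{C})$. For the first I invoke the exceptional isomorphism $\mathfrak{so}_4(\mathbb{C})\cong\mathfrak{sl}_2(\mathbb{C})\oplus\mathfrak{sl}_2(\mathbb{C})$, visible from the splitting of $\wedge^2\mathbb{C}^4\cong\mathfrak{so}_4(\mathbb{C})$ into its self-dual and anti-self-dual eigenspaces under the Hodge star of the quadratic form, each a copy of $\mathfrak{sl}_2(\mathbb{C})$, equivalently from the isogeny $\mathsf{SL}_2(\mathbb{C})\times\mathsf{SL}_2(\mathbb{C})\to\mathsf{SO}_4(\mathbb{C})$ (the Dynkin identity $D_2 = A_1\times A_1$). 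For the second I invoke $\mathfrak{so}_5(\mathbb{C})\cong\mathfrak{sp}_4(\mathbb{C})$; this is exactly the content of the $\wedge^2$-construction recalled in Section~5, where $\mathsf{Sp}_4$ acts by isometries on the rank-$5$ lattice $M=(e_{13}+e_{24})^{\perp}\subset L\wedge L$ with kernel $\{\pm I_4\}$, inducing the required Lie algebra isomorphism (the Dynkin identity $B_2 = C_2$).

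Beyond this there is essentially no obstacle: the argument is a direct appeal to the already-proven theorem plus textbook facts, and the only places demanding a moment of care are the identification $\mathfrak{nil}=\mathfrak{nilrad}$, so that $\mathfrak{G}_k$ and $\mathsf{AMSY}_k$ coincide verbatim, and the matching of the index $22-k$ in the two special cases.
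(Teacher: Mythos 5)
Your proposal is correct and follows essentially the same route as the paper: the proposition is a restatement of part (4) of the theorem in the section on arbitrary lattice polarizations (where $\mathsf{AMSY}_k=\mathfrak{o}_{22-k,\Psi}(\mathbb{C})\cong\mathfrak{so}_{22-k}(\mathbb{C})$ is proved by the dimension count), specialized via the classical coincidences $\mathfrak{so}_4(\mathbb{C})\cong\mathfrak{sl}_2(\mathbb{C})\oplus\mathfrak{sl}_2(\mathbb{C})$ and $\mathfrak{so}_5(\mathbb{C})\cong\mathfrak{sp}_4(\mathbb{C})$, exactly as the paper does in its earlier theorem and corollary. Your explicit remark that $\mathfrak{nil}=\mathfrak{nilrad}$ and the index check $22-18=4$, $22-17=5$ are the only points of care, and both are handled correctly.
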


\begin{lem}\label{unique} For each $\mathfrak{g}\in \mathfrak{G}$, there is a unique vector field $\tilde{\mathsf{R}}_{\mathfrak{g}}\in\Theta_{\Gamma_{\mathbb{Z}}\backslash\mathsf{\Pi}}$ such that
\begin{equation}\label{modvector}
[dx_{ij}(\tilde{\mathsf{R}}_{\mathfrak{g}})]=[x_{ij}]\mathfrak{g}.
\end{equation}
\end{lem}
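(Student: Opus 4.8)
The plan is to exhibit the vector field $\tilde{\mathsf{R}}_{\mathfrak{g}}$ explicitly on the manifold $\mathsf{\Pi}$ of period matrices, where the coordinates $x_{ij}$ are literally the matrix entries of a period matrix $P$, and then descend to the quotient $\Gamma_{\mathbb{Z}}\backslash\mathsf{\Pi}$. The defining equation \eqref{modvector} says that, if we think of $[x_{ij}]$ as the generic point $P\in\mathsf{\Pi}$, then the vector field must satisfy $dP(\tilde{\mathsf{R}}_{\mathfrak{g}}) = P\mathfrak{g}$; in other words, at each point $P$ the tangent vector assigned is $P\mathfrak{g}\in\mathsf{Mat}_5(\mathbb{C})$. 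So the first step is to \emph{check that $P\mathfrak{g}$ is actually tangent to $\mathsf{\Pi}$ at $P$}, using the description of $T_P\mathsf{\Pi}$ from Proposition \ref{tangent}: we need $P^T\Phi^{-1}(P\mathfrak{g}) + (P\mathfrak{g})^T\Phi^{-1}P = 0$, i.e. $P^T\Phi^{-1}P\,\mathfrak{g} + \mathfrak{g}^T P^T\Phi^{-1}P = 0$. Since $P\in\mathsf{\Pi}$ we have $P^T\Phi^{-1}P = \Psi$ (up to the normalization conventions of the paper; here $\Phi = \Psi^{-1}$), so this reduces to $\Psi\mathfrak{g} + \mathfrak{g}^T\Psi = 0$, which holds precisely because $\mathfrak{g}\in\mathfrak{G}\cong\mathfrak{o}_{5,\Psi}(\mathbb{C})$ — this is the defining condition of the AMSY-Lie algebra as a subalgebra of $\mathfrak{o}_{5,\Psi}(\mathbb{C})$ established in Section 4. (One should also observe that the $(P^1)^T\Psi\overline{P^1}>0$ part is an open condition, so it imposes no constraint on tangent vectors.) This gives a well-defined holomorphic vector field $\tilde{\mathsf{R}}_{\mathfrak{g}}$ on $\mathsf{\Pi}$, and by construction it is the unique one satisfying \eqref{modvector}, since the $x_{ij}$ together cut out $\mathsf{\Pi}$ and a tangent vector is determined by its action on these coordinate functions.

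The second step is \textbf{descent to the quotient} $\Gamma_{\mathbb{Z}}\backslash\mathsf{\Pi}$. Recall $\Gamma_{\mathbb{Z}}$ (the paper writes $\Gamma$) acts on $\mathsf{\Pi}$ by $A\cdot P = A^{-T}P$, i.e. by \emph{left} multiplication by $A^{-T}\in\mathsf{GL}_5(\mathbb{Z})$. The vector field $P\mapsto P\mathfrak{g}$ is visibly invariant under left translation: if $\Phi_A(P) = A^{-T}P$, then $(d\Phi_A)_P(P\mathfrak{g}) = A^{-T}P\mathfrak{g} = \Phi_A(P)\mathfrak{g}$, which is exactly the value of the vector field at $\Phi_A(P)$. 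Hence $\tilde{\mathsf{R}}_{\mathfrak{g}}$ is $\Gamma_{\mathbb{Z}}$-invariant. Since the action of $\Gamma_{\mathbb{Z}}$ on $\mathsf{\Pi}$ is properly discontinuous and free (established in Proposition \ref{comp} and the surrounding discussion, using $\mathsf{\Pi}\subset\mathsf{GL}_5(\mathbb{C})$), the quotient map is a covering of complex manifolds, and an invariant vector field downstairs pushes forward to a well-defined holomorphic vector field on $\Gamma_{\mathbb{Z}}\backslash\mathsf{\Pi}$, which we again call $\tilde{\mathsf{R}}_{\mathfrak{g}}$. The defining relation \eqref{modvector} is a local identity in the $x_{ij}$, so it is inherited on the quotient (the $x_{ij}$ are not globally defined functions on $\Gamma_{\mathbb{Z}}\backslash\mathsf{\Pi}$, but the equation is to be read in any local chart lifted from $\mathsf{\Pi}$). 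Uniqueness on the quotient follows from uniqueness upstairs together with the fact that the covering map is a local biholomorphism.

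The main technical point to get right — and the only place where care is needed — is \textbf{bookkeeping of conventions}: the paper uses both $\Psi$ and $\Phi = \Psi^{-1}$, defines $\mathsf{\Pi}$ via $P^T\Psi P = \Psi$ in one place and $P^T\Phi^{-1}P = \Psi$ via the map $F(P) = P^T\Phi^{-1}P$ in Proposition \ref{tangent}, and the AMSY-Lie algebra $\mathfrak{G}$ is characterized both as generated by $\mathsf{Lie}(\mathsf{G})$ and the transposed nilradical and (in Section 4) as equal to $\mathfrak{o}_{5,\Psi}(\mathbb{C})$. The substance of the argument — ``$P\mathfrak{g}$ is tangent because $\mathfrak{g}\in\mathfrak{o}_{5,\Psi}$, and left-invariance is automatic'' — is short, but I would spend a sentence pinning down exactly which bilinear form appears where so that the one-line tangency computation $\Psi\mathfrak{g} + \mathfrak{g}^T\Psi = 0$ is manifestly the defining equation of $\mathfrak{G}$. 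Once conventions are fixed, there is no real obstacle; the lemma is essentially the statement that a Lie algebra of ``infinitesimal symmetries'' of a homogeneous space gives rise to fundamental vector fields.
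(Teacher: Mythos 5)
Your proposal is correct and follows essentially the same route as the paper: define the field at $P$ by $P\mathfrak{g}$, verify tangency to $\mathsf{\Pi}$ via the antisymmetry relation $\Psi\mathfrak{g}+\mathfrak{g}^T\Psi=0$, and descend to the quotient by noting that the vector field is invariant under the left-multiplication action of $\Gamma$. Your added care about the $\Phi$ versus $\Psi$ conventions and the explicit uniqueness remark are welcome but do not change the argument.
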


\begin{proof} Let $x_{ij}$ be the usual matrix coordinates for $\mathsf{Mat}_5(\mathbb{C})$, and let us define 
\begin{equation}
\tilde{\mathsf{R}}_{\mathfrak{g}}=\sum_{i,j=1}^5 c_{ij}\frac{\partial}{\partial x_{ij}}\in \Theta_{\mathsf{Mat}_5(\mathbb{C})}
\end{equation}
by means of equation \ref{modvector}. Therefore, $[c_{ij}]=[x_{ij}]\mathfrak{g}.$ Now, observe that $\mathfrak{g}\Psi$ is antisymmetric, which implies that, for each $P\in \mathsf{\Pi}$, $P^T\Psi(\mathsf{\tilde{R}}_{\mathfrak{g}})_P+(\mathsf{\tilde{R}}_{\mathfrak{g}})_P^T\Psi P=P^T\Psi P \mathfrak{g}+\mathfrak{g}^T P^T\Psi P=\Psi \mathfrak{g}+\mathfrak{g}^T\Psi=0$.. Therefore, $\tilde{\mathsf{R}}_{\mathfrak{g}}$ descends to a holomorphic vector field on $\mathsf{\Pi}$, which we also denote by $\tilde{\mathsf{R}}_{\mathfrak{g}}\in\Theta_{\mathsf{\Pi}}$. Since the action of $\Gamma$ on $\mathsf{\Pi}$ is given by left multiplication, its derivative is also given by left multiplication. Therefore, equation \ref{modvector} implies that the holomorphic vector fields  $\tilde{\mathsf{R}}_{\mathfrak{g}}$ are $\Gamma$-invariant, which concludes the proof.

\end{proof}

The previous lemma will be used in the proof of the following theorem by means of the generalized period map, which allows us to work in the generalized period domain $\Gamma\backslash\mathsf{\Pi},$ and transport the information back to the moduli space $\mathsf{T}$. 

\begin{teo}\label{bigteo}
For each $\mathfrak{g}\in \mathfrak{G}$, there exists a unique algebraic vector field $\mathsf{R}_{\mathfrak{g}}\in \Theta_{\mathsf{T}}$ such that
\begin{equation}
\nabla_{\mathsf{R}_{\mathfrak{g}}} \alpha=\mathfrak{g}^T\alpha.
\end{equation}
Here, $\alpha=S\omega$ as in the previous sections.

In particular, for each $\mathfrak{g}\in \{\mathfrak{g}^T|\, \mathfrak{g}\in\mathfrak{nil}(\mathsf{Lie}(\mathsf{G}_k))\}$, the associated vector fields $\mathsf{R}_{\mathfrak{g}}$ are modular vector fields in the sense of \cite[Definition 6.13]{mov20}.
\end{teo}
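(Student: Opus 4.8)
The plan is to transport the problem to the generalized period domain via the biholomorphism $\mathcal{P}:\mathsf{T}\to\Gamma\backslash\mathsf{\Pi}$ of Theorem \ref{bihol}, where the desired vector field already exists and is unique by Lemma \ref{unique}, and then pull it back. First I would fix $\mathfrak{g}\in\mathfrak{G}\cong\mathfrak{sp}_4(\mathbb{C})$ and take $\tilde{\mathsf{R}}_{\mathfrak{g}}\in\Theta_{\Gamma\backslash\mathsf{\Pi}}$ as in Lemma \ref{unique}, characterized by $[dx_{ij}(\tilde{\mathsf{R}}_{\mathfrak{g}})]=[x_{ij}]\mathfrak{g}$. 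Since $\mathcal{P}$ is a biholomorphism, there is a unique holomorphic vector field $\mathsf{R}_{\mathfrak{g}}:=\mathcal{P}^{*}\tilde{\mathsf{R}}_{\mathfrak{g}}\in\Theta_{\mathsf{T}}$ with $d\mathcal{P}(\mathsf{R}_{\mathfrak{g}})=\tilde{\mathsf{R}}_{\mathfrak{g}}\circ\mathcal{P}$. It remains to check two things: that $\mathsf{R}_{\mathfrak{g}}$ satisfies $\nabla_{\mathsf{R}_{\mathfrak{g}}}\alpha=\mathfrak{g}^{T}\alpha$, and that it is \emph{algebraic}, i.e.\ lies in $\Theta_{\mathsf{T}}$ as an algebraic vector field on the quasi-affine variety $\mathsf{T}$ of Theorem \ref{quasiaffine}, not merely a holomorphic one.

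For the Gauss--Manin identity, the key point is the compatibility between $\nabla$ on $\mathsf{T}$ and the trivial connection on $\mathsf{\Pi}$ under the period map: writing $P=[\int_{\delta_i}\alpha_j]$ as local holomorphic coordinates on $\mathsf{\Pi}$ (these are exactly the entries $x_{ij}$), the defining property of the period matrix is that $\nabla\alpha$ pulls back to $dP\cdot P^{-1}\alpha$, or equivalently $\nabla_{\mathsf{R}_{\mathfrak{g}}}\alpha$ corresponds under $\mathcal{P}$ to the matrix $\big([dx_{ij}(\tilde{\mathsf{R}}_{\mathfrak{g}})]P^{-1}\big)^{T}\alpha$. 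Since $[dx_{ij}(\tilde{\mathsf{R}}_{\mathfrak{g}})]=P\mathfrak{g}$, this matrix is $(P\mathfrak{g}\,P^{-1})^{T}=P^{-T}\mathfrak{g}^{T}P^{T}$, and using $P^{T}\Psi P=\Psi$ together with the relation $\alpha=P^{T}\Psi\delta^{Pd}$ from Proposition \ref{pmatrix}, this collapses to $\mathfrak{g}^{T}\alpha$, as desired. Uniqueness is immediate: if $\mathsf{R}$ and $\mathsf{R}'$ both satisfy the identity, then $\nabla_{\mathsf{R}-\mathsf{R}'}\alpha=0$; since $\alpha$ is a frame for the cohomology bundle and the period map is an immersion (indeed an isomorphism), a vector field annihilating all periods is zero.

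The main obstacle is algebraicity. A priori the construction only produces a holomorphic vector field, and one must argue it is regular on the quasi-affine $\mathsf{T}$. I would handle this by working directly in the explicit coordinates of Theorem \ref{quasiaffine}: $\mathsf{T}$ is $\mathrm{Spec}$ of $\mathbb{C}[a,b,c,d]\otimes_{\mathbb{C}}\mathbb{C}[s_{ij}]/\mathcal{I}$, with $\alpha=S\omega$ and $\omega$ the algebraic frame produced from tame-polynomial theory. The Gauss--Manin connection $\nabla$ on the $\omega_i$ is algebraic over $\mathbb{C}[a,b,c,d][\Delta_f^{-1}]$ by the explicit formulas in \S\ref{chapalgcoh} (it maps $\mathsf{H}$ into $\frac{1}{\Delta}\Omega_{\mathsf{T}}\otimes\mathsf{H}$), so writing $\nabla\omega = A\,\omega$ with $A$ a matrix of regular $1$-forms, the equation $\nabla_{\mathsf{R}}\alpha=\mathfrak{g}^{T}\alpha$ becomes a linear system for the components of $\mathsf{R}$ (the derivatives of $a,b,c,d$ and of the $s_{ij}$) with coefficients in the function field of $\mathsf{T}$; the holomorphic solution just found is then forced to be this algebraic one, and invertibility of $S$ (part of the definition of $\mathsf{T}_\Psi$) shows the system is solvable over the ring of regular functions after inverting nothing beyond what is already inverted. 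Finally, for $\mathfrak{g}\in\{\mathfrak{g}^T\mid\mathfrak{g}\in\mathfrak{nil}(\mathsf{Lie}(\mathsf{G}_k))\}$, comparing the defining equation $\nabla_{\mathsf{R}_{\mathfrak{g}}}\alpha=\mathfrak{g}^{T}\alpha$ with \cite[Definition 6.13]{mov20} is then a matter of matching notation, since these $\mathfrak{g}$ are precisely the generators of $\mathfrak{G}$ transverse to $\mathsf{Lie}(\mathsf{G})$ that characterize the Ramanujan-type fields.
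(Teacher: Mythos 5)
Your overall strategy coincides with the paper's: existence and uniqueness of a holomorphic $\mathsf{R}_{\mathfrak{g}}$ come from Lemma \ref{unique} transported through the biholomorphism $\mathcal{P}$ of Theorem \ref{bihol}, and the real content is algebraicity, attacked by viewing $\nabla_{\mathsf{R}_{\mathfrak{g}}}\alpha=\mathfrak{g}^{T}\alpha$ as a linear system over the coordinate ring of a Zariski chart. The gap is in your final step. A linear system with regular coefficients that admits a holomorphic solution does not force that solution to be regular: if the coefficient matrix is singular, the solution space is a positive-dimensional affine space and contains transcendental holomorphic solutions (think of $0\cdot x=0$). So the assertion that ``the holomorphic solution just found is then forced to be this algebraic one'' is precisely what has to be proved, and your appeal to the invertibility of $S$ does not supply it: $S$ is invertible by construction, but the relevant coefficient matrix is the one multiplying the components $(a_k)$ of $\mathsf{R}_{\mathfrak{g}}$ along the moduli directions $a,b,c,d$, and its entries come from the $(1,i)$-entries of the Gauss--Manin matrix $\mathsf{GM}_{\alpha}$ (essentially Kodaira--Spencer data), not from $S$.

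The paper closes this gap as follows. Using that both $\mathsf{GM}_{\alpha}(\mathsf{R})\Psi$ and $\mathfrak{g}^{T}\Psi$ are antisymmetric, the equations indexed by $(i,5)$, $i=2,3,4$, are linear combinations of those indexed by $(1,i)$, so the subsystem $\mathsf{GM}_{\alpha}(\mathsf{R})_{1i}=\mathfrak{g}^{T}_{1i}$, $i=1,\dots,4$, in the unknowns $(a_k)$ is the one that matters. Its coefficient matrix is shown to be nonsingular by contradiction: a second solution $(a_k')$ could be completed, using the block-triangular shape of $S$ and $S^{-1}$, to a second holomorphic vector field $\mathsf{R}'$ with $\mathsf{GM}_{\alpha}(\mathsf{R}')=\mathfrak{g}^{T}$, contradicting the uniqueness in Lemma \ref{unique}. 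Only then does solving the nonsingular system give that the $(a_k)$, and subsequently the $(b_{ij})$, are regular on the chart. If you supply this nonsingularity argument (or an equivalent one showing the Kodaira--Spencer block has full rank), your proof becomes essentially the paper's.
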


\begin{proof}
We are going to explicitly compute the vector fields in the statement of this theorem. Let us fix an arbitrary $\mathfrak{g}\in \mathfrak{G}$, and write $\mathsf{R}=\mathsf{R}_{\mathfrak{g}}.$ For simplicity in notation, we will also denote the variables $a,b,c,d$ by $a_k$ over an index set $I$. We begin by observing that, by Lemma \ref{unique}, the existence of an unique holomorphic vector field $\mathsf{R}$ on $\mathsf{T}$ satisfying Equation \eqref{modular} is already guaranteed. We prove that $\mathsf{R}$ is algebraic and defined over $\mathbb{Q}$. To do this we use the fact that the Gauss-Manin connection $\nabla$ is defined over $\mathbb{Q}$. More explicitly, let us write such an $\mathsf{R}$ in a Zariski chart $O$ of $\mathsf{T}$ as \begin{equation}
\mathsf{R}=\sum_{k\in I}a_k\frac{\partial }{\partial t_k}+\sum_{i,j} b_{ij}\frac{\partial}{\partial s_{ij}}, 
\end{equation} 
where $a_k$ and $b_{ij}$ are holomorphic functions on $O$. Now, we aim to prove that these functions are indeed regular functions on $O$ . From Equation \ref{modular1}, we obtain 
\begin{equation}\label{clear}
\mathsf{GM}_{\alpha}(\mathsf{R})=\mathfrak{g}^T.
\end{equation}
Recall that $\mathsf{GM}_{\alpha}$ is the matrix of the Gauss-Manin connection in the frame $\alpha$. Since $\Psi \mathfrak{g}+\mathfrak{g}^T\Psi=0$ and $\Psi \mathsf{GM}_{\alpha}(\mathsf{R})^T+\mathsf{GM}_{\alpha}(\mathsf{R})\Psi=0.$ We conclude that both $\mathsf{GM}_{\alpha}(\mathsf{R})\Psi$ and $\mathfrak{g}^T\Psi$ are antisymmetric. This implies that the system of linear equations \begin{equation}\label{linear}
\mathsf{GM}_{\alpha}(R)_{i5}=\mathfrak{g}^T_{i5}, \,\, i=2,3,4
\end{equation}
in the variables $(a_k)_{k\in I}$ is a linear combination of the equations 

\begin{equation}
\mathsf{GM}_{\alpha}(R)_{1i}=\mathfrak{g}^T_{1i}, \,\, i=2,3,4.
\end{equation}
Furthermore, it also implies that $\mathsf{GM}_{\alpha}(R)_{15}=\mathfrak{g}^T_{15}=0$. Now, we affirm that the linear system 
\begin{equation}\label{system}
\mathsf{GM}_{\alpha}(R)_{1i}=\mathfrak{g}^T_{1i}, \,\, i=1,2,3,4.
\end{equation} of $|I|$ equations in the variables $(a_k)_{k\in I}$, has a non singular coefficients matrix. Suppose the contrary. Then there is another solution $(a_k')_{k\in I}$ to the system \ref{system}. Then, the $(a_k')_{k\in I}$ also satisfy equations \ref{linear}, since they are linear combinations of equations \ref{system}. It is not hard to see that we can find $b_{ij}'$ such that \begin{equation}
\mathsf{R}'=\sum_{k\in I}a_k'\frac{\partial }{\partial t_k}+\sum_{i,j} b_{ij}'\frac{\partial}{\partial s_{ij}}, 
\end{equation} satisfies $\mathsf{GM}_{\alpha}(\mathsf{R}')=\mathfrak{g}^T$, since $\mathsf{GM}_{\alpha}=(dS+S\mathsf{GM}_{\omega})S^{-1}$ and $S^{-1}$ has a similar block matrix form as $S$.\footnote{The details for isolating the $b_{ij}'$ in terms of the $(a_k')_{k\in I}$ can be found in the appendix to this section.} This is a contradiction with the uniqueness in Lemma \ref{unique}. Therefore, the coefficient matrix of the system \ref{system} is nonsingular, which implies that the $(a_k)_{k\in I}$ are regular functions on $O$. An this implies that the $b_{ij}$ are regular functions too, since we can isolate them in terms of the $(a_k')_{k\in I}$.\footnote{See the previous footnote.} Therefore, $\mathsf{R}$ is algebraic in $O$. The same computation can be done in every chart, and they are compatible in the intersections by lemma \ref{unique}.
\end{proof}

\begin{cor}
The vector fields $\{\mathsf{R}_{\mathfrak{g}}\}_{\mathfrak{g}\in \mathfrak{G}}$ induce an infinite-dimensional representation of $\mathfrak{sp}_4(\mathbb{C})$.
\end{cor}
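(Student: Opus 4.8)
The plan is to show that $\mathfrak{g}\mapsto\mathsf{R}_{\mathfrak{g}}$ is a homomorphism of Lie algebras from $\mathfrak{G}$ into the Lie algebra $\Theta_{\mathsf{T}}$ of algebraic vector fields on $\mathsf{T}$, and then to compose it with the tautological action of $\Theta_{\mathsf{T}}$ on $H^{0}(\mathsf{T},\mathcal{O}_{\mathsf{T}})$ by derivations. Since $\mathfrak{G}\cong\mathfrak{sp}_4(\mathbb{C})$ by the identification established above, and since $H^{0}(\mathsf{T},\mathcal{O}_{\mathsf{T}})$ (the algebra of algebraic Siegel quasi-modular forms, computed in Theorem \ref{quasiaffine}) is infinite-dimensional over $\mathbb{C}$, this produces the asserted infinite-dimensional representation of $\mathfrak{sp}_4(\mathbb{C})$.

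First I would check $\mathbb{C}$-linearity of $\mathfrak{g}\mapsto\mathsf{R}_{\mathfrak{g}}$: for $\lambda,\mu\in\mathbb{C}$ the vector field $\lambda\mathsf{R}_{\mathfrak{g}}+\mu\mathsf{R}_{\mathfrak{h}}$ satisfies $\nabla_{\lambda\mathsf{R}_{\mathfrak{g}}+\mu\mathsf{R}_{\mathfrak{h}}}\alpha=(\lambda\mathfrak{g}+\mu\mathfrak{h})^{T}\alpha$, so by the uniqueness clause of Theorem \ref{bigteo} it equals $\mathsf{R}_{\lambda\mathfrak{g}+\mu\mathfrak{h}}$. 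For the bracket I would use the flatness (integrability) of the Gauss-Manin connection, which gives $\nabla_{[\mathsf{R},\mathsf{R}']}=\nabla_{\mathsf{R}}\nabla_{\mathsf{R}'}-\nabla_{\mathsf{R}'}\nabla_{\mathsf{R}}$ as operators on sections of the cohomology bundle. Applying this with $\mathsf{R}=\mathsf{R}_{\mathfrak{g}}$, $\mathsf{R}'=\mathsf{R}_{\mathfrak{h}}$, and using that the entries of the \emph{constant} matrices $\mathfrak{g}^{T},\mathfrak{h}^{T}$ are scalars, so that $\nabla_{\mathsf{R}_{\mathfrak{g}}}(\mathfrak{h}^{T}\alpha)=\mathfrak{h}^{T}\nabla_{\mathsf{R}_{\mathfrak{g}}}\alpha$, one finds
\begin{equation*}
\nabla_{[\mathsf{R}_{\mathfrak{g}},\mathsf{R}_{\mathfrak{h}}]}\alpha
=\nabla_{\mathsf{R}_{\mathfrak{g}}}(\mathfrak{h}^{T}\alpha)-\nabla_{\mathsf{R}_{\mathfrak{h}}}(\mathfrak{g}^{T}\alpha)
=\mathfrak{h}^{T}\mathfrak{g}^{T}\alpha-\mathfrak{g}^{T}\mathfrak{h}^{T}\alpha
=(\mathfrak{g}\mathfrak{h}-\mathfrak{h}\mathfrak{g})^{T}\alpha
=[\mathfrak{g},\mathfrak{h}]^{T}\alpha .
\end{equation*}
Since $[\mathsf{R}_{\mathfrak{g}},\mathsf{R}_{\mathfrak{h}}]\in\Theta_{\mathsf{T}}$ and $[\mathfrak{g},\mathfrak{h}]\in\mathfrak{G}$, the uniqueness in Theorem \ref{bigteo} forces $[\mathsf{R}_{\mathfrak{g}},\mathsf{R}_{\mathfrak{h}}]=\mathsf{R}_{[\mathfrak{g},\mathfrak{h}]}$; hence $\mathfrak{g}\mapsto\mathsf{R}_{\mathfrak{g}}$ is a Lie algebra homomorphism (note the transpose is precisely what makes it a homomorphism rather than an anti-homomorphism).

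It remains to assemble the representation. If $\mathsf{R}_{\mathfrak{g}}=0$ then $\mathfrak{g}^{T}\alpha=0$, and since $\alpha_1,\dots,\alpha_5$ are linearly independent over the ring of regular functions, this forces $\mathfrak{g}^{T}=0$, i.e. $\mathfrak{g}=0$; so $\mathfrak{G}\hookrightarrow\Theta_{\mathsf{T}}$ is injective. Composing with the Lie algebra homomorphism $\Theta_{\mathsf{T}}\to\End_{\mathbb{C}}\!\bigl(H^{0}(\mathsf{T},\mathcal{O}_{\mathsf{T}})\bigr)$, $\mathsf{R}\mapsto(f\mapsto\mathsf{R}f)$ — which is a Lie algebra homomorphism because $[\mathsf{R},\mathsf{R}']f=\mathsf{R}(\mathsf{R}'f)-\mathsf{R}'(\mathsf{R}f)$ — and using $\mathfrak{G}\cong\mathfrak{sp}_4(\mathbb{C})$, we obtain a representation of $\mathfrak{sp}_4(\mathbb{C})$ on the (infinite-dimensional) algebra of algebraic Siegel quasi-modular forms. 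The only step requiring genuine care is the bracket computation above: tracking the transpose correctly and invoking integrability of the Gauss-Manin connection; everything else is formal bookkeeping from Theorem \ref{bigteo}.
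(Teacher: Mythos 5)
Your proof is correct, and it is precisely the argument the paper leaves implicit: the corollary is stated without proof immediately after Theorem \ref{bigteo}, the intended justification being exactly the combination of the uniqueness clause of that theorem with the flatness of the Gauss--Manin connection to get $[\mathsf{R}_{\mathfrak{g}},\mathsf{R}_{\mathfrak{h}}]=\mathsf{R}_{[\mathfrak{g},\mathfrak{h}]}$, followed by the tautological action of vector fields on $H^{0}(\mathsf{T},\mathcal{O}_{\mathsf{T}})$ by derivations. Your transpose bookkeeping ($[\mathfrak{g},\mathfrak{h}]^{T}=\mathfrak{h}^{T}\mathfrak{g}^{T}-\mathfrak{g}^{T}\mathfrak{h}^{T}$, so the assignment is a homomorphism rather than an anti-homomorphism) is exactly right.
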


\begin{teo}
\begin{equation}\bigcap_{\mathfrak{g}\in \mathsf{Lie}(\mathfrak{G}),\mathfrak{g}\neq\mathfrak{g}_0}Ker(\mathsf{R}_{\mathfrak{g}})=\mathbb{C}[a,b,c,d]
\end{equation}
\end{teo}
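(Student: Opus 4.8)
The statement says that the common kernel of all the vector fields $\mathsf{R}_{\mathfrak{g}}$, as $\mathfrak{g}$ ranges over $\mathsf{Lie}(\mathfrak{G})\setminus\{\mathfrak{g}_0\}$, is exactly the subring $\mathbb{C}[a,b,c,d]$ of $\Gamma(\mathcal{O}_{\mathsf{T}})=\mathbb{C}[a,b,c,d]\otimes_{\mathbb{C}}\mathbb{C}[s_{ij}]/\mathcal{I}$. The plan is to prove the two inclusions separately. For the easy inclusion $\mathbb{C}[a,b,c,d]\subseteq\bigcap_{\mathfrak{g}\neq\mathfrak{g}_0}\mathrm{Ker}(\mathsf{R}_{\mathfrak{g}})$, I would argue that each $\mathsf{R}_{\mathfrak{g}}$ with $\mathfrak{g}\in\mathfrak{G}$ projects under $d\pi\colon\Theta_{\mathsf{T}}\to\pi^*\Theta_{\mathcal{M}}$ (where $\pi\colon\mathsf{T}\to\mathcal{M}=\mathsf{T}/\mathsf{G}$) to the vector field induced by $\mathfrak{g}$ on $\mathcal{M}$; since $\mathfrak{g}\in\mathrm{Lie}(\mathsf{G})$ acts trivially on the quotient $\mathcal{M}$, the coefficients $a_k$ in the expression $\mathsf{R}_{\mathfrak{g}}=\sum_k a_k\partial/\partial t_k+\sum b_{ij}\partial/\partial s_{ij}$ vanish whenever $\mathfrak{g}\in\mathrm{Lie}(\mathsf{G})$. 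Since $\mathrm{Lie}(\mathsf{G})$ is spanned by $\mathfrak{g}_0,\mathfrak{g}_1,\ldots,\mathfrak{g}_6$, every $\mathfrak{g}_i$ with $i\neq 0$ kills the variables $a,b,c,d$, and hence kills all of $\mathbb{C}[a,b,c,d]$.

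For the harder inclusion, suppose $F\in\bigcap_{\mathfrak{g}\neq\mathfrak{g}_0}\mathrm{Ker}(\mathsf{R}_{\mathfrak{g}})$. By the previous paragraph it suffices to show $F$ does not genuinely depend on the fiber coordinates $s_{ij}$, i.e. $F\in\mathbb{C}[a,b,c,d]$. The key is that the vector fields $\mathsf{R}_{\mathfrak{g}_i}$ for $i=1,\ldots,6$, restricted to a fiber of $\pi$, generate the action of $\mathrm{Lie}(\mathsf{G})$ on that fiber, and together with the modular vector fields $\mathsf{R}_{\mathfrak{g}^T}$ (for $\mathfrak{g}$ in the nilradical of $\mathrm{Lie}(\mathsf{G})$, which also lie in $\mathfrak{G}\setminus\{\mathfrak{g}_0\}$ since $\mathfrak{g}_0$ is not in the nilradical) they span the full tangent space to $\mathsf{T}$ transverse to the base directions. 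More precisely, I would use $\mathsf{GM}_{\alpha}(\mathsf{R}_{\mathfrak{g}})=\mathfrak{g}^T$ from \eqref{clear}: for $\mathfrak{g}$ ranging over a basis of $\mathfrak{sp}_4(\mathbb{C})=\mathfrak{G}$ other than $\mathfrak{g}_0$, the resulting matrices $\mathfrak{g}^T$ together with the $9$-dimensional space they span (the complement of $\mathbb{C}\mathfrak{g}_0$) give, via the dictionary $\mathsf{R}\leftrightarrow(a_k,b_{ij})$ worked out in the proof of Theorem \ref{bigteo}, a family of vector fields whose values at any point of $\mathsf{T}$ span the $9$-dimensional subspace of $T_t\mathsf{T}$ complementary to the line tangent to the $\mathfrak{g}_0$-orbit. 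Since $F$ is annihilated by all of these, $dF$ annihilates that $9$-dimensional subspace, so $dF$ is supported on the remaining direction; but that direction is the one along which the $\mathbb{G}_m$-action \eqref{action} moves, along which $a,b,c,d$ already vary, so $dF$ being proportional to $d(\text{something in }a,b,c,d)$ forces $F$ to be a function of $a,b,c,d$ alone.

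The main obstacle, and the step requiring the most care, is verifying the span claim: that at every point $t\in\mathsf{T}$ the vectors $\{(\mathsf{R}_{\mathfrak{g}})_t : \mathfrak{g}\in\mathfrak{G},\ \mathfrak{g}\notin\mathbb{C}\mathfrak{g}_0\}$ span a codimension-one subspace of $T_t\mathsf{T}$, and that the missing direction is exactly $(\mathsf{R}_{\mathfrak{g}_0})_t$, which in turn corresponds to the Euler-type grading vector field $2a\partial_a+3b\partial_b+5c\partial_c+6d\partial_d$ coming from \eqref{action}. This is where one must combine three facts: (i) the $\mathrm{Lie}(\mathsf{G})$-directions other than $\mathfrak{g}_0$ move only the fiber coordinates $s_{ij}$ and, by freeness of the $\mathsf{G}$-action on $\mathsf{\Pi}$ (hence on $\mathsf{T}$ via the biholomorphism $\mathcal{P}$ of Theorem \ref{bihol}), span a $6$-dimensional subspace of the $7$-dimensional fiber tangent space; (ii) the nilradical-transpose directions $\mathsf{R}_{\mathfrak{g}^T}$ move the base coordinates $a,b,c,d$ and, crucially, their base-projections together with the $\mathfrak{g}_0$-direction must span $T\mathcal{M}$ — here one invokes that $\mathsf{R}_{\mathfrak{g}_0}$ alone restricted to $\mathcal{M}$ is the $\mathbb{G}_m$-grading field, so the three remaining nilradical directions plus $\mathfrak{g}_0$ give all of $T\mathcal{M}$, whence the three nilradical directions alone span a codimension-one (the $\mathfrak{g}_0$-line) subspace of $T\mathcal{M}$; (iii) assembling (i) and (ii): the full collection spans everything except the $\mathfrak{g}_0$-line. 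One should double-check that $\mathfrak{g}_0$ does not lie in the nilradical of $\mathrm{Lie}(\mathsf{G})$ — indeed from the bracket table $[\mathfrak{g}_0,\mathfrak{g}_i]=\mathfrak{g}_i$ for $i=1,2,3$, so $\mathfrak{g}_0$ acts nontrivially and is not nilpotent, confirming it is excluded from the modular vector fields and the argument is consistent.
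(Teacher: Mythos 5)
Your proof of the hard inclusion uses the wrong set of vector fields, and this is fatal. The intersection in the statement ranges only over $\mathrm{Lie}(\mathsf{G})\setminus\{\mathfrak{g}_0\}$, i.e.\ over the six fields $\mathsf{R}_{\mathfrak{g}_1},\dots,\mathsf{R}_{\mathfrak{g}_6}$ attached to the group $\mathsf{G}$ --- you say so yourself in your opening sentence --- but in the second half you let $\mathfrak{g}$ run over a basis of all of $\mathfrak{G}=\mathfrak{sp}_4(\mathbb{C})$ except $\mathfrak{g}_0$ and conclude that $dF$ kills a $9$-dimensional subspace of $T_t\mathsf{T}$. The three extra fields you bring in, namely $\mathsf{R}_{\mathfrak{g}}$ for $\mathfrak{g}$ in the transposed nilradical, are not part of the hypothesis, so you may not assume that $F$ is annihilated by them. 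Worse, if they were included the theorem would be false: these are exactly the modular (Ramanujan-type) vector fields, and they act nontrivially on $a,b,c,d$ (their pullbacks under $\mathsf{t}$ differentiate $E_4,E_6,\chi_{10},\chi_{12}$), so $\mathbb{C}[a,b,c,d]$ would not be contained in the enlarged intersection. Indeed your own conclusion --- that $dF$ vanishes on a codimension-one distribution --- would force the common kernel to be far smaller than a four-variable polynomial ring, contradicting the easy inclusion you establish in your first paragraph; your two halves are really about two different intersections. The correct argument, which is the paper's, uses only the six fields: each is of the form $\sum_{i,j} b_{ij}\partial/\partial s_{ij}$ with no base component, they are linearly independent over $\mathcal{O}_{\mathsf{T}}$, hence they generically span the $6$-dimensional module generated by $\partial/\partial\mathsf{s}_1,\dots,\partial/\partial\mathsf{s}_6$, and therefore any $Q$ in the common kernel satisfies $\partial Q/\partial\mathsf{s}_i=0$ for all $i$, i.e.\ $Q\in\mathbb{C}[a,b,c,d]$.

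Your justification of the easy inclusion is also not quite right. From ``$\mathfrak{g}\in\mathrm{Lie}(\mathsf{G})$ acts trivially on $\mathcal{M}$'' you infer that the base components $a_k$ of $\mathsf{R}_{\mathfrak{g}}$ vanish; but $\mathfrak{g}_0$ also acts trivially on $\mathcal{M}$, while $\mathsf{R}_{\mathfrak{g}_0}$ has nonzero $(a,b,c,d)$-components, since it generates the weighted rescaling $(a,b,c,d)\mapsto(k^{-4}a,k^{-6}b,k^{-10}c,k^{-12}d)$ --- a direction that is vertical for $\pi:\mathsf{T}\to\mathcal{M}$ but is not tangent to the level sets of $a,b,c,d$. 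So triviality on the quotient cannot be the reason. The actual reason the base components vanish for $\mathfrak{g}_1,\dots,\mathfrak{g}_6$ is that the linear system \eqref{system} computing them is homogeneous (the first row of $\mathfrak{g}_i^T$ vanishes for $i\neq 0$) and has nonsingular coefficient matrix, which is precisely the mechanism from the proof of Theorem \ref{bigteo} that the paper invokes.
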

\begin{proof}
Let us observe that every $\mathsf{R}_{\mathfrak{g}}$, with $\mathfrak{g}\in \mathsf{Lie}(\mathfrak{G})$ and $\mathfrak{g}\neq\mathfrak{g}_0$, is of the form $\sum_{i,j} b_{ij}\frac{\partial}{\partial s_{ij}}$ in any Zariski chart $O$ of $\mathsf{T}$ as in the proof of the previous theorem. This follows from the fact that, for such a $\mathfrak{g}$, system \ref{system} is homogeneous. This implies the containment ($\supseteq$). For the containment ($\subseteq$), let $\mathsf{s}_1,\ldots,\mathsf{s}_6$ be algebraically independent variables among the variables $s_{ij}$, and let us consider the $\mathcal{O}_{\mathsf{T}}$-span of the 6 vectors fields $\mathsf{R}_{\mathfrak{g}}$, with $\mathfrak{g}\in \mathsf{Lie}(\mathfrak{G})$ and $\mathfrak{g}\neq\mathfrak{g}_0$. Since they are linearly independent over $\mathcal{O}_{\mathsf{T}}$ % ´prove this!!!
, the span of such vector fields forms a 6-dimensional $\mathcal{O}_{\mathsf{T}}$-submodule of the $\mathcal{O}_{\mathsf{T}}$-module generated by the span of the 6 linearly independent vector fields $\frac{\partial}{\partial \mathsf{s}_1},\ldots,\frac{\partial}{\partial \mathsf{s}_6}$. Passing to the field of quotients of $H^0(O,\mathcal{O})$, we get that for every $i$ and $\mathfrak{g}$ as before, there are polynomials $P_i,P^{\mathfrak{g}}_{i}\in H^0(O,\mathcal{O})$ such that $P_i\frac{\partial}{\partial\mathsf{s}_i}=\sum_{\mathfrak{g}\in \mathsf{Lie}(\mathfrak{G}),\mathfrak{g}\neq\mathfrak{g}_0}P_i^{\mathfrak{g}}\mathsf{R}_{\mathfrak{g}}$. Let $Q\in \bigcap_{\mathfrak{g}\in \mathsf{Lie}(\mathfrak{G}),\mathfrak{g}\neq\mathfrak{g}_0}Ker(\mathsf{R}_{\mathfrak{g}})$. Then, $P_i\frac{\partial Q}{\partial\mathsf{s}_i}=\sum_{\mathfrak{g}\in \mathsf{Lie}(\mathfrak{G}),\mathfrak{g}\neq\mathfrak{g}_0}P_i^{\mathfrak{g}}\mathsf{R}_{\mathfrak{g}}(Q)=0$ for each $i$. Linear independency of the $\mathsf{R}_{\mathfrak{g}}$ implies that $\frac{\partial Q}{\partial\mathsf{s}_i}=0$ for each $i$. This implies $Q\in \mathbb{C}[a_k]_{k\in I}$.    

\end{proof} 

The previous theorem in the case of Calabi-Yau threefolds was used to define the ambiguity.

\subsection{Meromorphic Siegel Quasimodular forms}
In this section, we specialize the previous results to $N$-polarized K3 surfaces.

\begin{df}
The algebra of algebraic Siegel quasimodular forms $\mathfrak{M}^{alg}(\mathsf{Sp}_4(\mathbb{C}))$  of genus two for the group $\mathsf{Sp}_4(\mathbb{C})$ is defined to be the ring $H^0(\mathsf{T},\mathcal{O}_{\mathsf{T}})$ of regular global sections on $\mathsf{T}$.

Since each vector field $\mathsf{R}_{\mathfrak{g}}$ of Theorem \ref{bigteo} can be seen as a derivation of $\mathfrak{M}^{alg}(\mathsf{Sp}_4(\mathbb{C}))$, we have that $(\mathfrak{M}^{alg}(\mathsf{Sp}_4(\mathbb{C})),\{\mathsf{R}_{\mathfrak{g}}\}_{g\in\mathfrak{G}})$ is a differential algebra. We call it the differential algebra of algebraic Siegel quasimodilar forms of genus two.
\end{df}

The to pass from algebraic Siegel quasimodular forms to quasimodularforms as holomorphic functions on the Siegel half-plane $\mathbb{H}_2$ we use the $\mathsf{t}$-map constructed in the previous section 

\begin{df}
The algebra of meromorphic Siegel quasimodular forms is defined to be the pullback $\mathsf{t}^*(\mathcal{O}_{\mathsf{T}})$.
\end{df}

Since $\mathbb{C}[a,b,c,d]$ is $\mathbb{C}$-flat, we have a natural monomorphism $\mathbb{C}[a,b,c,d]\rightarrow \mathcal{O}_{\mathsf{T}}$. Therefore, we can assume that $\mathbb{C}[a,b,c,d]\subset \mathcal{O}_{\mathsf{T}}.$

\begin{teo}
The algebra $\mathsf{t}^*(\mathbb{C}[a,b,c,d])$ coincides with the algebra of Siegel modular forms of even weight.
\end{teo}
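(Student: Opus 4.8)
The plan is to recognize $\mathsf{t}^{\ast}(\mathbb{C}[a,b,c,d])$ as the ring $M^{\mathrm{ev}}:=\bigoplus_{k\ \mathrm{even}}M_k(\mathsf{Sp}_4(\mathbb{Z}))$ of genus-two Siegel modular forms of even weight in three moves: (i) show $\mathsf{t}^{\ast}$ sends $a,b,c,d$ to holomorphic Siegel modular forms of weights $4,6,10,12$; (ii) show $\mathsf{t}^{\ast}$ is injective on $\mathbb{C}[a,b,c,d]$, so these four forms are algebraically independent; (iii) compare Poincaré series with Igusa's structure theorem to force equality.

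For (i) I would first extract the automorphy cocycle of $\mathsf{t}$. Since $\mathbf{T}\colon\mathbb{H}_2\cong\mathsf{D}^{+}\hookrightarrow\mathsf{D}$ is a holomorphic section of $\mathsf{\Pi}\to\mathsf{\Pi}/\mathsf{G}=\mathsf{D}$ intertwining the $\mathsf{Sp}_4(\mathbb{Z})$-action on $\mathbb{H}_2$ with the monodromy action of $\Gamma$ on $\mathsf{D}$, for each $\gamma=\left(\begin{smallmatrix}A & B\\ C & D\end{smallmatrix}\right)\in\mathsf{Sp}_4(\mathbb{Z})$ the matrices $\mathbf{T}(\gamma\tau)$ and $A(\gamma)^{-T}\mathbf{T}(\tau)$ lie over the same point $\gamma\tau\in\mathsf{D}^{+}$, hence differ by a unique $g(\gamma,\tau)\in\mathsf{G}$ (uniqueness since $\mathsf{G}$ acts freely on $\mathsf{\Pi}\subset\mathsf{GL}_5(\mathbb{C})$): $\mathbf{T}(\gamma\tau)=A(\gamma)^{-T}\mathbf{T}(\tau)\,g(\gamma,\tau)$. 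Passing to $\mathsf{U}=\Gamma\backslash\mathsf{\Pi}$ (the $\Gamma$-part disappears and the right $\mathsf{G}$-action descends, by the compatibility of Proposition~\ref{comp}) and applying the $\mathsf{G}$-equivariant biholomorphism $\mathcal{P}^{-1}$ (Theorem~\ref{bihol}, Proposition~\ref{eq}) gives $\mathsf{t}(\gamma\tau)=\mathsf{t}(\tau)\cdot g(\gamma,\tau)$. Now $g^{T}$ respects the Hodge filtration, so the first column of $g$ is $(g_{11},0,0,0,0)^{T}$, and comparing first columns in $\mathbf{T}(\gamma\tau)=A(\gamma)^{-T}\mathbf{T}(\tau)g$ yields $v(\gamma\tau)=g_{11}\,A(\gamma)^{-T}v(\tau)$, where $v(\sigma)=(\sigma_2^2-\sigma_1\sigma_3,\sigma_3,\sigma_2,\sigma_1,1)^{T}$ is the first column of $\mathbf{T}(\sigma)$ — the period vector of $\omega=\alpha_1$. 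Its fifth entry is $1$, so $g_{11}(\gamma,\tau)=\bigl((A(\gamma)^{-T}v(\tau))_5\bigr)^{-1}$, and the crucial claim is $g_{11}(\gamma,\tau)=\det(C\tau+D)^{-1}$. This is where Section~\ref{period} is used: via $\mathbf{T}=F\circ\mathbf{T}_{AbS}$ and $F=\Xi\,\wedge^2(\cdot)\,\Xi^{-1}$ (Proposition~\ref{comparison}) the claim reduces to the classical transformation law of period matrices of principally polarized abelian surfaces, under which the $\wedge^2$ of the $H^{1,0}$-block of the automorphy factor scales by $\det(C\tau+D)^{-1}$; in practice it suffices to check the resulting polynomial identity on Bender's two generators of $\mathsf{Sp}_4(\mathbb{Z})$ (Proposition~\ref{bender}). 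Granting this, and using that $g\in\mathsf{G}$ rescales $\omega=\alpha_1$ by $g_{11}$ while on $\mathsf{S}\cong V$ (Theorem~\ref{s}) one has $(a,b,c,d)\cdot k=(k^{-4}a,k^{-6}b,k^{-10}c,k^{-12}d)$, I get $\mathsf{t}^{\ast}(a)(\gamma\tau)=g_{11}^{-4}\,\mathsf{t}^{\ast}(a)(\tau)=\det(C\tau+D)^{4}\,\mathsf{t}^{\ast}(a)(\tau)$, and likewise $\mathsf{t}^{\ast}(b),\mathsf{t}^{\ast}(c),\mathsf{t}^{\ast}(d)$ transform with weights $6,10,12$. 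Since $a,b,c,d$ are regular on $\mathsf{T}$ and $\mathsf{t}$ is holomorphic, each pullback is holomorphic on $\mathbb{H}_2$, and Koecher's principle (valid in genus $2$) makes them elements of $M_4,M_6,M_{10},M_{12}$ respectively.

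For (ii), if $F\in\mathbb{C}[a,b,c,d]$ is weighted-homogeneous with $\mathsf{t}^{\ast}(F)=0$, then $F$ vanishes along the image of $\mathsf{t}$ followed by the forgetful map $\mathsf{T}\to\mathsf{S}$, hence along its $\mathbb{C}^{\ast}$-saturation, whose image in $\mathcal{M}=\mathsf{S}/\mathbb{C}^{\ast}$ equals the image of the canonical map $\mathbb{H}_2\to\mathcal{M}$, $(X,\delta)\mapsto X$; that map is surjective because every $N$-polarized K3 surface admits a marking landing in the component $\mathsf{D}^{+}$, so $F$, as a section of $\mathcal{O}(\deg F)$ over the weighted projective space $\mathbb{P}(2,3,5,6)$, vanishes on the Zariski-dense open subset $\mathcal{M}$ of Theorem~\ref{coarse} and therefore $F=0$. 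Hence $\mathsf{t}^{\ast}$ is injective, $\mathsf{t}^{\ast}a,\mathsf{t}^{\ast}b,\mathsf{t}^{\ast}c,\mathsf{t}^{\ast}d$ are algebraically independent, and $\mathsf{t}^{\ast}(\mathbb{C}[a,b,c,d])=\mathbb{C}[\mathsf{t}^{\ast}a,\mathsf{t}^{\ast}b,\mathsf{t}^{\ast}c,\mathsf{t}^{\ast}d]$ is a polynomial ring of Poincaré series $\bigl((1-t^{4})(1-t^{6})(1-t^{10})(1-t^{12})\bigr)^{-1}$. For (iii), Igusa's structure theorem says $M^{\mathrm{ev}}=\mathbb{C}[E_4,E_6,\chi_{10},\chi_{12}]$ is a free polynomial algebra on generators of weights $4,6,10,12$, so it has the same Poincaré series; a graded inclusion of rings with equal, degreewise-finite Poincaré series is an equality in every degree, whence $\mathsf{t}^{\ast}(\mathbb{C}[a,b,c,d])=M^{\mathrm{ev}}$. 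The only nontrivial point is the cocycle computation $g_{11}(\gamma,\tau)=\det(C\tau+D)^{-1}$ in (i) — matching the $(1,1)$-entry of the $\mathsf{G}$-valued automorphy factor of the $\mathsf{t}$-map with the classical scalar automorphy factor of $\mathsf{Sp}_4(\mathbb{Z})$; once it is in hand, the appearance of exactly the even weights is automatic (for instance $M_2(\mathsf{Sp}_4(\mathbb{Z}))=0$ mirrors the absence of a degree-$2$ weighted monomial in $a,b,c,d$).
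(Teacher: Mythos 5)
Your proposal is correct in substance but follows a genuinely different route from the paper. The paper's entire proof is the assertion that, up to constants, $\mathsf{t}^*(a)=E_4$, $\mathsf{t}^*(b)=E_6$, $\mathsf{t}^*(c)=\chi_{10}$, $\mathsf{t}^*(d)=\chi_{12}$ --- an identification imported from the Clingher--Doran picture of the coarse moduli space as an open subset of $\mathbb{P}(2,3,5,6)$ matched with $\mathrm{Proj}$ of Igusa's ring --- after which Igusa's structure theorem finishes immediately. You never identify the four pullbacks with the specific Igusa generators; instead you (i) extract the $\mathsf{G}$-valued automorphy factor of the $\mathsf{t}$-map from $\mathbf{T}(\gamma\tau)=A(\gamma)^{-T}\mathbf{T}(\tau)g(\gamma,\tau)$, read off its $(1,1)$-entry from the fifth coordinate of the period vector, and reduce the identity $g_{11}=\det(C\tau+D)^{-1}$ to the abelian-surface picture via $F=\Xi\,\wedge^2(\cdot)\,\Xi^{-1}$, so that the weights $4,6,10,12$ fall out of the $\mathbb{C}^*$-weights $(k^{-4},k^{-6},k^{-10},k^{-12})$ on $(a,b,c,d)$; then (ii) you prove injectivity of $\mathsf{t}^*$ on $\mathbb{C}[a,b,c,d]$ from surjectivity of the period map onto $\mathcal{M}$; and (iii) you close with a Poincar\'e-series comparison against $\mathbb{C}[E_4,E_6,\chi_{10},\chi_{12}]$. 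What your route buys is self-containedness: everything is derived from structures already built in the paper (the $\mathsf{G}$-equivariance of $\mathcal{P}$, the $\mathbf{T}$-map, Proposition \ref{comparison}), at the cost of one explicit cocycle computation, which you correctly isolate as the only nontrivial point and for which checking on Bender's two generators is indeed sufficient. What the paper's route buys is brevity and the finer statement that the pullbacks \emph{are} the Igusa generators, not merely weight-$(4,6,10,12)$ forms generating the same ring. Two small points you should patch: in step (ii) you treat only weighted-homogeneous $F$, so add the remark that distinct-weight modular forms are linearly independent (or that $\mathsf{t}^*$ is graded) to pass to general $F$; and in step (i) the holomorphy of the pullbacks on all of $\mathbb{H}_2$ deserves a word about why the $\mathsf{t}$-map does not leave the chart where $a,b,c,d$ are regular --- though the paper itself is no more careful on this point.
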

\begin{proof}
Up to constants, $t^*(a)=E_4,t^*(b)=E_6,t^*(c)=\chi_{10},t^*(d)=\chi_{12}$. Then, the theorem follows from a classical result from Igusa \cite{igusa}. 
\end{proof}

\section{Appendix}
\label{sec7}
The vanishing locus is $$\Sigma=Z(\Delta_{f})\cup Z(c_{1,0,3}) \cup Z(c_{1,0,2}) \cup Z(b_{1,0,1})\cup Z(c_{1,0,1}) \cup Z(c_{0,0,2})\cup Z(s) \cup Z(a) \subset \mathbb{C}^5$$

To explain the previous notations, recall that $$\mathcal{B}=\{xw^3,xw^2.w^3,xy,xw,w^2,x,y,w,1\},$$ and we are indentifying each monomial with its tuple of exponents.

$a_{i,j,k}=\frac{b_{i,j,k}}{c_{i,j,k}}$

$b_{1,0,3}=\frac{-41278242816}{s^4}\Delta_{f}$

$b_{1,0,2}=sc_{1,0,3}$

$b_{0,0,3}=12sc_{1,0,2}$

$b_{0,0,2}=\frac{-1}{18}c_{1,0,1}$

$a_{1,1,0}=-24s$

$b_{0,0,2}=\frac{-1}{18}c_{1,0,1}$

$a_{0,1,0}=-24s$

$b_{1,0,0}=\frac{1}{18}c_{0,0,2}$

$a_{0,0,1}=12a$

$a_{0,0,0}=12a$

$\Delta_{f}  =\frac{1}{41278242816}(29386561536a^{12}s^{8}+2448880128a^{10}c^{2}ds^{7}-117546246144a^{9}b^{2}s^{8}+1360488960a^{9}bc^{3}s^{7}+629856a^{9}c^{6}s^{6}+1088391168a^{9}d^{3}s^{7}-78364164096a^{9}ds^{8}+6530347008a^{8}bcd^{2}s^{7}-156728328192a^{8}bcs^{8}+43460064a^{8}c^{4}d^{2}s^{6}+204073344a^{8}c^{4}s^{7}-816293376a^{7}b^{2}c^{2}ds^{7}+41570496a^{7}bc^{5}ds^{6}+34992a^{7}c^{8}ds^{5}-45349632a^{7}c^{2}d^{4}s^{6}-4534963200a^{7}c^{2}d^{2}s^{7}-30474952704a^{7}c^{2}s^{8}+176319369216a^{6}b^{4}s^{8}-1904684544a^{6}b^{3}c^{3}s^{7}+6298560a^{6}b^{2}c^{6}s^{6}-1088391168a^{6}b^{2}d^{3}s^{7}+78364164096a^{6}b^{2}ds^{8}+23328a^{6}bc^{9}s^{5}-25194240a^{6}bc^{3}d^{3}s^{6}-11972302848a^{6}bc^{3}ds^{7}-11664a^{6}c^{6}d^{3}s^{5}+4094064a^{6}c^{6}ds^{6}+10077696a^{6}d^{6}s^{6}-2539579392a^{6}d^{4}s^{7}+69657034752a^{6}d^{2}s^{8}-69657034752a^{6}s^{9}-13060694016a^{5}b^{3}cd^{2}s^{7}+313456656384a^{5}b^{3}cs^{8}-86920128a^{5}b^{2}c^{4}d^{2}s^{6}-5487305472a^{5}b^{2}c^{4}s^{7}-1154736a^{5}bc^{7}s^{6}-17414258688a^{5}bcd^{3}s^{7}+139314069504a^{5}bcds^{8}+3888a^{5}c^{10}s^{5}-92798784a^{5}c^{4}d^{3}s^{6}-2509346304a^{5}c^{4}ds^{7}-5714053632a^{4}b^{4}c^{2}ds^{7}-83140992a^{4}b^{3}c^{5}ds^{6}-69984a^{4}b^{2}c^{8}ds^{5}+90699264a^{4}b^{2}c^{2}d^{4}s^{6}-39544879104a^{4}b^{2}c^{2}d^{2}s^{7}+217678233600a^{4}b^{2}c^{2}s^{8}-255301632a^{4}bc^{5}d^{2}s^{6}-1894606848a^{4}bc^{5}s^{7}-71928a^{4}c^{8}d^{2}s^{5}-532170a^{4}c^{8}s^{6}+92378880a^{4}c^{2}d^{5}s^{6}+886837248a^{4}c^{2}d^{3}s^{7}+14511882240a^{4}c^{2}ds^{8}-117546246144a^{3}b^{6}s^{8}-272097792a^{3}b^{5}c^{3}s^{7}-14486688a^{3}b^{4}c^{6}s^{6}-1088391168a^{3}b^{4}d^{3}s^{7}+78364164096a^{3}b^{4}ds^{8}-46656a^{3}b^{3}c^{9}s^{5}+50388480a^{3}b^{3}c^{3}d^{3}s^{6}-21042229248a^{3}b^{3}c^{3}ds^{7}+23328a^{3}b^{2}c^{6}d^{3}s^{5}-187802064a^{3}b^{2}c^{6}ds^{6}-20155392a^{3}b^{2}d^{6}s^{6}-3627970560a^{3}b^{2}d^{4}s^{7}-417942208512a^{3}b^{2}s^{9}-184680a^{3}bc^{9}ds^{5}+230107392a^{3}bc^{3}d^{4}s^{6}-6691590144a^{3}bc^{3}d^{2}s^{7}+52565262336a^{3}bc^{3}s^{8}+23328a^{3}c^{6}d^{4}s^{5}-46492704a^{3}c^{6}d^{2}s^{6}-167028480a^{3}c^{6}s^{7}-20155392a^{3}d^{7}s^{6}+1773674496a^{3}d^{5}s^{7}-24508956672a^{3}d^{3}s^{8}+92876046336a^{3}ds^{9}+6530347008a^{2}b^{5}cd^{2}s^{7}-156728328192a^{2}b^{5}cs^{8}+43460064a^{2}b^{4}c^{4}d^{2}s^{6}-3423897216a^{2}b^{4}c^{4}s^{7}-31597776a^{2}b^{3}c^{7}s^{6}-17414258688a^{2}b^{3}cd^{3}s^{7}+139314069504a^{2}b^{3}cds^{8}-97200a^{2}b^{2}c^{10}s^{5}+19735488a^{2}b^{2}c^{4}d^{3}s^{6}-5129547264a^{2}b^{2}c^{4}ds^{7}+46656a^{2}bc^{7}d^{3}s^{5}-57386880a^{2}bc^{7}ds^{6}-40310784a^{2}bcd^{6}s^{6}+8062156800a^{2}bcd^{4}s^{7}-38698352640a^{2}bcd^{2}s^{8}-185752092672a^{2}bcs^{9}-37125a^{2}c^{10}ds^{5}+93172032a^{2}c^{4}d^{4}s^{6}-535237632a^{2}c^{4}d^{2}s^{7}+4434186240a^{2}c^{4}s^{8}+4081466880ab^{6}c^{2}ds^{7}+41570496ab^{5}c^{5}ds^{6}+34992ab^{4}c^{8}ds^{5}-45349632ab^{4}c^{2}d^{4}s^{6}-8162933760ab^{4}c^{2}d^{2}s^{7}-47889211392ab^{4}c^{2}s^{8}-84820608ab^{3}c^{5}d^{2}s^{6}-1229478912ab^{3}c^{5}s^{7}-68040ab^{2}c^{8}d^{2}s^{5}-11080800ab^{2}c^{8}s^{6}+89019648ab^{2}c^{2}d^{5}s^{6}+2660511744ab^{2}c^{2}d^{3}s^{7}+28056305664ab^{2}c^{2}ds^{8}-33750abc^{11}s^{5}+81368064abc^{5}d^{3}s^{6}-268738560abc^{5}ds^{7}+48600ac^{8}d^{3}s^{5}-4536000ac^{8}ds^{6}-57106944ac^{2}d^{6}s^{6}+510603264ac^{2}d^{4}s^{7}+2794881024ac^{2}d^{2}s^{8}-25798901760ac^{2}s^{9}+29386561536b^{8}s^{8}+816293376b^{7}c^{3}s^{7}+7558272b^{6}c^{6}s^{6}+1088391168b^{6}d^{3}s^{7}-78364164096b^{6}ds^{8}+23328b^{5}c^{9}s^{5}-25194240b^{5}c^{3}d^{3}s^{6}-1813985280b^{5}c^{3}ds^{7}-11664b^{4}c^{6}d^{3}s^{5}-15326496b^{4}c^{6}ds^{6}+10077696b^{4}d^{6}s^{6}-2539579392b^{4}d^{4}s^{7}+69657034752b^{4}d^{2}s^{8}-69657034752b^{4}s^{9}-48600b^{3}c^{9}ds^{5}+52068096b^{3}c^{3}d^{4}s^{6}+1209323520b^{3}c^{3}d^{2}s^{7}-6127239168b^{3}c^{3}s^{8}+23328b^{2}c^{6}d^{4}s^{5}+7931520b^{2}c^{6}d^{2}s^{6}-130636800b^{2}c^{6}s^{7}-20155392b^{2}d^{7}s^{6}+1773674496b^{2}d^{5}s^{7}-24508956672b^{2}d^{3}s^{8}+92876046336b^{2}ds^{9}+27000bc^{9}d^{2}s^{5}-1080000bc^{9}s^{6}-28366848bc^{3}d^{5}s^{6}-310542336bc^{3}d^{3}s^{7}+4299816960bc^{3}ds^{8}-3125c^{12}s^{5}-11664c^{6}d^{5}s^{5}+3283200c^{6}d^{3}s^{6}+31104000c^{6}ds^{7}+10077696d^{8}s^{6}-322486272d^{6}s^{7}+3869835264d^{4}s^{8}-20639121408d^{2}s^{9}+41278242816s^{10}).$

$c_{1,0,3}=7558272a^{10}c^{3}s+52907904a^{9}cd^{2}s-45349632a^{9}cs^{2}+90699264a^{8}bc^{2}ds+314928a^{8}c^{5}d+7558272a^{7}b^{2}c^{3}s+209952a^{7}bc^{6}+30233088a^{7}bd^{3}s-181398528a^{7}bds^{2}-52482519424a^{8}d^{2}s+5038848a^{7}bcds+34992a^{7}c^{4}d+2519424a^{6}b^{2}c^{2}s+23328a^{6}bc^{5}-11664a^{6}c^{2}d^{3}+909792a^{6}c^{2}ds-5038848a^{5}b^{2}d^{2}s+419904a^{5}bc^{3}s+3888a^{5}c^{6}-5038848a^{5}d^{3}s-13436928a^{5}ds^{2}-10077696a^{4}b^{3}cds-69984a^{4}b^{2}c^{4}d-20155392a^{4}bcd^{2}s-10077696a^{4}bcs^{2}-71928a^{4}c^{4}d^{2}-54432a^{4}c^{4}s-5038848a^{3}b^{4}c^{2}s-46656a^{3}b^{3}c^{5}+23328a^{3}b^{2}c^{2}d^{3}-26593920a^{3}b^{2}c^{2}ds-184680a^{3}bc^{5}d+23328a^{3}c^{2}d^{4}-3786912a^{3}c^{2}d^{2}s-2052864a^{3}c^{2}s^{2}+2519424a^{2}b^{4}d^{2}s-10497600a^{2}b^{3}c^{3}s-97200a^{2}b^{2}c^{6}-5038848a^{2}b^{2}d^{3}s-26873856a^{2}b^{2}ds^{2}+46656a^{2}bc^{3}d^{3}-8654688a^{2}bc^{3}ds-37125a^{2}c^{6}d+2519424a^{2}d^{4}s+13436928a^{2}d^{2}s^{2}+17915904a^{2}s^{3}+5038848ab^{5}cds+34992ab^{4}c^{4}d-10077696ab^{3}cd^{2}s-30233088ab^{3}cs^{2}-68040ab^{2}c^{4}d^{2}-3917160ab^{2}c^{4}s-33750abc^{7}+4945536abcd^{3}s+14183424abcds^{2}+48600ac^{4}d^{3}-820800ac^{4}ds+2519424b^{6}c^{2}s+23328b^{5}c^{5}-11664b^{4}c^{2}d^{3}-4548960b^{4}c^{2}ds-48600b^{3}c^{5}d+23328b^{2}c^{2}d^{4}+1687392b^{2}c^{2}d^{2}s-10264320b^{2}c^{2}s^{2}+27000bc^{5}d^{2}-432000bc^{5}s-3125c^{8}-11664c^{2}d^{5}+311040c^{2}d^{3}s+6220800c^{2}ds^{2}80a^{7}c^{3}d^{3}-4199040a^{7}c^{3}ds-68024448a^{6}b^{2}cd^{2}s-45349632a^{6}b^{2}cs^{2}-279936a^{6}bc^{4}d^{2}-26453952a^{6}bc^{4}s+34992a^{6}c^{7}+139968a^{6}cd^{5}-114213888a^{6}cd^{3}s-181398528a^{5}b^{3}c^{2}ds-629856a^{5}b^{2}c^{5}d-387991296a^{5}bc^{2}d^{2}s+60466176a^{5}bc^{2}s^{2}-694008a^{5}c^{5}d^{2}-6526008a^{5}c^{5}s-37791360a^{4}b^{4}c^{3}s-419904a^{4}b^{3}c^{6}-60466176a^{4}b^{3}d^{3}s+362797056a^{4}b^{3}ds^{2}+1049760a^{4}b^{2}c^{3}d^{3}-435020544a^{4}b^{2}c^{3}ds-1662120a^{4}bc^{6}d-60466176a^{4}bd^{4}s+241864704a^{4}bd^{2}s^{2}+1934917632a^{4}bs^{3}+1073088a^{4}c^{3}d^{4}-58366656a^{4}c^{3}d^{2}s+1119744a^{4}c^{3}s^{2}-22674816a^{3}b^{4}cd^{2}s+226748160a^{3}b^{4}cs^{2}+559872a^{3}b^{3}c^{4}d^{2}-93218688a^{3}b^{3}c^{4}s-874800a^{3}b^{2}c^{7}-279936a^{3}b^{2}cd^{5}-285534720a^{3}b^{2}cd^{3}s+725594112a^{3}b^{2}cds^{2}+2636064a^{3}bc^{4}d^{3}-125481312a^{3}bc^{4}ds-334125a^{3}c^{7}d-279936a^{3}cd^{6}+69144192a^{3}cd^{4}s-49268736a^{3}cd^{2}s^{2}+698720256a^{3}cs^{3}+90699264a^{2}b^{5}c^{2}ds+314928a^{2}b^{4}c^{5}d-337602816a^{2}b^{3}c^{2}d^{2}s+423263232a^{2}b^{3}c^{2}s^{2}+554040a^{2}b^{2}c^{5}d^{2}-28815912a^{2}b^{2}c^{5}s-303750a^{2}bc^{8}-559872a^{2}bc^{2}d^{5}+90139392a^{2}bc^{2}d^{3}s+282175488a^{2}bc^{2}ds^{2}+882900a^{2}c^{5}d^{3}-10847520a^{2}c^{5}ds+22674816ab^{6}c^{3}s+209952ab^{5}c^{6}+30233088ab^{5}d^{3}s-181398528ab^{5}ds^{2}-524880ab^{4}c^{3}d^{3}-44509824ab^{4}c^{3}ds-437400ab^{3}c^{6}d-60466176ab^{3}d^{4}s+241864704ab^{3}d^{2}s^{2}+1934917632ab^{3}s^{3}+1026432ab^{2}c^{3}d^{4}-32052672ab^{2}c^{3}d^{2}s+148925952ab^{2}c^{3}s^{2}+648000abc^{6}d^{2}-1814400abc^{6}s+30233088abd^{5}s-53747712abd^{3}s^{2}-1504935936abds^{3}-28125ac^{9}-688176ac^{3}d^{5}+11010816ac^{3}d^{3}s+83358720ac^{3}ds^{2}+37791360b^{6}cd^{2}s-136048896b^{6}cs^{2}-279936b^{5}c^{4}d^{2}-1259712b^{5}c^{4}s+139968b^{4}cd^{5}-83980800b^{4}cd^{3}s+241864704b^{4}cds^{2}+583200b^{3}c^{4}d^{3}+2519424b^{3}c^{4}ds-279936b^{2}cd^{6}+54027648b^{2}cd^{4}s-201553920b^{2}cd^{2}s^{2}+591224832b^{2}cs^{3}-324000bc^{4}d^{4}-5495040bc^{4}d^{2}s+24883200bc^{4}s^{2}+37500c^{7}d^{2}+180000c^{7}s+139968cd^{7}-7838208cd^{5}s+98537472cd^{3}s^{2}-358318080cds^{3}.$

$c_{1,0,2}=2519424a^{8}d^{2}s+5038848a^{7}bcds+34992a^{7}c^{4}d+2519424a^{6}b^{2}c^{2}s+23328a^{6}bc^{5}-11664a^{6}c^{2}d^{3}+909792a^{6}c^{2}ds-5038848a^{5}b^{2}d^{2}s+419904a^{5}bc^{3}s+3888a^{5}c^{6}-5038848a^{5}d^{3}s-13436928a^{5}ds^{2}-10077696a^{4}b^{3}cds-69984a^{4}b^{2}c^{4}d-20155392a^{4}bcd^{2}s-10077696a^{4}bcs^{2}-71928a^{4}c^{4}d^{2}-54432a^{4}c^{4}s-5038848a^{3}b^{4}c^{2}s-46656a^{3}b^{3}c^{5}+23328a^{3}b^{2}c^{2}d^{3}-26593920a^{3}b^{2}c^{2}ds-184680a^{3}bc^{5}d+23328a^{3}c^{2}d^{4}-3786912a^{3}c^{2}d^{2}s-2052864a^{3}c^{2}s^{2}+2519424a^{2}b^{4}d^{2}s-10497600a^{2}b^{3}c^{3}s-97200a^{2}b^{2}c^{6}-5038848a^{2}b^{2}d^{3}s-26873856a^{2}b^{2}ds^{2}+46656a^{2}bc^{3}d^{3}-8654688a^{2}bc^{3}ds-37125a^{2}c^{6}d+2519424a^{2}d^{4}s+13436928a^{2}d^{2}s^{2}+17915904a^{2}s^{3}+5038848ab^{5}cds+34992ab^{4}c^{4}d-10077696ab^{3}cd^{2}s-30233088ab^{3}cs^{2}-68040ab^{2}c^{4}d^{2}-3917160ab^{2}c^{4}s-33750abc^{7}+4945536abcd^{3}s+14183424abcds^{2}+48600ac^{4}d^{3}-820800ac^{4}ds+2519424b^{6}c^{2}s+23328b^{5}c^{5}-11664b^{4}c^{2}d^{3}-4548960b^{4}c^{2}ds-48600b^{3}c^{5}d+23328b^{2}c^{2}d^{4}+1687392b^{2}c^{2}d^{2}s-10264320b^{2}c^{2}s^{2}+27000bc^{5}d^{2}-432000bc^{5}s-3125c^{8}-11664c^{2}d^{5}+311040c^{2}d^{3}s+6220800c^{2}ds^{2}.$

$b_{1,0,1}=-(34992a^{7}c^{3}d+23328a^{6}bc^{4}-11664a^{6}cd^{3}+559872a^{6}cds+3888a^{5}c^{5}-69984a^{4}b^{2}c^{3}d-71928a^{4}c^{3}d^{2}-62208a^{4}c^{3}s-46656a^{3}b^{3}c^{4}+23328a^{3}b^{2}cd^{3}-1119744a^{3}b^{2}cds-184680a^{3}bc^{4}d+23328a^{3}cd^{4}-1026432a^{3}cd^{2}s-1306368a^{3}cs^{2}-97200a^{2}b^{2}c^{5}+46656a^{2}bc^{2}d^{3}-2426112a^{2}bc^{2}ds-37125a^{2}c^{5}d+34992ab^{4}c^{3}d-68040ab^{2}c^{3}d^{2}-311040ab^{2}c^{3}s-33750abc^{6}-2239488abds^{2}+48600ac^{3}d^{3}-583200ac^{3}ds+23328b^{5}c^{4}-11664b^{4}cd^{3}+559872b^{4}cds-48600b^{3}c^{4}d+23328b^{2}cd^{4}-1026432b^{2}cd^{2}s-933120b^{2}cs^{2}+27000bc^{4}d^{2}-108000bc^{4}s-3125c^{7}-11664cd^{5}+466560cd^{3}s).$

$c_{1,0,1}=23328a^{6}c^{2}-46656a^{3}b^{2}c^{2}-43416a^{3}c^{2}d-77760a^{2}bc^{3}-7776abcd^{2}-15750ac^{4}+23328b^{4}c^{2}-42120b^{2}c^{2}d+16200c^{2}d^{2}.$

$c_{0,0,2}=126a^{3}c+216abd+90b^{2}c.$

$c_{1,0,0}=2a^2.$

%\bigskip
%\newpage

\bibliographystyle{alpha}

%\bibliographystyle{babalpha}
%bibliographystyle{apalike}
\bibliography{ref}

%\begin{thebibliography}{99}

%\end{thebibliography}

\end{document}